\numberwithin{equation}{section}
\newcommand{\be}{\begin{equation}}
	\newcommand{\ee}{\end{equation}}
\newcommand{\benn}{\begin{equation*}}
	\newcommand{\eenn}{\end{equation*}}
\newcommand{\bea}{\begin{eqnarray}}
	\newcommand{\eea}{\end{eqnarray}}
\newcommand{\beann}{\begin{eqnarray*}}
	\newcommand{\eeann}{\end{eqnarray*}}
\newtheorem{theorem}{Theorem}[section]
\newtheorem{proposition}[theorem]{Proposition}
\newtheorem{corollary}[theorem]{Corollary}
\newtheorem{lemma}[theorem]{Lemma}
\newtheorem{definition}[theorem]{Definition}
\newtheorem{remark}[theorem]{Remark}
\newtheorem{notation}[theorem]{Notation}
\newcommand{\myquad}[1][1]{\hspace*{#1em}\ignorespaces}
\newtheorem{example}[theorem]{Example}
\newtheorem{assumptions}[theorem]{Assumption}
\newcommand{\proof}{\noindent{\bf Proof.\, }}
\newcommand{\qed}{\hfill $\Box$\smallskip\newline}
\newcommand{\E}{\noindent{$\mathbb{E}$ \ }}
\def\R{\mathbb{R}}
\def\N{\mathbb{N}}
\def\P{\mathbb{P}}
\def\E{\mathbb{E}}
\def\T{\mathbb{T}}
\def\P{\mathbb{P}}
\def\cB{\mathcal{B}}
\def\cF{\mathcal{F}}
\def\cP{\mathcal{P}}
\def\cR{\mathcal{R}}
\def\cX{\mathcal{X}}
\def\txtd{{\textnormal{d}}}
\def\txtD{{\textnormal{D}}}
\title{On the negativity of the top Lyapunov exponent for stochastic differential equations driven by fractional Brownian motion}
\author{Alexandra Blessing Neam\c tu~\thanks{Alexandra Blessing Neam\c tu.  Department of Mathematics and Statistics, University of Konstanz,
		Universit\"atsstra\ss{}e~10, 78464 Konstanz, Germany. E-Mail: alexandra.blessing@uni-konstanz.de}~~~~and~~~~Mazyar Ghani Varzaneh \thanks{Mazyar Ghani Varzaneh. Department of Mathematics and Statistics, University of Konstanz,
		Universit\"atsstra\ss{}e~10, 78464 Konstanz, Germany. E-Mail: mazyar.ghanivarzaneh@uni-konstanz.de } }
\begin{document}
	\maketitle
	\begin{abstract}
		We provide sign information for the top Lyapunov exponent for a stochastic differential equation driven by fractional Brownian motion.~To this aim we analyze the stochastic dynamical system generated by such an equation, obtain a random dynamical system and construct an appropriate invariant measure.~Suitable estimates for its  density together with Birkhoff's ergodic theorem imply the negativity of the top Lyapunov exponent by increasing the noise intensity. 
		
	\end{abstract}
	\tableofcontents

	{\bf Keywords:} stochastic dynamical system, random dynamical system, fractional Brownian motion, negative Lyapunov exponent.\\
	{\bf MSC (2020):}  60G22, 60F99, 37H30.
\section{Introduction}
The main goal of this work is to develop a machinery that allows one to investigate the asymptotic behavior of stochastic differential equations perturbed by non-Markovian noise. To this aim, we consider  the SDE
\begin{align}\label{sde:intro}
	\begin{cases}
		& \txtd Y_ t = F(Y_t)~\txtd t + \sigma\txtd B^H_t\\
		&Y_0=x\in \R^d,
	\end{cases}
\end{align}
where $d\geq 1$ and $(B^H_t)_{t\geq 0}$ is a $d$-dimensional fractional Brownian motion (fbm) with Hurst index $H\in(0,1)$.~In particular, under suitable assumptions on the coefficients we prove that~\eqref{sde:intro} has a negative Lyapunov exponent.~Our approach involves the construction of a random dynamical system (RDS) associated to~\eqref{sde:intro} given a stochastic dynamical system (SDS) as introduced in~\cite{Hai05}.~The RDS framework is useful to define concepts as (in)stability, attractors, invariant manifolds, synchronization (by noise) or chaos.~One key indicator for synchronization / chaos is  given by the top Lyapunov exponent of the underlying system.~However, deriving sign information for the top Lyapunov exponent of SDEs/SPDEs is a challenging task which recently received lots of attention~\cite{GT24,HR,BBPS2}.~In particular, a positive top Lyapunov exponent was established for different models arising from fluid dynamics such as  Lagrangian flows~\cite{BBPS1} or Galerkin truncations for 2D Navier-Stokes~\cite{BBPS2} indicating their chaotic behavior.\\

Here we are interested in the opposite phenomenon, i.e.~when the top Lyapunov exponent is negative.~To the best of our knowledge, all results on Lyapunov exponents and RDS are formulated in a Markovian setting heavily relying on the invariant measure of~\eqref{sde:intro} which solves a Fokker-Planck equation or on ergodic properties of the projective process using Furstenberg's theorem.~All these tools break down for fbm.~Therefore the main challenge of this work is to obtain an invariant measure for~\eqref{sde:intro} based on the approach in~\cite{Hai05} together with a representation for its density.~One of our main results, Theorem~\ref{MAIN} establishes that the top Lyapunov exponent of~\eqref{sde:intro} can become negative by taking the noise intensity $\sigma$ sufficiently large.~To the best of our knowledge, this is the first work that provides sign information for Lyapunov exponents in a non-Markovian setting.~The statement of Theorem~\ref{MAIN} aligns with the results of~\cite{FGS16a} obtained for a similar type of SDE driven by a Brownian motion showing that its invariant measure  flattens for large values of $\sigma$.~This fact can be inferred from the corresponding Fokker-Planck equation satisfied by the invariant measure of the SDE.~As already stated, such arguments are not available for SDEs driven by fbm.\\

Therefore we follow the following strategy: based on the results in~\cite{Hai05} one can construct a SDS associated to~\eqref{sde:intro}.~Given this we show how to obtain a RDS and construct an invariant measure using disintegration.~In this framework, the multiplicative ergodic theorem~\cite[Theorem 3.4.1]{Arn98} entails the existence of Lyapunov exponents.~To the best of our knowledge this is the first work that constructs a RDS from a SDS yielding the applicability of the multiplicative ergodic theorem in an non-Markovian setting.~Furthermore, using Birkhoff's ergodic theorem, we obtain a representation for the top Lyapunov exponent in terms of the stationary density of a rescaled version of \eqref{sde:intro}. Following \cite{LPS23}, we derive Gaussian upper and lower bounds for this stationary density.~In comparison to the results in~\cite{LPS23}, due to the rescaling, both drift and diffusion coefficients depend on the intensity of the noise $\sigma$. This has to be increased in order to make the top Lyapunov exponent of~\eqref{sde:intro} negative. Therefore the precise dependence of the stationary density on $\sigma$ is essential for our aims. For Gaussian bounds for the density of singular SDEs with additive fractional Brownian motion we refer to~\cite{density1} and~\cite{density2} and emphasize that for our aims the density of the invariant measure of the corresponding SDE is relevant.\\

At an informal level, our main result reads as follows. 

\begin{theorem}[Main result, see Theorem \ref{thm:main}] Let $F$ be eventually strictly monotone, see Assumption~\ref{Drift2}.~Then the top Lyapunov exponent of the SDE~\eqref{sde:intro} becomes negative choosing the noise intensity $\sigma$ large enough.
\end{theorem}
In conclusion, the connection between SDS and RDS developed in this work is of independent interest and can be applied to gain further dynamical insights for SDEs perturbed by fbm such as chaos, (singleton) attractors or invariant manifolds.~We leave such aspects for future works and refer to~\cite{BG26} for a weak form of synchronization by noise for~\eqref{sde:intro}.~Moreover, the sign of the top Lyapunov exponent is related to several fundamental problems in dynamical 
systems.~For example, having an ergodic invariant measure for~\eqref{sde:intro}, one can disintegrate it into probability measures to obtain a family of random measures on the state space as constructed in Theorem~\ref{disintegration_2}. These random measures turn out to be closely connected to the sign of the first Lyapunov exponent.~When the top Lyapunov exponent is negative, they are typically expected to become atomic as showed in~\cite{BG26}.~In contrast, when the first Lyapunov exponent is positive, the measures are expected to admit absolutely continuous conditional distributions on unstable manifolds, leading to the existence of Sinai-Ruelle-Bowen (SRB) measures.~Naturally, the non-Markovian nature of the system leads to additional challenges.~Nevertheless, we believe that the tools established in this manuscript can serve as a basis for further questions, such as obtaining a Furstenberg-type representation theorem for the top Lyapunov exponent. Moreover, we plan to extend our results to gradient type SDEs, i.e. $F=-\nabla V $, for a potential $V$ and multiplicative noise, where the setting of~\cite{Nualart} could be helpful.
\paragraph*{Literature}  
There has been a growing interest in the dynamics of stochastic systems with non-Markovian noise.~Several results as averaging~\cite{averaging,LiS}, multi-scale dynamics~\cite{multiscale1,Katharina,Nils}, finite-time Lyapunov exponents~\cite{DB}, early warning signs for bifurcations~\cite{EWS}, invariant manifolds and stability~\cite{GVR25Z,GVR25,GVR26,KN21,KN23} or traveling waves~\cite{Stannat} have been established.~Remarkably, it was observed in~\cite{Stannat} that large values of the Hurst parameter $H$ imply higher stability of traveling waves.~In~\cite{DB,Nils} and here this is not the case, since we can always obtain stability, i.e.~a negative top Lyapunov exponent by increasing the noise intensity.~However, the proof of this statement is more technical for $H\in(1/2,1)$ due to certain singularities that appear in zero, see Proposition~\ref{YHHNA7as}.~Moreover, the results in~\cite{Stannat,DB,Nils} are valid on a finite-time horizon, whereas here we are interested in asymptotic statements and therefore need ergodicity which is a challenging task in a non-Markovian setting.~On the other hand, the negativity of the top Lyapunov exponent and synchronization by noise for SDEs with additive noise were investigated in \cite{FGS16a,FGS16b}.~For one-dimensional SDEs driven by fbm, weak synchronization was established in~\cite[Section 4.1]{FGS16b} using that the 
corresponding random dynamical system is order-preserving and strongly mixing whereas for higher-dimensional SDEs we refer to~\cite{BG26}.  
\paragraph*{Plan of the paper} This work is structured as follows. In Section~\ref{sec:p} we collect important concepts from the theory of stochastic and random dynamical systems, the Mandelbrot van Ness representation of fractional Brownian motion together with a stationary noise process associated to it.~Since this encodes the past of the noise we construct in Subsection~\ref{sec:fbm} a two-sided fbm as required for the RDS approach.~Based on these foundations we construct in Section~\ref{sec:cons:sds} a RDS given a SDS.~Since the SDS incorporates the past of the noise and the RDS its future, for the construction of the RDS given the SDS in Theorem~\ref{RDS_SDS}, we first define a space encoding the randomness which contains two components (the past and future of the noise) and an appropriate shift.~In this setting we can verify the cocycle property.
Moreover, the SDS is defined on the product space of two Polish space $\cX$ and $\cB$, where $\cX$ stands for the phase space and $\cB$ is associated to the noise.~In this setting one can construct a probability measure on $\cX\times \cB$ which is invariant for the SDS.~Given this, we construct an invariant measure on the phase space $\cX$ for the RDS by disintegration together with a perfection-type argument in Theorem~\ref{disintegration_2} to prove its invariance.~This result is of independent interest and essential for the forthcoming work~\cite{BG26} on synchronization by fractional noise for~\eqref{sde:intro}.\\

In Section~\ref{sec:sde} we consider the SDE~\eqref{sde:intro} for which one can obtain a SDS due~\cite{Hai05}.~We apply the results stated in Section~\ref{sec:cons:sds} to generate a RDS and compare this approach to the standard one, where one subtracts the noise from~\eqref{sde:intro} obtaining an ODE with random coefficients~\cite{MS04}.~Here the model of the noise is described by the metric dynamical system $(C_0(\R;\R^d); \cB(C_0(\R;\R^d)); \mathbb{P}; (\theta_t)_{t\in\R})$, where $C_0(\R;\R^d)$ denotes the space of continuous functions which are zero in zero, $\mathbb{P}$ is the two-sided Wiener measure and $\theta$ is the Wiener shift.~In our setting, the space associated to the noise contains two components, both chosen from the space specified in Definition~\ref{BB_HH} whose choice is motivated by the Mandelbrot van Ness representation of fbm.~Before analyzing the negativity of the top Lyapunov exponent, as a first step we investigate in Subsection~\ref{Linearization} the linearized dynamics of~\eqref{sde:intro}.~Provided that the top Lyapunov exponent is negative, we obtain the existence of a local stable manifold.~A similar statement has been derived in the Markovian setting in~\cite[Section 3.1]{FGS16a}.~We employ novel tools based on the results in~\cite{GVR23A} to obtain this kind of statement in the non-Markovian case. \\

In Section~\ref{sec:LP} we first rescale the SDE~\eqref{sde:intro} and derive bounds for the density of the invariant measure of the corresponding rescaled SDE depending on $\sigma$.~To this aim we follow the approach of~\cite{LPS23}, where in our situation we have to keep track of the dependence of $\sigma$ on both drift and diffusion coefficient. By the rescaling argument, the diffusion term lies on the unit sphere and the parametric dependence can be handled.~For the convenience of the reader, we provide these computations in Appendix \ref{AAZAZA}. Furthermore, based on Birkhoff's ergodic theorem we show that increasing $\sigma$ we obtain a negative top Lyapunov exponent for~\eqref{sde:intro}.~We collect some properties of fractional Brownian motion, auxiliary results and tools on fractional calculus required in Section~\ref{sec:LP} in three appendices.
	\paragraph{Acknowledgements}
The authors acknowledge funding by the Deutsche Forschungsgemeinschaft (DFG, German Research Foundation) - CRC/TRR 388 "Rough Analysis, Stochastic Dynamics and Related Fields" - Project ID 516748464.~A Blessing acknowledges support from DFG CRC 1432 " Fluctuations and Nonlinearities in Classical and Quantum Matter beyond Equilibrium" - Project ID 425217212.
\section{Preliminaries}\label{sec:p}
\subsection{Notations} 
In this section, we introduce some notations, definitions and auxiliary results which will frequently by used throughout the manuscript.

\begin{lemma}\label{pushforward}
	Let \((\mathcal{X}, \sigma(\mathcal{X}))\) and \((\mathcal{Y}, \sigma(\mathcal{Y}))\) be two measure spaces, \(T: \mathcal{X} \to \mathcal{Y}\) be a measurable function and let \({\nu}\) be a Borel measure on \(X\). Then the pushforward measure \(T_{\star} {\nu}\) on \(\mathcal{Y}\) is defined by  
	\[
	T_{\star} \nu(E) := \nu(T^{-1}(E)), \quad \forall E \in \sigma(\mathcal{Y}).
	\]
	In addition, for a measurable function \(f: \mathcal{Y} \to \mathbb{R}^+\), we have  
	\[
	\int_{\mathcal{X}} f(T(x)) \, \nu(\mathrm{d}x) = \int_{\mathcal{Y}} f(y) \, T_{\star} \nu(\mathrm{d} y).
	\]
\end{lemma}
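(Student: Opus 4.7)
The statement has two parts: first the definition of the pushforward measure, which requires only that we check $T_\star\mu$ is actually a measure on $\sigma(Y)$; second the change-of-variables identity, which is the substantive claim. My plan is the standard measure-theoretic bootstrap from indicators to simple functions to nonnegative measurables.

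For the well-definedness of $T_\star\mu$, I would note that measurability of $T$ means $T^{-1}(E)\in\sigma(X)$ for every $E\in\sigma(Y)$, so $\mu(T^{-1}(E))$ makes sense. Countable additivity of $T_\star\mu$ follows immediately from countable additivity of $\mu$ together with the fact that $T^{-1}$ commutes with countable disjoint unions; $T_\star\mu(\emptyset)=\mu(\emptyset)=0$ is clear.

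For the integral identity, I would proceed in three layers.
\begin{enumerate}
\item \textbf{Indicators.} Fix $E\in\sigma(Y)$ and take $f=\mathbf{1}_E$. Then $f\circ T=\mathbf{1}_{T^{-1}(E)}$, so
\[
\int_X f(T(x))\,\mu(\mathrm{d}x)=\mu(T^{-1}(E))=T_\star\mu(E)=\int_Y f(y)\,T_\star\mu(\mathrm{d}y),
\]
which is exactly the definition.
\item \textbf{Simple functions.} For $f=\sum_{i=1}^n c_i\mathbf{1}_{E_i}$ with $c_i\geq 0$ and $E_i\in\sigma(Y)$, linearity of the integral on both sides reduces the claim to the indicator case.
\item \textbf{Nonnegative measurables.} For a general measurable $f:Y\to\mathbb{R}^+$, choose a pointwise increasing sequence of nonnegative simple functions $f_n\uparrow f$. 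Then $f_n\circ T\uparrow f\circ T$ pointwise on $X$, and the monotone convergence theorem applied on both sides gives
\[
\int_X f(T(x))\,\mu(\mathrm{d}x)=\lim_{n\to\infty}\int_X f_n(T(x))\,\mu(\mathrm{d}x)=\lim_{n\to\infty}\int_Y f_n(y)\,T_\star\mu(\mathrm{d}y)=\int_Y f(y)\,T_\star\mu(\mathrm{d}y),
\]
where the middle equality uses step (2).
\end{enumerate}

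There is no real obstacle here; this is a textbook change-of-variables statement and the only thing to be slightly careful about is making sure the approximating simple functions $f_n$ from the standard dyadic truncation are genuinely $\sigma(Y)$-measurable so that step (2) applies verbatim. Because $f$ is assumed measurable and nonnegative this is automatic.
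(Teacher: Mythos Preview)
Your proof is correct and is the standard textbook argument. The paper does not actually provide a proof of this lemma; it is stated as a well-known definitional fact and used freely thereafter, so there is nothing to compare against beyond noting that your bootstrap via indicators, simple functions, and monotone convergence is exactly the expected justification.
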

The following result is widely used in this manuscript.  
\begin{lemma}{\em (Fernique's theorem)}\label{FerniqueAA}
	Let $(\mathcal{X}, \|\cdot\|)$ be a real separable Banach space and let $\nu$ be a centered Gaussian measure on $\mathcal{X}$. Then there exists a constant $c > 0$ such that
	\[
	\int_{\mathcal{X}} \exp \bigl(c \|x\|^2\bigr)\, \nu(\mathrm{d}x) < \infty.
	\]
\end{lemma}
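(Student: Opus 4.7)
The plan is to reproduce the classical Fernique symmetrisation argument, which exploits the rotation invariance of centered Gaussian measures on $E$. Let $X,Y$ be independent $E$-valued random variables with common law $\mu$. The key structural fact I would use is that $U:=(X+Y)/\sqrt{2}$ and $V:=(X-Y)/\sqrt{2}$ are again independent and $\mu$-distributed; for a centered Gaussian measure on a separable Banach space this follows because every continuous linear functional applied to $(U,V)$ has the same centred joint Gaussian law as when applied to $(X,Y)$, and two-dimensional projections along continuous linear functionals characterise Gaussian laws on $E$. Inverting these relations yields $X=(U+V)/\sqrt{2}$ and $Y=(U-V)/\sqrt{2}$.

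Next, for $0<s<t$, I would use independence and the identity in law to write
$$\mu\bigl(\|\cdot\|\le s\bigr)\,\mu\bigl(\|\cdot\|>t\bigr)=\mathbb{P}\bigl(\|V\|\le s,\ \|U\|>t\bigr).$$
On the event $\{\|V\|\le s,\ \|U\|>t\}$ the reverse triangle inequality forces both
$$\|X\|\ge\frac{\|U\|-\|V\|}{\sqrt{2}}>\frac{t-s}{\sqrt{2}}\quad\text{and}\quad\|Y\|\ge\frac{\|U\|-\|V\|}{\sqrt{2}}>\frac{t-s}{\sqrt{2}}.$$
Since $X$ and $Y$ are independent, the inclusion of events and independence then produces the recursion
$$\mu\bigl(\|\cdot\|>t\bigr)\le\frac{\mu\bigl(\|\cdot\|>(t-s)/\sqrt{2}\bigr)^2}{\mu\bigl(\|\cdot\|\le s\bigr)}.$$

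To conclude, I would fix $s$ large enough that $q:=\mu(\|\cdot\|\le s)\ge 3/4$, which is possible because $\|\cdot\|<\infty$ $\mu$-almost surely, hence $\mu(\|\cdot\|\le s)\to 1$ by countable additivity. Setting $t_0:=s$ and $t_{n+1}:=s+\sqrt{2}\,t_n$, so that $t_n$ grows like $c(\sqrt{2})^{n}$, iterating the recursion yields
$$\mu\bigl(\|\cdot\|>t_n\bigr)\le q\,r^{\,2^n},\qquad r:=\frac{\mu(\|\cdot\|>s)}{q}<1.$$
Since $t_n^{\,2}\le C\cdot 2^n$, interpolating monotonicity of the tail between the $t_n$ converts this into a Gaussian-type bound $\mu(\|\cdot\|>t)\le C\exp(-\beta t^2)$ valid for all large $t$ and some $\beta>0$. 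Choosing any $0<\alpha<\beta$ and applying the layer-cake formula to $\int_E e^{\alpha\|x\|^2}\mu(\mathrm{d}x)$ then delivers the desired finiteness.

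The main obstacle, as always in Fernique's theorem, is the symmetrisation step: one must justify that $(U,V)$ is again a pair of independent copies of $\mu$ in the Banach-space setting, which relies on characterising centred Gaussian measures on $E$ through their finite-dimensional projections by continuous linear functionals. Once this identity in law is in place, the rest is a clean iteration, and the passage from the discrete bound at the $t_n$ to an exponential bound for general $t$ is routine.
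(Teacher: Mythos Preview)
Your proposal is correct: this is the classical Fernique symmetrisation argument, and all the steps you outline (rotation invariance of centred Gaussian laws via finite-dimensional projections, the tail recursion from the triangle inequality, the doubly-exponential iteration, and the layer-cake conclusion) are sound. The paper itself does not give a proof but simply cites the literature (Ledoux and Friz--Victoir), so your write-up is in fact more detailed than what the paper provides; the argument you sketch is precisely the one found in those references.
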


\proof
	See \cite[Theorem~4.1]{Led96} and \cite[Theorem~15.33]{FV10}.
\qed
\begin{definition}
	Let $(\Omega,\mathcal{F},\mathbb{P})$ be a probability space.~We define 
	\[
	\mathrm{Pr}_{\mathcal{B}}(\mathbb{R}^d) := \left\{ \nu \text{ is a probability measure on } \mathbb{R}^d \times \mathcal{B} \,\middle|\, \left( \Pi_{\mathcal{B}} \right)_{\ast} \nu = \mathbf{P} \right\},
	\]
	where $\Pi_{\mathcal{B}}$ denotes the projection onto $\mathcal{B}$.
\end{definition}
\begin{definition}
	For a path \(f : [0, T] \to \mathbb{R}^d\) and a parameter \(\alpha \in (0,1]\), we define the H\"older seminorm
	\[
	\|f\|_{\alpha; [0,T]} := \sup_{\substack{s,t \in [0,T]\\s \neq t}}
	\frac{\lvert f(t) - f(s)\rvert}{\lvert t - s\rvert^{\alpha}},
	\]
	and the supremum norm
	\[
	\|f\|_{\infty; [0,T]} := \sup_{s \in [0,T]} \lvert f(s)\rvert.
	\]
	For \(m \in \mathbb{N}_0\), we say that \(f \in C^{m,\alpha}([0,T],\mathbb{R}^d)\) if
	\begin{itemize}
		\item \(f\) is \(m\)-times continuously differentiable on \([0,T]\), and
		\item its \(m\)-th derivative \(f^{(m)}\) has finite \(\alpha\)-H\"older seminorm.
	\end{itemize}
	We equip \(C^{m,\alpha}([0,T],\mathbb{R}^d)\) with the norm
	\[
	\|f\|_{C^{m,\gamma};[0,T]}
	:= \sum_{k=0}^{m} \|f^{(k)}\|_{\infty;[0,T]}
	\;+\; \|f^{(m)}\|_{\alpha;[0,T]}.
	\]
\end{definition}
\subsection{Stochastic and random dynamical systems}
In this section, we provide a concise overview of the theory of {stochastic dynamical systems} (SDS) introduced in the seminal work~\cite{Hai05}. Furthermore, we outline key definitions and concepts from the theory of {random dynamical systems} (RDS) which goes back to~\cite{Arn98}.~Even if these two theories are not equivalent, we show how to obtain a RDS given a SDS and how to apply well-established results from  RDS theory in order to investigate non-Markovian dynamics.
\begin{definition}{(\cite[Definition 2.1]{Hai05})}\label{SDS}
	Let \(\mathcal{B}\) be a Polish space equipped with the Borel \(\sigma\)-algebra \(\sigma(\mathcal{B})\).~Then \(\left(\mathcal{B}, \{P_t\}_{t \geq 0}, \mathbf{P}, \{\vartheta_t\}_{t \geq 0}\right)\) is called a {stationary noise process} if it satisfies the following conditions.
	\item \textbf{(I)}\label{I} We assume that $\mathbf{P}$ is a probability measure on $(\mathcal{B},\sigma(\mathcal{B}))$.\\
	\textbf{(II)}\label{II} For every $t\geq 0$, we assume that \( P_{t}: \mathcal{B} \times \mathcal{B} \to \mathcal{B} \) and \( \vartheta_t: \mathcal{B} \to \mathcal{B} \) are two measurable functions such that for every \( \omega^- \in \mathcal{B} \), the random variables \( P_{t}(\omega^-, \cdot) \) and \( \vartheta_t \) are independent. This means that for every \( A, B \in \sigma(\mathcal{B}) \), we have 
	\begin{align}\label{indepenet}
		\begin{split}
			&\mathbf{P}\left( \omega^+ \in \mathcal{B} : P_{t}(\omega^-, \omega^+) \in A, \vartheta_{t}(\omega^+) \in B \right)\\& = \mathbf{P}\left( \omega^+ \in \mathcal{B} : P_{t}(\omega^-, \omega^+) \in A \right) \mathbf{P}\left( \omega^+ \in \mathcal{B} : \vartheta_{t}(\omega^+) \in B \right).
		\end{split}
	\end{align}
	\item \textbf{(III)}\label{III} We assume that \(\{\vartheta_t\}_{t \geq 0}\) is a semiflow on \(\mathcal{B}\), i.e. \(\vartheta_{t+s} = \vartheta_t \circ \vartheta_s\) for every \(t, s \geq 0\). In addition, this flow is invariant with respect to $\mathbf{P}$ meaning that \((\vartheta_t)_{\star}(\mathbf{P}) = \mathbf{P}\) for every \(t \geq 0\).
	\item \textbf{(IV)}\label{IV} For every \( \omega^- \in \mathcal{B} \) and $t\geq 0$, we define
	\[
	\mathcal{P}_t(\omega^-; \cdot) := (P_t(\omega^-, \cdot))_{\star}(\mathbf{P}),
	\]
	and assume that this family defines a Feller transition semigroup on \( \mathcal{B} \). This means that for the family of probability measures
	\[
	\mathcal{P}_t : \mathcal{B} \times \sigma(\mathcal{B}) \to [0, 1],
	\]
	and for all \( A \in \sigma(\mathcal{B}) \) and \( t, s > 0 \), the following properties hold:
	\begin{itemize}
		\item the map \(\omega^-\mapsto \mathcal{P}_t(\omega^-; A) \) is measurable;
		\item one has the semigroup property
		\begin{align}\label{SEMI_1}
			\mathcal{P}_{t+s}(\omega^-; A) = \int_{\mathcal{B}} \mathcal{P}_t(\omega^+; A) \mathcal{P}_s(\omega^-; \mathrm{d}\omega^+);
		\end{align}
		\item for every $A\in\sigma(\cB)$ and $\omega\in\mathcal{B}$  it holds that \( \mathcal{P}_0(\omega^-, A) = \mathlarger{\chi}_{A}(\omega^-) \), where $\chi_A$ denotes the indicator function of the set $A$;
		\item for every \( f \in C_b(\mathcal{B}) \), the map
		\begin{align*}
			f \mapsto \mathcal{P}_t f, \quad 
			\mathcal{P}_t f(\omega^-) := \int_{\mathcal{B}} f(P_t(\omega^-, \omega^+)) \, \mathbf{P}(\mathrm{d}\omega^+),
		\end{align*}
		belongs to the space of bounded and continuous functions on $\cB$ denoted by \( C_b(\mathcal{B}) \). 
	\end{itemize}
	Furthermore, we assume that \( \mathbf{P} \) is the unique invariant measure associated with the transition semigroup \( \{ \mathcal{P}_t \}_{t \geq 0} \).
	\item \textbf{(V)}\label{V} For every \( \omega^- ,\omega^+ \in \mathcal{B} \) and \( t,s \geq 0 \), we assume that
	\begin{align*}
		&\vartheta_{t}\circ P_{t}(\omega^-,\omega^+)= P_{t}\left(\vartheta_{t}\omega^-,P_{t}(\omega^+,\omega^-)\right)=\omega^-,\\
		& P_{t+s}(\omega^-,\omega^+)=P_{t}(P_{s}(\omega^-,\omega^+),\vartheta_{s}\omega^+).
	\end{align*} 
\end{definition}

\begin{remark}
	We note that properties \hyperref[I]{\textbf{(I)}}--\hyperref[IV]{\textbf{(IV)}} are stated in~\cite[Definition~2.1]{Hai05}. In addition, we require property \hyperref[V]{\textbf{(V)}}, which can easily be verified for the stationary noise process associated to fbm in Section~\ref{sec:fbm}.
\end{remark}

Now we define a stochastic dynamical system.
\begin{definition}\label{SDSSS}
	Let \( \left(\mathcal{B}, \{P_t\}_{t \geq 0}, \mathbf{P}, \{\vartheta_t\}_{t \geq 0}\right) \) be a stationary noise process and let \( \mathcal{X} \) be a Polish space. We call 
	\begin{align}
		\begin{split}
			&\varphi: \mathbb{R}^+ \times  \mathcal{X}\times \mathcal{B}  \to \mathcal{X},\\
			&  (t,x,\omega)\rightarrow \varphi^{t}_{\omega}(x)
		\end{split}
	\end{align}
	a continuous stochastic dynamical system (SDS) if the following properties hold.
	\begin{itemize}
		\item  For every \( T > 0 \), \( \omega \in \mathcal{B} \), and \( x \in \mathcal{X} \), for the map \( \tilde{\varphi}_T( x,\omega): [0, T] \to \mathcal{X} \) defined as
		\[
		\tilde{\varphi}_T(x,\omega)(t) := \varphi^{t}_{\vartheta_{T - t} \omega}(x),
		\]
		we have \( \tilde{\varphi}_T(x,\omega) \in C\left([0, T], \mathcal{X}\right) \). Moreover,  the map \( (x,\omega) \mapsto \tilde{\varphi}_T(x,\omega) \) is continuous from \(  \mathcal{X}\times\mathcal{B} \) to \( C\left([0, T], \mathcal{X}\right) \).
		\item For every $t,s\geq 0$, $\omega\in \mathcal{B}$ and $x\in\mathcal{X}$, we have
		\begin{align}\label{cocycle_SDS}
			\begin{split}
				&\varphi^{0}_{\omega}(x)=x,\\
				&\varphi^{t+s}_{\omega}(x)=\varphi^{s}_{\omega}\circ\varphi^{t}_{\vartheta_{s}\omega}(x).
			\end{split}
		\end{align}
		\item If $\cX$ is a separable Banach space, we call the stochastic dynamical system   \( C^k \) if for every \( k \geq 1 \), \( t \geq 0 \) and \( \omega \in \mathcal{B} \), the map
		\[
		x \mapsto \varphi^t_{\omega}(x)
		\]
		is \( C^k \)-Fr\'{e}chet differentiable.
	\end{itemize}
	
\end{definition}
When referring to a continuous SDS, we omit specifying the stationary noise process and the underlying
Polish space \( \mathcal{X} \) whenever these are clear from the context. 
The following result is taken from \cite[Lemma 2.12]{Hai05}.
\begin{definition}\label{DEAFRS}
	Let \( \varphi \) be a continuous SDS. For \( t \geq 0 \) and \( \Gamma \in \sigma(\mathcal{X}) \otimes \sigma(\mathcal{B}) \), we set
	\begin{align}\label{Chapman}
		\mathcal{Q}_{t}\left((x,\omega^-); \Gamma\right) := \int_{\mathcal{B}} \mathlarger{\chi}_{\Gamma}\left( \varphi^{t}_{P_{t}(\omega^-, \omega^+)}(x), P_{t}(\omega^-, \omega^+) \right) \mathbf{P}(\mathrm{d}\omega^+).
	\end{align}
	Then the family \( ( \mathcal{Q}_t )_{t \geq 0} \) constitutes a Feller transition semigroup on \( \mathcal{X} \times \mathcal{B} \). 
\end{definition}
This leads to the following definition of an invariant measure for a SDS on the state space \( \mathcal{B} \times \mathcal{X} \).
\begin{definition}\label{measure}
	Let \( \varphi \) be a continuous SDS. A probability measure \( \mu \) on \( \mathcal{B} \times \mathcal{X} \) is called an {invariant measure} for $\varphi$ if it satisfies the following properties:
	\begin{itemize}
		\item we have  
		\begin{align}\label{UJas}
			(\Pi_\mathcal{B})_{\star} \mu = \mathbf{P},
		\end{align}
		\item the measure \( \mu \) is invariant with respect to the transition semigroup \( (\mathcal{Q}_t)_{t\geq 0} \), i.e. for every \( t \geq 0 \) and $\Gamma\in\sigma(\mathcal{X})\otimes\sigma(\mathcal{B})$ it holds that
		\[
		(\mathcal{Q}_t)_{\star} \mu(\Gamma) :=\int_{\mathcal{X}\times\mathcal{B}}\mathcal{Q}_{t}\left((x,\omega);\Gamma\right)\mu\left(\mathrm{d}(x,\omega)\right)= \mu(\Gamma).
		\]
	\end{itemize}
\end{definition}
\begin{remark}\label{ASAsw65}
	From \eqref{Chapman} we know that for $A\times B\in\sigma(\mathcal{X})\otimes\sigma(\mathcal{B})$
	\begin{align}\label{PROJJE}
		(\Pi_{\mathcal{B}})_{\star}\mathcal{Q}_{t}\left((x,\omega);A\times B)\right)=\mathcal{P}_{t}(\omega;B). 
	\end{align}
	In particular, this implies that the projected measure above evolves independently of the state space \( \mathcal{X} \).
	In order to prove the uniqueness of the measure \( \mu \), it suffices to consider the class of invariant measures of \( \lbrace\mathcal{Q}_{t}\rbrace_{t\geq 0}\) satisfying condition~\eqref{UJas}.
	We also recall that by the ergodicity of \( \mu \), we mean ergodicity with respect to the shift map on \( C(\mathbb{R}^+, \mathcal{X} \times \mathcal{B}) \), see \cite[Chapter 2]{DPZ96} for further details. 
\end{remark}
We now introduce some concepts from the theory of random dynamical systems.
\begin{definition}\label{SAGBAsd}
	Let \((\Omega, \mathcal{F})\) be measurable spaces and let \(\T\) 
	be either \(\mathbb{R}\) or \(\mathbb{R}^+\), each equipped with its corresponding Borel \(\sigma\)-algebra.
	A family $\theta = \lbrace\theta_t\rbrace_{t \in \T}$ of maps from $\Omega$ to itself is called a {measurable dynamical system} if the following conditions are satisfied.
	\begin{itemize}
		\item[(i)] The map $(\omega, t) \mapsto \theta_t(\omega)$ is $\mathcal{F} \otimes \sigma(\T) / \mathcal{F}$-measurable. 
		\item[(ii)] It holds $\theta_0 = \mathrm{Id}$.
		\item[(iii)] The semigroup property holds, i.e. $\theta_{s+t} = \theta_s \circ \theta_t$, for all $s,t \in \T$.
	\end{itemize}
	If $\P$ is a probability measure on $(\Omega, \mathcal{F})$ which is invariant with respect to $(\theta_{t})$, i.e.
	$
	(\theta_{t})_\star\P = \P, \text{for every } t \in \T,
	$
	then the quadruple $(\Omega, \mathcal{F}, \P, \theta)$ is called a {measurable metric dynamical system}. We call \((\Omega, \mathcal{F}, \mathbb{P}, \theta)\) {ergodic} if the map $\theta_t : \Omega \to \Omega$
	is ergodic for every \( t \in \T \setminus \{0\} \). 
\end{definition}
Having a measurable metric dynamical system, one can define a random dynamical system.
\begin{definition}\label{Olas5d}
	Let \(\mathcal{X}\) be  a Polish space equipped with its Borel \(\sigma\)-algebra \(\sigma(\mathcal{X})\). Then an {(ergodic) measurable random dynamical system} on \((\mathcal{X}, \sigma(\mathcal{X}))\) consists of an (ergodic) measurable metric dynamical system \((\Omega, \mathcal{F}, \mathbb{P}, \theta)\) together with a measurable map
	\[
	\Phi \colon \mathbb{R}^+ \times \mathcal{X}\times\Omega  \to \mathcal{X},
	\]
	which satisfies the {cocycle property}.~Setting \(\Phi^t_\omega(x) := \Phi(t,x,\omega)\) we require
	\[
	\Phi^0_\omega = \operatorname{Id}_{\mathcal{X}} \quad \text{for all } \omega \in \Omega,
	\]
	and
	\[
	\Phi^{t+s}_\omega(x) = \Phi^s_{\theta_t \omega} \big( \Phi^t_\omega(x) \big),
	\]
	for all \(s, t \in \mathbb{R}^+\), \(x \in \mathcal{X}\), and \(\omega \in \Omega\). The map \(\Phi\) is called a {cocycle}.
	If additionally the map \(\Phi(\cdot,\cdot,\omega) \colon \mathbb{R}^+ \times \mathcal{X} \to \mathcal{X}\) is continuous for every \(\omega \in \Omega\), then \(\Phi\) is called a {continuous random dynamical system}.
\end{definition}


Let $\Phi$ be a measurable RDS. Then we can define the associated skew-product flow for $t\geq 0$ by
\begin{align}\label{OL963ass}
	\Theta_{t} : \mathcal{X} \times \Omega \to \mathcal{X} \times \Omega, \quad
	(x, \omega) \mapsto \big( \Phi^{t}_{\omega}(x), \theta_t \omega \big).
\end{align}
It is straightforward to verify that the semigroup property holds, meaning that for all \(s, t \geq 0\), we have
\[
\Theta_{t+s} = \Theta_s \circ \Theta_t.
\]
This leads to the following natural concept of an invariant measure for a random dynamical system.
\begin{definition}\label{ATTAAARFASA}
	Let $\Phi$ be a measurable RDS over the metric dynamical system $(\Omega,\cF,\P,\theta)$ and let $\Theta = \{\Theta_t\}_{t \geq 0}$ be the associated skew-product~\eqref{OL963ass}.~A probability measure $\nu$ on the product space $\left( \mathcal{X} \times \Omega, \sigma(\mathcal{X}) \otimes \mathcal{F} \right)$
	is called an {invariant measure} for the RDS $\Phi$ if the following conditions are satisfied:
	\begin{itemize}
		\item we have
		\[
		(\Pi_\Omega)_\star \nu = \mathbb{P};
		\]
		\item the measure \( \nu \) is invariant under the skew-product map $\Theta$, i.e., for every \( t \geq 0 \),
		\[
		(\Theta_t)_\star \nu = \nu.
		\]
	\end{itemize}
	Similar to Definition~\ref{SAGBAsd}, the measure \( \nu \) is called {ergodic} if it is ergodic with respect to the transformation \( \Theta_t \) for every \( t > 0 \).
\end{definition}
\subsection{Stationary noise process}\label{sec:fbm}

In order to show that fractional Brownian motion generates a stationary noise process in the sense of Definition~\ref{SDS}, we rely on the Mandelbrot van Ness representation.

\begin{definition}
	Let \( (W(t))_{t\in\R} \) be a two-sided Wiener process with values in \( \mathbb{R}^d \) and $H\in (0,1)$.  
	Then we have the following Mandelbrot van Ness representation of a two-sided fractional Brownian motion $(B^H_t)_{t\in \R}$ given by
	\begin{align}\label{ASAdd}
		B^{H}_t  = \frac{1}{\alpha_{H}} \int_{-\infty}^{0}  (-r)^{H-\frac{1}{2}} \left( \mathrm{d}W(t+r) - \mathrm{d}W(r) \right),
	\end{align}
	where the stochastic integral is understood in the It\^o sense and
	\begin{align*}
		\alpha_{H} := \left( \frac{1}{2H} + \int_{0}^{\infty} \left( (1 + s)^{H - \frac{1}{2}} - s^{H - \frac{1}{2}} \right)^2 \, \mathrm{d}s \right)^{\frac{1}{2}}.
	\end{align*}
\end{definition}
Given this representation, $W$ is referred to as the Wiener process associated to $B^H$.

\begin{definition}\label{BB_HH}
	For \( 0 < H < 1 \), we define \( C_0^\infty(\mathbb{R}^-, \mathbb{R}^d) \) as the space of smooth, compactly supported functions \( \omega: \mathbb{R}^- \to \mathbb{R}^d \) such that \( \omega(0) = 0 \).  For $\omega\in C_0^\infty(\mathbb{R}^-, \mathbb{R}^d)$ we set
	\begin{align}\label{norm:BH}
		\Vert \omega\Vert_{\mathcal{B}_H}:=\sup_{s,t\in \R^- }\frac{\vert \omega(t)-\omega(s)\vert}{\sqrt{1+\vert t\vert+\vert s\vert }\vert t-s\vert^{\frac{1-H}{2}}}.
	\end{align}
	The space \( \mathcal{B}_H \) is defined as the closure of \( C_0^\infty(\mathbb{R}^-, \mathbb{R}^d) \) with respect to the norm given above and is a separable Banach space.
\end{definition}

Motivated by the representation~\eqref{ASAdd} we state the following result. 
\begin{lemma}{\em (\cite[Lemma 3.6]{Hai05})}\label{AJSJAsd}
	For \( \omega \in C_0^\infty(\mathbb{R}^-, \mathbb{R}^d) \) the operator
	\begin{align}\label{AAS11}
		\mathcal{D}_H \omega(t) := \frac{1}{\alpha_H} \int_{-\infty}^{0} (-r)^{H - \frac{1}{2}} \left( \dot{\omega}(t + r) - \dot{\omega}(r) \right) \, \mathrm{d}r, \quad t \leq 0,
	\end{align}
	can be canonically extended to \( \mathcal{B}_H \). This means that for any sequence \( \{ \omega_n \}_{n \geq 1} \subset C_0^\infty(\mathbb{R}^-, \mathbb{R}^d) \) converging to \( \omega \in \mathcal{B}_H \), the  limit 
	$
	\mathcal{D}_H \omega := \lim_{n \to \infty} \mathcal{D}_H \omega_n
	$
	exists.~Moreover, \( \mathcal{D}_H \omega \) takes values in \( \mathcal{B}_{1-H} \) and the mapping
	$\mathcal{D}_H : \mathcal{B}_H \longrightarrow \mathcal{B}_{1-H}$
	is continuous and admits a bounded inverse. Furthermore, there exists a constant \( \gamma_H > 0 \), with \( \gamma_H = \gamma_{1-H} \), such that the inverse of \( \mathcal{D}_H \) is given by
	\[
	\mathcal{D}_H^{-1} = \gamma_H \mathcal{D}_{1-H} : \mathcal{B}_{1-H} \longrightarrow \mathcal{B}_H.
	\]
\end{lemma}

We point out that there exists a Gaussian measure \(\mathbf{P}\) on $\cB_H$ such that the
canonical process associated to it is a time-reversed Brownian motion.~Then, thanks to the properties of the operator \(\mathcal{D}_H\), the pushforward measure \((\mathcal{D}_H)_\star \mathbf{P}\) defines a time-reversed fractional Brownian motion on \(\mathcal{B}_{1-H}\) with Hurst parameter \(H\). Furthermore, using the operators \(\mathcal{D}_H\) and \(\mathcal{D}_{1-H}\), one can switch between Wiener processes and fractional Brownian motions.


\begin{remark}\label{RGASsdd}
	As we already mentioned, for the {canonical Wiener process} \(\omega \in \mathcal{B}_{H}\), the process \(\{\mathcal{D}_H \omega(t)\}_{t \leq 0}\) defines a fractional Brownian motion for negative times. 
	However, in the context of random dynamical systems we need a two-sided fractional Brownian motion, i.e.~we have to extend the previous construction to positive times. This is in contrast to the theory of stochastic dynamical systems, where it suffices to characterize the driving noise by its law. 
	Therefore, we will extend in Lemma~\ref{FARASSa} the construction of the fractional Brownian motion to positive times.
\end{remark}

We now construct a stationary noise process for a fractional Brownian motion.
\begin{lemma}\label{Olafads} The quadrupel  \(\left(\mathcal{B}_H, \{P_t\}_{t \geq 0}, \mathbf{P}, \{\vartheta_t\}_{t \geq 0}\right)\) is a stationary noise process in the sense of Definition \ref{SDS}, where:
	\begin{enumerate}
		\item the family of shift maps $\lbrace\vartheta_t\rbrace_{t\geq 0}$ on $\mathcal{B}_{H}$ is defined as 
		\begin{align*}
			\vartheta_{t}\omega(s):=\omega(s-t)-\omega(-t),  \ \ \  \forall s\leq  0,
		\end{align*}
		\item  for every $t\geq 0$,  \( P_{t}: \mathcal{B}_{H} \times \mathcal{B}_{H} \to \mathcal{B}_{H} \) is defined as 
		\begin{align}\label{PPOOAS}
			P_{t}(\omega^-,\omega^+)(s):=\begin{cases}
				\omega^+(-s-t)-\omega^+(-t), & \text{for } -t\leq s\leq 0,\\
				\omega^-(s+t)-\omega^+(-t), & \text{for }  s\leq -t . 
			\end{cases}
		\end{align}
	\end{enumerate}
	\begin{figure}[h]
		\centering
		\includegraphics[width=7.5cm]{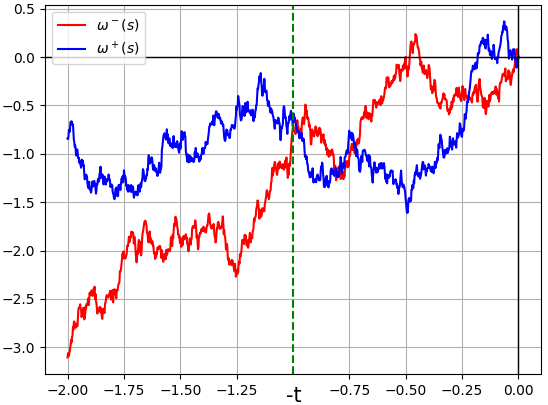}
		\includegraphics[width=7.5cm]{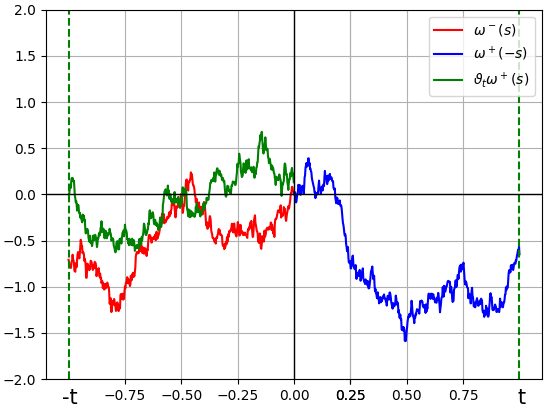}
		\includegraphics[width=7.5cm]{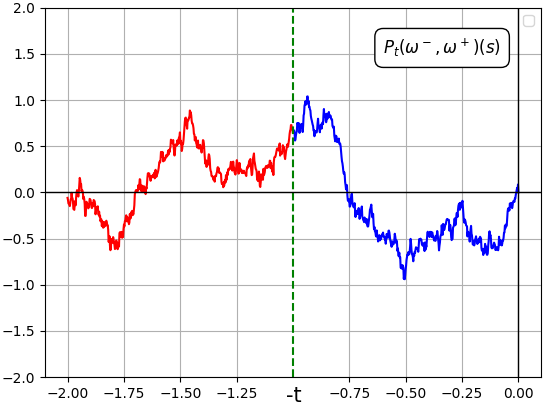}
		\caption{Visualization of \( \vartheta_{t}\omega^+\) and \( P_t(\omega^-, \omega^+) \) for two paths \( \omega^- \) and \( \omega^+ \) in \( \mathcal{B}_{H} \). }
	\end{figure}
\end{lemma}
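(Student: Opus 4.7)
The plan is to verify the five axioms \hyperref[I]{\textbf{(I)}}--\hyperref[V]{\textbf{(V)}} of Definition~\ref{SDS} one by one, most of them by direct computation from the explicit formulas for \(P_t\) and \(\vartheta_t\), combined with the standard properties of the two-sided Wiener process encoded by \(\mathbf{P}\) via Lemma~\ref{OLalsf}. Condition \hyperref[I]{\textbf{(I)}} is immediate, since \(\mathbf{P}\) is the Gaussian measure on \(\mathcal{B}_H\) supplied by Lemma~\ref{OLalsf}. For \hyperref[III]{\textbf{(III)}}, the semiflow identity \(\vartheta_{t+s}\omega(u) = \omega(u-t-s)-\omega(-t-s) = (\vartheta_t\circ\vartheta_s)\omega(u)\) is a one-line algebraic check, and invariance \((\vartheta_t)_\star\mathbf{P}=\mathbf{P}\) reduces to stationarity of increments of the canonical Wiener process under \(\mathbf{P}\).

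For \hyperref[II]{\textbf{(II)}}, I would first note that both \(P_t\) and \(\vartheta_t\) are continuous, hence Borel measurable: the shift \(\vartheta_t\) is an affine operation, while the concatenation \(P_t\) can be bounded in the seminorm~\eqref{norm:BH} by the sum of the \(\mathcal{B}_H\)-norms of its two arguments after handling the breakpoint \(-t\) carefully. For the independence~\eqref{indepenet}, note that because \(\omega^+(0)=0\), for fixed \(\omega^-\) the random variable \(\omega^+\mapsto P_t(\omega^-,\omega^+)\) is \(\sigma(\omega^+|_{[-t,0]})\)-measurable, whereas \(\omega^+\mapsto\vartheta_t\omega^+\) depends only on the increments \(\omega^+(u)-\omega^+(-t)\) for \(u\leq -t\). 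These two \(\sigma\)-algebras correspond to Wiener increments over the disjoint intervals \([-t,0]\) and \((-\infty,-t]\) and are therefore independent under \(\mathbf{P}\).

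Property \hyperref[V]{\textbf{(V)}} I would verify by a careful case analysis on the position of the argument relative to the breakpoints. Using that \(P_t(\omega^-,\omega^+)(-t)=-\omega^+(-t)\), the identity \(\vartheta_t\circ P_t(\omega^-,\omega^+)=\omega^-\) follows directly from the two-piece definition of \(P_t\), and the symmetric identity \(P_t(\vartheta_t\omega^-,P_t(\omega^+,\omega^-))=\omega^-\) is obtained analogously. The composition rule \(P_{t+s}(\omega^-,\omega^+)=P_t(P_s(\omega^-,\omega^+),\vartheta_s\omega^+)\) is checked by splitting the argument \(u \leq 0\) into the three regions determined by the breakpoints \(-t\) and \(-t-s\), and verifying in each case that the telescoping \(\omega^+(-s)\)-terms coming from the inner \(P_s\) and the shift \(\vartheta_s\) cancel to reproduce the two-piece expression for \(P_{t+s}(\omega^-,\omega^+)\).

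Property \hyperref[IV]{\textbf{(IV)}} is the most substantial step. Since \(P_0(\omega^-,\omega^+)=\omega^-\), one has \(\mathcal{P}_0(\omega^-;A)=\chi_A(\omega^-)\). The Chapman--Kolmogorov identity~\eqref{SEMI_1} follows by combining the composition rule from \hyperref[V]{\textbf{(V)}} with the independence established in \hyperref[II]{\textbf{(II)}}: integrating \(\chi_A(P_t(P_s(\omega^-,\omega^+),\vartheta_s\omega^+))\) against \(\mathbf{P}(\mathrm{d}\omega^+)\) factorizes, after using \((\vartheta_s)_\star\mathbf{P}=\mathbf{P}\), into \(\int_{\mathcal{B}_H}\mathcal{P}_t(\omega^+;A)\,\mathcal{P}_s(\omega^-;\mathrm{d}\omega^+)\). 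The Feller property is a standard consequence of the joint continuity of \(P_t\) and dominated convergence for \(f\in C_b(\mathcal{B}_H)\). The main obstacle is the uniqueness of \(\mathbf{P}\) as an invariant measure of \(\{\mathcal{P}_t\}_{t\geq 0}\): here I would appeal to the corresponding result in~\cite{Hai05}, whose proof exploits that \(P_t(\omega^-,\omega^+)\) completely refreshes the canonical process on \([-t,0]\) while merely affinely translating its past beyond \(-t\) by \(-\omega^+(-t)\), so that the transition semigroup is strongly mixing in a suitable sense and its only stationary distribution is the law of the canonical Wiener process.
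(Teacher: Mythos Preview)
Your proposal is correct and follows the same approach as the paper: direct verification of axioms \hyperref[I]{\textbf{(I)}}--\hyperref[V]{\textbf{(V)}} from the explicit formulas for \(P_t\) and \(\vartheta_t\), together with the independent-increments property of the canonical Wiener process under \(\mathbf{P}\). The paper's own proof is in fact briefer: it delegates properties \hyperref[I]{\textbf{(I)}}--\hyperref[IV]{\textbf{(IV)}} to \cite[Lemma~3.10]{Hai05} and only spells out one of the identities in \hyperref[V]{\textbf{(V)}} (namely \(P_t(\vartheta_t\omega^-,P_t(\omega^+,\omega^-))=\omega^-\)) by the same two-piece case check that you describe, so your write-up is effectively a more self-contained version of the same argument.
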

\proof
	Since \( \mathcal{B}_H \) can be canonically equipped with the Wiener measure \( \mathbf{P} \), we can easily verify that \( \left( \mathcal{B}_H, \{P_t\}_{t \geq 0}, \mathbf{P}, \{\vartheta_t\}_{t \geq 0} \right) \) is a stationary noise process in the sense of Definition \ref{SDS}, see \cite[Lemma 3.10]{Hai05} for more details. We only prove here the identities~\eqref{V} which are straightforward. For example, using the definition of $\{\vartheta_t\}_{t\geq 0}$ and~\eqref{PPOOAS} we have 
	\begin{align*}
		P_{t}\left(\vartheta_{t}\omega^-, P_{t}(\omega^+, \omega^-)\right)(s)=\begin{cases}
			P_{t}(\omega^+, \omega^-)(-s-t)- P_{t}(\omega^+, \omega^-)(-t)=\omega^-(s), & \text{for } -t\leq s\leq 0,\\
			\vartheta_{t}\omega^-(s+t)-P_{t}(\omega^+,\omega^-)(-t)=\omega^-(s), & \text{for }  s\leq -t. 
		\end{cases}
	\end{align*}
	For $s\leq - t$ we have since $\omega^+(0)=0$ that 
	\[ \vartheta_{t}\omega^-(s+t)-P_{t}(\omega^+,\omega^-)(-t)=\omega^-(s) = \omega^-(s+t-t) -\omega^-(-t) - \omega^+(-t+t) +\omega^-(-t)=\omega^-(s) \]
	whereas for $-t\leq s \leq 0$
	\[ P_{t}(\omega^+, \omega^-)(-s-t)- P_{t}(\omega^+, \omega^-)(-t) = \omega^-(s+t-t) -\omega^-(-t) - \omega^-(t-t) +\omega^-(-t) =\omega^-(s). \]
	This verifies that $\mathcal{\vartheta}_t \circ P_t (\omega^-,\omega^+) = P_t(\mathcal{\vartheta}_t \omega^-, P_t(\omega^+,\omega^-))=\omega^-$.
\qed	
For our aims, we need the following result.	\begin{lemma}\label{RR_TT}
	For  every \( \omega^-, \omega^+ \in \mathcal{B}_H \) and every \( 0 \leq s \leq t \) and \( 0 \leq \tau \leq s \), it holds that  
	\begin{align}\label{RR_TT1}
		\mathcal{D}_{H} P_{t}(\omega^-, \omega^+)(\tau - t) - \mathcal{D}_{H} P_{t}(\omega^-, \omega^+)(-t)  
		= \mathcal{D}_{H} P_{s}(\omega^-, \omega^+)(\tau - s) - \mathcal{D}_{H} P_{s}(\omega^-, \omega^+)(-s).
	\end{align}
	Moreover, for all $r \leq 0$ and $\omega^-\in\mathcal{B}_{H}$ we have
	\begin{align}\label{RR_TT2}
		(\mathcal{D}_{H}\vartheta_{t}\omega^-)(r)=\mathcal{D}_{H}\omega^-(-t+r)-\mathcal{D}_{H}\omega^-(-t).
	\end{align}
\end{lemma}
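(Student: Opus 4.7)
The plan is to reduce both identities to the case of smooth compactly supported paths and then verify them by a direct computation using the integral formula~\eqref{AAS11}. By Lemma~\ref{AJSJAsd} the operator $\mathcal{D}_H : \mathcal{B}_H \to \mathcal{B}_{1-H}$ is continuous, and the maps $\vartheta_t$ and $P_t$ are continuous in their $\mathcal{B}_H$-arguments since they are piecewise assembled from translations and reflections together with additive constants. Hence both sides of~\eqref{RR_TT1} and~\eqref{RR_TT2} are continuous in $\omega^-,\omega^+ \in \mathcal{B}_H$, and by density of $C_0^\infty(\mathbb{R}^-,\mathbb{R}^d)$ in $\mathcal{B}_H$ (Definition~\ref{BB_HH}) it suffices to prove them when $\omega^-,\omega^+$ are smooth. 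This allows us to differentiate pointwise and evaluate~\eqref{AAS11} directly.

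For~\eqref{RR_TT2} a one-line substitution suffices. Since $\vartheta_t\omega^-(s) = \omega^-(s-t) - \omega^-(-t)$, we obtain $\frac{\mathrm{d}}{\mathrm{d}s}\vartheta_t\omega^-(s) = \dot{\omega}^-(s-t)$ and plugging into~\eqref{AAS11}, the integrand becomes $(-u)^{H-1/2}\bigl(\dot{\omega}^-(r+u-t) - \dot{\omega}^-(u-t)\bigr)$. The right-hand side $\mathcal{D}_H\omega^-(r-t)-\mathcal{D}_H\omega^-(-t)$ of~\eqref{RR_TT2}, when written via~\eqref{AAS11}, produces exactly the same integrand by the linearity of the integral, and~\eqref{RR_TT2} follows.

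For~\eqref{RR_TT1} I set $\rho_t := P_t(\omega^-,\omega^+)$ and use the piecewise formula~\eqref{PPOOAS} to read off
\begin{align*}
\dot{\rho}_t(s) = \begin{cases}\dot{\omega}^-(s+t), & s \leq -t,\\ -\dot{\omega}^+(-s-t), & -t \leq s \leq 0.\end{cases}
\end{align*}
The plan is then to compute $\mathcal{D}_H\rho_t(\tau-t) - \mathcal{D}_H\rho_t(-t)$ by splitting $\int_{-\infty}^0(-u)^{H-1/2}(\dot{\rho}_t(r+u)-\dot{\rho}_t(u))\,\mathrm{d}u$, evaluated at $r = \tau - t$ and $r = -t$, into the three subregions $u \in (-\infty,-t]$, $u\in[-t,-\tau]$, $u\in[-\tau,0]$, on each of which the two values $\dot{\rho}_t(r+u)$ and $\dot{\rho}_t(u)$ are unambiguously expressed through $\dot{\omega}^-$ or $\dot{\omega}^+$. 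After subtraction the terms involving $\dot{\omega}^+(-u-t)$ cancel on the first two subregions, and the remaining contributions collapse to
\begin{align*}
\mathcal{D}_H\rho_t(\tau-t) - \mathcal{D}_H\rho_t(-t) = \frac{1}{\alpha_H}\int_{-\infty}^{-\tau}(-u)^{H-\frac12}\bigl(\dot{\omega}^-(u+\tau) - \dot{\omega}^-(u)\bigr)\,\mathrm{d}u - \frac{1}{\alpha_H}\int_{-\tau}^0(-u)^{H-\frac12}\bigl(\dot{\omega}^+(-u-\tau) + \dot{\omega}^-(u)\bigr)\,\mathrm{d}u,
\end{align*}
an expression which depends only on $\omega^-,\omega^+,\tau$ and not on $t$ (as long as $t \geq \tau$). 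The same derivation applied with $s$ in place of $t$ yields precisely the same right-hand side, and this establishes~\eqref{RR_TT1}.

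The main technical obstacle is purely combinatorial: keeping track of which branch of~\eqref{PPOOAS} governs $\dot{\rho}_t(u)$ and $\dot{\rho}_t(r+u)$ as $u$ ranges over $(-\infty,0]$ and $r$ switches between $-t$ and $\tau-t$. Once this case analysis is performed carefully, the cancellation of all $t$-dependent contributions is automatic and reflects the intuitive fact that the fBm increment corresponding to physical times $[0,\tau]$ produced via the concatenation $P_t(\omega^-,\omega^+)$ depends only on $\omega^-,\omega^+,\tau$, provided the cut-off $t$ is at least $\tau$.
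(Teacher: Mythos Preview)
Your proof is correct and follows essentially the same approach as the paper: reduce to smooth compactly supported paths by density and continuity of $\mathcal{D}_H$, $P_t$, $\vartheta_t$, then verify the identities by direct computation with the integral formula~\eqref{AAS11} and the piecewise definition~\eqref{PPOOAS}. One minor slip: the $\dot{\omega}^+(-u-t)$ terms in fact cancel on the \emph{last} two subregions $[-t,-\tau]$ and $[-\tau,0]$ (not the first two), but this does not affect your final formula, which is correct and indeed independent of $t$.
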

\proof
	We can assume that $\omega, \tilde{\omega} \in C_{0}^\infty(\mathbb{R}^-,\mathbb{R}^d)$. Otherwise, we can take two sequences $\lbrace\omega_n^{-}\rbrace_{n\geq 1}$ and $\lbrace\omega_n^{+}\rbrace_{n\geq 1}$ in $C_{0}^\infty(\mathbb{R}^-,\mathbb{R}^d)$ that converge to $\omega^-$ and $\omega^+$ in $\mathcal{B}_H$ and conclude the claim using the continuity of $P_t$, $P_s$ and $\mathcal{D}_H$. From \eqref{AAS11} it is sufficient to prove that for almost all $r \leq 0$ we have
	\begin{align*}
		&\frac{\mathrm{d}}{\mathrm{d}r}\left[\left(P_{t}(\omega^-,\omega^+)(\tau-t+r)-P_{t}(\omega^-,\omega^+)(-t+r)\right)\right]=\\&\frac{\mathrm{d}}{\mathrm{d}r}\left[\left(P_{s}(\omega^-,\omega^+)(\tau-s+r)-P_{s}(\omega^-,\omega^+)(-s+r)\right)\right],
	\end{align*}	which easily follows from \eqref{PPOOAS}. A similar argument can be used to prove the second claim.
\qed
Now we extend the fractional Brownian motion to positive times. 
\begin{lemma}\label{FARASSa}
	For \( H \in (0,1) \), let \( \Omega_H := \mathcal{B}_H \times \mathcal{B}_H \) and define the probability space \( (\Omega_H, \mathcal{F}, \mathbb{P}) \) by setting
	\[
	\mathcal{F} := \sigma(\mathcal{B}_H) \otimes \sigma(\mathcal{B}_H), \quad \mathbb{P} := \mathbf{P} \times \mathbf{P}.
	\]
	Furthermore, we define 
	\begin{align}\label{FBANHYD}
		B_t^H := 
		\begin{cases}
			B_t^H\!\left(P_t(\omega^-, \omega^+)\right) 
			= - \bigl( \mathcal{D}_H P_t(\omega^-, \omega^+) \bigr)(-t), & t \geq 0, \\[6pt]
			\ \mathcal{D}_H \omega^-(t), & t \leq 0.
		\end{cases}
	\end{align}
	Then \( (B_t^H)_{t \in \mathbb{R}} \) is a two-sided fractional Brownian motion with Hurst parameter \( H \) adapted to the canonical filtration \( (\mathcal{F}_{-\infty}^t)_{t \in \mathbb{R}} \)  of the Brownian motion, where
	\[
	\mathcal{F}_{-\infty}^t 
	:= \sigma\!\left( \bigcup_{s \leq t} \mathcal{F}_s^t \right) 
	= \bigcup_{s \leq t} \mathcal{F}_s^t.
	\]
\end{lemma}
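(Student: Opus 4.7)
The plan is to verify the three defining properties of a two-sided fractional Brownian motion with Hurst parameter $H$ (continuous sample paths, centered Gaussian finite-dimensional distributions, and the covariance identity $\mathbb{E}[B_s^H B_t^H] = \frac{1}{2}(|s|^{2H}+|t|^{2H}-|t-s|^{2H})$), together with adaptedness to $(\mathcal{F}^t_{-\infty})_{t\in\R}$. For $t\leq 0$ all of these follow directly from Lemma~\ref{OLalsf}; the core task is therefore to analyze the positive-time branch of \eqref{FBANHYD} and the compatibility at $t=0$.

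First I would check consistency at the gluing point: evaluating \eqref{PPOOAS} with $t=0$ yields $P_0(\omega^-,\omega^+)=\omega^-$, and since $\mathcal{D}_H$ maps into $\mathcal{B}_{1-H}$ each of whose elements vanishes at $0$, both branches of \eqref{FBANHYD} agree. Continuity of $t\mapsto B_t^H$ then follows from the continuity of $t\mapsto P_t(\omega^-,\omega^+)$ in $\mathcal{B}_H$ (visible directly from \eqref{PPOOAS}) combined with the boundedness of $\mathcal{D}_H:\mathcal{B}_H\to\mathcal{B}_{1-H}$ from Lemma~\ref{AJSJAsd} and the H\"older regularity encoded in the seminorm \eqref{norm:BH}. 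Gaussianity of the finite-dimensional distributions is immediate because $(\omega^-,\omega^+)\mapsto P_t(\omega^-,\omega^+)$ is affine, $\mathcal{D}_H$ is linear, and point evaluation is continuous; thus under $\mathbb{P}=\mathbf{P}\times\mathbf{P}$ any vector $(B_{t_1}^H,\ldots,B_{t_n}^H)$ is a linear functional of a jointly Gaussian pair.

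The heart of the argument is the covariance computation for positive times. For smooth compactly supported approximants and $t\geq 0$ I would apply \eqref{AAS11} to $P_t(\omega^-,\omega^+)$ evaluated at $-t$ and split the integration at the break point $r=-t$. Using \eqref{PPOOAS}, differentiation on $[-t,0]$ produces a time-reversed copy of $\dot\omega^+$, while on $(-\infty,-t]$ it produces $\dot\omega^-$; carrying the minus sign of \eqref{FBANHYD} through, the result should be exactly the Mandelbrot van Ness integral
$$
B_t^H = \frac{1}{\alpha_H}\int_{-\infty}^t \bigl[(t-r)^{H-\frac{1}{2}} - ((-r)_+)^{H-\frac{1}{2}}\bigr]\,\mathrm{d}W(r),
$$
where $W$ is the two-sided Wiener process built from $\omega^-$ and $\omega^+$ via the canonical identification of the time-reversed BMs encoded by $\mathbf{P}$. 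Extension to general $\omega^-,\omega^+\in\mathcal{B}_H$ by density then follows from Lemma~\ref{AJSJAsd}. Independence of $\omega^-$ and $\omega^+$ under $\mathbb{P}=\mathbf{P}\times\mathbf{P}$ produces the standard fbm covariance; identity \eqref{RR_TT1} from Lemma~\ref{RR_TT} ensures that the increment $B_t^H-B_s^H$ is consistently defined for $0\leq s\leq t$, and \eqref{RR_TT2} yields compatibility with the shift $\vartheta_t$. The mixed case $s\leq 0\leq t$ reduces to the same computation, with only $\omega^-$ contributing to the cross-covariance. Adaptedness follows since for $t\geq 0$, $B_t^H$ depends only on $\omega^-$ and $\omega^+|_{[-t,0]}$, which together encode the two-sided Wiener process up to time $t$.

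The main obstacle is the careful bookkeeping required to extract the Mandelbrot van Ness integrand from $\mathcal{D}_H P_t(\omega^-,\omega^+)(-t)$: one must correctly differentiate the piecewise expression in \eqref{PPOOAS}, track the sign introduced by the time-reversal of $\omega^+$, and justify the passage from the smooth dense class $C_0^\infty(\R^-,\R^d)$ to $\mathcal{B}_H$ through Lemma~\ref{AJSJAsd} so that the resulting stochastic integral is well-defined as an almost-sure limit. Once these conventions are pinned down, the covariance identity reduces to the classical computation underlying \eqref{ASAdd}.
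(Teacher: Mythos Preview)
Your proposal is correct but considerably more laborious than the paper's argument. You proceed by directly expanding $\mathcal{D}_H P_t(\omega^-,\omega^+)(-t)$ via \eqref{AAS11} and \eqref{PPOOAS}, recovering the Mandelbrot--van Ness integral \eqref{ASAdd} driven by the concatenated Wiener process, and then computing the covariance from scratch. The paper instead argues distributionally in two lines: since $\mathbf{P}$ is invariant for the transition semigroup (property \hyperref[IV]{\textbf{(IV)}}), one has $(P_t)_\star(\mathbf{P}\times\mathbf{P})=\mathbf{P}$, so for each fixed $t\geq 0$ the element $P_t(\omega^-,\omega^+)$ is again distributed as a time-reversed Brownian motion; Lemma~\ref{OLalsf} then says $\mathcal{D}_H$ pushes this to the time-reversed fBm law, and the increment identity $B_t^H-B_s^H=-\mathcal{D}_H P_t(\omega^-,\omega^+)(s-t)$ from \eqref{RR_TT1} shows the increments have the correct joint distribution without any explicit covariance computation. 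Your route has the advantage of making the Mandelbrot--van Ness representation of the constructed process fully explicit (which is instructive and also underlies the decomposition in Lemma~\ref{SDASASASAa}), whereas the paper's route exploits the already-established structural facts about $\mathbf{P}$, $\mathcal{D}_H$, and $P_t$ to avoid redoing the classical calculation.
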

\proof
	From Definition~\ref{SDS}, for every $t \geq 0$ we have $(P_{t})_{\star}(\mathbb{P}) = \mathbf{P}$.  
	Moreover, from \eqref{RR_TT1} it follows for $s,t \geq 0$ 
	\[
	B_t^H - B_s^H = -\mathcal{D}_{H} P_{t}(\omega^-, \omega^+)(s - t).
	\]
	Therefore we conclude that the process $\{ B^{H}_t \}_{t \in \mathbb{R}}$ is a fractional Brownian motion with Hurst parameter $H$ adapted to the filtration $\{ \mathcal{F}^t_{-\infty} \}_{t \geq 0}$.   
\qed
We finally state the following result regarding the pathwise decomposition of fractional Brownian motion into a history and innovation process.
\begin{lemma}\label{SDASASASAa}
	Let \( \omega^-, \omega^+ \in \mathcal{B}_H \). For \( t \geq 0 \), recall that
	\[
	B^{H}_{t}\!\left(P_{t}(\omega^-, \omega^+)\right) = -\bigl(\mathcal{D}_H P_t(\omega^-, \omega^+)\bigr)(-t).
	\]
	Then
	\begin{equation}\label{SALOSWa}
		B^{H}_{t}\!\left(P_{t}(\omega^-, \omega^+)\right) 
		= \mathcal{P}(\omega^{-})(t) + \tilde{B}^{H}_{t}(\omega^+),
	\end{equation}
	where  
	\begin{align}\label{FF14}
		\begin{split}
\mathcal{P}(\omega)(t) 
&:= \frac{1}{\alpha_H} \int_{-\infty}^{0} 
\left[ (t - r)^{H - \frac{1}{2}} - (-r)^{H - \frac{1}{2}} \right] 
\dot{\omega}(r)\, \mathrm{d}r, \\
\tilde{B}^{H}(\omega)(t) 
&:= \tilde{B}^{H}_t(\omega) 
:= -\frac{1}{\alpha_{H}} \int_{-t}^{0} 
(r + t)^{H - \frac{1}{2}} \dot{\omega}(r)\, \mathrm{d}r
		\end{split}
	\end{align}
	extend continuously from \( C_0^\infty(\mathbb{R}^-, \mathbb{R}^d) \) to \( \mathcal{B}_H \).
\end{lemma}
\proof
	We first assume that
	\( \omega^-, \omega^+ \in C_0^\infty(\mathbb{R}^-, \mathbb{R}^d) \). 
	Then the statement follows from \eqref{AAS11} and \eqref{FF14}. 	For \( \omega^-, \omega^+ \in \mathcal{B}_H \), we choose sequences 
	\( (\omega_n^-)_{n \geq 1} \) and \( (\omega_n^+)_{n \geq 1} \) in 
	\( C_0^\infty(\mathbb{R}^-, \mathbb{R}^d) \) and conclude by a density argument. For more details about the properties of $\cP$ and $\tilde{B}^H$ we refer to Lemma \ref{Liouville} and Lemma \ref{Liouville1}.
\qed

\section{Construction of a random dynamical system given a stochastic dynamical system}\label{sec:cons:sds}
Now that we can construct a stationary noise process for a fractional Brownian motion, let us return to the connection between stochastic and random dynamical systems. 
\begin{remark} 
	Before generating a RDS from SDS we first point out some fundamental differences between the two theories. First of all, the abstract space $\Omega$ in Definition~\ref{Olas5d} incorporates the future of the noise, in contrast to $\cB$ which incorporates the past of the noise.~Moreover, in the RDS framework, the noise is frozen once an element $\omega\in\Omega$ is fixed which is in contrast to the construction of the SDS. 
\end{remark}

The next results entails a random dynamical system given a stochastic dynamical system. 
\begin{theorem}\label{RDS_SDS}
	Let \( \varphi \) be a continuous SDS. We consider the probability space $(\Omega,\cF,\P)$ where  $\Omega:=\mathcal{B} \times \mathcal{B}$,\ \(\mathcal{F} := \sigma(\mathcal{B}) \otimes \sigma(\mathcal{B})\) and \(\mathbb{P} := \mathbf{P} \times \mathbf{P}\) and define a family of measurable functions 
	\begin{align}\label{COCO}
		\begin{split}
			&\Phi: \R^+\times  \mathcal{X}\times\Omega\rightarrow \mathcal{X},\\
			&\left(t,x,(\omega^-,\omega^+)\right)\longrightarrow \Phi^{t}_{(\omega^-,\omega^+)}(x):=\varphi^{t}_{P_{t}(\omega^-,\omega^+)}(x).
		\end{split}
	\end{align}
	Furthermore, we define the shift \( \theta_t : \Omega \to \Omega \)  by
	\begin{align}\label{semigr}
		\begin{split}   
			\theta_t(\omega^-,\omega^+):=\begin{cases}
				\left(P_{t}(\omega^-,\omega^+),\vartheta_{t}\omega^+\right), & \text{for } t\geq 0,\\
				\left(\vartheta_{-t}\omega^-,P_{-t}(\omega^+,\omega^-)\right), & \text{for }  t<0 . 
			\end{cases}
		\end{split}
	\end{align}
	For every \( t, s \in \mathbb{R} \) we have \( \theta_{t+s} = \theta_t \circ \theta_s \). Additionally, this family preserves the probability measure, i.e.
	\begin{align}\label{preserve}
		(\theta_t)_{\star} (\mathbf{P}\times\mathbf{P}) =\mathbf{P}\times\mathbf{P}, \quad \text{ for all }t \in \mathbb{R}.
	\end{align}
	In conclusion, \(\Phi\) is a continuous random dynamical system satisfying the cocycle property, i.e. for \( t, s \geq 0 \) and \( x \in \mathcal{X} \) we have
	\begin{align}\label{cocycle}
		\Phi^{t+s}_{(\omega^-,\omega^+)}(x)=\Phi^{s}_{\theta_t(\omega^-,\omega^+)}\circ\Phi^{t}_{(\omega^-,\omega^+)}(x).
	\end{align}
\end{theorem}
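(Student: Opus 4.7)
The plan is to verify, in order, (i) the group property $\theta_{t+s}=\theta_t\circ\theta_s$ on all of $\mathbb{R}$, (ii) the invariance $(\theta_t)_\star(\mathbf{P}\times\mathbf{P})=\mathbf{P}\times\mathbf{P}$, (iii) the cocycle identity \eqref{cocycle}, and (iv) the continuity and measurability of $\Phi$. The two essential tools are property (V) from Definition~\ref{SDS}, which encodes the algebraic interaction of $P_t$ and $\vartheta_t$, together with the SDS cocycle~\eqref{cocycle_SDS}.

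For (i), the case $t,s\geq 0$ reduces immediately to
\begin{align*}
\theta_t\circ\theta_s(\omega^-,\omega^+)&=\bigl(P_t(P_s(\omega^-,\omega^+),\vartheta_s\omega^+),\,\vartheta_t\vartheta_s\omega^+\bigr)\\
&=\bigl(P_{t+s}(\omega^-,\omega^+),\,\vartheta_{t+s}\omega^+\bigr)
\end{align*}
by properties (V) and (III), and the case $t,s\leq 0$ is symmetric. The main bookkeeping is the mixed-sign case, e.g.~$t=a>0$, $s=-b<0$ with $a\geq b$: applying (V) twice yields
\begin{align*}
P_a(\vartheta_b\omega^-,P_b(\omega^+,\omega^-))&=P_{a-b}\bigl(P_b(\vartheta_b\omega^-,P_b(\omega^+,\omega^-)),\,\vartheta_b P_b(\omega^+,\omega^-)\bigr)\\
&=P_{a-b}(\omega^-,\omega^+),
\end{align*}
while $\vartheta_a P_b(\omega^+,\omega^-)=\vartheta_{a-b}\vartheta_b P_b(\omega^+,\omega^-)=\vartheta_{a-b}\omega^+$, so $\theta_a\circ\theta_{-b}=\theta_{a-b}$. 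The subcase $a<b$ is handled analogously, as is the remaining mixed-sign configuration.

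For (ii), for $t\geq 0$ and a product set $A\times B$, Fubini together with the independence condition~\eqref{indepenet}, property (III) and the invariance of $\mathbf{P}$ under the semigroup $\mathcal{P}_t$ from (IV) give
\begin{align*}
\mathbb{P}(\theta_t^{-1}(A\times B))&=\int_{\mathcal{B}}\mathbf{P}\{\omega^+:P_t(\omega^-,\omega^+)\in A\}\,\mathbf{P}\{\omega^+:\vartheta_t\omega^+\in B\}\,\mathbf{P}(\mathrm{d}\omega^-)\\
&=\mathbf{P}(B)\int_{\mathcal{B}}\mathcal{P}_t(\omega^-;A)\,\mathbf{P}(\mathrm{d}\omega^-)=\mathbf{P}(A)\mathbf{P}(B).
\end{align*}
Extension to general sets is routine, and for $t<0$ the invariance follows from (i) via $\theta_{-t}=\theta_t^{-1}$.

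For (iii), applying (V) yields $P_{t+s}(\omega^-,\omega^+)=P_s(P_t(\omega^-,\omega^+),\vartheta_t\omega^+)$ and $\vartheta_s P_{t+s}(\omega^-,\omega^+)=P_t(\omega^-,\omega^+)$, so the right-hand side of \eqref{cocycle} rewrites as $\varphi^s_{P_{t+s}(\omega^-,\omega^+)}\circ\varphi^t_{\vartheta_s P_{t+s}(\omega^-,\omega^+)}(x)$, which collapses to $\varphi^{t+s}_{P_{t+s}(\omega^-,\omega^+)}(x)=\Phi^{t+s}_{(\omega^-,\omega^+)}(x)$ by \eqref{cocycle_SDS}. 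For (iv), note that for fixed $T>0$ and $(\omega^-,\omega^+)$, property (V) gives $\vartheta_{T-t}P_T(\omega^-,\omega^+)=P_t(\omega^-,\omega^+)$ for every $t\in[0,T]$, so $\Phi^t_{(\omega^-,\omega^+)}(x)=\tilde{\varphi}_T(x,P_T(\omega^-,\omega^+))(t)$; joint continuity in $(t,x)$ then follows from the continuity property of $\tilde\varphi_T$ in Definition~\ref{SDSSS}, and measurability of $\Phi$ on $\mathbb{R}^+\times\mathcal{X}\times\Omega$ is inherited from that of $\varphi$ and of $P_t$. The principal obstacle throughout is the sign bookkeeping in step (i), which propagates into the verification of (ii) for negative times, while steps (iii)–(iv) are essentially consequences of (V) combined with the intrinsic structure already present in the SDS.
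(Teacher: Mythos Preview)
Your proof is correct and follows essentially the same route as the paper: property~(V) drives the group law for $\theta$, the independence in~(II) plus the $\mathcal{P}_t$-invariance of $\mathbf{P}$ in~(IV) give measure preservation, and~(V) combined with the SDS cocycle~\eqref{cocycle_SDS} yields~\eqref{cocycle}. The only tactical differences are that the paper handles the mixed-sign case in~(i) by first establishing $\theta_t\circ\theta_{-t}=\mathrm{id}$ and then writing $\theta_t\circ\theta_{-s}=\theta_{t-s}\circ\theta_s\circ\theta_{-s}$, whereas you compute directly; and you supply an explicit continuity/measurability argument in~(iv) (via $\vartheta_{T-t}P_T=P_t$ and the map $\tilde\varphi_T$) that the paper leaves implicit.
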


\proof
	From \hyperref[V]{\textbf{(V)}} in Definition \ref{SDS} and the semiflow property of \(\vartheta_t\) we have for \(t, s \geq 0\)
	\begin{align}\label{GG1}
		\begin{split}
			\theta_{t}\circ\theta_{s}(\omega^{-},\omega^{+})&=\theta_{t}\left(P_{s}(\omega^-,\omega^+),\vartheta_{s}\omega^+\right)=\left(P_{t}\left(P_{s}(\omega^-,\omega^+),\vartheta_{s}\omega^+\right),\vartheta_{t}\circ\vartheta_{s}\omega^+\right)\\
			&=\left(P_{t+s}(\omega^-,\omega^+),\vartheta_{t+s}\omega^+\right)=\theta_{t+s}(\omega^{-},\omega^{+}).
		\end{split}
	\end{align}
	We now compute the inverse of $\theta_t$ for $t\geq 0$. We observe 
	\begin{align}\label{inver}
		\theta_{t}\circ\theta_{-t}(\omega^{-},\omega^{+})=\theta_{t}\left(\vartheta_{t}\omega^-,P_{t}(\omega^+,\omega^-)\right)=\left(P_{t}(\vartheta_{t}\omega^-,P_{t}(\omega^+,\omega^-)),\vartheta_{t}P_{t}(\omega^+,\omega^-)\right)=(\omega^{-},\omega^{+}),
	\end{align}
	where we used the first property in \hyperref[V]{\textbf{(V)}} in the last step.
	Similarly, we can prove that $\theta_{-t} \circ \theta_{-s} = \theta_{-t-s}$. Now, let \(0 \leq s \leq t\). Then, from \eqref{GG1} and \eqref{inver} we have
	\begin{align*}
		\theta_{t}\circ\theta_{-s}(\omega^{-},\omega^{+})=\theta_{t-s}\circ\theta_{s}\circ\theta_{-s}(\omega^{-},\omega^{+})=\theta_{t-s}(\omega^{-},\omega^{+}).
	\end{align*}
	The remaining cases can be addressed in a similar way. To prove \eqref{preserve}, for simplicity, let \( t \geq 0 \) and \( A, C \in \sigma(\mathcal{B}) \). From Definition \ref{SDS} and \eqref{semigr}

	\begin{align*}
		(\theta_t)_{\star} (\mathbf{P}\times\mathbf{P})(A\times C)&
		{\mathrel{\mathop{=}}} \int_{\mathcal{B}}\int_{\mathcal{B}}\mathlarger{\chi}_{A}(P_{t}(\omega^-,\omega^+))\mathlarger{\chi}_{C}(\vartheta_{t}\omega^+)\mathbf{P}(\mathrm{d}\omega^+)\mathbf{P}(\mathrm{d}\omega^-)\\
		&\stackrel{ \hyperref[II]{\textbf{(II)}}}{\mathrel{\mathop{=}}}\int_{\mathcal{B}}\left(\int_{\mathcal{B}}\mathlarger{\chi}_{A}(P_{t}(\omega^-,\omega^+))\mathbf{P}(\mathrm{d}\omega^+)\int_{\mathcal{B}}\mathlarger{\chi}_{C}(\vartheta_{t}\omega^+)\mathbf{P}(\mathrm{d}\omega^+)\right)\mathbf{P}(\mathrm{d}\omega^-)\\&\stackrel{ \hyperref[III]{\textbf{(III)}}}{\mathrel{\mathop{=}}}\mathbf{P}(C)\int_{\mathcal{B}}\mathcal{P}_t(\omega^-; A)\mathbf{P}(\mathrm{d}\omega^-)\\&\stackrel{ \hyperref[IV]{\textbf{(IV)}}}{\mathrel{\mathop{=}}}\mathbf{P}(C)\mathbf{P}(A)=\mathbf{P}\times\mathbf{P}(A\times C).
	\end{align*}
	Now, it remains to show the cocycle property. This can be verified as follows.
	\begin{align*}
		\Phi^{t+s}_{(\omega^-,\omega^+)}(x)& {\mathrel{\mathop{=}}} \varphi^{t+s}_{P_{t+s}(\omega^-,\omega^+)}(x)\\&\stackrel{ \eqref{cocycle_SDS}}{\mathrel{\mathop{=}}}\phi^{s}_{P_{t+s}(\omega^-,\omega^+)}\circ\phi^{t}_{\vartheta_{s}\circ P_{t+s}(\omega^-,\omega^+)}(x)\\&\stackrel{ \hyperref[V]{\textbf{(V)}}}{\mathrel{\mathop{=}}} \phi^{s}_{P_{s}\left(P_{t}(\omega^-,\omega^+),\vartheta_{t}\omega^+\right)}\circ\phi^{t}_{\vartheta_{s}\circ P_{s}\left(P_{t}(\omega^-,\omega^+),\vartheta_{t}\omega^+\right)}(x)\\&\stackrel{ \hyperref[V]{\textbf{(V)}}}{\mathrel{\mathop{=}}}\phi^{s}_{P_{s}\left(P_{t}(\omega^-,\omega^+),\vartheta_{t}\omega^+\right)}\circ\phi^{t}_{P_{t}(\omega^-,\omega^+)}(x)\\&\stackrel{ \eqref{COCO}-\eqref{semigr}}{\mathrel{\mathop{=}}}\Phi^{s}_{\theta_{t}(\omega^-,\omega^+)}\circ\Phi^{t}_{(\omega^-,\omega^+)}(x).
	\end{align*}
\qed
\begin{remark}\label{MJNFSA}
	From the definition, it is clear that if \(\varphi\) is \(C^{k}\) for some \(k \geq 1\), then \(\Phi\) is also \(C^{k}\).
\end{remark}
Assuming that \(\varphi\) admits an invariant measure \(\mu\) on \(\mathcal{B} \times \mathcal{X}\), it is possible to define a flow as in~\eqref{OL963ass} on \(\mathcal{X} \times \mathcal{B} \times \mathcal{B}\) that preserves the probability measure \(\mu \times \mathbf{P}\).
\begin{lemma}\label{THe} 
	Let \(\varphi\) be a continuous SDS and suppose it admits an invariant measure \(\mu\) in the sense of Definition~\ref{measure}. For every $t\geq 0$ we set 
	\begin{align*}
		&\Theta_{t}:\mathcal{X}\times\mathcal{B}\times\mathcal{B} \rightarrow \mathcal{X}\times\mathcal{B}\times\mathcal{B} ,\\
		&\left(x,\omega^-,\omega^+\right)\longrightarrow \left(\Phi^{t}_{(\omega^{-},\omega^+)}(x),P_{t}(\omega^{-},\omega^+),\vartheta_{t}\omega^{+}\right).
	\end{align*}
	Then we have
	\begin{enumerate}
		\item For all \( t, s \geq 0 \), it holds that 
		\begin{align*}
			\Theta_{t+s}=\Theta_{t}\circ\Theta_{s}.
		\end{align*}
		\item This family of maps preserves the probability measure $ \mu\times\mathbf{P}$, i.e.
		\begin{align}\label{preserve_2}
			(\Theta_{t})_{\star} ( \mu\times\mathbf{P}) = \mu\times\mathbf{P}, \quad \forall t\in [0,\infty).
		\end{align} 
	\end{enumerate}
\end{lemma}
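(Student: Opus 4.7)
The map $\Theta_{t}$ is nothing but the standard skew--product $(x,\omega)\mapsto(\Phi^{t}_{\omega}(x),\theta_{t}\omega)$ associated to the RDS $\Phi$ over the metric dynamical system $(\Omega,\mathcal{F},\mathbb{P},\theta)$ from Theorem~\ref{RDS_SDS}, where $\omega=(\omega^{-},\omega^{+})$. So for \emph{part 1}, I would simply unfold the definition of $\Theta_{t}\circ\Theta_{s}$, plug in the cocycle property \eqref{cocycle} of $\Phi$ and the semigroup property $\theta_{t+s}=\theta_{t}\circ\theta_{s}$ already proved in Theorem~\ref{RDS_SDS} (which, restricted to $t,s\geq 0$, reads $(P_{t+s}(\omega^{-},\omega^{+}),\vartheta_{t+s}\omega^{+})=(P_{t}(P_{s}(\omega^{-},\omega^{+}),\vartheta_{s}\omega^{+}),\vartheta_{t}\vartheta_{s}\omega^{+}))$, and read off that $\Theta_{t+s}=\Theta_{t}\circ\Theta_{s}$. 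This is a one--line verification and not where the work lies.

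\textbf{Part 2: the invariance $(\Theta_{t})_{\star}(\mu\times\mathbf{P})=\mu\times\mathbf{P}$.} I would test the identity against product functions $f_{1}\otimes f_{2}$ with $f_{1}:\mathcal{X}\times\mathcal{B}\to\mathbb{R}$ bounded measurable and $f_{2}\in C_{b}(\mathcal{B})$, and then use a monotone class / Stone--Weierstrass argument to extend to general bounded measurable functions on the product space. By Fubini,
\begin{align*}
\int f_{1}(\Phi^{t}_{(\omega^{-},\omega^{+})}(x), P_{t}(\omega^{-},\omega^{+}))\,f_{2}(\vartheta_{t}\omega^{+})\,\mathrm{d}(\mu\times\mathbf{P})
=\int_{\mathcal{X}\times\mathcal{B}}\!\!\int_{\mathcal{B}} f_{1}(\varphi^{t}_{P_{t}(\omega^{-},\omega^{+})}(x), P_{t}(\omega^{-},\omega^{+}))\,f_{2}(\vartheta_{t}\omega^{+})\,\mathbf{P}(\mathrm{d}\omega^{+})\,\mu(\mathrm{d}(x,\omega^{-})).
\end{align*}
Here $\Phi^{t}_{(\omega^{-},\omega^{+})}(x)=\varphi^{t}_{P_{t}(\omega^{-},\omega^{+})}(x)$ by \eqref{COCO}, so in the inner $\omega^{+}$--integral the first two arguments of $f_{1}$ depend on $\omega^{+}$ only through $P_{t}(\omega^{-},\omega^{+})$, while the $f_{2}$--factor depends on $\omega^{+}$ only through $\vartheta_{t}\omega^{+}$. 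The independence property \hyperref[II]{\textbf{(II)}}, restated in \eqref{indepenet}, then allows me to factorize the inner integral as the product of $(\mathcal{Q}_{t}f_{1})(x,\omega^{-})$ (by Definition~\ref{DEAFRS}) and $\int_{\mathcal{B}}f_{2}(\vartheta_{t}\omega^{+})\,\mathbf{P}(\mathrm{d}\omega^{+})$.

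\textbf{Concluding.} At this point two applications of invariance finish the argument. The stationarity \hyperref[III]{\textbf{(III)}} gives $\int f_{2}(\vartheta_{t}\omega^{+})\,\mathbf{P}(\mathrm{d}\omega^{+})=\int f_{2}(\omega^{+})\,\mathbf{P}(\mathrm{d}\omega^{+})$, and the invariance of $\mu$ under $\{\mathcal{Q}_{t}\}_{t\geq 0}$ from Definition~\ref{measure} yields $\int(\mathcal{Q}_{t}f_{1})\,\mathrm{d}\mu=\int f_{1}\,\mathrm{d}\mu$. Multiplying these produces $\int (f_{1}\otimes f_{2})\,\mathrm{d}(\mu\times\mathbf{P})$, which is exactly what is needed, and the extension to general bounded measurable $f$ is standard.

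\textbf{Main obstacle.} The step that requires some care is the factorization: one must notice that the integrand, viewed as a function of $\omega^{+}$ with $(x,\omega^{-})$ fixed, genuinely splits into a measurable function of $P_{t}(\omega^{-},\omega^{+})$ times a measurable function of $\vartheta_{t}\omega^{+}$, so that \hyperref[II]{\textbf{(II)}} (which is an independence statement between those two specific random variables, not between arbitrary functionals of $\omega^{+}$) really applies. Once this observation is made, the proof collapses to the chain of identities sketched above.
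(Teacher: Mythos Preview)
Your proposal is correct and follows essentially the same route as the paper's proof: part~1 is dispatched via Theorem~\ref{RDS_SDS}, and for part~2 the paper tests on rectangles $\Gamma\times A\in(\sigma(\mathcal{X})\otimes\sigma(\mathcal{B}))\times\sigma(\mathcal{B})$ (you use product functions $f_{1}\otimes f_{2}$, which amounts to the same thing), applies Fubini, factorizes the $\omega^{+}$--integral via \hyperref[II]{\textbf{(II)}}, and then invokes \hyperref[III]{\textbf{(III)}} together with the $\mathcal{Q}_{t}$--invariance of $\mu$ from Definition~\ref{measure}. The ``main obstacle'' you flag is exactly the point where the paper uses \hyperref[II]{\textbf{(II)}}.
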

\proof
	The flow property is a simple consequence of Theorem \ref{RDS_SDS}. It remains to prove \eqref{preserve_2}. By definition for $\Gamma\times A\in \left(\sigma(\mathcal{X})\otimes\sigma(\mathcal{B})\right)\times \sigma(\mathcal{B})$, we have
	\begin{align*}
		(\Theta_{t})_{\star}(\Gamma\times A)&=\int_{\mathcal{B}}\int_{\mathcal{X}\times\mathcal{B}}\mathlarger{\chi}_{\Gamma}\left(\varphi^{t}_{P_{t}(\omega^-,\omega^+)}(x),P_{t}(\omega^{-},\omega^+)\right)\mathlarger{\chi}_{A}\left(\vartheta_{t}\omega^{+}\right)\mu(\mathrm{d}(x,\omega^-))\mathbf{P}(\mathrm{d}\omega^+)\\&=\int_{\mathcal{X}\times\mathcal{B}}\int_{\mathcal{B}}\mathlarger{\chi}_{\Gamma}\left(\varphi^{t}_{P_{t}(\omega^-,\omega^+)}(x),P_{t}(\omega^{-},\omega^+)\right)\mathlarger{\chi}_{A}\left(\vartheta_{t}\omega^{+}\right)\mathbf{P}(\mathrm{d}\omega^+)\mu(\mathrm{d}(x,\omega^-)).
	\end{align*}
	Now, from \hyperref[II]{\textbf{(II)}} and \hyperref[III]{\textbf{(III)}} and the invariance of $\mu$, we conclude that
	\begin{align*}
		(\Theta_{t})_{\star}(\Gamma\times A)&=\mathbf{P}(A)\int_{\mathcal{X}\times\mathcal{B}}\int_{\mathcal{B}}\mathlarger{\chi}_{\Gamma}\left(\varphi^{t}_{P_{t}(\omega^-,\omega^+)}(x),P_{t}(\omega^{-},\omega^+)\right)\mathbf{P}(\mathrm{d}\omega^+)\mu(\mathrm{d}(x,\omega^-))\\&=\mathbf{P}(A)\int_{\mathcal{X}\times\mathcal{B}}\mathcal{Q}_{t}\left((x,\omega^-);\Gamma\right)\mu\left(\mathrm{d}(x,\omega^-)\right)=\mathbf{P}(A)\mu(\Gamma)=(\mu\times\mathbf{P})(\Gamma\times A),
	\end{align*}
	completing the proof.
\qed
This yields the following result.
\begin{corollary}\label{MDS}
	Let $\{\Theta_{t}\}_{t \geq 0}$ be as in Lemma \ref{THe}. Then
	\[
	\left(\mathcal{X} \times \mathcal{B} \times \mathcal{B}, \; \sigma(\mathcal{X}) \otimes \sigma(\mathcal{B}) \otimes \sigma(\mathcal{B}), \; \mu \times \mathbf{P}, \; \Theta \right),
	\]
	is a metric dynamical system.
\end{corollary}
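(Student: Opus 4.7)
The plan is to verify each of the four requirements of Definition~\ref{SAGBAsd} for the quadruple $(\mathcal{X} \times \mathcal{B} \times \mathcal{B},\ \sigma(\mathcal{X}) \otimes \sigma(\mathcal{B}) \otimes \sigma(\mathcal{B}),\ \mu \times \mathbf{P},\ \Theta)$. The semigroup property is exactly part~(1) of Lemma~\ref{THe}, and the preservation of $\mu \times \mathbf{P}$ is precisely~\eqref{preserve_2}. Hence the only items left are the identity $\Theta_0 = \mathrm{Id}$ and the joint measurability of the map $(t, x, \omega^-, \omega^+) \mapsto \Theta_t(x, \omega^-, \omega^+)$.

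For the identity at $t = 0$, I would first note that applying the semiflow identity in~\hyperref[III]{\textbf{(III)}} at $t = s = 0$ yields $\vartheta_0 = \vartheta_0 \circ \vartheta_0$, and combined with the invariance of $\mathbf{P}$ this forces $\vartheta_0 = \mathrm{Id}$. Plugging $t = 0$ into the first identity of~\hyperref[V]{\textbf{(V)}} then gives $P_0(\omega^-, \omega^+) = \omega^-$ for all $(\omega^-, \omega^+) \in \mathcal{B} \times \mathcal{B}$. Together with $\varphi^0_\omega = \mathrm{Id}$ from~\eqref{cocycle_SDS}, one obtains
\[
\Theta_0(x, \omega^-, \omega^+) = \bigl(\varphi^0_{P_0(\omega^-, \omega^+)}(x),\ P_0(\omega^-, \omega^+),\ \vartheta_0 \omega^+\bigr) = (x, \omega^-, \omega^+),
\]
as required.

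For the joint measurability, I would decompose $\Theta_t$ into its three coordinates. The map $(t, \omega^+) \mapsto \vartheta_t \omega^+$ is jointly measurable by~\hyperref[III]{\textbf{(III)}} in Definition~\ref{SDS}, and $(t, \omega^-, \omega^+) \mapsto P_t(\omega^-, \omega^+)$ is jointly measurable as part of the stationary noise data (this is explicit from~\eqref{PPOOAS} in the fractional Brownian setting). The first coordinate $(t, x, \omega^-, \omega^+) \mapsto \varphi^t_{P_t(\omega^-, \omega^+)}(x) = \Phi^t_{(\omega^-,\omega^+)}(x)$ is jointly measurable because $\Phi$ is a measurable cocycle by Theorem~\ref{RDS_SDS}; this ultimately relies on the continuity of $t \mapsto \tilde\varphi_T(x, \omega)$ granted in Definition~\ref{SDSSS}, which together with sectionwise measurability in $(x, \omega)$ yields joint measurability by a standard Carath\'eodory-type argument.

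The main obstacle in this corollary is purely bookkeeping rather than analysis: everything reduces to the already-established Lemma~\ref{THe}, Theorem~\ref{RDS_SDS}, and the axioms~\hyperref[I]{\textbf{(I)}}--\hyperref[V]{\textbf{(V)}} of Definition~\ref{SDS}; the only mild subtlety is confirming joint measurability in $t$, which follows from the continuity built into the SDS and stationary noise data.
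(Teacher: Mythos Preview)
Your proposal is correct and in fact more detailed than the paper, which states the corollary as an immediate consequence of Lemma~\ref{THe} without further argument. One minor remark: your deduction of $\vartheta_0=\mathrm{Id}$ from idempotence plus measure invariance is not quite sufficient in general, but this is moot since $\vartheta_0=\mathrm{Id}$ is part of the standard definition of a semiflow invoked in~\hyperref[III]{\textbf{(III)}} (and is explicit in the concrete fbm setting of Lemma~\ref{Olafads}).
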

It is natural to expect a connection between the ergodicity of $\lbrace \mathcal{Q}_{t} \rbrace_{t \geq 0}$ and that of $\lbrace\Theta_{t}\rbrace_{t \geq 0}$. This is the content of the next corollary:
\begin{corollary}\label{THETA}
	Let \(\mu\) be the unique ergodic invariant measure for the transition semigroup \(\{ \mathcal{Q}_{t} \}_{t \geq 0}\), and suppose that, for every \(t > 0\), the map \(\vartheta_t : \mathcal{B} \to \mathcal{B}\) is ergodic with respect to the measure \(\mathbf{P}\). Then, for every \(t > 0\), the map 
	\[
	\Theta_{t}:\mathcal{X}\times\mathcal{B}\times\mathcal{B} \rightarrow \mathcal{X}\times\mathcal{B}\times\mathcal{B}
	\]
	is ergodic with respect to the product measure \(\mu \times \mathbf{P}\).
\end{corollary}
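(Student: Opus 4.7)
The strategy will be to combine the ergodicity of the Markov semigroup $\{\mathcal{Q}_s\}_{s \geq 0}$ on $\mathcal{X} \times \mathcal{B}$ with the ergodicity of the base shift $\vartheta_t$ on $(\mathcal{B}, \mathbf{P})$, exploiting the skew-product structure of $\Theta_t$ over $(\mathcal{B}, \vartheta_t, \mathbf{P})$ via the projection $(x, \omega^-, \omega^+) \mapsto \omega^+$. I would fix $t > 0$, take any $\Theta_t$-invariant set $A \subset \mathcal{X} \times \mathcal{B} \times \mathcal{B}$ (modulo $(\mu \times \mathbf{P})$-nullsets), and aim to show $(\mu \times \mathbf{P})(A) \in \{0,1\}$.

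First, I would integrate out the $\omega^+$-variable, setting $h(x, \omega^-) := \int_{\mathcal{B}} \chi_A(x, \omega^-, \omega^+)\, \mathbf{P}(\mathrm{d}\omega^+)$. Combining the $\Theta_t$-invariance of $A$, the independence property \hyperref[II]{\textbf{(II)}} between $P_t(\omega^-, \cdot)$ and $\vartheta_t$ as random variables in $\omega^+$, and the $\vartheta_t$-invariance of $\mathbf{P}$ from \hyperref[III]{\textbf{(III)}}, I would get
\[
h(x, \omega^-) = \int_{\mathcal{B}}\!\int_{\mathcal{B}} \chi_A\bigl(\varphi^t_\omega(x), \omega, \omega'\bigr)\, \mathcal{P}_t(\omega^-; \mathrm{d}\omega)\, \mathbf{P}(\mathrm{d}\omega') = (\mathcal{Q}_t h)(x, \omega^-).
\]
Iterating, $h = \mathcal{Q}_{nt}\, h$ for every $n \in \mathbb{N}$; the ergodicity of $\mu$ as the unique invariant measure of $\{\mathcal{Q}_s\}_{s \geq 0}$, interpreted as in Remark~\ref{ASAsw65}, will then force $h \equiv c := (\mu \times \mathbf{P})(A)$ $\mu$-a.s.

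To upgrade this to $\chi_A \equiv c$ $(\mu \times \mathbf{P})$-a.s., I would consider $\psi := \chi_A - c$, which is $\Theta_t$-invariant with $\int \psi(x, \omega^-, \omega^+)\, \mathbf{P}(\mathrm{d}\omega^+) = 0$ for $\mu$-a.e.~$(x, \omega^-)$. Applying Birkhoff's theorem to $\Theta_t$ with test functions of the form $F(x, \omega^-, \omega^+) := \psi(x, \omega^-, \omega^+)\, K(\omega^+)$, and using both $\psi \circ \Theta_t^n = \psi$ and the ergodicity of $\vartheta_t$ to obtain $\tfrac{1}{N}\sum_{n=0}^{N-1} K(\vartheta_{nt}\omega^+) \to \int K\, \mathrm{d}\mathbf{P}$ pathwise, yields an identity $\psi(x, \omega^-, \omega^+) \cdot \int K\, \mathrm{d}\mathbf{P} = \mathbb{E}[\psi K \mid \mathcal{J}_{\Theta_t}]$ $(\mu \times \mathbf{P})$-a.s. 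Specializing to $K(\omega^+) = \psi(x_0, \omega^-_0, \omega^+)$ for $\mu$-typical $(x_0, \omega^-_0)$, integrating against $\mu \times \mathbf{P}$, and exploiting the zero $\omega^+$-mean of $\psi$ gives $\psi \equiv 0$, hence $c \in \{0,1\}$. The hard part is this last step: the random fibre maps $T^{(t)}_{\omega^+}(x, \omega^-) = (\varphi^t_{P_t(\omega^-,\omega^+)}(x), P_t(\omega^-, \omega^+))$ do not preserve $\mu$ individually, only on average through the Chapman--Kolmogorov equation~\eqref{SEMI_2}, so the classical Kifer--Ohno skew-product ergodicity theorems do not apply directly, and one must lean crucially on \hyperref[II]{\textbf{(II)}} to decouple the two roles of $\omega^+$ (inside $P_t$ and inside $\vartheta_t$) when iterating $\Theta_t$.
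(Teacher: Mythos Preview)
The paper's proof consists of a single sentence invoking uniqueness of $\mu$, so there is little to compare against; you are in effect trying to supply the missing argument. Your first two steps --- defining $h(x,\omega^-)=\int\chi_A(x,\omega^-,\omega^+)\,\mathbf{P}(\mathrm{d}\omega^+)$ and showing $h=\mathcal{Q}_th$ via the independence property \textbf{(II)} --- are correct and are the right starting point.

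The argument breaks down in your last step. Since $\psi=\chi_A-c$ is $\Theta_t$-invariant it is $\mathcal{J}_{\Theta_t}$-measurable, so $\mathbb{E}[\psi K\mid\mathcal{J}_{\Theta_t}]=\psi\,\mathbb{E}[K(\omega^+)\mid\mathcal{J}_{\Theta_t}]$; and the ergodicity of $\vartheta_t$ you invoke already forces $\mathbb{E}[K(\omega^+)\mid\mathcal{J}_{\Theta_t}]=\int K\,\mathrm{d}\mathbf{P}$. Your Birkhoff identity therefore reduces to the tautology $\psi\int K\,\mathrm{d}\mathbf{P}=\psi\int K\,\mathrm{d}\mathbf{P}$ and carries no new information. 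Concretely, specializing $K=\psi(x_0,\omega_0^-,\cdot)$ and integrating yields only
\[
\int_{\mathcal{B}}\Bigl(\int_{\mathcal{X}\times\mathcal{B}}\psi(x,\omega^-,\omega^+)\,\mu(\mathrm{d}(x,\omega^-))\Bigr)^{2}\mathbf{P}(\mathrm{d}\omega^+)=0,
\]
i.e.\ $\int\psi\,\mathrm{d}\mu=0$ for $\mathbf{P}$-a.e.\ $\omega^+$, which together with the zero $\omega^+$-mean does \emph{not} force $\psi\equiv 0$ (consider $\psi=a(x,\omega^-)b(\omega^+)$ with $\int a\,\mathrm{d}\mu=\int b\,\mathrm{d}\mathbf{P}=0$). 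There is a related gap in step~3: you conclude that $h$ is constant from $h=\mathcal{Q}_th$ for a \emph{single} $t$, but ergodicity of $\mu$ in the sense of Remark~\ref{ASAsw65} (the time shift on path space) only guarantees constancy of functions harmonic for \emph{all} $\mathcal{Q}_s$, and uniqueness of $\mu$ for the continuous semigroup does not in general imply ergodicity of the discrete skeleton $\{\mathcal{Q}_{nt}\}_{n\geq 0}$. What is actually needed is to show that every $\Theta_t$-invariant probability measure with last marginal $\mathbf{P}$ arises as $\nu\times\mathbf{P}$ for some $\mathcal{Q}_t$-invariant $\nu$ (the computation $(\Theta_t)_*(\nu\times\mathbf{P})=((\mathcal{Q}_t)_*\nu)\times\mathbf{P}$ from Lemma~\ref{THe} is one half of this), so that extremality of $\mu$ among $\mathcal{Q}_t$-invariant measures translates into ergodicity of $\Theta_t$.
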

\proof
	The statement follows directly from the uniqueness (and thus the ergodicity) of the invariant measure~$\mu$ associated with the transition semigroup 
	$\{ \mathcal{Q}_{t} \}_{t \geq 0}$.
\qed
The flow property of \( \{ \Theta_{t} \}_{t \geq 0} \), together with the fact that the probability measure \( \mu \times \mathbf{P} \) is preserved under this flow, allows us to define another cocycle by linearizing \( \Phi \).
\begin{corollary}\label{Thetta}
	Suppose that \( \mathcal{X} \) is a separable Banach space and \( \varphi \) is a \( C^k \)-SDS for some \( k \geq 1 \). Let \( \Phi \) be the RDS given in Theorem~\ref{RDS_SDS} and define 
	\begin{align*}
		& \mathcal{T} : \R^{+}\times\mathcal{X}\times\left(\mathcal{X}\times\mathcal{B}\times\mathcal{B}\right)\rightarrow \mathcal{X},\\
		&\left(t,y,\left(x,\omega^-,\omega^+\right)\right)\longrightarrow \mathcal{T} ^{t}_{(x,\omega^-,\omega^+)}(y):= \txtD_{x}\Phi^{t}_{(\omega^-,\omega^+)}(y).
	\end{align*}
	Then, for every \( t, s \geq 0 \) and \( (x, \omega^-, \omega^+) \in \mathcal{X} \times \mathcal{B} \times \mathcal{B} \), it holds that
	\begin{align*}
		\mathcal{T}^{t+s}_{(x,\omega^-,\omega^+)}=\mathcal{T}^{s}_{\Theta_{t}(x,\omega^-,\omega^+)}\circ \mathcal{T}^{t}_{(x,\omega^-,\omega^+)}.
	\end{align*}
	In particular, \( \mathcal{T} \) is a linear continuous RDS over the metric metric dynamical system
	\[
	\left(\mathcal{X} \times \mathcal{B} \times \mathcal{B}, \, \sigma(\mathcal{X}) \otimes \sigma(\mathcal{B}) \otimes \sigma(\mathcal{B}), \, \mu \times \mathbf{P}, \, \Theta \right).
	\]
\end{corollary}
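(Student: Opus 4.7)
\proof
The plan is to derive the cocycle identity for $\mathcal{T}$ by differentiating the cocycle identity~\eqref{cocycle} for $\Phi$ with respect to the spatial variable, and then to read off the measurability/linearity needed to call $\mathcal{T}$ a linear continuous RDS over the metric dynamical system identified in Corollary~\ref{MDS}.

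The first step is to observe, using Remark~\ref{MJNFSA}, that since $\varphi$ is $C^k$ for some $k\geq 1$, the map $x\mapsto \Phi^{t}_{(\omega^-,\omega^+)}(x)$ is $C^k$-Fr\'echet differentiable for every fixed $t\geq 0$ and every $(\omega^-,\omega^+)\in\mathcal{B}\times\mathcal{B}$. Hence $\mathcal{T}^{t}_{(x,\omega^-,\omega^+)}\in \mathcal{L}(\mathcal{X},\mathcal{X})$ is well defined, and it depends linearly on its argument $y$. The next step is to apply the chain rule to the cocycle identity~\eqref{cocycle} written in the form
\[
\Phi^{t+s}_{(\omega^-,\omega^+)}(x)=\Phi^{s}_{\theta_{t}(\omega^-,\omega^+)}\bigl(\Phi^{t}_{(\omega^-,\omega^+)}(x)\bigr),
\]
which immediately yields
\[
\txtD_{x}\Phi^{t+s}_{(\omega^-,\omega^+)}(x)=\txtD_{z}\Phi^{s}_{\theta_{t}(\omega^-,\omega^+)}\bigl(\Phi^{t}_{(\omega^-,\omega^+)}(x)\bigr)\;\circ\;\txtD_{x}\Phi^{t}_{(\omega^-,\omega^+)}(x).
\]
Recalling that $\Theta_{t}(x,\omega^-,\omega^+)=\bigl(\Phi^{t}_{(\omega^-,\omega^+)}(x),P_{t}(\omega^-,\omega^+),\vartheta_{t}\omega^+\bigr)$ and that $\theta_{t}(\omega^-,\omega^+)=(P_{t}(\omega^-,\omega^+),\vartheta_{t}\omega^+)$ from~\eqref{semigr}, the second factor on the right-hand side is precisely $\mathcal{T}^{s}_{\Theta_{t}(x,\omega^-,\omega^+)}$ evaluated at $\txtD_{x}\Phi^{t}_{(\omega^-,\omega^+)}(x)y=\mathcal{T}^{t}_{(x,\omega^-,\omega^+)}(y)$, which gives the desired identity.

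The final step is to check the RDS prerequisites. Joint measurability of $(t,y,x,\omega^-,\omega^+)\mapsto \mathcal{T}^{t}_{(x,\omega^-,\omega^+)}(y)$ follows from Definition~\ref{SDSSS}, since $\tilde{\varphi}_{T}(x,\omega)$ is continuous jointly in $(x,\omega)$ with values in $C([0,T],\mathcal{X})$, so the same holds for $\Phi$, and Fr\'echet derivatives of $C^k$ maps with $k\geq 1$ depend continuously on their base point; linearity of $y\mapsto \mathcal{T}^{t}_{(x,\omega^-,\omega^+)}(y)$ is automatic. Combined with the cocycle identity derived above and the metric dynamical system structure provided by Corollary~\ref{MDS}, this yields that $\mathcal{T}$ is a linear continuous RDS driven by the skew product $\Theta$. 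The only mildly delicate point is the identification of the base of the derivative in the chain rule with $\Theta_{t}(x,\omega^-,\omega^+)$, but this is exactly the content of the definition of $\Theta_t$ together with~\eqref{semigr}, so no genuine obstacle arises.
\qed
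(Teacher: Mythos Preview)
Your proof is correct and follows essentially the same approach as the paper: differentiate the cocycle identity~\eqref{cocycle} for $\Phi$ with respect to the spatial variable and apply the chain rule, identifying the base point of the outer derivative with $\Theta_t(x,\omega^-,\omega^+)$. The paper's proof is just a two-line version of this; your additional remarks on measurability and linearity are reasonable elaborations but not required in the paper.
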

\proof
	From \eqref{cocycle} we have
	\begin{align*}
		\Phi^{t+s}_{(\omega^-,\omega^+)}(x)=\Phi^{s}_{\theta_t(\omega^-,\omega^+)}\circ\Phi^{t}_{(\omega^-,\omega^+)}(x).
	\end{align*}
	The claim follows by differentiating $\Phi$ and using the chain rule.
\qed
\paragraph*{Invariant measure for the RDS} 
For stochastic Markov processes admitting an invariant measure, it is possible to construct a family of random measures that remain invariant under the associated cocycle of the process. This can be achieved by employing the martingale convergence theorem; see \cite[Chapter 5]{V14} for further details.~Since we are dealing with non-Markovian processes, this approach is not applicable.~However, since the invariant measure $\mu$ for a continuous SDS is defined on the product space \(\mathcal{B} \times \mathcal{X}\), we can instead employ the disintegration theorem to construct invariant random measures on the state space \(\mathcal{X}\).~This result will be used in a forthcoming work to investigate the connection between attractors and invariant measures in a non-Markovian setting.  
\begin{lemma}\label{disintegation}
	Suppose $\varphi$ is a continuous SDS and $\mu$ is an invariant measure for this SDS. Then, there exists a family of probability measures \( \lbrace \mu_{\omega^-} \rbrace_{\omega^- \in \mathcal{B}} \) on \( \mathcal{X} \) such that for every $A\times C \in  \sigma(\cX)\otimes\sigma(\mathcal{B})$ the following properties hold:
	\begin{enumerate}
		\item [1)] $\mu_{\omega^-}(\mathcal{X}) = 1$ for $\mathbf{P}$-a.s.\ $\omega^- \in \mathcal{B}$.
		\item [2)]The function defined by $\omega^-\mapsto \mu_{\omega^-}(A)$ is measurable. 
		\item [3)] $\mu(A\times C)=\int_{C}\mu_{\omega^-}(A)\mathbf{P}(\txtd\omega^-).$
		\item [4)] The family of measures $\{\mu_{\omega^-}\}_{\omega^- \in \mathcal{B}}$ is $\mathbf{P}$-almost surely unique. 
		This means that if there exists another family of measures $\{\tilde{\mu}_{\omega^-}\}_{\omega^- \in \mathcal{B}}$ 
		satisfying the three conditions above, then for $\mathbf{P}$-almost every $\omega^- \in \mathcal{B}$ we have 
		\[
		\tilde{\mu}_{\omega^-} = \mu_{\omega^-}.
		\]
	\end{enumerate}
\end{lemma}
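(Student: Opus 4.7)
The plan is to view this as a direct application of the classical disintegration theorem for Borel probability measures on Polish spaces, applied to the measure $\mu$ on $\mathcal{X}\times \mathcal{B}$ together with the projection $\Pi_{\mathcal{B}}$. Since both $\mathcal{X}$ and $\mathcal{B}$ are Polish, their product $\mathcal{X}\times \mathcal{B}$ is again Polish and $\mu$ is Borel. The projection $\Pi_{\mathcal{B}}:\mathcal{X}\times \mathcal{B}\to \mathcal{B}$ is continuous, hence measurable, and by Definition~\ref{measure}, in particular by the marginal condition~\eqref{UJas}, we have $(\Pi_{\mathcal{B}})_{\star}\mu=\mathbf{P}$. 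These are exactly the hypotheses required by the standard disintegration theorem (as stated, e.g., in Kallenberg, \emph{Foundations of Modern Probability}, or Dellacherie--Meyer).

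From that theorem, I would first obtain a $\mathbf{P}$-a.s.\ unique family of Borel probability measures $\{\tilde{\mu}_{\omega^-}\}_{\omega^-\in\mathcal{B}}$ on the product space $\mathcal{X}\times\mathcal{B}$ such that: the map $\omega^-\mapsto \tilde{\mu}_{\omega^-}(E)$ is measurable for every Borel set $E\subseteq \mathcal{X}\times\mathcal{B}$; each $\tilde{\mu}_{\omega^-}$ is concentrated on the fiber $\Pi_{\mathcal{B}}^{-1}(\{\omega^-\})=\mathcal{X}\times \{\omega^-\}$ for $\mathbf{P}$-a.e.\ $\omega^-$; and the integration formula
\[
\mu(E)=\int_{\mathcal{B}}\tilde{\mu}_{\omega^-}(E)\,\mathbf{P}(\mathrm{d}\omega^-)
\]
holds. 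Next, exploiting that each fiber $\mathcal{X}\times\{\omega^-\}$ is canonically identified with $\mathcal{X}$, I would push $\tilde{\mu}_{\omega^-}$ forward through this bijection to define $\mu_{\omega^-}$ as a Borel probability measure on $\mathcal{X}$. Property 1 then follows from $\tilde{\mu}_{\omega^-}$ being a probability measure supported on the fiber; property 2 is inherited from the measurability of $\omega^-\mapsto \tilde{\mu}_{\omega^-}(A\times \mathcal{B})$; and property 3 is obtained by specializing the integration formula to product sets $E=A\times C$, noting that $\tilde{\mu}_{\omega^-}(A\times C) = \chi_C(\omega^-)\,\mu_{\omega^-}(A)$ for $\mathbf{P}$-a.e.\ $\omega^-$.

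For property 4, suppose $\{\hat{\mu}_{\omega^-}\}$ is another family satisfying properties 1--3. Pulling it back through the fiber identification produces another candidate disintegration of $\mu$ in the sense of the abstract theorem, and its $\mathbf{P}$-a.s.\ uniqueness then forces $\hat{\mu}_{\omega^-}=\mu_{\omega^-}$ for $\mathbf{P}$-a.e.\ $\omega^-\in\mathcal{B}$.

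There is no real obstacle here beyond verifying that the standard theorem applies and carefully identifying measures on the fiber with measures on $\mathcal{X}$; all genuine work is absorbed into the cited disintegration theorem. The substantial content concerning these random measures, namely their invariance under the cocycle $\Phi$, is deferred to Theorem~\ref{disintegration_2}, where a perfection-type argument will be the real difficulty.
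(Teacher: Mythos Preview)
Your proposal is correct and follows essentially the same approach as the paper: the paper's proof simply invokes the disintegration theorem for Borel probability measures on Polish spaces together with the marginal condition $(\Pi_{\mathcal{B}})_{\star}\mu=\mathbf{P}$, citing \cite[Proposition~5.1.7 and Theorem~5.1.11]{VO16}. Your write-up is more explicit about the fiber identification, but there is no substantive difference.
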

\proof
	Since $\mathcal{B}$ and $\mathcal{X}$ are Polish spaces and $(\Pi_{\cX})_{\star} \mu = \mathbf{P}$, the disintegration theorem entails the result.~The uniqueness also follows from the disintegration theorem; see \cite[Proposition 5.1.7 and Theorem 5.1.11]{VO16} for more details.  
\qed
Let $\Phi$ be the RDS defined in Theorem~\ref{RDS_SDS}. Thanks to the previous lemma, if the SDS $\varphi$ admits an invariant measure, then we can obtain a family of random measures on the state space $\mathcal{X}$. We naturally expect this family of random measures to be invariant for $\Phi$. This means that for every $t > 0$, on a $\theta_t$-invariant subset $\tilde{\Omega} \subset \mathcal{B} \times \mathcal{B}$ of full measure, the following invariance property holds
\begin{align}\label{sdsd}
	(\Phi^{t}_{(\omega^-, \omega^+)})_{\star} \mu_{\omega^-} = \mu_{P_{t}(\omega^-, \omega^+)}.
\end{align}
However, there always exists a set of null measure that must be neglected, and this set typically depends on the parameter \( t \). In the context of random dynamical systems, such dependence is undesirable. In what follows, we aim to address this issue by constructing a set of full measure in \( \mathcal{B} \times \mathcal{B} \) that is \( \theta \)-invariant and on which the invariance property~\eqref{sdsd} holds. Our main contribution in this context is to generate such random measures in a non-Markovian setting, without imposing any additional assumptions on the underlying measure $\mu$. Before making these statements rigorous, we first show that these random measures are a natural choice (up to a set of full measure) for the invariance property~\eqref{sdsd}.
\begin{lemma}\label{LABBAASS}
	Assume that \( \lbrace \mu_{\omega^-} \rbrace_{\omega^- \in \mathcal{B}} \) is the family of random measures obtained in Lemma \ref{disintegation}. Then for every $t\geq 0$ we can find a set of full measure $\mathcal{B}_t\subset \mathcal{B}$ such that for every $\omega^-\in \mathcal{B}_t$
	\begin{align*}
		(\varphi^{t}_{\omega^-})_{\star}(\mu_{\vartheta_{t}\omega^-})=\mu_{\omega^-}.
	\end{align*}
\end{lemma}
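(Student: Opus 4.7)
}

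The plan is to test the invariance relation $(\mathcal{Q}_t)_\star \mu = \mu$ against a product test function and then peel off the $\omega^-$-integration using the disintegration. Take bounded continuous $g\colon\mathcal{X}\to\mathbb{R}$ and $h\colon\mathcal{B}\to\mathbb{R}$ and set $f(x,\omega^-):=g(x)h(\omega^-)$. By the definition of $\mathcal{Q}_t$ in \eqref{Chapman}, the invariance of $\mu$ together with the disintegration from Lemma~\ref{disintegation} rewrites as
\[
\int_{\mathcal{B}}\!\!\int_{\mathcal{B}}\!\!\int_{\mathcal{X}} g\bigl(\varphi^{t}_{P_t(\omega^-,\omega^+)}(x)\bigr)\, h\bigl(P_t(\omega^-,\omega^+)\bigr)\,\mu_{\omega^-}(\mathrm{d}x)\,\mathbf{P}(\mathrm{d}\omega^+)\,\mathbf{P}(\mathrm{d}\omega^-)
= \int_{\mathcal{B}}\!\!\int_{\mathcal{X}} g(x)h(\omega^-)\,\mu_{\omega^-}(\mathrm{d}x)\,\mathbf{P}(\mathrm{d}\omega^-).
\]

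Next, I would perform the change of variable $\eta := P_t(\omega^-,\omega^+)$. For fixed $\omega^-$, the pushforward of $\mathbf{P}$ under $\omega^+\mapsto P_t(\omega^-,\omega^+)$ is $\mathcal{P}_t(\omega^-;\cdot)$ by \hyperref[IV]{\textbf{(IV)}}, and property \hyperref[V]{\textbf{(V)}} forces $\omega^- = \vartheta_t\eta$ for $\mathcal{P}_t(\omega^-;\cdot)$-a.e.\ $\eta$. Hence the inner double integral becomes
\[
\int_{\mathcal{B}}\!\!\int_{\mathcal{B}}\!\!\int_{\mathcal{X}} g(\varphi^{t}_{\eta}(x))\,h(\eta)\,\mu_{\vartheta_t\eta}(\mathrm{d}x)\,\mathcal{P}_t(\omega^-;\mathrm{d}\eta)\,\mathbf{P}(\mathrm{d}\omega^-).
\]
The joint probability $\mathbf{P}(\mathrm{d}\omega^-)\mathcal{P}_t(\omega^-;\mathrm{d}\eta)$ on $\mathcal{B}\times\mathcal{B}$ has $\eta$-marginal equal to $\mathbf{P}$ by the invariance of $\mathbf{P}$ under $\{\mathcal{P}_t\}_{t\ge 0}$ asserted in \hyperref[IV]{\textbf{(IV)}}. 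Since the integrand no longer depends on $\omega^-$, integrating out $\omega^-$ reduces the expression to $\int_{\mathcal{B}}\int_{\mathcal{X}} g(\varphi^{t}_{\eta}(x))\,h(\eta)\,\mu_{\vartheta_t\eta}(\mathrm{d}x)\,\mathbf{P}(\mathrm{d}\eta)$. Renaming $\eta$ back to $\omega^-$ and equating with the right-hand side yields
\[
\int_{\mathcal{B}} h(\omega^-)\!\left[\int_{\mathcal{X}} g(\varphi^{t}_{\omega^-}(x))\,\mu_{\vartheta_t\omega^-}(\mathrm{d}x)-\int_{\mathcal{X}} g(x)\,\mu_{\omega^-}(\mathrm{d}x)\right]\mathbf{P}(\mathrm{d}\omega^-)=0.
\]

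Since $h\in C_b(\mathcal{B})$ is arbitrary and $\mathcal{B}$ is Polish, standard monotone-class arguments give that the square-bracket expression vanishes on a $\mathbf{P}$-full measure set $\mathcal{B}_t^{(g)}\subset\mathcal{B}$. Choosing a countable convergence-determining family $\{g_n\}\subset C_b(\mathcal{X})$ (which exists because $\mathcal{X}$ is Polish, so probability measures on $\mathcal{X}$ are characterized by integration against such a family) and setting $\mathcal{B}_t:=\bigcap_n \mathcal{B}_t^{(g_n)}$ produces the desired full-measure set on which $(\varphi^{t}_{\omega^-})_\star \mu_{\vartheta_t\omega^-}=\mu_{\omega^-}$ as measures on $\mathcal{X}$.

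The only subtle step is the change of variables, which must be justified by combining the concentration property $\omega^-=\vartheta_t\eta$ from \hyperref[V]{\textbf{(V)}} with the invariance of $\mathbf{P}$ under $\{\mathcal{P}_t\}_{t\ge 0}$; once this joint reparametrization is in place, the rest is Fubini, the uniqueness of the disintegration, and a standard separability argument to remove the $g$-dependence of the null set. I expect no obstacle in producing $\mathcal{B}_t$ for each fixed $t$; removing the $t$-dependence to obtain a simultaneously $\theta$-invariant set (the harder statement of Theorem~\ref{disintegration_2}) is the genuinely new difficulty, but the present lemma only asks for the fixed-$t$ version.
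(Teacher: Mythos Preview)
Your proof is correct and follows essentially the same route as the paper: both exploit the invariance $(\mathcal{Q}_t)_\star\mu=\mu$ together with the disintegration and property~\hyperref[V]{\textbf{(V)}} (namely $\vartheta_t P_t(\omega^-,\omega^+)=\omega^-$) to reduce to an identity of $\mathbf{P}$-integrals over $\mathcal{B}$. The only cosmetic differences are that the paper packages your change-of-variable step as the single identity $(\theta_t)_\star(\mathbf{P}\times\mathbf{P})=\mathbf{P}\times\mathbf{P}$ from Theorem~\ref{RDS_SDS}, and it concludes via the $\mathbf{P}$-a.s.\ uniqueness of the disintegration (item~4 of Lemma~\ref{disintegation}) rather than your countable convergence-determining family; both end-games are equivalent.
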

\proof
	First, we recall the basic fact that the composition of Borel measurable functions is again Borel measurable. This will be used throughout the argument, even when not stated explicitly.
	Since \( \varphi \) is a continuous SDS, it follows that the map
	\begin{align*}
		(t, x, \omega^-) \mapsto \varphi^t_{\omega^-}(x)
	\end{align*}
	is jointly measurable. Let \( A \times C \in  \sigma(\cX)\otimes\sigma(\mathcal{B}) \). Then from Definition \ref{DEAFRS}, Definition \ref{measure} and Lemma \ref{disintegation}, we conclude that
	\begin{align}\label{dis_2}
		\begin{split}
			&\mu(A\times C)=\int_{\mathcal{B}}\mathlarger{\chi}_{C}(\omega^-)\int_{\mathcal{X}}\mathlarger{\chi}_{A}(x)\mu_{\omega^-}(\mathrm{d} x)\mathbf{P}(\mathrm{d}\omega^-)\\&=(\mathcal{Q}_t)_{\star} \mu (A\times C)=\int_{\mathcal{B}}\int_{\mathcal{X}}\int_{\mathcal{B}}\mathlarger{\chi}_{C}\left(P_{t}(\omega^-,\omega^+)\right)\mathlarger{\chi}_{A}(\varphi^{t}_{P_{t}(\omega^-,\omega^+)}(x))\mathbf{P}(\mathrm{d}\omega^+)\mu_{\omega^-}(\mathrm{d} x)\mathbf{P}(d\omega^-)\\&=\int_{\mathcal{B}\times \mathcal{B}}\int_{\mathcal{X}}\mathlarger{\chi}_{C}\left(P_{t}(\omega^-,\omega^+)\right)\mathlarger{\chi}_{A}(\varphi^{t}_{P_{t}(\omega^-,\omega^+)}(x))\mu_{\omega^-}(\mathrm{d} x)\ (\mathbf{P}\times\mathbf{P})(\mathrm{d}(\omega^-,\omega^+)).
		\end{split}
	\end{align}
	We set 
	\begin{align*}
		\tilde{R}(\omega^-)=R(\omega^-,\omega^+):=\int_{\mathcal{X}}\mathlarger{\chi}_{A}(\varphi^{t}_{\omega^-}(x))\mathlarger{\chi}_{C}(\omega^-)\mu_{\vartheta_{t}\omega^-}(\mathrm{d} x).
	\end{align*}
	Thus, from \eqref{dis_2}, Lemma \ref{pushforward} and  \eqref{preserve}, we get 
	\begin{align}\label{dis3}
		\begin{split}
			&\int_{\mathcal{B}}\mathlarger{\chi}_{C}(\omega^-)\int_{\mathcal{X}}\mathlarger{\chi}_{A}(x)\mu_{\omega^-}(\mathrm{d} x)\mathbf{P}(d\omega^-)=\int_{\mathcal{B}\times \mathcal{B}}R(\theta_t(\omega^-, \omega^+))(\mathbf{P}\times\mathbf{P})\mathrm{d}(\omega^-,\omega^{+})\\&\quad=\int_{\mathcal{B}\times \mathcal{B}}R(\omega^-, \omega^+)(\mathbf{P}\times\mathbf{P})\mathrm{d}(\omega^-,\omega^{+})=\int_{\mathcal{B}}\mathlarger{\chi}_{C}(\omega^-)\int_{\mathcal{X}}\mathlarger{\chi}_{A}(\varphi^{t}_{\omega^-}(x))\mu_{\vartheta_{t}\omega^-}(\mathrm{d} x)\mathbf{P}(\txtd\omega^-)\\ &\qquad=\int_{\mathcal{B}}\mathlarger{\chi}_{C}(\omega^-)\int_{\mathcal{X}}\mathlarger{\chi}_{A}(x)(\varphi^{t}_{\omega^-})_{\star}(\mu_{\vartheta_{t}\omega^-})(\mathrm{d} x)\mathbf{P}(d\omega^-).
		\end{split}
	\end{align}
	Since \( A \times C \) was chosen arbitrarily, it follows from statetement 4) of Lemma~\ref{disintegation} that on a set of full measure \( \mathcal{B}_{t} \subseteq \mathcal{B} \), we have
	\begin{align*}
		(\varphi^{t}_{\omega^-})_{\star}(\mu_{\vartheta_{t}\omega^-})=\mu_{\omega^-}.
	\end{align*}
\qed
\begin{remark} 
	Let \( t \geq 0 \) be fixed. Then, for every \( C \in \sigma(\mathcal{B}) \), from \hyperref[IV]{\textbf{(IV)}} and the Fubini theorem, we have
	\begin{align*}
		(P_t)_{\star}(\mathbf{P}\times\mathbf{P})(C)&=\mathbf{P}\times\mathbf{P}\left\lbrace(\omega^-,\omega^+)\in\mathcal{B}\times\mathcal{B} : \ P_{t}(\omega^-,\omega^+)\in C\right)\rbrace\\&=\int_{\mathcal{B}}\mathcal{P}_{t}(\omega^-,C)\mathbf{P}(\mathrm{d}\omega^-)=\mathbf{P}(C).
	\end{align*}
	Therefore, we can find a set of full measure \( \tilde{\mathcal{B}}_t \) such that, for every \( \omega^- \in \tilde{\mathcal{B}}_t \), there exists another set of full measure \( \tilde{\mathcal{B}}_{t, \omega^-} \) with the property that if \( \omega^+ \in \tilde{\mathcal{B}}_{t, \omega^-} \), then
	\begin{align*}
		P_{t}(\omega^-, \omega^+) \in \mathcal{B}_{t}.
	\end{align*}
	In particular, this yields that
	\begin{align}\label{AISOAs}
		\varphi^{t}_{(\omega^-,\omega^+)}(\mu_{\omega^-})=\mu_{P_{t}(\omega^-,\omega^+)},
	\end{align}
	for $(\omega^-,\omega^+)\in\tilde{\mathcal{B}}_t\times \tilde{\mathcal{B}}_{t, \omega^-} $.
\end{remark}
As noted in the previous remark, although the invariance property \eqref{AISOAs} holds, its validity is restricted to a set that may depend on $t$ and $\omega^{-}$ which is contradicts the RDS framework. Moreover, these sets are not necessarily invariant under the shift map \( \theta \). In the next theorem, we rigorously resolve this problem. This result can be interpreted as a perfection-type result in the theory of random dynamical systems.
\begin{theorem}\label{disintegration_2}
	There exist a set of full measure \( \tilde{\mathcal{B}} \) in \( \mathcal{B} \) and a family of random measures  
	\( \{ \tilde{\mu}_{\omega^-} \}_{\omega^- \in \mathcal{B}} \) satisfying the assumptions of Lemma \ref{disintegation} (i.e., another disintegration of \( \mu \)), such that for every \( (\omega^-,\omega^+) \in \tilde{\mathcal{B}} \times \mathcal{B} \), we have 
	\begin{align}\label{OLAefa}
		\begin{split}
			&P_{t}(\omega^-,\omega^+) \in \tilde{\mathcal{B}},\\
			&\left(\varphi_{P_{t}(\omega^-,\omega^+)}^{t}\right)_{\star} \tilde{\mu}_{\omega^-} = \tilde{\mu}_{P_{t}(\omega^-,\omega^+)}.
		\end{split}
	\end{align}
	
\end{theorem}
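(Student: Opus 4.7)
The plan is a perfection-type argument in the spirit of the classical results for crude cocycles in random dynamical systems: one starts from the for-fixed-$t$ invariance established in Lemma~\ref{LABBAASS} and upgrades it to an identity holding simultaneously for every $t\geq 0$ on a $P_t$-forward-invariant full-measure set, after redefining the disintegration $\{\mu_{\omega^-}\}$ on a $\mathbf{P}$-null set.

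The starting point is to work at rational times. For each $t\in\mathbb{Q}_+$, Lemma~\ref{LABBAASS} yields a $\mathbf{P}$-full-measure set $\mathcal{B}_t\subset\mathcal{B}$ on which $(\varphi^t_{\omega^-})_\star\mu_{\vartheta_t\omega^-}=\mu_{\omega^-}$. Combined with property~\textbf{(V)} (which gives $\vartheta_t P_t(\omega^-,\omega^+)=\omega^-$), this produces a $\mathbf{P}\times\mathbf{P}$-full-measure set $G_t\subset\mathcal{B}\times\mathcal{B}$ on which $(\varphi^t_{P_t(\omega^-,\omega^+)})_\star\mu_{\omega^-}=\mu_{P_t(\omega^-,\omega^+)}$. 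Setting $G:=\bigcap_{t\in\mathbb{Q}_+}G_t$ gives a single full-measure set carrying this identity for every rational $t$. The measure-preservation of $\{\theta_s\}_{s\in\mathbb{R}}$ from Theorem~\ref{RDS_SDS} then implies that $\tilde G:=\bigcap_{s\in\mathbb{Q}}\theta_s^{-1}(G)$ is of full measure and invariant under rational shifts. Fubini gives that $\tilde{\mathcal B}:=\{\omega^-\in\mathcal B:\mathbf{P}(\{\omega^+:(\omega^-,\omega^+)\in\tilde G\})=1\}$ has full $\mathbf{P}$-measure, and the semigroup relation $P_{t+s}(\omega^-,\omega^+)=P_t(P_s(\omega^-,\omega^+),\vartheta_s\omega^+)$ from~\textbf{(V)} then yields the forward-invariance $P_t(\omega^-,\omega^+)\in\tilde{\mathcal B}$ for $\omega^-\in\tilde{\mathcal B}$ and $\mathbf{P}$-a.e.\ $\omega^+$.

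The perfected random measure is obtained by redefining $\tilde\mu_\eta$ on $\mathcal B\setminus\tilde{\mathcal B}$. For $\eta\in\tilde{\mathcal B}$ set $\tilde\mu_\eta:=\mu_\eta$; otherwise choose any rational $t>0$ with $\vartheta_t\eta\in\tilde{\mathcal B}$ (such $t$ exist densely by the shift invariance of $\tilde{\mathcal B}$) and set $\tilde\mu_\eta:=(\varphi^t_\eta)_\star\mu_{\vartheta_t\eta}$. Independence of the choice of $t$ and compatibility with the invariance at rational times follow from the cocycle property~\eqref{cocycle_SDS} of $\varphi$ applied on $\tilde G$. Measurability of $\eta\mapsto\tilde\mu_\eta$ follows from measurability of the ingredients, and since $\tilde\mu=\mu$ on $\tilde{\mathcal B}$, statement~4) of Lemma~\ref{disintegation} shows that $\{\tilde\mu_\eta\}$ is again a disintegration of $\mu$. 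The extension from rational to real $t\geq 0$ uses the joint continuity of $\varphi$ asserted in Definition~\ref{SDSSS} together with the continuity of $P_t$ in $t$, which allows passage to the limit in $(\varphi^{t_n}_{P_{t_n}(\omega^-,\omega^+)})_\star\tilde\mu_{\omega^-}$ when $t_n\to t$ through rationals.

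The main obstacle is the upgrade from ``$P_t(\omega^-,\omega^+)\in\tilde{\mathcal B}$ for $\mathbf{P}$-a.e.\ $\omega^+$'' to ``for every $\omega^+\in\mathcal B$'', which cannot be obtained from the original disintegration alone since, for fixed $\omega^-$, the image $P_t(\omega^-,\mathcal B)$ unavoidably meets $\mathbf{P}$-null sets. The pullback redefinition of $\tilde\mu$ outside $\tilde{\mathcal B}$ is precisely what absorbs this defect and turns the crude identity into an everywhere-valid one, in the manner familiar from perfection theorems for crude cocycles in random dynamical systems.
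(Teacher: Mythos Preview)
Your perfection strategy is natural, but there are two genuine gaps that the paper's argument avoids by a different construction.

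\textbf{First gap: the set \(\tilde{\mathcal B}\).} The theorem demands that \(P_t(\omega^-,\omega^+)\in\tilde{\mathcal B}\) for \emph{every} \(\omega^+\in\mathcal B\). Your \(\tilde{\mathcal B}\), defined via Fubini from \(\tilde G\), only gives this for \(\mathbf P\)-a.e.\ \(\omega^+\), as you acknowledge. Redefining \(\tilde\mu\) on the null set \(\mathcal B\setminus\tilde{\mathcal B}\) may repair the measure identity there, but it does not change the set \(\tilde{\mathcal B}\) itself, so the first conclusion remains unproved. Moreover, the redefinition you propose for \(\eta\notin\tilde{\mathcal B}\) presupposes a rational \(t\) with \(\vartheta_t\eta\in\tilde{\mathcal B}\); for an arbitrary \(\eta\) in a null set this need not exist.

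\textbf{Second gap: the continuity step.} Passing from rational to real \(t\) via continuity controls the left-hand side \((\varphi^{t_n}_{P_{t_n}(\omega^-,\omega^+)})_\star\tilde\mu_{\omega^-}\), but gives nothing for the right-hand side \(\tilde\mu_{P_{t_n}(\omega^-,\omega^+)}\), because \(\omega^-\mapsto\tilde\mu_{\omega^-}\) is only measurable, not continuous.

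The paper circumvents both problems by a single device: it replaces \(\mu_{\omega^-}\) by the Ces\`aro averages
\[
\hat\mu_{\omega^-}:=\limsup_{n\to\infty}\frac{1}{n}\int_0^n(\varphi^s_{\omega^-})_\star\mu_{\vartheta_s\omega^-}\,\mathrm{d}s,
\qquad \check\mu_{\omega^-}:=\liminf_{n\to\infty}(\cdots),
\]
and sets \(\tilde{\mathcal B}:=\{\omega^-:\hat\mu_{\omega^-}=\check\mu_{\omega^-}\}\). By the SDS cocycle relation \(\varphi^{t+s}_{P_t(\omega^-,\omega^+)}=\varphi^t_{P_t(\omega^-,\omega^+)}\circ\varphi^s_{\omega^-}\) together with \(\vartheta_{t+s}P_t(\omega^-,\omega^+)=\vartheta_s\omega^-\) from \textbf{(V)}, the Ces\`aro integral at \(P_t(\omega^-,\omega^+)\) equals a finite time shift of the one at \(\omega^-\), pushed forward by the fixed map \(\varphi^t_{P_t(\omega^-,\omega^+)}\). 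Hence existence of the limit at \(\omega^-\) \emph{deterministically} forces existence at \(P_t(\omega^-,\omega^+)\) for every \(\omega^+\) and every real \(t\ge 0\), yielding simultaneously the forward invariance of \(\tilde{\mathcal B}\) and the invariance identity, with no continuity argument and no a.e.\ exception in \(\omega^+\).
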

\proof
	Similar to Lemma~\ref{LABBAASS} we use the fact that the composition of Borel measurable functions is again Borel measurable. For every $\omega^-\in\mathcal{B}$, we set
	\begin{align*}
		&\hat{\mu}_{\omega^-}:=\limsup_{n\rightarrow \infty}\frac{1}{n}\int_{0}^{n}(\varphi^{s}_{\omega^-})_{\star}\mu_{\vartheta_{s}\omega^-}\mathrm{d}s,\\
		& \check{\mu}_{\omega^-}:=\liminf_{n\rightarrow \infty}\frac{1}{n}\int_{0}^{n}(\varphi^{s}_{\omega^-})_{\star}\mu_{\vartheta_{s}\omega^-}\mathrm{d}s,
	\end{align*}
	where this means that for every \( C \in \sigma(\mathcal{B}) \)
	\begin{align*}
		\hat{\mu}_{\omega^-}(C) = \limsup_{n \to \infty} \frac{1}{n} \int_0^n (\varphi^s_{\omega^-})_{\star} \mu_{\vartheta_s \omega^-}(C)\mathrm{d}s,
	\end{align*}
	and similarly for \( \check{\mu}_{\omega^-} \).
	First, note that these Lebesque integrals are well-defined. Indeed, for every \( C \in \sigma(\mathcal{X}) \), the following map is jointly measurable
	\begin{align}\label{AAS}
		(s,\omega^-)\longrightarrow (\varphi^{s}_{\omega^-})_{\star}\mu_{\vartheta_{s}\omega^-}(C)= \int_{\mathcal{X}}\mathlarger{\chi}_{C}(\varphi^{s}_{\omega^-}(x))\mu_{\vartheta_{s}\omega^-}(\mathrm{d}x).
	\end{align}
	To establish this, we begin with the jointly measurable function
	\[
	(s, x, \omega^-) \mapsto \mathlarger{\chi}_{C}(\varphi^{s}_{\omega^-}(x)).
	\]
	We then apply the monotone class lemma~\cite[Lemma~2.35]{Fol99}, together with 2) in Lemma~\ref{disintegation} and the fact that $\vartheta_s:\mathcal{B}\to\mathcal{B} $ is a measurable map, to conclude the measurability of the map defined in \eqref{AAS}; see also \cite[Proposition~2.34 and Theorem~2.36]{Fol99}.
	In particular, for every $\omega^-\in\mathcal{B}$, the map
	\[
	s \mapsto (\varphi^{s}_{\omega^-})_{\star}\mu_{\vartheta_{s}\omega^-}(C)
	\]
	is measurable. Therefore, the associated integrals are well-defined.~Moreover, for every $n\geq 1$, the following map 
	\begin{align*}
		\omega^-\longrightarrow\frac{1}{n} \int_0^n (\varphi^s_{\omega^-})_{\star} \mu_{\vartheta_s \omega^-}(C)\mathrm{d}s
	\end{align*}
	is measurable. This in particular yields that $\hat{\mu}_{\omega^-}(C)$ and $\check{\mu}_{\omega^-}(C)$ are measurable (since the lim sup and lim inf are taken over a countable set). Additionally, note that from  \eqref{dis_2} and \eqref{dis3}, for every \(n \geq 1\), the family of \(\left\{ \frac{1}{n}\int_{0}^{n}(\varphi^{s}_{\omega^-})_{\star}\mu_{\vartheta_{s}\omega^-}\mathrm{d}s \right\}_{\omega^-\in\mathcal{B}}\) satisfies the assumption in Lemma \ref{disintegation}. 
	Therefore, by Property~4) of this lemma, for every \( n \in \mathbb{N} \), there exists a measurable subset \( \mathcal{B}^n \subseteq \mathcal{B} \) of full measure such that, for every \( \omega^- \in \mathcal{B}^n \), we have  
	\begin{align}\label{NNNa}  
		\frac{1}{n} \int_{0}^{n} (\varphi^{s}_{\omega^-})_{\star} \mu_{\vartheta_{s} \omega^-} \, \mathrm{d}s = \mu_{\omega^-}.
	\end{align}  
	In particular, by setting \( \mathcal{B}^\prime := \bigcap_{n \in \mathbb{N}} \mathcal{B}^n \), we obtain a measurable subset of \( \mathcal{B} \) with full measure such that \eqref{NNNa} holds for every \( n \in \mathbb{N} \) and every \( \omega^- \in \mathcal{B}^\prime \). This implies that the families \( \lbrace \hat{\mu}_{\omega^-} \rbrace_{\omega^- \in \mathcal{B}} \) and \( \lbrace \check{\mu}_{\omega^-} \rbrace_{\omega^- \in \mathcal{B}} \) also satisfy the assumptions of Lemma \ref{disintegation}. Indeed, the first two conditions in Lemma~\ref{disintegation} follow immediately. For the third condition, note that, using \eqref{dis3} and the Fubini theorem, we obtain
	\begin{align*}
		\int_{C}\mu_{\omega^-}(A)\,\mathbf{P}(\mathrm{d}\omega^-)
		&= \frac{1}{n}\int_{0}^{n}\int_{C} (\varphi^{s}_{\omega^-})_{\star}\mu_{\vartheta_{s}\omega^-}(A)\,\mathbf{P}(\mathrm{d}\omega^-)\,\mathrm{d}s
		\\& = \int_{C}\bigg(\frac{1}{n}\int_{0}^{n}(\varphi^{s}_{\omega^-})_{\star}\mu_{\vartheta_{s}\omega^-}\,\mathrm{d}s\bigg)(A)\mathbf{P}(\mathrm{d}\omega^-).
	\end{align*}
	Thus, since \eqref{NNNa} holds for every \( n\in\N  \) in \( \mathcal{B}^\prime \), we can apply the dominated convergence theorem to interchange the limit and the integral. This verifies the third condition for both families \( \lbrace \hat{\mu}_{\omega^-} \rbrace_{\omega^- \in \mathcal{B}} \) and \( \lbrace \check{\mu}_{\omega^-} \rbrace_{\omega^- \in \mathcal{B}} \), which are equal on \( \mathcal{B}^\prime \).
	The uniqueness statement in Lemma \ref{disintegation} indicates that for
	\begin{align*}
		\tilde{\mathcal{B}}:=\{\omega^- \in \mathcal{B}: \hat{\mu} = \check{\mu}\},
	\end{align*}
	we have \( \mathbf{P}(\tilde{\mathcal{B}}) = 1 \). Note that \( \mathcal{B}^\prime \subseteq \tilde{\mathcal{B}} \). 
	Now, we prove the for every $\omega^{-}\in \tilde{B}$, $\omega^+\in \mathcal{B}$ and $t\geq 0$
	\begin{align}\label{AYsh}
		P_{t}(\omega^-,\omega^+)\in\tilde{\mathcal{B}}.
	\end{align}
	To this aim, first note that by the cocycle property of \( \varphi \) together with Property~\hyperref[V]{\textbf{(V)}} in Definition~\ref{SDS}, we have
	\[
	\varphi^{t+s}_{P_{t}(\omega^-,\omega^+)} = \varphi^{t}_{P_{t}(\omega^-,\omega^+)} \circ \varphi^{s}_{\omega^-}.
	\]
	Therefore
	\begin{align*}
		\int_0^n (\varphi^{t+s}_{P_{t}(\omega^-,\omega^+)})_{\star} \mu_{\vartheta_s \omega^-}(C)\mathrm{d}s=\int_0^n (\varphi^{s}_{\omega^-})_{\star} \mu_{\vartheta_s \omega^-}\left( \left(\varphi_{P_{t}(\omega^-,\omega^+)}^{t}\right)^{-1}(C)\right)\mathrm{d}s.
	\end{align*}
	Since \(\left(\varphi_{P_{t}(\omega^-,\omega^+)}^{t}\right)^{-1}(C) \in \sigma(\mathcal{X})\) and \(\omega^- \in \tilde{\mathcal{B}}\), we conclude that
	\begin{align}\label{ATGAs}
		\lim_{n\rightarrow\infty}\frac{1}{n}\int_0^n (\varphi^{t+s}_{P_{t}(\omega^-,\omega^+)})_{\star} \mu_{\vartheta_s \omega^-}(C)\mathrm{d}s, \ \ \ \text{exists}.
	\end{align}
	Note that due to~\eqref{V} we have $\vartheta_{t+s} P_{t}(\omega^-,\omega^+)= \vartheta_s P_t(\vartheta_{t}\omega_{-}, P_t(\omega_+,\omega_-))=\vartheta_{s}\omega^-$. This implies that
	\begin{align*}
		&\int_0^n (\varphi^{t+s}_{P_{t}(\omega^-,\omega^+)})_{\star} \mu_{\vartheta_s \omega^-}(C)\mathrm{d}s=\int_{0}^{n}(\varphi^{t+s}_{P_{t}(\omega^-,\omega^+)})_{\star} \mu_{\vartheta_{t+s} P_{t}(\omega^-,\omega^+)}(C)\mathrm{d}s\\&\quad=\int_{t}^{n+t}(\varphi^{s}_{P_{t}(\omega^-,\omega^+)})_{\star} \mu_{\vartheta_{s} P_{t}(\omega^-,\omega^+)}(C)\mathrm{d}s.
	\end{align*}
	Therefore, form \eqref{ATGAs}
	\begin{align*}
		\lim_{n\rightarrow\infty}\frac{1}{n}\int_0^n (\varphi^{s}_{P_{t}(\omega^-,\omega^+)})_{\star} \mu_{\vartheta_{s} P_{t}(\omega^-,\omega^+)}(C)\mathrm{d}s \ \ \ \text{exists}.
	\end{align*}
	This yields \eqref{AYsh}. In particular
	\begin{align}\label{sfddad}
		\begin{split}
			\lim_{n\rightarrow\infty}\frac{1}{n}&\int_0^n (\varphi^{t+s}_{P_{t}(\omega^-,\omega^+)})_{\star} \mu_{\vartheta_s \omega^-}(C)\mathrm{d}s=\lim_{n\rightarrow\infty}\frac{1}{n}\int_0^n (\varphi^{s}_{P_{t}(\omega^-,\omega^+)})_{\star} \mu_{\vartheta_{s} P_{t}(\omega^-,\omega^+)}(C)\mathrm{d}s\\&= \check{\mu}_{P_{t}(\omega^-,\omega^+)}(C)=\hat{\mu}_{P_{t}(\omega^-,\omega^+)}(C).
		\end{split}
	\end{align}
	Now we prove that for \( (\omega^-, \omega^+) \in \tilde{\mathcal{B}} \times \mathcal{B} \) and \( t \geq 0 \),
	\begin{align}\label{55sd65fr}
		\begin{split}  
			\left(\varphi_{P_{t}(\omega^-,\omega^+)}^{t}\right)_{\star} \hat{\mu}_{\omega^-}
			&= \hat{\mu}_{P_{t}(\omega^-,\omega^+)} \\
			&= \left(\varphi_{P_{t}(\omega^-,\omega^+)}^{t}\right)_{\star} \check{\mu}_{\omega^-}
			= \check{\mu}_{P_{t}(\omega^-,\omega^+)}.
		\end{split}
	\end{align}
	First, note that by definition
	\begin{align*}
		&\left(\varphi_{P_{t}(\omega^-,\omega^+)}^{t}\right)_{\star}\hat{\mu}_{\omega^-}(C)=\hat{\mu}_{\omega^-}\left( \left(\varphi_{P_{t}(\omega^-,\omega^+)}^{t}\right)^{-1}(C)\right)=\\
		&\limsup_{n \to \infty} \frac{1}{n} \int_0^n (\varphi^s_{\omega^-})_{\star} \mu_{\vartheta_s \omega}\left( \left(\varphi_{P_{t}(\omega^-,\omega^+)}^{t}\right)^{-1}(C)\right)\mathrm{d}s=\limsup_{n\rightarrow\infty}\frac{1}{n}\int_0^n (\varphi^{t+s}_{P_{t}(\omega^-,\omega^+)})_{\star} \mu_{\vartheta_s \omega^-}(C)\mathrm{d}s.
	\end{align*}
	The same arguments apply to \( \check{\mu} \) as well. Therefore, together with \eqref{sfddad}, we can deduce \eqref{55sd65fr}.

	Now we can finish the proof by setting either \( \tilde{\mu}_{\omega^-} = \hat{\mu}_{\omega^-} \) or \( \tilde{\mu}_{\omega^-} = \check{\mu}_{\omega^-} \) for every \( \omega^- \in \mathcal{B} \). Note that \( \hat{\mu}_{\omega^-} \) and \( \check{\mu}_{\omega^-} \) are not necessarily equal on \( \mathcal{B} \setminus \tilde{\mathcal{B}} \); however, the measure of this set is zero.
\qed
The preceding theorem yields the following result, which implies that the set \( \tilde{\mathcal{B}}\times\mathcal{B} \) is \( \theta \) invariant.
\begin{corollary}\label{random_measure}
	Let \( \varphi \) be a continuous SDS with invariant measure \( \mu \), and let \( \Phi \) be the continuous RDS defined in Theorem~\ref{RDS_SDS}. Under the assumptions of Theorem \ref{disintegration_2}, the following statements hold:
	\begin{enumerate}
		\item $\mathbf{P}\times\mathbf{P}(\tilde{\mathcal{B}}\times \mathcal{B})=1$.
		\item  $\theta_{t}(\tilde{\mathcal{B}}\times \mathcal{B})\subseteq \tilde{\mathcal{B}}\times \mathcal{B}$ for every $t\geq 0$
		\item For every element \((\omega^-, \omega^+) \in \tilde{\mathcal{B}} \times \mathcal{B}\) and every $t\geq 0$ we have
		\begin{align*}
			(\Phi^{t}_{(\omega^-,\omega^+)})_{\star}\mu_{\omega^-}=\mu_{P_{t}(\omega^-,\omega^+)}.
		\end{align*}
\end{enumerate}
\end{corollary}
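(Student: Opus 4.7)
The plan is to deduce all three assertions directly from Theorem~\ref{disintegration_2}, which already contains the essential analytical work; the corollary is essentially an assembly step that repackages \eqref{AYsh} and \eqref{OLAefa} into the language of the RDS $\Phi$. Throughout I identify the measures $\mu_{\omega^-}$ in the statement of the corollary with the particular disintegration $\tilde{\mu}_{\omega^-}$ constructed in Theorem~\ref{disintegration_2} (this identification is legitimate, since the uniqueness clause in Lemma~\ref{disintegation} ensures that any two such disintegrations agree $\mathbf{P}$-almost everywhere).

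First I would dispatch item~1: by Theorem~\ref{disintegration_2} we have $\mathbf{P}(\tilde{\mathcal{B}}) = 1$, and since $\mathbb{P} = \mathbf{P} \times \mathbf{P}$ is the product measure on $\mathcal{B} \times \mathcal{B}$, Fubini's theorem (or simply the definition of the product measure on rectangles) gives
\[
(\mathbf{P} \times \mathbf{P})(\tilde{\mathcal{B}} \times \mathcal{B}) = \mathbf{P}(\tilde{\mathcal{B}}) \cdot \mathbf{P}(\mathcal{B}) = 1.
\]
For item~2, I would unpack the definition \eqref{semigr} of $\theta_t$ for $t \geq 0$, which yields
\[
\theta_t(\omega^-, \omega^+) = \bigl( P_t(\omega^-, \omega^+),\, \vartheta_t \omega^+ \bigr).
\]
Since $\vartheta_t \omega^+ \in \mathcal{B}$ trivially, the inclusion $\theta_t(\tilde{\mathcal{B}} \times \mathcal{B}) \subseteq \tilde{\mathcal{B}} \times \mathcal{B}$ reduces to verifying $P_t(\omega^-, \omega^+) \in \tilde{\mathcal{B}}$ for every $\omega^- \in \tilde{\mathcal{B}}$ and every $\omega^+ \in \mathcal{B}$. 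This is precisely the content of \eqref{AYsh} already established inside the proof of Theorem~\ref{disintegration_2}, so nothing further is needed.

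For item~3, I would simply translate \eqref{OLAefa} into the cocycle notation. From the definition \eqref{COCO} of the RDS, $\Phi^t_{(\omega^-,\omega^+)}(x) = \varphi^t_{P_t(\omega^-,\omega^+)}(x)$, so as maps of $\mathcal{X}$ into itself, $\Phi^t_{(\omega^-,\omega^+)} = \varphi^t_{P_t(\omega^-,\omega^+)}$. Applying the pushforward gives
\[
\bigl( \Phi^t_{(\omega^-,\omega^+)} \bigr)_\star \mu_{\omega^-} = \bigl( \varphi^t_{P_t(\omega^-,\omega^+)} \bigr)_\star \mu_{\omega^-} = \mu_{P_t(\omega^-,\omega^+)},
\]
where the last equality is exactly the second line of \eqref{OLAefa}, valid on $\tilde{\mathcal{B}} \times \mathcal{B}$ by Theorem~\ref{disintegration_2}. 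There is no genuine obstacle here: the nontrivial work, namely the construction of the $\theta$-invariant set $\tilde{\mathcal{B}}$ of full measure and the Ces\`aro-averaging argument that produces disintegrations $\hat{\mu}, \check{\mu}$ which coincide on $\tilde{\mathcal{B}}$ and satisfy the invariance \eqref{OLAefa}, has already been carried out. The only point requiring mild care is the notational identification between $\mu_{\omega^-}$ as appearing in the corollary and $\tilde{\mu}_{\omega^-}$ from the theorem, which I would address with a brief remark invoking Property~4) of Lemma~\ref{disintegation}.
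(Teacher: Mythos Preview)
Your proposal is correct and follows essentially the same approach as the paper: both proofs recognize that the corollary is a direct repackaging of Theorem~\ref{disintegration_2} via the defining relations $\theta_t(\omega^-,\omega^+) = (P_t(\omega^-,\omega^+),\vartheta_t\omega^+)$ and $\Phi^t_{(\omega^-,\omega^+)} = \varphi^t_{P_t(\omega^-,\omega^+)}$. Your version is in fact more explicit than the paper's (which simply says the properties follow ``as a direct consequence''), and your remark about identifying $\mu_{\omega^-}$ with the $\tilde{\mu}_{\omega^-}$ of Theorem~\ref{disintegration_2} via Property~4) of Lemma~\ref{disintegation} is a worthwhile clarification.
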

\proof
The first property is clear. To prove the remaining properties, recall that for \( t>0 \) and \( x \in \mathcal{X} \),
\begin{align*}
	\theta_{t}(\omega^-,\omega^+) = \left(P_{t}(\omega^-,\omega^+), \vartheta_{t}\omega^+\right),
\end{align*}
and that
\[
\Phi^{t}_{(\omega^-,\omega^+)}(x) = \varphi^{t}_{P_{t}(\omega^-,\omega^+)}(x).
\]
Thus, as a direct consequence of Theorem~\ref{disintegration_2} 
the remaining properties can be derived using these relations.
\qed
\section{Stochastic differential equations driven by fractional Brownian motion}\label{sec:sde}
The aim of this section is to show how to obtain a stochastic dynamical system and thereafter a random dynamical system from stochastic differential equations driven by fractional Brownian motion. More precisely, we consider the following stochastic differential equation
\begin{align}\label{MAIN}
\begin{cases}
	\mathrm{d}Y_t = F(Y_t)\, \mathrm{d}t + \sigma \mathrm{d}B^H_t, \\
	Y_0=x \in \mathbb{R}^d,
\end{cases}
\end{align}
where \( d \geq 1 \) and \( (B^H_t)_{t\geq 0} \) denotes a \( d \)-dimensional fractional Brownian motion with Hurst parameter \( H \in (0,1) \).
\begin{assumptions}\label{Drift} 
We make the following assumptions on the coefficients of~\eqref{MAIN}.
\begin{enumerate}
	\item [1)] Let \(\sigma : \mathbb{R}^d \rightarrow \mathbb{R}^d\) be an invertible matrix.
	\item [2)]  For the vector field \( F : \mathbb{R}^d \to \mathbb{R}^d \), we assume the existence of constants \( C^F_i > 0 \) for \( i \in \{1,2,3\} \), such that for all \( \xi_1, \xi_2 \in \mathbb{R}^d \),
	\begin{align*}
		\langle F(\xi_2) - F(\xi_1), \xi_2 - \xi_1 \rangle \leq \min \left\{ C_1^F - C_2^F |\xi_2 - \xi_1|^2, C_3^F |\xi_2 - \xi_1|^2 \right\}.
	\end{align*}
	\item [3)] We further assume that \( F \) is differentiable and satisfies the following polynomial growth condition for some constant \( C_F > 0 \) and \( N \geq 1 \):
	\begin{align*}
		|F(\xi)| \leq C_F(1 + |\xi|)^N, \quad |\mathrm{D}_\xi F| \leq C_F(1 + |\xi|)^N \quad \text{for all } \xi \in \mathbb{R}^d.
	\end{align*}
	\item [4)] If \( \frac{1}{2} < H < 1 \), we assume that the derivative of \( F \) is globally bounded.
\end{enumerate}
\end{assumptions}

\subsection{Well-posedness and generation of a random dynamical system}
For the sake of completeness, we first recall the solution concept of~\eqref{MAIN} together with a well-posedness result established in~\cite{Hai05}.~This ensures that~\eqref{MAIN} generates a continuous RDS from which we can obtain a continuous RDS from Theorem~\ref{RDS_SDS} . 
\begin{definition}\label{SASDdfe}
Let $(Y_t)_{t \geq 0}$ be a stochastic process with continuous trajectories. We call \(Y\) a solution of the SDE \eqref{MAIN}, if the stochastic process 
\begin{align*}
	Y_t-Y_0-\int_{0}^{t}F(Y_s)\mathrm{d}s,
\end{align*}
is equal in law to $\sigma B^{H}_t$.
\end{definition}
\begin{remark}
\item [1)] One can solve the SDE~\eqref{MAIN} for a broader class of initial data, called generalized initial conditions~\cite[Definition 2.14]{Hai05}). However, since we are also interested in a random dynamical systems approach, we work with initial data $Y_0=x\in \R^d$.
\item [2)] The SDE~\eqref{MAIN} can be solved pathwise, using a standard flow transformation by subtracting the noise.
\end{remark}

\begin{lemma}{\em(\cite[Lemma 3.9]{Hai05})}\label{OALSdfe}
Let Assumption~\ref{Drift} hold and fix \(T > 0\). For \(x \in \mathbb{R}^d\) 
and \(\omega \in C_0([0,T], \mathbb{R}^d)\), the ODE with random non-autonomous coefficients
\begin{align}\label{ASdd}
	\dot{Z}_t = F\big(Z_t + x + \sigma\omega(t)\big), \quad Z_0 = 0,
\end{align}
has a unique solution for \(t \in [0,T]\). In particular, setting
\begin{align}\label{Sjdks}
	\tilde{\Phi}_T(t,x,\omega):=\tilde{\Phi}_T(x,\omega)(t) := Z_t + x + \sigma\omega(t),
\end{align}
we have for all \(t \in [0,T]\) that
\begin{align}\label{Sjdks33}
	\tilde{\Phi}_T(t,x,\omega) = x + \sigma \omega(t) + \int_0^t F\big(\tilde{\Phi}_T(s,x,\omega)\big) \, \mathrm{d}s.
\end{align}
Moreover, the map
\begin{align*}
	\tilde{\Phi}_T : \mathbb{R}^d \times C_0([0,T], \mathbb{R}^d) &\to C([0,T], \mathbb{R}^d), \\
	(x,\omega) &\mapsto \tilde{\Phi}_T(x,\omega),
\end{align*}
is locally Lipschitz continuous. Also, for
\begin{align}\label{TGBAS}
	\begin{split}
		\varphi : \mathbb{R}^+ \times \mathbb{R}^d \times \mathcal{B}_H &\to \mathbb{R}^d, \\
		(t,x,\omega) &\mapsto \varphi^t_\omega(x) := \tilde{\Phi}_t\bigl(t, x, R_t \mathcal{D}_H \omega\bigr),
	\end{split}
\end{align}
it holds that \( \varphi \) is a continuous stochastic dynamical system over \( \left(\mathcal{B}_H, \{P_t\}_{t \geq 0}, \mathbf{P}, \{\vartheta_t\}_{t \geq 0} \right) \).
\end{lemma}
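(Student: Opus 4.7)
The overall plan is to decouple the stochasticity from the dynamics via the flow transformation $Z_t := Y_t - x - \sigma\omega(t)$ and then treat the resulting ODE~\eqref{ASdd} as a non-autonomous ODE on $[0,T]$ with continuous coefficients. Once the random ODE is solved pathwise for each $\omega \in C_0([0,T],\mathbb{R}^d)$, the result for \eqref{MAIN} follows by adding back the noise, and the SDS axioms in Definition~\ref{SDSSS} reduce to continuity estimates together with an algebraic identity verified using property~\hyperref[V]{\textbf{(V)}} of the stationary noise process constructed in Lemma~\ref{Olafads}.

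For the well-posedness of~\eqref{ASdd}, I would first obtain local existence of a continuous solution via Peano's theorem (using the continuity of $F$ and $\omega$), and then upgrade this to global existence on $[0,T]$ using an energy estimate. Writing $U_t := Z_t + x + \sigma\omega(t)$ and using the first bound in Assumption~\ref{Drift}(2) applied to $\xi_2 = U_t$ and $\xi_1 = x + \sigma\omega(t)$, an a~priori estimate of the form $\frac{\mathrm{d}}{\mathrm{d}t}|Z_t|^2 \leq C(1+|Z_t|^2) + C|\omega(t)|^{2N}$ follows via Cauchy--Schwarz together with the polynomial growth in Assumption~\ref{Drift}(3). Gronwall's inequality then rules out finite-time blow-up, uniformly in $t\in[0,T]$ and locally in the data $(x,\omega)$. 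For uniqueness, if $Z$ and $\tilde Z$ are two solutions with the same data, the second bound in Assumption~\ref{Drift}(2) yields $\frac{\mathrm{d}}{\mathrm{d}t}|Z_t-\tilde Z_t|^2 \leq 2C_3^F |Z_t - \tilde Z_t|^2$, and Gronwall forces $Z = \tilde Z$.

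For the local Lipschitz continuity of $\tilde\Phi_T : \mathbb{R}^d \times C_0([0,T],\mathbb{R}^d) \to C([0,T],\mathbb{R}^d)$, I would fix a bounded set in $\mathbb{R}^d \times C_0([0,T],\mathbb{R}^d)$ and combine the a~priori bound from the previous step with the representation~\eqref{Sjdks33}. For two pairs $(x_i,\omega_i)$, the difference $D_t := \tilde\Phi_T(t,x_1,\omega_1) - \tilde\Phi_T(t,x_2,\omega_2)$ satisfies
\begin{equation*}
|D_t| \leq |x_1-x_2| + |\sigma|\, \|\omega_1-\omega_2\|_{\infty;[0,T]} + \int_0^t \bigl|F(\tilde\Phi_T(s,x_1,\omega_1)) - F(\tilde\Phi_T(s,x_2,\omega_2))\bigr|\,\mathrm{d}s,
\end{equation*}
and using $|DF|\lesssim (1+|\xi|)^N$ together with the uniform pointwise bound on $\tilde\Phi_T$ on the fixed bounded set, a Gronwall argument closes the estimate. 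The global boundedness of $DF$ assumed for $H\in(1/2,1)$ is not needed here; it enters only later when passing from the ODE to the fractional SDE and handling the nontrivial regularity of $\mathcal{D}_H\omega$.

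Finally, to check that~\eqref{TGBAS} defines a continuous SDS, I would verify the two bullets in Definition~\ref{SDSSS}. Joint continuity of the map $\tilde\varphi_T$ in $(x,\omega) \in \mathbb{R}^d\times\mathcal{B}_H$ follows by composing the Lipschitz map $\tilde\Phi_T$ with the continuous operator $\mathcal{D}_H : \mathcal{B}_H \to \mathcal{B}_{1-H}$ from Lemma~\ref{AJSJAsd} and the restriction/translation operator $R_t$; the fact that $\tilde\varphi_T(x,\omega) \in C([0,T],\mathbb{R}^d)$ is a direct consequence of the continuity of the flow. The identity $\varphi^0_\omega(x)=x$ is clear from $Z_0=0$. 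The main algebraic step is the cocycle identity $\varphi^{t+s}_\omega(x)=\varphi^s_\omega\circ\varphi^t_{\vartheta_s\omega}(x)$, and this is exactly the step where the stationary noise process structure is essential: using the relation $P_{t+s}(\omega^-,\omega^+)=P_t(P_s(\omega^-,\omega^+),\vartheta_s\omega^+)$ from~\hyperref[V]{\textbf{(V)}} together with the decomposition~\eqref{SALOSWa} from Lemma~\ref{SDASASASAa} identifying the fBm on $[0,t+s]$ as the concatenation of the history piece $\mathcal{P}(\omega^-)$ and the innovation $\tilde B^H$, the restarted flow $\tilde\Phi$ after time $t$ agrees with the flow driven by the shifted noise. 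I expect this last verification to be the main technical obstacle, as it requires carefully tracking the splitting of the noise into past and future and ensuring that the pathwise definition of $\varphi^t_\omega$ is compatible with the semi-flow $\vartheta_t$ on $\mathcal{B}_H$; all other pieces reduce to the ODE estimates above and the already-established properties of $\mathcal{D}_H$.
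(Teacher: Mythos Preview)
The paper does not give its own proof of this lemma; it is quoted from \cite[Lemma~3.9]{Hai05}. Your treatment of the ODE part (local existence via Peano, a~priori bound from Assumption~\ref{Drift}(2)--(3), uniqueness from the one-sided Lipschitz bound, and then Gronwall for local Lipschitz dependence) is the standard route and matches what is done in \cite{Hai05}.

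There is, however, a genuine confusion in your verification of the SDS cocycle property~\eqref{cocycle_SDS}. You propose to use the relation $P_{t+s}(\omega^-,\omega^+)=P_t(P_s(\omega^-,\omega^+),\vartheta_s\omega^+)$ from \hyperref[V]{\textbf{(V)}} and the past/future decomposition~\eqref{SALOSWa}. But the SDS $\varphi$ is defined on a \emph{single} copy of $\mathcal{B}_H$: there is no pair $(\omega^-,\omega^+)$ and no $P_t$ in sight at this stage. Those objects enter only later, in Theorem~\ref{RDS_SDS}, when one builds the RDS on $\mathcal{B}_H\times\mathcal{B}_H$. The SDS cocycle $\varphi^{t+s}_\omega = \varphi^s_\omega \circ \varphi^t_{\vartheta_s\omega}$ requires instead the commutation of $\mathcal{D}_H$ with the one-sided shift $\vartheta_s$, namely \eqref{RR_TT2}: $(\mathcal{D}_H\vartheta_s\omega)(r)=\mathcal{D}_H\omega(r-s)-\mathcal{D}_H\omega(-s)$. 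From this one reads off $(R_t\mathcal{D}_H\vartheta_s\omega)(\tau)=(R_{t+s}\mathcal{D}_H\omega)(\tau)$ for $\tau\in[0,t]$ and $(R_s\mathcal{D}_H\omega)(\tau)=(R_{t+s}\mathcal{D}_H\omega)(t+\tau)-(R_{t+s}\mathcal{D}_H\omega)(t)$ for $\tau\in[0,s]$, so the noise driving $\varphi^t_{\vartheta_s\omega}$ is the initial segment of the noise driving $\varphi^{t+s}_\omega$, and the noise driving $\varphi^s_\omega$ is its shifted-and-recentered final segment. The cocycle identity then follows from the flow property of $\tilde\Phi_T$ (uniqueness of ODE solutions). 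The same computation also underlies the first bullet of Definition~\ref{SDSSS}, since it shows $\tilde\varphi_T(x,\omega)(t)=\varphi^t_{\vartheta_{T-t}\omega}(x)=\tilde\Phi_T(t,x,R_T\mathcal{D}_H\omega)$, after which continuity is immediate from your Lipschitz estimate on $\tilde\Phi_T$ composed with the continuous maps $R_T$ and $\mathcal{D}_H$.
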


\begin{corollary}\label{CCC11}
For each $\omega \in C_0([0,T]; \mathbb{R}^d)$ and $t \in [0,T]$, the mapping
\[
x \longmapsto \tilde{\Phi}_t(t, x, \omega)
\]
is of class $C^1$ with respect to $x$. Moreover, $\varphi$ is a $C^1$ stochastic dynamical system.
\end{corollary}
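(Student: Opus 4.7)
The plan is to apply classical $C^1$-dependence on initial conditions for ordinary differential equations with smooth coefficients to the integral equation~\eqref{Sjdks33}. Fix $\omega \in C_0([0,T];\mathbb{R}^d)$, and for $x \in \mathbb{R}^d$ set $Y^x_s := \tilde{\Phi}_s(s,x,\omega)$ on $[0,T]$, so that $Y^x$ solves $Y^x_s = x + \sigma\omega(s) + \int_0^s F(Y^x_r)\,\mathrm{d}r$. Since $\omega$ is continuous on the compact interval $[0,T]$ and the dissipativity in Assumption~\ref{Drift}~2) provides a priori $L^\infty$-bounds on $Y^x$ which are locally uniform in $x$, I may restrict $F$ and $\mathrm{D} F$ to a sufficiently large ball on which both are bounded. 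I then define the candidate Fr\'echet derivative $J^x_s \in \mathbb{R}^{d \times d}$ as the solution of the linear variational equation
\begin{equation*}
J^x_s = \mathrm{Id} + \int_0^s \mathrm{D} F(Y^x_r)\, J^x_r\,\mathrm{d}r,
\end{equation*}
which is well-posed since $r \mapsto \mathrm{D} F(Y^x_r)$ is continuous and bounded on the relevant set.

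To establish that $\partial_x \tilde{\Phi}_s(s,x,\omega) = J^x_s$ in the Fr\'echet sense, I subtract the integral equations for $Y^{x+h}$ and $Y^x$, add and subtract $\mathrm{D} F(Y^x_r)\,(Y^{x+h}_r - Y^x_r)$ inside the integrand, and split
\begin{equation*}
Y^{x+h}_s - Y^x_s - J^x_s h = \int_0^s \bigl[F(Y^{x+h}_r) - F(Y^x_r) - \mathrm{D} F(Y^x_r)(Y^{x+h}_r - Y^x_r)\bigr]\,\mathrm{d}r + \int_0^s \mathrm{D} F(Y^x_r)\bigl(Y^{x+h}_r - Y^x_r - J^x_r h\bigr)\,\mathrm{d}r.
\end{equation*}
A standard Lipschitz estimate on the relevant ball yields $|Y^{x+h}_s - Y^x_s| \lesssim |h|$, and the $C^1$-regularity of $F$ implies that the first integrand is $o(|h|)$ uniformly in $r \in [0,t]$; Gr\"onwall's lemma applied to the second integral then yields $|Y^{x+h}_s - Y^x_s - J^x_s h| = o(|h|)$ uniformly on $[0,t]$, which proves Fr\'echet differentiability.

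Continuity of $x \mapsto J^x_t$ follows from continuous dependence of linear ODEs on their coefficients together with the continuity of $x \mapsto Y^x$ guaranteed by Lemma~\ref{OALSdfe}, giving the first assertion. For the second statement, since $R_t \mathcal{D}_H \omega \in C_0([0,t],\mathbb{R}^d)$ does not depend on $x$, the composition $x \mapsto \varphi^t_\omega(x) = \tilde{\Phi}_t(t,x, R_t \mathcal{D}_H \omega)$ inherits $C^1$-Fr\'echet differentiability from the first part, so by Definition~\ref{SDSSS} the SDS $\varphi$ is $C^1$. The main obstacle I foresee is handling the polynomial (rather than globally bounded) growth of $F$ and $\mathrm{D} F$, but this is entirely absorbed by the a priori $L^\infty$-bounds on solutions coming from the monotone dissipativity hypothesis in Assumption~\ref{Drift}~2), reducing the argument to the classical $C^1$-dependence theorem for ODEs with locally Lipschitz coefficients.
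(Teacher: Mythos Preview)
Your proof is correct and follows exactly the approach the paper intends: the paper's own proof is the single line ``This follows from \eqref{ASdd} and the $C^1$-regularity of the vector field $F$,'' i.e.\ an appeal to classical $C^1$-dependence on parameters for ODEs, and you have simply written out that classical argument in detail (variational equation plus Gr\"onwall, with the polynomial growth handled via the a~priori bounds from dissipativity).
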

\proof
This follows from \eqref{ASdd} and the \(C^1\)-regularity of the vector field \(F\).
\qed
\begin{remark}\label{REMMAS} 
For every \(0 \leq s \leq t\), \(\omega \in C_{0}([0,t], \mathbb{R}^d)\) and \(x \in \mathbb{R}^d\), we have
\begin{align*}
	\tilde{\Phi}_t(s, x, \omega)=\tilde{\Phi}_s(s, x, \omega),
\end{align*}
since 
\begin{align*}
	\tilde{\Phi}_t(s, x, \omega)= x+\sigma \omega(s) +\int_0^s F( \tilde{\Phi}_t(r,x,\omega) )~\txtd r
\end{align*}
and 
\begin{align*}
	\tilde{\Phi}_s(s, x, \omega) = x+\sigma \omega(s) + \int_0^s F( \Tilde{\Phi}_s(r,x,\omega) )~\txtd r. 
\end{align*}

\end{remark}
As a consequence of Lemma \ref{OALSdfe}, we can now construct a stochastic dynamical system for the SDE \eqref{MAIN}.~This will allow us to obtain an RDS by Theorem~\ref{RDS_SDS}.  First, we need the following definition.
\begin{definition}
For \( t \geq 0 \) we define the continuous shift operator
\begin{align} \label{R_RT}
	\begin{split}
		R_t : C_0((-\infty,0], \mathbb{R}^d) &\to C_0([0,t], \mathbb{R}^d), \\
		(R_t \omega)(s) &:= \omega(s - t) - \omega(-t), \quad s \in [0,t].
	\end{split}
\end{align}
\end{definition}
Based on this, we introduce a continuous SDS associated with the SDE \eqref{MAIN}. To this aim, we first recall that process \(\left(\mathcal{B}_H, \{P_t\}_{t \geq 0}, \mathbf{P}, \{\vartheta_t\}_{t \geq 0}\right)\) is the stationary noise process obtained in Lemma~\ref{Olafads}. 

Having established the existence of a continuous SDS \(\varphi\), we can now apply Theorem~\ref{RDS_SDS} to obtain a continuous RDS.

\begin{corollary}\label{RRRDDDSS}
There exists a continuous  $\R^d$-valued RDS \(\Phi\) over the metric dynamical system
\[
\left(\mathcal{B}_H \times \mathcal{B}_H, \; \sigma(\mathcal{B}_H) \otimes \sigma(\mathcal{B}_H), \; \mathbb{P}, \; \theta \right),
\]
where $\mathbb{P}=\mathbf{P} \times \mathbf{P}$. 
In addition, for $(\omega^-,\omega^+)\in \mathcal{B}_H \times \mathcal{B}_H $ 
\begin{align}
	\Phi^{t}_{(\omega^-,\omega^+)}(x)= \tilde{\Phi}_t\left(t,x,(R_{t}	\mathcal{D}_{H}P_{t}(\omega^-,\omega^+))\right).
\end{align}
\end{corollary}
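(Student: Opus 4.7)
The plan is to exhibit $\Phi$ as a direct instance of the general SDS-to-RDS construction of Theorem~\ref{RDS_SDS}, applied to the SDS for the SDE~\eqref{MAIN} provided by Lemma~\ref{OALSdfe}. First, I invoke Lemma~\ref{Olafads} to recall that the quadruple $(\mathcal{B}_H, \{P_t\}_{t\geq 0}, \mathbf{P}, \{\vartheta_t\}_{t\geq 0})$ is a stationary noise process in the sense of Definition~\ref{SDS}. Then, by Lemma~\ref{OALSdfe}, the map $(t,x,\omega)\mapsto \varphi^t_\omega(x):=\tilde{\Phi}_t(t,x, R_t\mathcal{D}_H\omega)$ defines a continuous SDS over this stationary noise process with state space $\mathcal{X}=\mathbb{R}^d$.

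Next, I apply Theorem~\ref{RDS_SDS} with the specific choices $\mathcal{B}=\mathcal{B}_H$ and $\mathcal{X}=\mathbb{R}^d$. This theorem directly produces the desired metric dynamical system
\[
\bigl(\mathcal{B}_H\times\mathcal{B}_H,\; \sigma(\mathcal{B}_H)\otimes\sigma(\mathcal{B}_H),\; \mathbf{P}\times\mathbf{P},\; \theta\bigr),
\]
with $\theta$ defined by~\eqref{semigr}, and the measurable cocycle
\[
\Phi^t_{(\omega^-,\omega^+)}(x):=\varphi^t_{P_t(\omega^-,\omega^+)}(x).
\]
Substituting the definition of $\varphi$ from~\eqref{TGBAS} with the argument $\omega=P_t(\omega^-,\omega^+)\in\mathcal{B}_H$ immediately yields the explicit representation
\[
\Phi^t_{(\omega^-,\omega^+)}(x)=\tilde{\Phi}_t\bigl(t,x, R_t\mathcal{D}_H P_t(\omega^-,\omega^+)\bigr),
\]
as claimed.

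It remains only to observe continuity of $\Phi$ in the sense of Definition~\ref{Olas5d}. Fix $(\omega^-,\omega^+)\in \mathcal{B}_H\times\mathcal{B}_H$. For fixed $T>0$, the map $(t,x)\mapsto \Phi^t_{(\omega^-,\omega^+)}(x)$ on $[0,T]\times\mathbb{R}^d$ can be rewritten, using Remark~\ref{REMMAS}, as $(t,x)\mapsto \tilde{\Phi}_T(t,x, R_T\mathcal{D}_H P_T(\omega^-,\omega^+))$; continuity then follows from the local Lipschitz continuity of the flow transformation $\tilde{\Phi}_T$ asserted in Lemma~\ref{OALSdfe}, together with the continuity of the operators $\mathcal{D}_H$, $R_T$ and $P_T$. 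No step here is genuinely difficult: the only point requiring a brief check is the compatibility between the ``per-horizon'' solution map $\tilde{\Phi}_T$ and the cocycle's pointwise evaluation $\tilde{\Phi}_t$, which is precisely the content of Remark~\ref{REMMAS}; the cocycle identity itself and the preservation of $\mathbf{P}\times\mathbf{P}$ by $\theta$ are already embedded in Theorem~\ref{RDS_SDS} and need not be re-proved.
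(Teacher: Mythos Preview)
Your proposal is correct and follows exactly the approach the paper intends: the corollary is stated without explicit proof in the paper because it is an immediate application of Theorem~\ref{RDS_SDS} to the continuous SDS $\varphi$ from Lemma~\ref{OALSdfe}, and your write-up unpacks precisely that. Your extra care about continuity via Remark~\ref{REMMAS} is fine, though strictly speaking you also need Lemma~\ref{RR_TT} (equation~\eqref{RR_TT1}) to identify $R_t\mathcal{D}_H P_t(\omega^-,\omega^+)$ with the restriction of $R_T\mathcal{D}_H P_T(\omega^-,\omega^+)$ to $[0,t]$; this is exactly the argument the paper later spells out in the proof of Corollary~\ref{DCHYJAS85}.
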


We now show that \(\Phi\) solves the SDE \eqref{MAIN} in the sense of Definition~\ref{SASDdfe}.
\begin{corollary}\label{DCHYJAS85}
For every \(x \in \mathbb{R}^d\), \((\omega^-, \omega^+) \in \Omega_H\) and \(t \geq 0\), it holds that
\begin{align*}
	\Phi^{t}_{(\omega^-,\omega^+)}(x)=\int_{0}^{t}F\left(\Phi^{s}_{(\omega^-,\omega^+)}(x)\right)\mathrm{d}s+x-\sigma 
	(\mathcal{D}_{H}P_{t}(\omega^-,\omega^+))(-t).
\end{align*}
\end{corollary}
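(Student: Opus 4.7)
The identity is essentially a restatement of the integral equation satisfied by $\tilde\Phi_t$, once one unpacks the definition of $\Phi$ in Corollary~\ref{RRRDDDSS} and matches the two noise paths. My plan is as follows. By definition,
\[
\Phi^{t}_{(\omega^-,\omega^+)}(x)=\tilde\Phi_t\!\left(t,x,R_t\mathcal D_H P_t(\omega^-,\omega^+)\right),
\]
and Lemma~\ref{OALSdfe}, equation \eqref{Sjdks33}, gives
\[
\tilde\Phi_t\!\left(t,x,R_t\mathcal D_H P_t(\omega^-,\omega^+)\right)=x+\sigma\,(R_t\mathcal D_H P_t(\omega^-,\omega^+))(t)+\int_0^t F\!\left(\tilde\Phi_t(s,x,R_t\mathcal D_H P_t(\omega^-,\omega^+))\right)\mathrm ds.
\]
For the boundary term, I would use \eqref{R_RT}: since $\mathcal D_H P_t(\omega^-,\omega^+)\in\mathcal B_{1-H}$ vanishes at $0$, one has $(R_t\mathcal D_H P_t(\omega^-,\omega^+))(t)=-\mathcal D_H P_t(\omega^-,\omega^+)(-t)$, which is exactly the driving term appearing in the claim.

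The only non-trivial step is to identify the integrand with $F(\Phi^s_{(\omega^-,\omega^+)}(x))$ for $0\le s\le t$. For this I would show that for every $0\le r\le s$,
\[
(R_t\mathcal D_H P_t(\omega^-,\omega^+))(r)=(R_s\mathcal D_H P_s(\omega^-,\omega^+))(r).
\]
Unfolding the definition of $R_\cdot$, this reduces exactly to the identity
\[
\mathcal D_H P_t(\omega^-,\omega^+)(r-t)-\mathcal D_H P_t(\omega^-,\omega^+)(-t)=\mathcal D_H P_s(\omega^-,\omega^+)(r-s)-\mathcal D_H P_s(\omega^-,\omega^+)(-s),
\]
which is \eqref{RR_TT1} of Lemma~\ref{RR_TT} with $\tau=r$. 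Hence the ODEs defining $\tilde\Phi_t(\cdot,x,R_t\mathcal D_H P_t(\omega^-,\omega^+))$ and $\tilde\Phi_s(\cdot,x,R_s\mathcal D_H P_s(\omega^-,\omega^+))$ agree as non-autonomous ODEs on $[0,s]$. Combining this with Remark~\ref{REMMAS} (which permits replacing the upper index $t$ by $s$ in $\tilde\Phi_t(s,\cdot,\cdot)$) and the definition of $\Phi$ applied at time $s$, one obtains
\[
\tilde\Phi_t(s,x,R_t\mathcal D_H P_t(\omega^-,\omega^+))=\tilde\Phi_s(s,x,R_s\mathcal D_H P_s(\omega^-,\omega^+))=\Phi^{s}_{(\omega^-,\omega^+)}(x).
\]

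Substituting this into the integral equation and using the boundary identification finishes the proof. The main potential obstacle is the consistency of the two driving paths just described; once Lemma~\ref{RR_TT} is invoked, the rest is purely bookkeeping. Note also that no probabilistic argument is needed here, since the SDS and the operators $R_t,\mathcal D_H,P_t$ all act pathwise on $\mathcal B_H\times\mathcal B_H$, so the identity holds for every $(\omega^-,\omega^+)\in\Omega_H$ rather than almost surely.
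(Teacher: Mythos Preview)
Your proposal is correct and follows essentially the same approach as the paper: both apply \eqref{Sjdks33} to $\tilde\Phi_t$, identify the boundary term via \eqref{R_RT}, and use Lemma~\ref{RR_TT} (equation \eqref{RR_TT1}) together with Remark~\ref{REMMAS} to show $\tilde\Phi_t(s,x,R_t\mathcal D_H P_t(\omega^-,\omega^+))=\Phi^{s}_{(\omega^-,\omega^+)}(x)$.
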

\proof
For \(0 \leq s \leq t\), we claim that 
\begin{align}\label{BII}
	\tilde{\Phi}_t\left(s,x,(R_{t}	\mathcal{D}_{H}P_{t}(\omega^-,\omega^+))\right)=\tilde{\Phi}_s\left(s,x,(R_{s}	\mathcal{D}_{H}P_{s}(\omega^-,\omega^+))\right)=\Phi^{s}_{(\omega^-,\omega^+)}(x).
\end{align}
Since \( R_t \mathcal{D}_H P_t(\omega^-, \omega^+) \in C_0([0,t], \mathbb{R}^d) \), by Remark~\ref{REMMAS} we have
\begin{align}\label{AII}
	\tilde{\Phi}_t\left(s,x,(R_{t}	\mathcal{D}_{H}P_{t}(\omega^-,\omega^+))\right)=\tilde{\Phi}_s\left(s,x,(R_{t}	\mathcal{D}_{H}P_{t}(\omega^-,\omega^+))\right).
\end{align}
From Lemma~\ref{OALSdfe}, one can prove \eqref{BII} by showing that for all   $\tau\in[0,s] $ we have
\begin{align*}
	\quad \big(R_{t} \mathcal{D}_{H} P_{t}(\omega^-, \omega^+)\big)(\tau) = \big(R_{s} \mathcal{D}_{H} P_{s}(\omega^-, \omega^+)\big)(\tau),
\end{align*}
which holds due to \eqref{RR_TT1}. 
Finally, from \eqref{Sjdks33} and \eqref{R_RT} we infer that
\begin{align*}
	\Phi^{t}_{(\omega^-,\omega^+)}(x)&=\int_{0}^{t}F\left(\tilde{\Phi}_t\left(s,x,(R_{t}	\mathcal{D}_{H}P_{t}(\omega^-,\omega^+))\right)\right)\mathrm{d}s+x+\sigma (R_{t}	\mathcal{D}_{H}P_{t}(\omega^-,\omega^+))(t)\\&=\int_{0}^{t}F\left(\Phi^{s}_{(\omega^-,\omega^+)}(x)\right)\mathrm{d}s+x-\sigma (\mathcal{D}_{H}P_{t}(\omega^-,\omega^+))(-t).
\end{align*}
\qed
\begin{remark}\label{sodosa}
Since $\varphi$ is a $C^1$-SDS, we obviously get that \(\Phi\) is a \(C^1\)-RDS by Remark~\ref{MJNFSA}.
\end{remark}
\paragraph*{Invariant measure for the SDS}
After constructing an RDS from the SDS generated by the SDE~\eqref{MAIN}, we address the existence of a unique invariant measure for the SDS $\varphi$.~To this aim we state the following result.
\begin{proposition}\em{(\cite[Theorem 1.2 and Theorem 1.3]{Hai05})}\label{YTDS}
Let Assumption~\ref{Drift} hold. Then the SDS \(\varphi\) admits a unique invariant measure $\mu$ in the sense of Definition~\ref{measure}.  
In addition, for the law of the solution \( (Y_{t,x})_{t \geq 0} \) 
we have the convergence
\begin{align}
	\mathcal{L}(Y_{t,x}) \xrightarrow{t \to \infty} (\Pi_{\mathbb{R}^d})_{\star} \mu,
\end{align}
in total variation.
Moreover, $(\Pi_{\R^d})_\star \mu$ has  finite moments of order $m$ for every \( m \geq 1 \), i.e.
\begin{align*}
	\int_{\mathbb{R}^d} |x|^{m} \, (\Pi_{\mathbb{R}^d})_{\star}\mu(\mathrm{d}x) < \infty.
\end{align*}
\end{proposition}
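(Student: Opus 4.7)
The plan is to follow the strategy of Hairer, combining pathwise dissipativity estimates, a Krylov--Bogolyubov argument on the product space $\R^d \times \cB_H$, and a coupling argument exploiting the invertibility of $\sigma$. The proof decomposes into three parts: moment bounds, existence, and uniqueness together with total variation convergence.

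First, for the moment bounds, I would exploit the one-sided Lipschitz estimate in Assumption~\ref{Drift}(2). Given two solutions $Y_t$ and $\tilde Y_t$ of~\eqref{MAIN} driven by the same fbm realization with different initial data, subtracting the equations yields
\begin{align*}
\frac{\mathrm{d}}{\mathrm{d}t}|Y_t - \tilde Y_t|^2 = 2\langle F(Y_t) - F(\tilde Y_t),\, Y_t - \tilde Y_t\rangle \leq 2C_1^F - 2C_2^F\,|Y_t - \tilde Y_t|^2,
\end{align*}
so a Gr\"onwall argument gives a uniform-in-$t$ bound on $|Y_t - \tilde Y_t|$ depending only on $|Y_0 - \tilde Y_0|$. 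Choosing $\tilde Y_0 = 0$ and combining with the flow transformation $Z_t = Y_t - \sigma B^H_t$ together with Fernique's theorem (Lemma~\ref{FerniqueAA}) applied on finite intervals, one obtains finite moments of all orders for $|Y_t|$ which are uniform in $t$, and which transfer to $(\Pi_{\R^d})_{\star}\mu$ once $\mu$ is constructed.

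Second, for existence, I would run Krylov--Bogolyubov for the Feller semigroup $\{\cQ_t\}_{t \geq 0}$ from Definition~\ref{DEAFRS} on $\R^d \times \cB_H$. Starting from $\nu_0 = \delta_x \times \mathbf{P}$ and forming the Cesaro averages $\mu_T := \tfrac{1}{T}\int_0^T (\cQ_t)_\star \nu_0\,\mathrm{d}t$, the $\cB_H$-marginal equals $\mathbf{P}$ for every $T$ by~\eqref{PROJJE} and property~\hyperref[III]{\textbf{(III)}}, while the $\R^d$-marginal is tight by the uniform moment bound from the first step. Any weak subsequential limit is $\cQ_t$-invariant by the Feller property stated in~\hyperref[IV]{\textbf{(IV)}}, and it automatically satisfies~\eqref{UJas}.

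Third, the main obstacle is uniqueness and the total variation convergence, which are genuinely non-Markovian statements. The plan is to couple two copies of the system sharing the same past noise $\omega^-$ but with independent futures: one started at the deterministic point $x$, the other according to the disintegrated invariant measure $\mu_{\omega^-}$ from Lemma~\ref{disintegation}. Using the invertibility of $\sigma$, a Girsanov-type change of measure on the Wiener process associated to $B^H$ (transported through the operator $\cD_H$ of Lemma~\ref{AJSJAsd}) produces an admissible perturbation of the future noise of one copy so that the two trajectories coincide from a random time onward with probability approaching one. The pathwise contraction from Step~1 controls the non-coincidence event, and the ergodicity of the noise shift $\vartheta$ on $(\cB_H,\mathbf{P})$ transports the coupling bound into global uniqueness and total variation convergence. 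The delicate point is constructing the Girsanov perturbation so that it stays in the Cameron--Martin space of $B^H$; this requires the fractional-calculus inversion of the Mandelbrot--van Ness kernel and is the substantial technical part of the argument in~\cite{Hai05}.
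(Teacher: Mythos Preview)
The paper does not give its own proof of this proposition: it is stated as a direct citation of \cite[Theorems~1.2 and~1.3]{Hai05}, with no argument supplied. Your sketch correctly identifies the three pillars of Hairer's proof---pathwise dissipativity for moments, Krylov--Bogolyubov on $\R^d\times\cB_H$ for existence, and a Girsanov-type coupling through the Mandelbrot--van Ness kernel for uniqueness and total variation convergence---so your approach is the same as what the paper relies on by reference.

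One small inaccuracy worth flagging: in Step~3 you describe coupling two copies ``with independent futures,'' but the actual mechanism in \cite{Hai05} is the opposite---the futures are deliberately made \emph{dependent} via a Cameron--Martin shift of one driving noise so that the two solutions coalesce after a random time, with the dissipativity estimate from Step~1 controlling the cost of the shift. This is a wording issue rather than a conceptual gap, since the rest of your paragraph describes the construction correctly.
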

\textbf{Two approaches for the generation of RDS for SDEs with additive fractional noise.}\label{FCRDS}
Let us now revisit the definition of a classical random dynamical system arising from the SDE \eqref{MAIN} and compare it to the RDS constructed in Corollary~\ref{RRRDDDSS}. As already stated, in the RDS framework one incorporates the future of the noise, while the SDS considers the past of the noise. In the following, we make this more precise. 
The classical construction of an RDS associated with the SDE \eqref{MAIN} proceeds as follows.
\begin{enumerate}
\item One considers \( \tilde{\Omega} := C_0(\mathbb{R}, \mathbb{R}^d) \) endowed with the compact open topology and $\cF=\cB(C_0(\R,\R^d))$.~It is well-known that one can define a Gaussian measure $\P$ such that the associated canonical process is a two-sided fractional Brownian motion with Hurst parameter \( H\in(0,1) \). For every \( \tilde{\omega} \in \tilde{\Omega} \) and \( t \in \mathbb{R} \), we define the shift $\theta_t :\tilde{\Omega}\to\tilde{\Omega}$
\begin{align*}
	\theta_{t} \tilde{\omega}(s) := \tilde{\omega}(t+s) - \tilde{\omega}(t).
\end{align*}
Moreover $(\tilde{\Omega},\cF,\P, (\theta_t)_{t\in\R}))$ is an ergodic metric dynamical system.
\item Let \( T > 0 \) be fixed and let \(\tilde{\omega}_{[0,T]}\) denote the restriction of \(\tilde{\omega}\) to the interval \([0,T]\). For every \( t \in [0,T] \) and \( x \in \mathbb{R}^d \), define
\[
\Psi^{t}_{\tilde{\omega}}(x) := \tilde{\Phi}_{T}(t, x, \tilde{\omega}_{[0,T]}).
\]
In this setting, due to the uniqueness of the solution, one can verify the cocycle property, i.e. for all \( t, s \geq 0 \) 
\[
\Psi^{t+s}_{\tilde{\omega}}(x) = \Psi^{s}_{\theta_{t} \tilde{\omega}} \circ \Psi^{t}_{\tilde{\omega}}(x).
\]
\end{enumerate}
For the existence of RDS for S(P)DEs with additive fractional noise as sketched above, we refer to~\cite{MS,FGS16b}. 
In the SDS setting, as seen above we consider the space $\Omega:=\cB_H\times \cB_H$ and have that 
\begin{align*}
\Psi^{t}_{(\omega^-, \omega^+)}(x) = \varphi^{t}_{P_t(\omega^-, \omega^+)}(x),
\end{align*}
where $\varphi$ is the SDS associated to the SDE~\eqref{MAIN}.

We mention some advantages of the formulation of RDS given by Corollary~\ref{RRRDDDSS} which are exploited in this work. 
\begin{enumerate}
\item We are working with the space $\cB_H$ which can be canonically equipped with the Wiener measure $\mathbf{P}$ and therefore incorporates Markovian dynamics. 
\item In order to compute Lyapunov exponents, we need ergodicity and information about the density of the invariant measure of~\eqref{MAIN}. For this reason is convenient to work with RDS. 
\end{enumerate}

\subsection{A local stable manifold theorem}\label{Linearization}
We recall and further fix some notations that will be used throughout this subsection. 
\begin{notation}\label{ASSMM}
\begin{itemize}
\item We write  \( \mathcal{B} \) instead of   \( \mathcal{B}_H \). Therefore \( \Omega := \mathcal{B} \times \mathcal{B} \) and \( \mathbb{P}= \mathbf{P} \times \mathbf{P} \). Accordingly, each element \( \omega \in \Omega \) is represented as before as a pair \( \omega = (\omega^-, \omega^+) \).
\item Furthermore \( \Phi \) denotes the \( C^1 \)-RDS defined in Corollary~\ref{RRRDDDSS}. Moreover, the measure $\mu$ refers to the invariant measure established by Proposition~\ref{YTDS}, which is unique and therefore ergodic. 
\item  The family \( \{\Theta_t\}_{t \geq 0} \) denotes the semigroup generated in Lemma~\ref{THe} associated with the RDS \( \Phi \). 
\end{itemize}
\end{notation}
We now apply the multiplicative ergodic theorem which is the statement of the following proposition.
\begin{proposition} \label{MET}
There exists a sequence of deterministic values called Lyapunov exponents $\lambda_k < \ldots < \lambda_1$, where $\lambda_i \in [-\infty, \infty)$  and a $(\Theta_t)_{t\geq 0}$-invariant set of full measure $\tilde{\Omega} \subseteq   \mathbb{R}^d\times \mathcal{B} \times \mathcal{B}$ with respect to the probability measure $  \mu\times \mathbf{P}$. 
Defining 
	\begin{align*}
G_\lambda(\omega,x):=\left\lbrace z\in\R^d: \ \ \limsup_{t\rightarrow\infty}\frac{1}{t}\log\left|\txtD_{x}\Phi^{t}_{\omega}(z)\right|\leq \lambda\right\rbrace
\end{align*}  
for every $(x,\omega)=(x,\omega^-,\omega^+) \in \tilde{\Omega}$ and $\lambda\in[-\infty,\infty)$,  the following assertions hold.
\begin{enumerate}
\item	$G_{\lambda_1}((x,\omega))=\R^d$.
\item  For every $1\leq i<k$ 
\begin{align*}
	z\in G_{\lambda_i}(x,\omega)\setminus G_{\lambda_{i+1}}(x,\omega)\xLeftrightarrow{\hspace{.5cm}} \lim_{t\rightarrow\infty}\frac{1}{t}\log\left|\txtD_{x}\Phi^{t}_{\omega}(z)\right|=\lambda_i.
\end{align*}
\item  For every $z\in G_{\lambda_k}(\omega,x)\setminus\lbrace 0\rbrace$
\begin{align*}
	\lim_{t\rightarrow\infty}\frac{1}{t}\log\left|\txtD_{x}\Phi^{t}_{\omega}(z)\right|=\lambda_k.
\end{align*}
\item For every $1 \leq i < k$, we can find a measurable subspace $H_i(x, \omega)$ of $\mathbb{R}^d$ such that
\begin{align*}
	G_{\lambda_i}(x,\omega)= G_{\lambda_{i+1}}(x,\omega)\oplus H_{i}(x,\omega).
\end{align*}	
\end{enumerate}
\end{proposition}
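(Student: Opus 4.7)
My plan is to deduce the statement as a direct application of the Oseledets multiplicative ergodic theorem (in its filtration form, see \cite[Theorem~3.4.1]{Arn98}) to the linear cocycle $\mathcal{T}^{t}_{(x,\omega^-,\omega^+)} = \txtD_x\Phi^{t}_{(\omega^-,\omega^+)}$ constructed in Corollary~\ref{Thetta}. This cocycle sits above the metric dynamical system $(\R^d\times\cB\times\cB,\sigma(\R^d)\otimes\sigma(\cB)\otimes\sigma(\cB),\mu\times\mathbf{P},\Theta)$ supplied by Lemma~\ref{THe}, and its base is ergodic by Corollary~\ref{THETA}: indeed, $\mu$ is the unique invariant measure of $(\mathcal{Q}_t)_{t\geq 0}$ by Proposition~\ref{YTDS}, and the Wiener shift $\vartheta_t$ on $\cB$ is $\mathbf{P}$-ergodic as a standard consequence of the independent-increment structure of the underlying Brownian motion.

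With ergodicity granted, the only remaining hypothesis to check is the integrability bound
\[
\E_{\mu\times\mathbf{P}}\Big[\sup_{0\leq t\leq 1}\log^+\|\txtD_x\Phi^{t}_{(\omega^-,\omega^+)}(x)\|\Big]<\infty.
\]
To verify it, I will differentiate the identity in Corollary~\ref{DCHYJAS85} with respect to $x$; since the noise term $\sigma(\mathcal{D}_HP_t(\omega^-,\omega^+))(-t)$ is independent of $x$, the derivative $V(t):=\txtD_x\Phi^{t}_{(\omega^-,\omega^+)}(x)$ satisfies the linear variational equation
\[
\dot V(t)=\txtD F\bigl(\Phi^{t}_{(\omega^-,\omega^+)}(x)\bigr)V(t),\qquad V(0)=\Id.
\]
A Gr\"onwall estimate combined with the polynomial growth bound of Assumption~\ref{Drift}~3) then yields
\[
\log^+\|V(t)\|\leq C_F\int_0^t\bigl(1+|\Phi^{s}_{(\omega^-,\omega^+)}(x)|\bigr)^N\,\txtd s.
\]
Taking the supremum over $t\in[0,1]$, then expectation, and applying Fubini together with the $\Theta_s$-invariance of $\mu\times\mathbf{P}$ (Lemma~\ref{THe}) reduces the task to bounding $\int_{\R^d}(1+|x|)^N\,(\Pi_{\R^d})_\star\mu(\txtd x)$, which is finite by the moment estimate in Proposition~\ref{YTDS}. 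In the regime $H>1/2$, the global boundedness of $\txtD F$ from Assumption~\ref{Drift}~4) makes the estimate immediate.

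Once both hypotheses are in hand, the MET produces a $\Theta_t$-invariant set $\tilde\Omega\subseteq\R^d\times\cB\times\cB$ of full $\mu\times\mathbf{P}$-measure, the deterministic Lyapunov spectrum $\lambda_k<\cdots<\lambda_1$, and the measurable filtration $G_{\lambda_i}(x,\omega)$ satisfying items (1)--(3). The complementary subspaces $H_i(x,\omega)$ in item (4) are then obtained by a standard measurable selection of a linear complement of $G_{\lambda_{i+1}}(x,\omega)$ inside $G_{\lambda_i}(x,\omega)$. I expect the main technical obstacle to lie in the integrability verification, specifically in the interplay between the polynomial growth of $\txtD F$ and moment control of the stationary process $(\Phi^{s}_{(\omega^-,\omega^+)}(x))_{s\geq 0}$; this is precisely the reason why Assumption~\ref{Drift}~4) is imposed in the regime $H>1/2$, where pathwise control of the flow is more delicate.
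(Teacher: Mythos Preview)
Your overall strategy matches the paper's exactly: reduce to the MET of \cite[Theorem~3.4.1]{Arn98} applied to the linear cocycle $\mathcal{T}$ from Corollary~\ref{Thetta} over the ergodic base supplied by Corollary~\ref{THETA}, and check the integrability hypothesis via the variational equation $\dot V=\txtD F(\Phi^t_\omega(x))V$.

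The one place where you diverge is the integrability estimate. You bound $\|\txtD F\|$ pointwise by the polynomial growth condition in Assumption~\ref{Drift}~3), which forces you through the moment bounds of Proposition~\ref{YTDS} together with a $\Theta$-invariance argument. The paper instead pairs $\dot V$ with $V$ and uses the \emph{one-sided Lipschitz} condition from Assumption~\ref{Drift}~2): writing $\txtD_{\Phi^t_\omega(x)}F(v)=\lim_{\epsilon\to 0}\epsilon^{-1}\bigl(F(\Phi^t_\omega(x+\epsilon v))-F(\Phi^t_\omega(x))\bigr)$ and invoking $\langle F(\xi_2)-F(\xi_1),\xi_2-\xi_1\rangle\leq C_3^F|\xi_2-\xi_1|^2$ gives
\[
\tfrac{\txtd}{\txtd t}\,|\txtD_x\Phi^t_\omega(x_0)|^2\leq 2C_3^F\,|\txtD_x\Phi^t_\omega(x_0)|^2,
\]
hence the \emph{deterministic} bound $\|\txtD_x\Phi^t_\omega\|\leq e^{C_3^F t}$, valid for every $H\in(0,1)$. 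This makes the integrability check a one-liner and, in particular, shows that Assumption~\ref{Drift}~4) plays no role in this proof; your closing remark that the global boundedness of $\txtD F$ for $H>\tfrac12$ is imposed ``precisely'' for this step is a misattribution. Your route via moments is correct, but the paper's is both shorter and gives a cleaner quantitative input (the constant $C_3^F$ alone, rather than $C_F$ together with the $N$-th moment of $(\Pi_{\R^d})_\star\mu$).
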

\proof
From Remark~\ref{sodosa} we know that \( \Phi \) is \( C^1 \). Thus, by Corollary~\ref{DCHYJAS85} we obtain for any arbitrary \( \epsilon > 0 \) and $x_0\in\R^d$ that
\begin{align*}
\Phi^{t}_{\omega}(x+\epsilon x_0)-\Phi^{t}_{\omega}(x)=\int_{0}^{t}\left(F\left(\Phi^{s}_{\omega}(x+\epsilon x_0)\right)-F\left(\Phi^{s}_{\omega}(x)\right)\right)\mathrm{d}s+\epsilon x_0.
\end{align*}
Therefore, by the dominated convergence theorem as \( \epsilon \to 0 \), we conclude that 
\begin{align*}
\txtD_{x}\Phi^{t}_{\omega}(x_0)=\int_{0}^{t}\txtD_{\Phi^{s}_{\omega}(x)}F(\txtD_{x}\Phi^{s}_{\omega}(x_0))\ \mathrm{d}s + x_0.
\end{align*}
In particular, this yields
\begin{align}\label{UASs}
\frac{\mathrm{d}}{\mathrm{d} t}\txtD_{x}\Phi^{t}_{\omega}(x_0)=\txtD_{\Phi^{t}_{\omega}(x)}F(\txtD_{x}\Phi^{t}_{\omega}(x_0)), \ \ \txtD_{x}\Phi^{0}_{\omega}(x_0)=x_0.
\end{align}
Consequently, from Assumption \ref{Drift} 2)
\begin{align*}
\begin{split}
	&\frac{\mathrm{d}}{\mathrm{d} t}\left|\txtD_{x}\Phi^{t}_{\omega}(x_0)\right|^2=2\left\langle \txtD_{\Phi^{t}_{\omega}(x)}F(\txtD_{x}\Phi^{t}_{\omega}(x_0)),\txtD_{x}\Phi^{t}_{\omega}(x_0)\right\rangle\\&=2\lim_{\epsilon\rightarrow 0}\frac{1}{\epsilon^2}\left\langle F\left(\Phi^{t}_{\omega}(x+\epsilon x_0)\right)-F\left(\Phi^{t}_{\omega}(x)\right),\Phi^{t}_{\omega}(x+\epsilon x_0)-\Phi^{t}_{\omega}(x)\right\rangle\leq 2C_3^F \left|\txtD_{x}\Phi^{t}_{\omega}(x_0)\right|^2.
\end{split}
\end{align*} 
Using Gr\"onwall's lemma, we obtain
\begin{align}\label{ILASOsd69a}
\left\| \txtD_{x}\Phi^{t}_{\omega} \right\|_{L(\mathbb{R}^d, \mathbb{R}^d)} \leq \exp(C_3^F t),
\end{align}
which is a deterministic bound and hence integrable with respect to \(\mu \times \mathbf{P}\).
By Corollary~\ref{Thetta}, \( \left( \txtD_{x} \Phi^{t}_{\omega} \right)_{t\geq 0} \) is a RDS over the metric dynamical system
\begin{align}\label{ASAd}
\left( \mathbb{R}^d \times \mathcal{B} \times \mathcal{B}, \; \sigma(\mathbb{R}^d) \otimes \sigma(\mathcal{B}) \otimes \sigma(\mathcal{B}), \; \mu \times \mathbf{P}, \; \Theta \right).
\end{align}
Note that for every \( t > 0 \), the map \( \vartheta_t : \mathcal{B} \to \mathcal{B} \) is ergodic with respect to \( \mathbf{P} \). Moreover, the measure \( \mu \) is also ergodic. This proves the ergodicity of the metric dynamical system above by Corollary~\ref{THETA}. 
Consequently, the multiplicative ergodic theorem~\cite[Theorem 3.4.1]{Arn98} proves the statement. 
\qed
This theorem yields several important results. The one particularly relevant to our setting is the local stability property, which holds in particular when the top Lyapunov exponent is negative. We begin by stating a preparatory lemma.
\begin{lemma}{\em (\cite[Lemma~7.2.1]{Arn98})}\label{YAHs}
For $\omega\in\Omega$ and $x,z \in\R^d$, set
\begin{align*}
\overline{\Phi}^{t}_{(x,\omega)}(z):=\Phi^{t}_{\omega}(x+z)-\Phi^{t}_{\omega}(x).
\end{align*}
Then \( \overline{\Phi} \) is a \( C^1 \)-RDS over the metric dynamical system~\eqref{ASAd}. In addition, zero is a fixed point and the Dirac measure $\delta_0$ is an invariant measure for $\overline{\Phi}$.
\end{lemma}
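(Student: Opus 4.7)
The statement has three components: (a) $\overline{\Phi}$ is a $C^{1}$-RDS over the metric dynamical system \eqref{ASAd}; (b) zero is a fixed point; (c) $\delta_{0}$ is an invariant measure. All three follow from the corresponding properties of $\Phi$ together with the definition of the skew product $\Theta$, so the proof is essentially a bookkeeping exercise.

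First, I would verify the cocycle property. By Definition~\ref{Olas5d} applied to the base MDS on $\R^{d}\times\cB\times\cB$, I need to show, for $t,s\geq 0$, $z\in\R^{d}$ and $(x,\omega)\in\R^{d}\times\cB\times\cB$, that
\[
\overline{\Phi}^{t+s}_{(x,\omega)}(z)=\overline{\Phi}^{s}_{\Theta_{t}(x,\omega)}\!\bigl(\overline{\Phi}^{t}_{(x,\omega)}(z)\bigr).
\]
By definition the left-hand side equals $\Phi^{t+s}_{\omega}(x+z)-\Phi^{t+s}_{\omega}(x)$, and by the cocycle property of $\Phi$ established in Theorem~\ref{RDS_SDS} this rewrites as $\Phi^{s}_{\theta_{t}\omega}(\Phi^{t}_{\omega}(x+z))-\Phi^{s}_{\theta_{t}\omega}(\Phi^{t}_{\omega}(x))$. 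On the other hand, $\Theta_{t}(x,\omega)=(\Phi^{t}_{\omega}(x),\theta_{t}\omega)$ by definition, so the right-hand side evaluates (with $w:=\overline{\Phi}^{t}_{(x,\omega)}(z)$) to $\Phi^{s}_{\theta_{t}\omega}(\Phi^{t}_{\omega}(x)+w)-\Phi^{s}_{\theta_{t}\omega}(\Phi^{t}_{\omega}(x))$, and substituting $\Phi^{t}_{\omega}(x)+w=\Phi^{t}_{\omega}(x+z)$ gives the claimed identity. The identity at $t=0$ is trivial since $\overline{\Phi}^{0}_{(x,\omega)}(z)=(x+z)-x=z$.

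For the regularity part, joint continuity and the $C^{1}$-property of $z\mapsto\overline{\Phi}^{t}_{(x,\omega)}(z)$ follow at once from Corollary~\ref{CCC11} and Remark~\ref{sodosa}, since $\overline{\Phi}$ is the difference of two evaluations of $\Phi$, and differentiation in $z$ at any fixed $z_{0}$ gives $\txtD_{x+z_{0}}\Phi^{t}_{\omega}$. Measurability in the full bundle variable $(x,\omega)$ is inherited from that of $\Phi$ and the continuity of the subtraction map on $\R^{d}$. This, together with the cocycle identity just verified, places $\overline{\Phi}$ in the framework of Definition~\ref{Olas5d} with base MDS \eqref{ASAd}, whose invariance was established in Lemma~\ref{THe} and whose ergodicity in Corollary~\ref{THETA}.

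Finally, for the fixed point and the invariant measure: taking $z=0$ in the definition yields $\overline{\Phi}^{t}_{(x,\omega)}(0)=\Phi^{t}_{\omega}(x)-\Phi^{t}_{\omega}(x)=0$ for every $t\geq 0$ and every $(x,\omega)$, so $0$ is a deterministic fixed point of $\overline{\Phi}$. Denoting by $\overline{\Theta}_{t}(z,x,\omega):=(\overline{\Phi}^{t}_{(x,\omega)}(z),\Theta_{t}(x,\omega))$ the associated skew product on $\R^{d}\times(\R^{d}\times\cB\times\cB)$, the fixed-point property gives $\overline{\Theta}_{t}(0,x,\omega)=(0,\Theta_{t}(x,\omega))$, hence
\[
(\overline{\Theta}_{t})_{\star}\bigl(\delta_{0}\otimes(\mu\times\mathbf{P})\bigr)=\delta_{0}\otimes(\Theta_{t})_{\star}(\mu\times\mathbf{P})=\delta_{0}\otimes(\mu\times\mathbf{P}),
\]
using \eqref{preserve_2}. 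Together with $(\Pi_{\R^{d}\times\cB\times\cB})_{\star}(\delta_{0}\otimes(\mu\times\mathbf{P}))=\mu\times\mathbf{P}$, this verifies Definition~\ref{ATTAAARFASA}, proving that $\delta_{0}$ is an invariant measure for $\overline{\Phi}$. There is no real obstacle in the argument; the only point requiring mild care is matching the conventions of Definition~\ref{Olas5d} and Definition~\ref{ATTAAARFASA} (fiber vs.\ base variables) to the skew product $\Theta$ rather than to $\theta$, since here the base MDS already includes the state-space component $x$.
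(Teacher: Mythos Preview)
Your proof is correct and follows the standard direct verification. The paper does not give its own proof of this lemma but simply cites \cite[Lemma~7.2.1]{Arn98}; your argument is precisely the elementary bookkeeping one finds in that reference, so there is nothing to compare.
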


We need an auxiliary lemma that provides an integrable 
a priori bound for the solution of \eqref{MAIN} under Assumption \ref{Drift}.
\begin{lemma}\label{PRIORI}
Let \( t_0 > 0 \). Then there exist a random variable \( \Sigma = \Sigma_{t_0} \) and a positive constant \( L \) such that for every \( x \in \mathbb{R}^d \)
\begin{align*}
\sup_{0 \leq t\leq t_0}\vert\Phi^{t}_{\omega}(x)\vert\leq L\left(1+\vert x\vert^N +\Sigma(\omega)\right)
\end{align*}
and $\Sigma\in\bigcap_{1\leq q<\infty}L^{q}(\Omega).$
\end{lemma}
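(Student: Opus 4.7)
The plan is to exploit the explicit representation of $\Phi^{t}_{\omega}(x)$ together with the dissipativity hypothesis in Assumption~\ref{Drift}~2) to get a pathwise a priori bound, and then invoke Fernique's theorem to upgrade it to the required integrability.

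First, by Corollary~\ref{DCHYJAS85} combined with the decomposition of Lemma~\ref{SDASASASAa}, we can write
\[
\Phi^{t}_{\omega}(x) = x + \sigma\, B^{H}_{t}(P_{t}(\omega^-,\omega^+)) + \int_{0}^{t} F\bigl(\Phi^{s}_{\omega}(x)\bigr)\,\mathrm{d}s,
\]
where $B^{H}_{t}(P_{t}(\omega^-,\omega^+)) = \mathcal{P}(\omega^-)(t) + \tilde{B}^{H}_{t}(\omega^+)$. Setting $Z_t := \Phi^{t}_{\omega}(x) - x - \sigma B^{H}_{t}(P_{t}(\omega^-,\omega^+))$, this is precisely the $Z$ solving the random ODE from Lemma~\ref{OALSdfe}, so $\dot Z_t = F(Z_t + x + \sigma B^{H}_{t})$ with $Z_0 = 0$.

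Next I would differentiate $|Z_t|^2$. Writing $\eta_t := x + \sigma B^{H}_{t}(P_{t}(\omega^-,\omega^+))$ and splitting $F(Z_t + \eta_t) = [F(Z_t + \eta_t) - F(\eta_t)] + F(\eta_t)$, the first bracket is controlled by Assumption~\ref{Drift}~2) applied to the pair $(\eta_t, Z_t + \eta_t)$, yielding
\[
\langle F(Z_t+\eta_t) - F(\eta_t), Z_t\rangle \leq C_1^F - C_2^F |Z_t|^2,
\]
while Young's inequality together with the polynomial growth of $F$ from Assumption~\ref{Drift}~3) gives
\[
\langle F(\eta_t), Z_t\rangle \leq \tfrac{C_2^F}{2}|Z_t|^2 + \tfrac{1}{2C_2^F} C_F^2 (1+|\eta_t|)^{2N}.
\]
Adding the two contributions and applying Gronwall's lemma on $[0,t_0]$ furnishes
\[
\sup_{0\leq t\leq t_0} |Z_t|^2 \;\lesssim\; 1 + |x|^{2N} + \sup_{0\leq t\leq t_0} |B^{H}_{t}(P_{t}(\omega^-,\omega^+))|^{2N}.
\]
Taking square roots, using $\sqrt{a+b+c}\lesssim 1+\sqrt a+\sqrt b+\sqrt c$, and plugging back into $\Phi^{t}_{\omega}(x) = Z_t + \eta_t$ gives
\[
\sup_{0\leq t\leq t_0} |\Phi^{t}_{\omega}(x)| \leq L\Bigl(1 + |x|^{N} + \sup_{0\leq t\leq t_0}|B^{H}_{t}(P_{t}(\omega^-,\omega^+))|^{N}\Bigr).
\]

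It remains to define $\Sigma(\omega) := \sup_{0\leq t\leq t_0}|B^{H}_{t}(P_{t}(\omega^-,\omega^+))|^{N}$ and verify $\Sigma \in L^q(\Omega)$ for all $q\geq 1$. Here the decomposition from Lemma~\ref{SDASASASAa} is crucial: by Lemma~\ref{Liouville} with H\"older exponent $\gamma = H/2 > 0$, $\|\mathcal{P}(\omega^-)\|_{\infty;[0,t_0]} \leq C_{t_0,H}\|\omega^-\|_{\mathcal{B}_H}$, and by Lemma~\ref{Liouville1} $|\tilde{B}^{H}_{t}(\omega^+)| \leq C_{t_0,H}\,t_0^{H/2}\|\omega^+\|_{\mathcal{B}_H}$ uniformly on $[0,t_0]$. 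Hence $\Sigma(\omega) \lesssim (\|\omega^-\|_{\mathcal{B}_H} + \|\omega^+\|_{\mathcal{B}_H})^{N}$. Since $\mathbf{P}$ is a centered Gaussian measure on the separable Banach space $\mathcal{B}_H$, Fernique's theorem (Lemma~\ref{FerniqueAA}) yields exponential integrability of $\|\cdot\|_{\mathcal{B}_H}$ under $\mathbf{P}$, so $\Sigma \in \bigcap_{q\geq 1} L^q(\Omega,\mathbb{P})$.

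The main conceptual point is to use the pathwise splitting from Lemma~\ref{SDASASASAa} so that all stochastic contributions end up controlled by the $\mathcal{B}_H$-norms of $\omega^-$ and $\omega^+$, allowing a clean application of Fernique. The only place where one has to be careful is the Young/Gronwall step, where the polynomial growth $N$ of $F$ forces the factor $|x|^{N}$ (rather than $|x|$) on the right-hand side; any attempt to use linear growth would miss this and would not be consistent with Assumption~\ref{Drift}~3).
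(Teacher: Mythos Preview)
Your proof is correct and follows essentially the same route as the paper: split $\Phi^{t}_{\omega}(x)$ into the noise part $U_t=x+\sigma B^{H}_t$ and the ODE part $V_t$ (your $Z_t$), apply the dissipativity bound from Assumption~\ref{Drift}~2) together with Young's inequality and the polynomial growth of $F$ to derive a differential inequality for $|V_t|^2$, and conclude via Gronwall and Fernique's theorem. The only minor difference is that you spell out the final integrability step more carefully, using the pathwise decomposition of Lemma~\ref{SDASASASAa} and the bounds of Lemmas~\ref{Liouville} and~\ref{Liouville1} to control $\sup_{t\le t_0}|B^{H}_t|$ by $\|\omega^-\|_{\mathcal{B}_H}+\|\omega^+\|_{\mathcal{B}_H}$, whereas the paper simply invokes Lemma~\ref{FerniqueAA} directly.
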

\proof
From Corollary~\ref{DCHYJAS85}, it follows that for every \( t \geq 0 \) and \( x \in \mathbb{R}^d \),
\begin{align*}
\Phi^{t}_{\omega}(x)=\int_{0}^{t}F\left(\Phi^{s}_{\omega}(x)\right)\mathrm{d}s+x-\sigma (	\mathcal{D}_{H}P_{t}(\omega^-,\omega^+))(-t).
\end{align*}
For $t\geq 0$, we set 
\begin{align}\label{TAYeS6}
\begin{split}
	&B^{H}_{t}(P_{t}(\omega^-,\omega^+))=- (\mathcal{D}_{H}P_{t}(\omega^-,\omega^+))(-t),\\
	&U_{t}(\omega,x):=x+\sigma B^{H}_{t}(P_{t}(\omega^-,\omega^+)).
\end{split}
\end{align}
Then \( \Phi^{t}_{\omega}(x) = V_{t}(\omega,x) + U_{t}(\omega,x) \) where 
\begin{align*}
\frac{\mathrm{d}}{\mathrm{d}t} V_{t}(\omega,x) = F\left(V_{t}(\omega,x) + U_{t}(\omega,x)\right), \ \ \	V_{0}(\omega,x) = 0.
\end{align*}
This further implies using Assumption~\ref{Drift} 
\begin{align*}
\frac{\mathrm{d}}{\mathrm{d}t} \left\vert V_{t}(\omega,x)\right\vert^2&=2\left\langle F\left(V_{t}(\omega,x) + U_{t}(\omega,x)\right)-F\left( U_{t}(\omega,x)\right),V_{t}(\omega,x)\right\rangle+2\left\langle F\left( U_{t}(\omega,x)\right),V_{t}(\omega,x) \right\rangle\\&\leq 2C_{1}^{F}-2C_{2}^{F}\left|V_{t}(\omega,x)\right|^2+\frac{1}{C_2^F} \left| F\left(U_{t}(\omega,x)\right)\right|^2+C_2^F \left|V_{t}(\omega,x)\right|^2\\&\leq -C_{2}^{F}\left|V_{t}(\omega,x)\right|^2+2C_{1}^{F}+\frac{(C_F)^2}{C_2^F}\left(1 + \left\vert U_{t}(\omega,x)\right\vert\right)^{2N}\\&
\leq -C_{2}^{F}\left|V_{t}(\omega,x)\right|^2+ \tilde{C}_{F}\left(1 + \left\vert U_{t}(\omega,x)\right\vert\right)^{2N},
\end{align*}
for another arbitrary constant \( \tilde{C}_{F} \). Thus, by Gr\"onwall's inequality we obtain
\begin{align*}
\left|V_{t}(\omega,x)\right|^2\leq \exp(-C_{2}^{F}t)\vert x\vert^2+\tilde{C}_{F}\int_{0}^{t}\exp\left(-C_{2}^{F}(t-\tau)\right)\left(1 + | U_{\tau}(\omega,x)|\right)^{2N}\mathrm{d}\tau.
\end{align*}
This, combined with Lemma~\ref{FerniqueAA}, proves the claim. 
\qed
The following result provides a sufficient condition to prove the existence of invariant manifolds. For stability (i.e.~a negative top Lyapunov exponent), we are particularly interested in the stable manifold theorem. To this aim we state the following auxiliary result which enables us to prove the stable manifold theorem, i.e. Proposition \ref{STAB}.
\begin{proposition}\label{YASd}
Assume that for some \( r \in (0, 1] \), and for every \( \xi_1, \xi_2 \in \mathbb{R}^d \) there exist constants $\overline{C}_F$ and $p_1\geq 1 $ such that
\begin{align}\label{DEEE}
\| \txtD_{\xi_2}F - \txtD_{\xi_1}F \|_{L(\mathbb{R}^d, \mathbb{R}^d)} \leq \overline{C}_F(1+\vert \xi_1\vert+\vert \xi_2\vert)^{p_1} |\xi_2 - \xi_1|^r.
\end{align}
For every \( t_0 > 0 \), there exist \( p_2 \in \mathbb{N} \) and a positive random variable \( f \) such that for all \( \omega \in \Omega \) and \( x, y, z \in \mathbb{R}^d \)
\begin{align}\label{ASAx}
\begin{split}
	&\sup_{0\leq t\leq t_0}\left\vert{\Phi}^{t}_{\omega}(x+y)-{\Phi}^{t}_{\omega}(x+z)-\txtD_{x}{\Phi}^{t}_{\omega}(y-z)\right\vert\\&\quad\leq \vert y-z\vert(\vert y\vert^r+\vert z\vert^r)\exp\left(1+(\vert y\vert+\vert z\vert)^{p_2}\right)\exp\left(f(\omega)+\vert x\vert^{p_2}\right)
\end{split}
\end{align}
where \( f \in \bigcap_{1\leq q<\infty} L^{q}(\Omega) \).
\end{proposition}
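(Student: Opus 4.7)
Set $R_t := \Phi^{t}_{\omega}(x+y) - \Phi^{t}_{\omega}(x+z) - \txtD_x \Phi^{t}_{\omega}(y-z)$. Since the additive noise contribution $-\sigma \mathcal{D}_H P_t(\omega^-,\omega^+)(-t)$ in Corollary~\ref{DCHYJAS85} is independent of the initial datum, it cancels in the difference $\Phi^{t}_{\omega}(x+y) - \Phi^{t}_{\omega}(x+z)$, and subtracting the linearized equation~\eqref{UASs} yields $R_0 = 0$ and
\begin{align*}
R_t = \int_0^t \bigl[F(\Phi^{s}_{\omega}(x+y)) - F(\Phi^{s}_{\omega}(x+z)) - \txtD_{\Phi^{s}_{\omega}(x)} F \cdot \txtD_x \Phi^{s}_{\omega}(y-z)\bigr]\,\mathrm{d}s.
\end{align*}
The plan is to apply the fundamental theorem of calculus to the difference $F(\Phi^{s}_{\omega}(x+y)) - F(\Phi^{s}_{\omega}(x+z))$ along the segment joining the two endpoints, add and subtract $\txtD_{\Phi^{s}_{\omega}(x)} F$, and write $R_t = \int_0^t \txtD_{\Phi^{s}_{\omega}(x)} F \cdot R_s\,\mathrm{d}s + E_t$, where the remainder $E_t$ involves only the H\"older modulus of $\txtD F$ at the points $\xi^{s}_\tau := \tau \Phi^{s}_{\omega}(x+y) + (1-\tau)\Phi^{s}_{\omega}(x+z)$ versus $\Phi^{s}_{\omega}(x)$, multiplied by $\Delta^s := \Phi^{s}_{\omega}(x+y) - \Phi^{s}_{\omega}(x+z)$. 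A standard Gr\"onwall inequality then gives $|R_{t_0}| \leq \widetilde{E}_{t_0}\,\exp\bigl(\int_0^{t_0} \|\txtD_{\Phi^{s}_{\omega}(x)} F\|\,\mathrm{d}s\bigr)$, where $\widetilde{E}_{t_0} := \int_0^{t_0}\!\int_0^1 \|\txtD_{\xi^{s}_\tau}F - \txtD_{\Phi^{s}_{\omega}(x)}F\|\,|\Delta^s|\,\mathrm{d}\tau\,\mathrm{d}s$.

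The second step is to plug in the pointwise bounds. From the monotonicity part of Assumption~\ref{Drift}~2) applied to two solutions starting from different initial data, a Gr\"onwall argument gives $|\Phi^{s}_{\omega}(x+a) - \Phi^{s}_{\omega}(x+b)| \leq |a-b|\,e^{C_3^F s}$, hence $|\Delta^s| \leq |y-z|\,e^{C_3^F s}$ and $|\xi^{s}_\tau - \Phi^{s}_{\omega}(x)| \leq (|y|+|z|)\,e^{C_3^F s}$. Combining with hypothesis~\eqref{DEEE}, the integrand of $\widetilde{E}_{t_0}$ is controlled by $\overline{C}_F\,(1+|\Phi^{s}_{\omega}(x)| + |\Phi^{s}_{\omega}(x+y)| + |\Phi^{s}_{\omega}(x+z)|)^{p_1}(|y|+|z|)^r |y-z|\,e^{(r+1)C_3^F s}$. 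Using $(|y|+|z|)^r \leq 2^r(|y|^r+|z|^r)$ for $r\leq 1$ and Lemma~\ref{PRIORI} applied at each of the three initial points, the supremum of the polynomial factor is bounded by $C(1+|x|^{Np_1}+|y|^{Np_1}+|z|^{Np_1}+\Sigma(\omega)^{p_1})$. Similarly, Assumption~\ref{Drift}~3) together with Lemma~\ref{PRIORI} yields $\int_0^{t_0}\|\txtD_{\Phi^{s}_{\omega}(x)} F\|\,\mathrm{d}s \leq C_{t_0}(1+|x|^{N^2} + \Sigma(\omega)^{N})$; in the regime $H>1/2$, Assumption~\ref{Drift}~4) simplifies this to a deterministic constant.

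Finally, to cast the estimate in the claimed form, I would factor the polynomial $(1+|x|^{Np_1}+|y|^{Np_1}+|z|^{Np_1}+\Sigma^{p_1})$ as a product $(1+|y|^{Np_1}+|z|^{Np_1})(1+|x|^{Np_1}+\Sigma^{p_1})$ using $(a+b) \leq (1+a)(1+b)$, bound each factor by an exponential via $1+t \leq e^t$, and absorb the prefactors of $C_{t_0}$ and $e^{(r+1)C_3^F t_0}$ into the additive constant $1$ inside the first exponential. Choosing $p_2 := \max(Np_1, N^2)$ and $f(\omega) := C_{t_0}(1 + \Sigma(\omega)^{p_1} + \Sigma(\omega)^{N})$ produces~\eqref{ASAx}. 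The integrability $f \in \bigcap_{q<\infty}L^q(\Omega)$ is inherited directly from Lemma~\ref{PRIORI} since $f$ is a polynomial in $\Sigma$. The main obstacle, and the reason one has to be careful, is the subtlety that only $f$ (not $\exp f$) is required to have all moments: an exponential moment in $\Sigma$ is unavailable since Fernique's theorem gives only Gaussian-type tails in $\|\omega\|_{\mathcal{B}_H}$, whereas $\Sigma$ contains powers $W^{N}$ with $N \geq 2$. The proof is therefore arranged so that the random variable $\Sigma$ appears linearly (to the $N$-th power) inside the exponential bound on the Gr\"onwall factor, rather than being further exponentiated, which is exactly what Lemma~\ref{PRIORI} delivers.
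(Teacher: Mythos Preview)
Your argument is correct and essentially equivalent to the paper's, with a minor reorganization. The paper proceeds by first deriving an ODE for $|\txtD_b\Phi^t_\omega(a)-\txtD_x\Phi^t_\omega(a)|^2$ (differentiating the linearized flow at two base points $b$ and $x$), applying Gr\"onwall to that, and only then invoking the mean-value representation
\[
\Phi^t_\omega(x+y)-\Phi^t_\omega(x+z)-\txtD_x\Phi^t_\omega(y-z)=\int_0^1\bigl[\txtD_{x+z+\theta(y-z)}\Phi^t_\omega-\txtD_x\Phi^t_\omega\bigr](y-z)\,\mathrm{d}\theta.
\]
You instead apply the mean-value theorem to $F$ in the state variable first and run Gr\"onwall directly on $R_t$. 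Both routes feed on the same ingredients (hypothesis~\eqref{DEEE}, the Lipschitz-in-initial-data bound $|\Phi^s_\omega(a)-\Phi^s_\omega(b)|\le e^{C_3^F s}|a-b|$ from Assumption~\ref{Drift}~2), the a~priori estimate of Lemma~\ref{PRIORI}, and the linearized bound~\eqref{ILASOsd69a}) and produce the same final structure. A small advantage of your arrangement is that your Gr\"onwall exponent $\int_0^{t_0}\|\txtD_{\Phi^s_\omega(x)}F\|\,\mathrm{d}s$ depends only on $x$, whereas the paper's $\kappa$ involves $|\Phi^t_\omega(b)|^N$ with $b$ on the segment $[x+z,x+y]$; this makes the separation of the $(x,\omega)$-dependence from the $(y,z)$-dependence slightly more transparent in your version. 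Your closing remark that only $f$, not $\exp f$, needs moments is exactly the point, and is why Lemma~\ref{PRIORI} (polynomial in $\Sigma$) suffices.
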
  
\proof
First, note that for \( a, b, x \in \mathbb{R}^d \) and using \eqref{UASs}, we have
\begin{align*}
\frac{\mathrm{d}}{\mathrm{d} t} \left\vert \txtD_{b}\Phi^{t}_{\omega}(a)-\txtD_{x}\Phi^{t}_{\omega}(a)\right\vert^2&=2\left\langle \txtD_{\Phi^{t}_{\omega}(b)}F\left(\txtD_{b}\Phi^{t}_{\omega}(a)\right)-\txtD_{\Phi^{t}_{\omega}(x)}F\left(\txtD_{x}\Phi^{t}_{\omega}(a)\right),\txtD_{b}\Phi^{t}_{\omega}(a)-\txtD_{x}\Phi^{t}_{\omega}(a)\right\rangle\\
&=2\left\langle  \txtD_{\Phi^{t}_{\omega}(b)}F\left(\txtD_{b}\Phi^{t}_{\omega}(a)-\txtD_{x}\Phi^{t}_{\omega}(a)\right),\txtD_{b}\Phi^{t}_{\omega}(a)-\txtD_{x}\Phi^{t}_{\omega}(a)\right\rangle\\&+2\left\langle\left(\txtD_{\Phi^{t}_{\omega}(b)}F-\txtD_{\Phi^{t}_{\omega}(x)}F\right)\left(\txtD_{x}\Phi^{t}_{\omega}(a)\right),\txtD_{b}\Phi^{t}_{\omega}(a)-\txtD_{x}\Phi^{t}_{\omega}(a)\right\rangle.
\end{align*}
Thus from \eqref{DEEE}, Assumption~\ref{Drift} and using H\"older's and Minkowski's inequalities, we obtain that
\begin{align}\label{NMIOas}
\begin{split}
	&\frac{\mathrm{d}}{\mathrm{d} t} \left\vert \txtD_{b}\Phi^{t}_{\omega}(a)-\txtD_{x}\Phi^{t}_{\omega}(a)\right\vert^2\leq  2C_F\left(1 + \left|\Phi^{t}_{\omega}(b)\right|\right)^N \left\vert \txtD_{b}\Phi^{t}_{\omega}(a)-\txtD_{x}\Phi^{t}_{\omega}(a)\right\vert^2\\&+2\overline{C}_F\left(1+\left\vert \Phi^{t}_{\omega}(b)\right\vert+\left\vert \Phi^{t}_{\omega}(x)\right\vert\right)^{p_1}\left|\Phi^{t}_{\omega}(b)-\Phi^{t}_{\omega}(x)\right|^r \left|\txtD_{x}\Phi^{t}_{\omega}(a)\right|\left\vert \txtD_{b}\Phi^{t}_{\omega}(a)-\txtD_{x}\Phi^{t}_{\omega}(a)\right\vert\\&\leq  \underbrace{\left(2C_F\left(1 + \left|\Phi^{t}_{\omega}(b)\right|\right)^N+1\right)}_{\kappa(t,b,x,a)}\left\vert \txtD_{b}\Phi^{t}_{\omega}(a)-\txtD_{x}\Phi^{t}_{\omega}(a)\right\vert^2\\&+\underbrace{(\overline{C}_{F})^2\left(1+\left\vert \Phi^{t}_{\omega}(b)\right\vert+\left\vert \Phi^{t}_{\omega}(x)\right\vert\right)^{2p_1}\left|\Phi^{t}_{\omega}(b)-\Phi^{t}_{\omega}(x)\right|^{2r}\left|\txtD_{x}\Phi^{t}_{\omega}(a)\right|^2}_{\tilde\kappa(t,b,x,a)}\\=&\kappa(t,b,x,a)\left\vert \txtD_{b}\Phi^{t}_{\omega}(a)-\txtD_{x}\Phi^{t}_{\omega}(a)\right\vert^2+\tilde\kappa(t,b,x,a).
\end{split}
\end{align}
Therefore, by Gr\"onwall's inequality
\begin{align}\label{ASAc}
\left\vert \txtD_{b}\Phi^{t}_{\omega}(a)-\txtD_{x}\Phi^{t}_{\omega}(a)\right\vert^2\leq \left[\int_{0}^{t}\tilde\kappa(s,b,x,a)\exp\left(\int_{s}^{t}\kappa(\tau,b,x,a)\mathrm{d}\tau\right)\mathrm{d}s\right].
\end{align}
Note that
\begin{align}\label{ASAAsc}
\begin{split}
	\big\vert{\Phi}^{t}_{\omega}(x+y)&-{\Phi}^{t}_{\omega}(x+z)-\txtD_{x}{\Phi}^{t}_{\omega}(y-z)\big\vert^2\\&
	\leq\int_{0}^{1}\left\vert\left( \txtD_{x+z+\theta(y-z)}\Phi^{t}_{\omega}\left(y-z\right)-\txtD_{x}\Phi^{t}_{\omega}\left(y-z\right)\right)\right\vert^2\mathrm{d}\theta.
\end{split}
\end{align}

To establish the claim, it suffices to use \eqref{ASAc} to estimate the right-hand side of \eqref{ASAAsc} in terms of the functions $\tilde{k}$ and $\kappa$. Based on \eqref{NMIOas}, Lemma~\ref{PRIORI} together with \eqref{ILASOsd69a} prove the statement.

\qed
The next result establishes the existence of a local stable manifold when the top Lyapunov exponent $\lambda_1$ is negative.~Such a manifold contains the neighborhood of the origin and within this neighborhood, trajectories decay exponentially toward the origin
\begin{proposition}\label{STAB} 
Assume that the conditions of Proposition~\ref{YASd} hold and that the top 
Lyapunov exponent satisfies $\lambda_1 < 0$. Then there exists a set of full 
measure $\mathcal{M} \subseteq \mathbb{R}^d \times \Omega$ such that for every $0 < \nu < -\lambda_1$, there exists a positive random variable 
\[
R^\nu : \mathcal{M} \to (0,\infty)
\]
such that, for every $(x,\omega) \in \mathcal{M}$ and every $y \in \mathbb{R}^d$ 
with $\lvert y \rvert \leq R^\nu(x,\omega)$, we have
\begin{align*}
\sup_{t\geq 0}\exp(t\nu)\left|\Phi^{t}_{\omega}(x+y)-\Phi^{t}_{\omega}(x)\right|< \infty.
\end{align*}
\end{proposition}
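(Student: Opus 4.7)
The plan is to reduce the claim to the standard local stable manifold theorem for random dynamical systems applied to the auxiliary cocycle $\overline{\Phi}^t_{(x,\omega)}(y) := \Phi^t_\omega(x+y) - \Phi^t_\omega(x)$ introduced in Lemma~\ref{YAHs}, for which $y = 0$ is a fixed point over the ergodic metric dynamical system \eqref{ASAd}. Since $\lambda_1 < 0$, Proposition~\ref{MET} implies that the entire tangent space $\R^d$ is the Oseledets stable subspace at $\mu \times \mathbf{P}$-a.e.\ $(x,\omega)$, so for every $\epsilon>0$ with $\lambda_1+\epsilon<0$ there exists a measurable tempered random variable $C_\epsilon:\R^d\times\Omega\to(0,\infty)$ with
\[
\|\txtD_x \Phi^t_\omega\|_{L(\R^d,\R^d)}\leq C_\epsilon(x,\omega)\,e^{(\lambda_1+\epsilon)t},\qquad t\geq 0.
\]

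Next I fix $\nu\in(0,-\lambda_1)$, choose $\epsilon$ with $\lambda_1+\epsilon<-\nu$, and work at discrete times with step $t_0=1$. By the cocycle identity $\overline{\Phi}^{n+1}_{(x,\omega)}=\overline{\Phi}^{1}_{\Theta_n(x,\omega)}\circ\overline{\Phi}^{n}_{(x,\omega)}$, I expand $\overline{\Phi}^{1}$ at the shifted base point $\Theta_n(x,\omega)$. Applying Proposition~\ref{YASd} with $t_0=1$, $z=0$ and base point $x_n:=\Phi^n_\omega(x)$, together with the linear estimate above, gives, as long as $|\overline{\Phi}^{n}_{(x,\omega)}(y)|\leq 1$,
\[
|\overline{\Phi}^{n+1}_{(x,\omega)}(y)|\leq C_\epsilon(\Theta_n(x,\omega))\,e^{\lambda_1+\epsilon}\,|\overline{\Phi}^{n}_{(x,\omega)}(y)|+g_n(x,\omega)\,|\overline{\Phi}^{n}_{(x,\omega)}(y)|^{1+r},
\]
where the random prefactor $g_n(x,\omega)$ is of the form $\exp\bigl(f(\theta_n\omega)+|x_n|^{p_2}+1\bigr)$ with $f\in\bigcap_{q\geq 1}L^{q}(\Omega)$ supplied by Proposition~\ref{YASd}. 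By Birkhoff's ergodic theorem applied to the $\Theta_1$-ergodic measure $\mu\times\mathbf{P}$ (the required integrability follows from $f\in L^q$, from the polynomial moments of $(\Pi_{\R^d})_\star\mu$ in Proposition~\ref{YTDS} and from the temperedness of $C_\epsilon$), on a $\Theta$-invariant set $\mathcal{M}\subset\R^d\times\Omega$ of full measure
\[
\frac{1}{n}\log C_\epsilon(\Theta_n(x,\omega))\xrightarrow[n\to\infty]{}0,\qquad \frac{1}{n}\log g_n(x,\omega)\xrightarrow[n\to\infty]{}0.
\]

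On $\mathcal{M}$ I define
\[
R^\nu(x,\omega):=\min\!\Big\{1,\ \inf_{n\geq 0}\frac{e^{-\nu n}}{\bigl(4\,g_n(x,\omega)\bigr)^{1/r}}\Big\},
\]
which is strictly positive by the subexponential growth of $g_n$. A nonlinear Gr\"onwall-type iteration in the spirit of \cite[Theorem~7.5.5]{Arn98} then shows that for $|y|\leq R^\nu(x,\omega)$ the sequence $a_n:=e^{\nu n}|\overline{\Phi}^{n}_{(x,\omega)}(y)|$ remains bounded (and even decays) for all $n$, so that $\sup_{n\geq 0}e^{\nu n}|\overline{\Phi}^{n}_{(x,\omega)}(y)|<\infty$. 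The passage from integer to continuous times uses the deterministic bound \eqref{ILASOsd69a} on each unit interval, which controls the growth of $\overline{\Phi}^{t}$ between integer times by $e^{C_3^F}$, yielding the claimed $\sup_{t\geq 0}e^{\nu t}|\overline{\Phi}^{t}_{(x,\omega)}(y)|<\infty$.

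The main obstacle I anticipate is showing that the exceptional null sets can be chosen $\Theta$-invariantly and that $R^\nu$ is genuinely measurable and strictly positive \emph{everywhere} on $\mathcal{M}$ — not merely on a $t$- or $n$-dependent full-measure set. This is the non-invertible, non-Markovian analogue of the perfection-type construction of Theorem~\ref{disintegration_2}, and it is precisely where I would invoke the refined multiplicative ergodic framework of \cite{GVR23A}, flagged in the introduction as the technical backbone for this kind of statement, in order to produce the tempered constants $C_\epsilon$ and $g_n$ uniformly on a $\Theta$-invariant set.
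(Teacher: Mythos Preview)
Your approach is correct and assembles the right ingredients, but it is considerably more hands-on than the paper's proof. The paper proceeds as follows: after introducing the cocycle $\overline{\Phi}^t_{(x,\omega)}(y)=\Phi^t_\omega(x+y)-\Phi^t_\omega(x)$ and discretising with an arbitrary step $t_0>0$, it simply checks that the remainder estimate \eqref{ASAx} of Proposition~\ref{YASd} places $\overline{\Phi}$ in the framework of \cite[Theorem~2.10]{GVR23A}, namely
\[
\sup_{0\leq t\leq t_0}\bigl|\overline{\Phi}^t_{(x,\omega)}(y)-\overline{\Phi}^t_{(x,\omega)}(z)-\txtD_0\overline{\Phi}^t_{(x,\omega)}(y-z)\bigr|\leq |y-z|\,(|y|^r+|z|^r)\,g(|y|+|z|)\,G(x,\omega),
\]
with $g(s)=\exp(1+s^{p_2})$ and $\log^+ G(x,\omega)=f(\omega)+|x|^{p_2}\in L^1(\mu\times\mathbf{P})$ by Proposition~\ref{YTDS} and Proposition~\ref{YASd}. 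The local stable manifold theorem from \cite{GVR23A} is then invoked as a black box, and the passage from discrete to continuous time is delegated to \cite[Remark~2.13]{GVR25D}. No explicit iteration, no tempered Oseledets constants, no construction of $R^\nu$ appear in the paper's argument.

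What you do instead is essentially unpack that black box: you extract the tempered constants $C_\epsilon$ from the MET, feed Proposition~\ref{YASd} (specialised to $z=0$, which suffices here since the statement only asks for decay, not a graph structure) into a one-step recursion, and run a Gr\"onwall-type iteration \`a la \cite[\S 7.5]{Arn98}. This is legitimate, but be aware that your recursion as literally written has one-step linear factor $C_\epsilon(\Theta_n(x,\omega))\,e^{\lambda_1+\epsilon}$, which is \emph{not} uniformly below $1$ since $C_\epsilon\circ\Theta_n$ is merely tempered; closing the iteration requires either a Lyapunov norm or the telescoped $n$-step bound $\|\txtD_x\Phi^n_\omega\|\leq C_\epsilon(x,\omega)\,e^{n(\lambda_1+\epsilon)}$, which is precisely the machinery inside \cite[Theorem~2.10]{GVR23A} and \cite[Theorem~7.5.5]{Arn98}, so your references are on point even if the step is not written out. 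Your continuous-time extension via the deterministic Lipschitz estimate $|\overline{\Phi}^t_{(x,\omega)}(y)|\leq e^{C_3^F t}|y|$ (which follows from Assumption~\ref{Drift}~2) for the nonlinear flow, not only its linearisation~\eqref{ILASOsd69a}) is a valid and more explicit alternative to the paper's citation of \cite{GVR25D}.
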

\proof
We first recall the cocycle \(\overline{\Phi} \) defined by
\[
\overline{\Phi}^{t}_{(x,\omega)}(y) = \Phi^{t}_{\omega}(x + y) - \Phi^{t}_{\omega}(x)
\]
defined in Lemma~\ref{YAHs}.~Fixing an arbitrary time step \( t_0 > 0 \), we consider the discretized cocycle \( (\overline{\Phi}^{n t_0}_{(x,\omega)})_{n \in \mathbb{N}} \) and prove the existence of a local stable manifold for this system.  
This can be achieved applying~\cite[Theorem~2.10]{GVR23A}.~To this aim we need to verify that 
\begin{align}\label{5sdww}
\begin{split}      
	\sup_{0\leq t\leq t_0} \big\vert\overline{\Phi}^{t}_{(x,\omega)}(y) &-\overline{\Phi}^{t}_{(x,\omega)}(z) - \txtD_0\bar{\Phi}^{t}_{(x,\omega)}(y-z) \big\vert  
	\\&\leq G(\omega,x) \vert y - z \vert  
	\leq \vert y - z \vert (\vert y \vert^r + \vert z \vert^r) g(\vert y \vert + \vert z \vert),
\end{split}
\end{align}
where \( \log^{+} G(x,\omega) \) is integrable with respect to \( \mu \times \mathbf{P} \) for an arbitrary function $G$, \( 0 < r \leq 1 \), \( g \) is a positive, increasing \( C^1 \)-function, and \( \txtD_0 \) denotes the derivative at the origin.~We now verify \eqref{5sdww}. For every \( (x, \omega) \in \mathbb{R}^d \times \Omega \) and \( y, z \in \mathbb{R}^d \) and from Proposition \ref{YTDS}, we have
\begin{align}\label{85as}
\begin{split}
	&\sup_{0\leq t\leq t_0}\left\vert\overline{\Phi}^{t}_{(x,\omega)}(y)-\overline{\Phi}^{t}_{(x,\omega)}(z)-\txtD_0\overline{\Phi}^{t}_{(x,\omega)}(y-z)\right\vert\\&=\sup_{0\leq t\leq t_0}\left\vert{\Phi}^{t}_{\omega}(x+y)-{\Phi}^{t}_{\omega}(x+z)-\txtD_x{\Phi}^{t}_{\omega}(y-z)\right\vert\\&\quad \leq \vert y-z\vert(\vert y\vert^r+\vert z\vert^r)\exp\left(1+(\vert y\vert+\vert z\vert)^{p_2}\right)\exp\left(f(\omega)+\vert x\vert^{p_2}\right).
\end{split}
\end{align}
From Propositions \ref{YTDS}  and \ref{YASd} given the fact that using the fact that $f$ does not depend on the state variable $x$, we have
\begin{align}\label{85as3}
\int_{ \R^d\times\Omega }\left(f(\omega)+\vert x\vert^{p_2}\right)( \mu\times\mathbf{P})\left(\mathrm{d}(x,\omega^-,\omega^+)\right)<\infty.
\end{align}
This verifies~\eqref{5sdww} and~\cite[Theorem 2.10]{GVR23A} gives us the existence of a local invariant stable manifold around the origin for cocycle \( (\overline{\Phi}^{nt_0}_{(x,\omega)})_{n \in \mathbb{N}} \).~This can be extended to the continuous time cocycle \( (\overline{\Phi}^t_{(x,\omega)})_{t \geq 0} \)by~\cite[Remark 2.13]{GVR26}.
\qed

\section{Lyapunov exponents}\label{sec:LP}

In this section we prove that by increasing the intensity of the noise, we can ensure that the Lyapunov exponent of~\eqref{MAIN} becomes negative. 
Since we cannot use the Fokker-Planck equation to compute the stationary density of the SDE as in~\cite{FGS16a}, we rely on the results in~\cite{LPS23} which provide Gaussian estimates for this density. 
We consider the same SDE~\eqref{MAIN} given by
\begin{align}\label{aasass}
\begin{cases}
&\mathrm{d}Y_t^{\sigma} = F(Y_t^\sigma) \, \mathrm{d}t + \sigma \, \mathrm{d} B^{H}_t\\
& Y_0^\sigma=x \in \mathbb{R}^d,
\end{cases}
\end{align}
where we additionally keep track of the dependence of the solution $(Y^\sigma_t)_{t\in \R^+}$ on the noise intensity $\sigma$. 
We first specify some assumptions. 
\begin{assumptions}\label{Drift2}   
	
	We assume that \( F \in C^1(\mathbb{R}^d) \) and that its derivative is globally bounded. Moreover, \(F\) is eventually strictly monotone, i.e.~there exist constants \(R > 0\) and \(C_4^F > 0\) such that, for all \(\xi_1, \xi_2 \in \mathbb{R}^d\) with \(|\xi_1|, |\xi_2| \geq R\) we have 
	\begin{align}\label{monotone st}
\langle F(\xi_2) - F(\xi_1), \xi_2 - \xi_1 \rangle \leq -C_4^F |\xi_2-\xi_1|^2.
\end{align}
As before, \( \sigma : \mathbb{R}^d \rightarrow \mathbb{R}^d \) is an invertible matrix. 
\end{assumptions}
\begin{remark}\label{Notation}
Note that Assumption~\ref{Drift2} implies  Assumption~\ref{Drift}. In particular
\begin{align*}
\langle F(\xi_2) - F(\xi_1), \xi_2 - \xi_1 \rangle \leq \min \left\{ C_1^F - C_2^F |\xi_2 - \xi_1|^2,\, C_3^F |\xi_2 - \xi_1|^2 \right\}
\end{align*}
and
\begin{align*}
|F(\xi)| \leq C_F(1 + |\xi|), \quad |\txtD_\xi F| \leq C_F,
\end{align*}
for all \(\xi, \xi_1, \xi_2 \in \mathbb{R}^d\). Thus, together with Proposition~\ref{YTDS}, we can conclude that the SDS associated with~ SDE \eqref{aasass} admits a unique invariant measure.
\end{remark}
We further specify a restriction on \( \sigma \).
\begin{definition}\label{theth}
Let us consider the singular value decomposition of \( \sigma \) as 
\[
\sigma = U \Sigma V^T,
\]
where \( U, V \in \mathbb{R}^{d \times d} \) are orthogonal matrices, and \( \Sigma = \mathrm{diag}(\beta_1, \beta_2, \dots, \beta_d) \) is a diagonal matrix with nonzero entries, known as the singular values of \( \sigma \), such that
\[
0 < \beta_1 \leq \beta_2 \leq \dots \leq \beta_d.
\]

Then, for every \( \theta \geq 1 \), we define the set
\[
T_\theta := \left\{ \sigma \in \mathbb{R}^{d \times d} \,\middle|\, \sigma \text{ is invertible and } \frac{|\beta_d|}{|\beta_1|} \leq \theta \right\}.
\]
Note that in this case \( \Vert \sigma \Vert = \beta_d \) and \( \Vert \sigma^{-1} \Vert = \frac{1}{\beta_1} \). For \( \kappa > 0 \), we set
\[
T_{\theta, \kappa} := \left\{ \sigma \in T_{\theta} \,\middle|\, \Vert \sigma \Vert \geq \kappa \right\}.
\]
Throughout this section, the parameters \( \theta \geq 1 \) and \( \kappa > 0 \) are fixed.
\end{definition} 


\subsection{A rescaling argument}

For our purposes, it is more convenient  to rescale~\eqref{aasass} such that its diffusion coefficient lies on the unit sphere. This is the content of the next lemma.

\begin{lemma}\label{LANAMSWQ}
We consider the following SDE
\begin{align} \label{aasass1}
\begin{cases}
	\txtd Z_t^\sigma = \|\sigma\|^{-1} F(\|\sigma\| Z_t^\sigma) \, \txtd t + \|\sigma\|^{-1} \sigma \, \txtd B_t^H, \\
	Z_0^\sigma=x \in \mathbb{R}^d,
\end{cases}
\end{align}
and  
denote its solution by \( (Z_{t,x}^\sigma)_{t \geq 0} \). 
Then, for every \( t \geq 0 \) and \( x \in \mathbb{R}^d \), we have
\begin{align*}
Z_{t,x}^{\sigma} = \|\sigma\|^{-1} Y_{t, \|\sigma\| x}^{\sigma},
\end{align*}
where $Y^\sigma_{t,x}$ denotes the solution of~\eqref{aasass} starting in $x\in\R^d$. 
\end{lemma}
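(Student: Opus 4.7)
The statement is a routine change-of-variables identity, so the plan is simply to verify that the two sides of the proposed equality solve the same SDE with the same initial condition and then invoke pathwise uniqueness, which is available under Assumption~\ref{Drift2} via the flow transformation described in Lemma~\ref{OALSdfe}.

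\textbf{Step 1: candidate process.} I would set
\[
W_t := \|\sigma\|^{-1} Y^\sigma_{t,\|\sigma\|x},
\]
where $(Y^\sigma_{t,\|\sigma\|x})_{t\geq 0}$ denotes the solution of \eqref{aasass} starting at $\|\sigma\|x$. At $t=0$ we immediately get $W_0 = \|\sigma\|^{-1}\cdot\|\sigma\|x = x$, matching the initial data of \eqref{aasass1}.

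\textbf{Step 2: verifying the SDE.} Using the integral representation for $Y^\sigma$ from Corollary~\ref{DCHYJAS85} (which gives the equation pathwise in the sense of Definition~\ref{SASDdfe}), one multiplies by $\|\sigma\|^{-1}$ throughout:
\begin{align*}
W_t = \|\sigma\|^{-1} Y^\sigma_{t,\|\sigma\|x}
&= x + \|\sigma\|^{-1}\!\int_0^t F\bigl(Y^\sigma_{s,\|\sigma\|x}\bigr)\,\txtd s + \|\sigma\|^{-1}\sigma B^H_t \\
&= x + \int_0^t \|\sigma\|^{-1}F\bigl(\|\sigma\| W_s\bigr)\,\txtd s + \|\sigma\|^{-1}\sigma B^H_t,
\end{align*}
which is precisely the integral form of \eqref{aasass1}.

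\textbf{Step 3: uniqueness.} Finally, since the rescaled drift $x\mapsto \|\sigma\|^{-1}F(\|\sigma\| x)$ inherits the $C^1$-regularity and globally bounded derivative from Assumption~\ref{Drift2}, and the rescaled diffusion $\|\sigma\|^{-1}\sigma$ is still an invertible matrix, Assumption~\ref{Drift} holds for \eqref{aasass1}. Thus Lemma~\ref{OALSdfe} applies and \eqref{aasass1} has a unique solution, so $W_t = Z^\sigma_{t,x}$ pathwise, which is the desired identity. No step here should be difficult; the only care needed is to check that the rescaled coefficients still fall within the framework under which the SDE is well-posed, but this is immediate from Assumption~\ref{Drift2}.
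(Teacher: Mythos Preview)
Your proof is correct and follows exactly the approach the paper takes: the paper's proof is a single sentence invoking uniqueness of solutions from Lemma~\ref{OALSdfe}, and you have simply spelled out the routine verification that $\|\sigma\|^{-1}Y^\sigma_{t,\|\sigma\|x}$ satisfies \eqref{aasass1} with the right initial condition.
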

\proof
The result follows directly from the uniqueness of solutions of the SDE~\eqref{aasass1} stated in Lemma~\ref{OALSdfe}. 
\qed
We also have the following straightforward result for the rescaled drift.
\begin{lemma}\label{Straightforward}
Let $\sigma\in T_{\theta,1}$ and assume that Assumption \ref{Drift2}  holds. For every \(\xi \in \mathbb{R}^d\) we define 
	\begin{align}\label{IK63asd}
F^{\sigma}(\xi) := \|\sigma\|^{-1} F(\|\sigma\|\xi).
\end{align}
Then, for every \(\xi_1,\xi_2 \in \mathbb{R}^d\), we have
\begin{align}\label{fsigma}
\begin{split}
	\langle F^{\sigma}(\xi_2) - F^{\sigma}(\xi_2), \xi_2-\xi_1 \rangle 
	&\leq \min \left\{ C_1^F - C_2^F |\xi_2-\xi_1|^2,\, C_3^F |\xi_2-\xi_1|^2 \right\}, \\
	C_{F^{\sigma}} &= C_F,
\end{split}
\end{align}
as imposed in Assumption~\ref{Drift}. In addition, the SDS associated with SDE~\eqref{aasass1} admits a unique invariant measure.
\end{lemma}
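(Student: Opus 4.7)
The plan is to verify the two displayed properties by direct computation using the definition $F^{\sigma}(\xi)=\|\sigma\|^{-1}F(\|\sigma\|\xi)$ and the fact that $\|\sigma\|\geq 1$ for $\sigma\in T_{\theta,1}$, and then deduce the existence and uniqueness of an invariant measure from Proposition~\ref{YTDS} once Assumption~\ref{Drift} is checked.

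First, for the monotonicity/dissipativity bound, I would write
\[
\langle F^{\sigma}(\xi_2)-F^{\sigma}(\xi_1),\xi_2-\xi_1\rangle
=\|\sigma\|^{-2}\bigl\langle F(\|\sigma\|\xi_2)-F(\|\sigma\|\xi_1),\|\sigma\|\xi_2-\|\sigma\|\xi_1\bigr\rangle,
\]
and apply Assumption~\ref{Drift2} (more precisely, the reformulation in Remark~\ref{Notation}) to the pair $\|\sigma\|\xi_1,\|\sigma\|\xi_2\in\mathbb{R}^d$. This yields the upper bound
\[
\min\bigl\{C_1^F\|\sigma\|^{-2}-C_2^F|\xi_2-\xi_1|^2,\;C_3^F|\xi_2-\xi_1|^2\bigr\}.
\]
Since $\sigma\in T_{\theta,1}$ forces $\|\sigma\|^{-2}\leq 1$, the first entry of the minimum is dominated by $C_1^F-C_2^F|\xi_2-\xi_1|^2$, and \eqref{fsigma} follows. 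For the polynomial growth/Lipschitz constant, the chain rule gives $\mathrm{D}_{\xi}F^{\sigma}(\xi)=\mathrm{D}_{\|\sigma\|\xi}F$, which is uniformly bounded by $C_F$ under Assumption~\ref{Drift2}. The bound $|F^{\sigma}(\xi)|\leq C_F(1+|\xi|)$ follows analogously, using $\|\sigma\|^{-1}\leq 1$. Hence the same constant $C_F$ works for $F^{\sigma}$.

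Finally, having verified that $F^{\sigma}$ satisfies Assumption~\ref{Drift} with constants independent of $\sigma$, and that $\|\sigma\|^{-1}\sigma$ is an invertible matrix (its singular values are $\beta_i/\beta_d$ and the smallest satisfies $\beta_1/\beta_d\geq 1/\theta>0$), Proposition~\ref{YTDS} applies to~\eqref{aasass1} and gives the existence and uniqueness of the invariant measure for the associated SDS. No genuine obstacle is expected here; the only point requiring care is to exploit $\|\sigma\|\geq 1$ when comparing $C_1^F\|\sigma\|^{-2}$ with $C_1^F$, which is precisely why the restriction to $T_{\theta,\kappa}$ (with $\kappa=1$) is imposed in the statement.
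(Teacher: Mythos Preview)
Your proposal is correct and follows exactly the approach the paper indicates: the paper's proof simply states that \eqref{fsigma} ``can be readily verified'' from the definition of $F^{\sigma}$ and Remark~\ref{Notation}, and that the invariant measure follows from Proposition~\ref{YTDS}. You have carried out precisely this verification in detail, including the key observation that $\|\sigma\|\geq 1$ is needed to bound $C_1^F\|\sigma\|^{-2}$ by $C_1^F$.
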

\proof
By the definition of \( F^{\sigma} \) and Remark~\ref{Notation}, the condition~\eqref{fsigma} can be readily verified.
Moreover, the existence of a unique invariant measure follows from Proposition~\ref{YTDS}.
\qed
\begin{remark}
The assumption $\sigma\in T_{\theta,1}$ implies that $\|\sigma\|\geq 1$. This condition can be replaced by $\|\sigma\|\geq \kappa$ for an arbitrary $\kappa>0$. Since this obviously does not change the statement of Lemma~\ref{Straightforward} but only the constants involved, we stated for simplicity~\eqref{fsigma} for  $\kappa=1$. 
\end{remark}
\begin{notation}
We use \( \mu^{\sigma} \) and \( \tilde{\mu}^{\sigma} \) to denote the invariant measures of the stochastic dynamical systems associated with the SDEs~\eqref{aasass} and~\eqref{aasass1}. The corresponding first marginals are denoted by \( \pi^\sigma \) and \( \tilde{\pi}^\sigma \) meaning that
\begin{align*}
(\Pi_{\mathbb{R}^d})_{\star} \mu^{\sigma} &= \pi^{\sigma}, \\
(\Pi_{\mathbb{R}^d})_{\star} \tilde{\mu}^{\sigma} &= \tilde{\pi}^{\sigma}.
\end{align*}
\end{notation}
We now focus on the connection between the invariant measures of the SDEs \eqref{aasass} and \eqref{aasass1} which immediately follows by rescaling. 
\begin{lemma}\label{contrere} 
For every Borel set \( A \subseteq \mathbb{R}^d \), we have
\begin{align}\label{UAISd96a}
\pi^{\sigma}(A) = \tilde{\pi}^{\sigma}(\|\sigma\|^{-1} A).
\end{align}
\end{lemma}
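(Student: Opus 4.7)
\proof[Proof plan]
The plan is to push the identity from Lemma~\ref{LANAMSWQ} through to the invariant measures by letting $t \to \infty$ and using the total variation convergence guaranteed by Proposition~\ref{YTDS}. Concretely, define the scaling homeomorphism $T_{\sigma}: \mathbb{R}^d \to \mathbb{R}^d$ by $T_{\sigma}(x) := \|\sigma\| x$, so that $T_{\sigma}^{-1}(A) = \|\sigma\|^{-1} A$ and the pushforward notation of Lemma~\ref{pushforward} applies.

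First, I would fix an arbitrary $x_0 \in \mathbb{R}^d$ and apply Lemma~\ref{LANAMSWQ} with initial condition $\|\sigma\|^{-1} x_0$ for the rescaled SDE~\eqref{aasass1}; this gives the pathwise identity
\[
Y^{\sigma}_{t,\, x_0} \;=\; \|\sigma\|\, Z^{\sigma}_{t,\, \|\sigma\|^{-1} x_0} \;=\; T_{\sigma}\bigl(Z^{\sigma}_{t,\, \|\sigma\|^{-1} x_0}\bigr),
\]
so in particular $\mathcal{L}(Y^{\sigma}_{t, x_0}) = (T_{\sigma})_{\star}\,\mathcal{L}(Z^{\sigma}_{t, \|\sigma\|^{-1} x_0})$ for every $t \geq 0$.

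Next, I would invoke Proposition~\ref{YTDS} twice (once for the SDE~\eqref{aasass}, once for the rescaled SDE~\eqref{aasass1}, whose well-posedness and existence of a unique invariant measure are guaranteed by Lemma~\ref{Straightforward}): the left-hand law converges in total variation to $\pi^{\sigma}$, and the argument of $T_{\sigma}$ on the right converges in total variation to $\tilde{\pi}^{\sigma}$. Since $T_{\sigma}$ is a bijective bi-measurable map, pushforward under $T_{\sigma}$ is an isometry in total variation, so we can pass to the limit and obtain $\pi^{\sigma} = (T_{\sigma})_{\star}\, \tilde{\pi}^{\sigma}$. Evaluating on an arbitrary Borel set $A \subseteq \mathbb{R}^d$ yields
\[
\pi^{\sigma}(A) \;=\; \tilde{\pi}^{\sigma}\bigl(T_{\sigma}^{-1}(A)\bigr) \;=\; \tilde{\pi}^{\sigma}\bigl(\|\sigma\|^{-1} A\bigr),
\]
which is precisely~\eqref{UAISd96a}.

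No step here is really an obstacle: the identity is essentially a change of variables, and the only ingredient that is not purely algebraic is the invocation of Proposition~\ref{YTDS} to guarantee that the laws of $Y^{\sigma}_{t, x_0}$ and $Z^{\sigma}_{t, \|\sigma\|^{-1} x_0}$ actually converge to the respective first marginals of the invariant measures. Alternatively, one could avoid the convergence argument altogether and verify directly that $(T_{\sigma})_{\star}\, \tilde{\pi}^{\sigma}$ is invariant for the SDS associated with~\eqref{aasass} by testing against the transition semigroup, and then conclude by the uniqueness part of Proposition~\ref{YTDS}; this is the alternative I would fall back on if a reader objected to the use of total variation convergence.
\qed
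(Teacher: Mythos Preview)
Your proposal is correct and follows essentially the same approach as the paper: use Lemma~\ref{LANAMSWQ} to relate the laws of $Y^{\sigma}_{t,x_0}$ and $Z^{\sigma}_{t,\|\sigma\|^{-1}x_0}$, then invoke the total variation convergence of Proposition~\ref{YTDS} for both SDEs and pass to the limit. The paper's proof is simply a terser version of what you wrote, and your explicit mention that pushforward by a bijection is a total variation isometry makes the limiting step more transparent.
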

\proof
From Proposition~\ref{YTDS}, we have
\begin{align}\label{SAUDajdksd}
\lim_{t \rightarrow \infty} \left\| \mathcal{L}\left(Z_{t,x}^{\sigma}\right) - \tilde{\pi}^{\sigma} \right\|_{\mathrm{TV}} = 0, \quad 
\lim_{t \rightarrow \infty} \left\| \mathcal{L}\left( Y_{t, \|\sigma\| x}^{\sigma} \right) - \pi^{\sigma} \right\|_{\mathrm{TV}} = 0.
\end{align}
This, together with Lemma~\ref{LANAMSWQ} implies the claim.
\qed

\begin{remark}\label{comaprison}
In~\cite{LPS23} the following SDE depending on a parameter $\lambda\in\R^d$ was considered
\begin{align}\label{sde:param:xm1}
\txtd Z^\lambda_t = b(\lambda, Z^\lambda_t)~\txtd t + \sigma~\txtd B^H_t, \quad Z^\lambda_0 = x.
\end{align}
In particular, \cite[Theorem~1.5]{LPS23} establishes a uniform bound for the stationary density of $(\Pi_{\mathbb{R}^d})_{\star} \mu^{\lambda}$ as $\lambda$ ranges over a compact set.
This framework is closely related to the SDE \eqref{aasass1}, where we analyze the dependence of the density of $(\Pi_{\mathbb{R}^d})_{\star} \tilde{\mu}^{\sigma}$ on the parameter $\sigma$.~In our case, both drift and diffusion coefficients exhibit this parametric dependence.~This parameter is crucial for our aims, since it's choice will ensure the negativity of the top Lyapunov exponent, as seen below. Therefore we revisit the results \cite{LPS23} keeping track of this parameter. 
\end{remark}
\subsection{Negativity of the top Lyapunov exponent}
The main goal of this subsection is to track the dependence of the measure \( \pi^{\sigma} \) ($\tilde{\pi}^{\sigma}$) on the parameter \( \sigma \). As previously mentioned, this measure admits a density with respect to the Lebesgue measure. Our main contribution is to analyze how this density varies with respect to $\sigma$.~More precisely, under Assumption~\ref{Drift2}, we show that the top Lyapunov exponent of the SDE~\eqref{aasass} becomes negative as the intensity of the noise $\sigma$ increases.~We first estimate the top Lyapunov exponent in terms of  the invariant measure \( \pi^{\sigma} \).
\begin{lemma}\label{LYASIGMA}
Let \( \lambda_{1}^{\sigma} \) denote the top Lyapunov exponent of the SDE~\eqref{aasass}, as defined in Proposition~\ref{MET}. Then there exists a constant \( C > 0 \), depending only on the drift term \( F \), such that
\begin{align*}
\lambda_1^{\sigma} \leq -C_{4}^{F} \pi^{\sigma}\left(\mathbb{R}^d \setminus B(0,R)\right) + C \pi^{\sigma}\left(B(0,R)\right).
\end{align*}
\end{lemma}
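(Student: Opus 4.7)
\textbf{Proof proposal for Lemma~\ref{LYASIGMA}.}

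The plan is to differentiate $\log|D_x\Phi^t_\omega(x_0)|^2$ along the trajectory, split the resulting integrand according to whether the base point lies inside or outside the ball $B(0,R)$, and then apply Birkhoff's ergodic theorem. From equation~\eqref{UASs} we already have
\[
\frac{\mathrm{d}}{\mathrm{d}t}\,|D_x\Phi^t_\omega(x_0)|^2
=2\bigl\langle D_{\Phi^t_\omega(x)} F\bigl(D_x\Phi^t_\omega(x_0)\bigr),\,D_x\Phi^t_\omega(x_0)\bigr\rangle.
\]
The key observation is that the monotonicity hypotheses on $F$ pass, via the limiting identity
\[
\langle D_\eta F(v),v\rangle=\lim_{\epsilon\to 0}\tfrac{1}{\epsilon^2}\langle F(\eta+\epsilon v)-F(\eta),\epsilon v\rangle,
\]
to pointwise bounds on $D_\eta F$. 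Applying the strict dissipativity~\eqref{monotone st} with $\xi_1=\eta$, $\xi_2=\eta+\epsilon v$ for small $\epsilon$ yields $\langle D_\eta F(v),v\rangle\leq -C_4^F|v|^2$ whenever $|\eta|>R$, while the global bound from Assumption~\ref{Drift}(2) gives $\langle D_\eta F(v),v\rangle\leq C_3^F|v|^2$ for every $\eta$. Combining these with the linearization equation produces
\[
\frac{\mathrm{d}}{\mathrm{d}t}\,\log|D_x\Phi^t_\omega(x_0)|^2
\leq -2C_4^F\,\chi_{\mathbb{R}^d\setminus B(0,R)}\bigl(\Phi^t_\omega(x)\bigr)
+2C_3^F\,\chi_{B(0,R)}\bigl(\Phi^t_\omega(x)\bigr),
\]
on the (full-measure) set of times where $|\Phi^t_\omega(x)|\neq R$.

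Next I would integrate from $0$ to $t$ and divide by $t$:
\[
\frac{1}{t}\log|D_x\Phi^t_\omega(x_0)|
\leq \frac{1}{t}\log|x_0|
-C_4^F\cdot\frac{1}{t}\!\int_0^t\!\chi_{\mathbb{R}^d\setminus B(0,R)}\bigl(\Phi^s_\omega(x)\bigr)\,\mathrm{d}s
+C_3^F\cdot\frac{1}{t}\!\int_0^t\!\chi_{B(0,R)}\bigl(\Phi^s_\omega(x)\bigr)\,\mathrm{d}s.
\]
To pass to the limit, invoke the skew-product $(\Theta_t)_{t\geq 0}$ of Lemma~\ref{THe}, which preserves the product measure $\mu^\sigma\times\mathbf{P}$ and is ergodic by Corollary~\ref{THETA} (since $\mu^\sigma$ is the unique, hence ergodic, invariant measure from Proposition~\ref{YTDS}, and the shifts $\vartheta_t$ on $\mathcal{B}_H$ are ergodic). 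Birkhoff's ergodic theorem, applied to the bounded observables $(x,\omega^-,\omega^+)\mapsto \chi_A(x)$ with $A=B(0,R)$ and $A=\mathbb{R}^d\setminus B(0,R)$, yields $(\mu^\sigma\times\mathbf{P})$-almost surely
\[
\lim_{t\to\infty}\frac{1}{t}\!\int_0^t\!\chi_A\bigl(\Phi^s_\omega(x)\bigr)\,\mathrm{d}s
=\pi^\sigma(A).
\]

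Finally, choosing $x_0\in\mathbb{R}^d\setminus\{0\}$ so that the limit defining $\lambda_1^\sigma$ in Proposition~\ref{MET} is attained (e.g.\ any $x_0$ outside the proper subspace $G_{\lambda_2}(x,\omega)$), we conclude
\[
\lambda_1^\sigma=\lim_{t\to\infty}\frac{1}{t}\log|D_x\Phi^t_\omega(x_0)|
\leq -C_4^F\,\pi^\sigma\bigl(\mathbb{R}^d\setminus B(0,R)\bigr)+C\,\pi^\sigma\bigl(B(0,R)\bigr)
\]
with $C:=C_3^F$, depending only on $F$. The only delicate point is checking that Birkhoff's theorem applies on a single set of full measure which simultaneously accommodates the two indicator observables and captures $\lambda_1^\sigma$; this follows from intersecting the three countably many exceptional sets and from the fact that $\lambda_1^\sigma$ is deterministic. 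The bulk of the argument is otherwise a routine differential-inequality computation.
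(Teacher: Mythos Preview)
Your proof is correct and follows essentially the same route as the paper: differentiate $|D_x\Phi^t_\omega(x_0)|^2$ via~\eqref{UASs}, use the monotonicity hypotheses to bound $\langle D_\eta F(v),v\rangle$ pointwise, and then invoke ergodicity of $(\Theta_t)$ (Corollary~\ref{THETA}) together with Birkhoff. The only cosmetic difference is that the paper packages the pointwise bound into the single observable $\lambda^+(y):=\max_{|v|=1}\langle D_yF(v),v\rangle$, applies Birkhoff once to obtain $\lambda_1^\sigma\le\int\lambda^+(x)\,\pi^\sigma(\mathrm{d}x)$, and splits the integral afterwards, whereas you split into the two indicators first and apply Birkhoff to each; this yields the explicit constant $C=C_3^F$ in place of the paper's $C=\sup_y\lambda^+(y)$.
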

\proof
Let \( \Phi \) denote the RDS associated with the SDE~\eqref{aasass}. From~\eqref{UASs}, it follows that for every \( \tilde{\omega} = (\tilde{\omega}^-, \tilde{\omega}^+) \in \mathcal{B} \times \mathcal{B} \) and for all \( x_1, x_0 \in \mathbb{R}^d \),
\begin{align*}
\begin{split}
	\frac{\mathrm{d}}{\mathrm{d} t}\left|\txtD_{x_1}\Phi^{t}_{\tilde{\omega}}(x_0)\right|^2
	= 2 \left\langle D_{\Phi^{t}_{\tilde{\omega}}(x_1)}F\left(\frac{\txtD_{x_1}\Phi^{t}_{\tilde{\omega}}(x_0)}{\left|\txtD_{x_1}\Phi^{t}_{\tilde{\omega}}(x_0)\right|}\right), \frac{\txtD_{x_1}\Phi^{t}_{\tilde{\omega}}(x_0)}{\left|\txtD_{x_1}\Phi^{t}_{\tilde{\omega}}(x_0)\right|} \right\rangle \left|\txtD_{x_1}\Phi^{t}_{\tilde{\omega}}(x_0)\right|^2.
\end{split}
\end{align*}
Therefore,
\begin{align*}
\left|\txtD_{x_1}\Phi^{t}_{\tilde{\omega}}(x_0)\right|^2
= 2\vert x_0\vert^2
\exp\left(\int_{0}^{t} \left\langle D_{\Phi^{s}_{\tilde{\omega}}(x_1)}F\left(\frac{\txtD_{x_1}\Phi^{s}_{\tilde{\omega}}(x_0)}{\left|\txtD_{x_1}\Phi^{s}_{\tilde{\omega}}(x_0)\right|}\right), \frac{\txtD_{x_1}\Phi^{s}_{\tilde{\omega}}(x_0)}{\left|\txtD_{x_1}\Phi^{s}_{\tilde{\omega}}(x_0)\right|} \right\rangle \mathrm{d}s \right).
\end{align*}
From Proposition~\ref{MET}, there exists \( x_0 \neq 0 \) such that for \( \mu^{\sigma} \times \mathbf{P} \)-almost every \( (x_1, \tilde{\omega}) \in \mathbb{R}^d \times \mathcal{B} \times \mathcal{B} \),
\begin{align}\label{8sd2feww}
\begin{split}
	\lambda_1^{\sigma}&=\lim_{t\rightarrow \infty}\frac{1}{t}\int_{0}^{t}\left\langle D_{\Phi^{s}_{\tilde{\omega}}(x_1)}F\left(\frac{(\txtD_{x_1}\Phi^{s}_{\tilde{\omega}}(x_0)}{\left|\txtD_{x_1}\Phi^{s}_{\tilde{\omega}}(x_0)\right|}\right),\frac{(\txtD_{x_1}\Phi^{s}_{\tilde{\omega}}(x_0)}{\left|\txtD_{x_1}\Phi^{s}_{\tilde{\omega}}(x_0)\right|}\right\rangle\mathrm{d}s \\&\leq \limsup_{t\rightarrow\infty}\frac{1}{t}\int_{0}^{t}\lambda^{+}(\Phi^{s}_{\tilde{\omega}}(x_1))\mathrm{d}s,
\end{split}
\end{align}
where \( \lambda^{+}(y) := \max_{\vert v \vert = 1} \langle \txtD_{y}F(v), v \rangle \).
We define the function
\begin{align*}
\overline{\lambda} \colon \mathbb{R}^d \times \mathcal{B} \times \mathcal{B} \to \mathbb{R}, \quad
\overline{\lambda}(x, \omega) := \lambda^{+}(x).
\end{align*}
Thanks to Assumptions~\ref{Drift2}, one can see that $\overline{\lambda}$ is continuous and bounded.
From Corollary~\ref{THETA}, the family \( \{ \Theta_t \}_{t > 0} \) is ergodic with respect to the measure \( \mu^{\sigma} \times \mathbf{P} \). Therefore, by \eqref{8sd2feww}, the monotone convergence theorem, and Birkhoff's ergodic theorem, we conclude that for \( \mu^{\sigma} \times \mathbf{P} \)-almost every \( (x_1, \tilde{\omega}) \in \mathbb{R}^d \times \mathcal{B} \times \mathcal{B} \),
\begin{align*}
\lambda_1^{\sigma} 
&\leq \lim_{t \rightarrow \infty} \frac{1}{t} \int_{0}^{t} \overline{\lambda}(\Theta_s(x_1, \tilde{\omega})) \, \mathrm{d}s \\
&= \int_{\mathbb{R}^d \times \mathcal{B} \times \mathcal{B}} \overline{\lambda}(x, \omega) \, \mu^{\sigma} \times \mathbf{P} \left( \mathrm{d}(x, \omega^-, \omega^+) \right) \\
&= \int_{\mathbb{R}^d} \lambda^{+}(x) \, \pi^{\sigma}(\mathrm{d}x).
\end{align*}
Consequently, 
\begin{align}
\begin{split}
	\lambda_1^{\sigma}&\leq \int_{\R^d \setminus B(0,R)}\lambda^{+}(x)\pi^{\sigma}(\mathrm{d}x)+\int_{B(0,R)}\lambda^{+}(x)\pi^{\sigma}(\mathrm{d}x)\\ &\leq -C_{4}^{F}\pi^{\sigma}\left(\R^d \setminus B(0,R)\right)+\sup_{y\in \R^d}\lambda^{+}(y)\pi^{\sigma}\left(B(0,R)\right),
\end{split}
\end{align}
where we used~\eqref{monotone st} to bound the first term.
\qed
Therefore to make the first Lyapunov exponent $\lambda^\sigma_1$ negative, it is sufficient to prove that as \( \frac{1}{\Vert\sigma^{-1}\Vert}   \) grows, not too much mass of \( \pi^{\sigma} \) concentrates on \( B(0,R) \).  We provide a well-known example that motivates this phenomenon. 
\begin{example}
Recall that the fractional Ornstein-Uhlenbeck process is defined as
\[
\tilde{x}_t^\sigma = \sigma \int_{0}^{t} e^{-(t-s)} \, \mathrm{d}B^{H}_{s},
\]
where \( (B^{H}_t)_{t\geq 0} \) is a fractional Brownian motion. This is a Gaussian process whose covariance matrix is given by
\[
2H \exp(-t) \sigma \sigma^{T} \int_{0}^{t} s^{2H-1} \cosh(t-s) \, \mathrm{d}s.
\]
Therefore, the density of the law \( \mathcal{L}(\tilde{x}_t^\sigma) \) on $\R^d$ becomes flatter as \( \|\sigma^{-1}\| \) increases. In particular, its probability mass within any bounded subset of \( \mathbb{R}^d \) becomes small. This is precisely what we aim to prove for the measure \( \pi^{\sigma} \).
\end{example}
Let us briefly outline the strategy we will follow to show that, by increasing \( \frac{1}{\|\sigma^{-1}\|} \), the Lyapunov exponent \( \lambda^\sigma_1 \) becomes negative. According to Lemma \ref{LYASIGMA}, it is sufficient to show that
\begin{align}\label{UAJSASsd}
\lim_{\Vert\sigma\Vert\rightarrow \infty}\pi^{\sigma}\left(B(0,R)\right)=0.
\end{align}
\begin{itemize}
\item \textbf{Step 1)} To prove this 
we work with the rescaled SDE \eqref{aasass1}. Thus, from Lemma \eqref{contrere}, it is enough to show that
\begin{align}\label{asasqewe}
\lim_{\Vert\sigma\Vert\rightarrow \infty}\tilde{\pi}^{\sigma}\left(\|\sigma\|^{-1}B(0,R)\right)=0.
\end{align}
\item \textbf{Step 2)} We derive a uniform bound for the density of \eqref{aasass1} when \(\sigma\) belongs to \(T_{\theta,\kappa}\) 
for every arbitrary but fixed value of \(\theta \geq 1\). To achieve this, we follow the approach in \cite{LPS23} keeping track of the dependence of $\tilde{\pi}^\sigma$ on $\sigma$. 
\end{itemize}
\begin{notation}
Recall that $B^H_t\left(P_t(\omega^-, \omega^+)\right) = -\left(\mathcal{D}_H P_t(\omega^-, \omega^+)\right)(-t).$
For simplicity, we write \( B^H_t \) instead of \( B^H_t\left(P_t(\omega^-, \omega^+)\right) \). Furthermore, \( \mathbb{P} = \mathbf{P} \times \mathbf{P} \) as before.
\end{notation}
We now focus on the density of the SDE~\eqref{aasass1}. 

\begin{lemma}\label{sds85sdarr}
There exists a constant \(\rho_{\kappa,1} > 0\) such that
\[
\sup_{\sigma\in T_{\theta,\kappa} }\int_{\mathbb{R}^d} \exp\bigl(\rho_{\kappa,1} |y|^2\bigr) \, \tilde{\pi}^\sigma(\mathrm{d}y) < \infty.
\]

\end{lemma}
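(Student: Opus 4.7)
The plan is to obtain a uniform-in-$\sigma$ Gaussian moment bound on $\mathcal{L}(Z_t^{\sigma})$ and then pass to the limit $t\to\infty$ via Proposition~\ref{YTDS}. The essential ingredient is to split $Z^{\sigma}$ into an ``OU-type'' stationary Gaussian part (to absorb the fBm) and a bounded remainder controlled by the dissipativity of $F^{\sigma}$. Throughout we write $\hat\sigma:=\|\sigma\|^{-1}\sigma$, so that $\|\hat\sigma\|=1$ and $\|\hat\sigma^{-1}\|\le\theta$ uniformly for $\sigma\in T_{\theta,\kappa}$.

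First, I would introduce, for a fixed parameter $\alpha>0$, the stationary fractional Ornstein--Uhlenbeck process
\[
X_t^{\sigma}:=\int_{-\infty}^{t} e^{-\alpha(t-s)}\,\hat\sigma\,\mathrm{d}B_s^{H},
\]
which is a Gaussian process with stationary law $\mathcal{N}(0,c(H,\alpha)\,\hat\sigma\hat\sigma^{\top})$ for some finite $c(H,\alpha)>0$; this is well-defined on $\Omega_H$ via the two-sided extension of Lemma~\ref{FARASSa} and the pathwise constructions of Lemma~\ref{Liouville}--Lemma~\ref{Liouville1}. Since the singular values of $\hat\sigma$ lie in $[1/\theta,1]$ uniformly over $\sigma\in T_{\theta,\kappa}$, the covariance of $X_0^{\sigma}$ has operator norm bounded above by $c(H,\alpha)$ independently of $\sigma$, so there is a constant $\eta_{\theta}>0$ (depending only on $H,\alpha,\theta$) with
\[
\sup_{\sigma\in T_{\theta,\kappa}}\mathbb{E}\exp\!\bigl(\eta_{\theta}\,|X_0^{\sigma}|^2\bigr)<\infty.
\]

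Next I would consider the decomposition $V_t^{\sigma}:=Z_t^{\sigma}-X_t^{\sigma}$, which satisfies the random ODE $\dot V_t^{\sigma}=F^{\sigma}(V_t^{\sigma}+X_t^{\sigma})+\alpha X_t^{\sigma}$. Using the dissipativity estimate \eqref{fsigma} provided by Lemma~\ref{Straightforward} together with Young's inequality (split $F^{\sigma}(V_t+X_t)$ as $(F^{\sigma}(V_t+X_t)-F^{\sigma}(X_t))+F^{\sigma}(X_t)$) one arrives at
\[
\tfrac{\mathrm{d}}{\mathrm{d}t}|V_t^{\sigma}|^2\le -C_2^{F}|V_t^{\sigma}|^2+K\bigl(1+|X_t^{\sigma}|^2\bigr),
\]
where $K$ depends only on $F$ and $\alpha$. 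Gr\"onwall's inequality combined with the trivial initial bound $|V_0^{\sigma}|^2\le 2|x|^2+2|X_0^{\sigma}|^2$ yields
\[
|V_t^{\sigma}|^2\le 2e^{-C_2^{F}t}\bigl(|x|^2+|X_0^{\sigma}|^2\bigr)+K\int_0^{t} e^{-C_2^{F}(t-s)}\bigl(1+|X_s^{\sigma}|^2\bigr)\mathrm{d}s.
\]
Applying Jensen's inequality with the probability measure $\mu_t(\mathrm{d}s)=\tfrac{C_2^{F}e^{-C_2^{F}(t-s)}}{1-e^{-C_2^{F}t}}\mathbf 1_{[0,t]}(s)\,\mathrm{d}s$ and then the stationarity of $X^{\sigma}$ gives, for sufficiently small $\rho>0$,
\[
\mathbb{E}\exp\!\bigl(\rho|V_t^{\sigma}|^2\bigr)\le C\,\mathbb{E}\exp\!\Bigl(\tfrac{2\rho K}{C_2^{F}}|X_0^{\sigma}|^2\Bigr),
\]
where the right-hand side is finite and uniformly bounded in $\sigma\in T_{\theta,\kappa}$, provided that $\rho$ is chosen so that $\tfrac{2\rho K}{C_2^{F}}<\eta_\theta$ (and, as $t\to\infty$, the transient term $e^{-C_2^{F}t}|x|^2$ contributes a uniformly bounded factor). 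Combining $|Z_t^{\sigma}|^2\le 2|V_t^{\sigma}|^2+2|X_t^{\sigma}|^2$ with the Cauchy--Schwarz inequality, one obtains a constant $\rho_{\kappa,1}>0$ (independent of $\sigma\in T_{\theta,\kappa}$) such that $\sup_{t\ge 0}\mathbb{E}\exp(\rho_{\kappa,1}|Z_t^{\sigma}|^2)<\infty$ uniformly in $\sigma\in T_{\theta,\kappa}$.

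Finally, since $\mathcal{L}(Z_t^{\sigma})\to\tilde{\pi}^{\sigma}$ in total variation by Proposition~\ref{YTDS}, Fatou's lemma applied to the non-negative continuous function $y\mapsto\exp(\rho_{\kappa,1}|y|^2)$ gives
\[
\int_{\mathbb{R}^d}\exp\!\bigl(\rho_{\kappa,1}|y|^2\bigr)\tilde{\pi}^{\sigma}(\mathrm{d}y)\le\liminf_{t\to\infty}\mathbb{E}\exp\!\bigl(\rho_{\kappa,1}|Z_t^{\sigma}|^2\bigr),
\]
and the right-hand side is bounded uniformly in $\sigma\in T_{\theta,\kappa}$. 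The main technical obstacle is ensuring that the exponential moments of the stationary fractional OU process $X^{\sigma}$ are controlled uniformly in $\sigma$; this is handled by noting that $\|\hat\sigma\|=1$ forces the covariance operator of $X^{\sigma}$ to have uniformly bounded operator norm, so that the Gaussian concentration threshold $\eta_\theta$ can be chosen independently of $\sigma\in T_{\theta,\kappa}$.
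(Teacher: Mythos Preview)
Your proposal is correct and follows essentially the same strategy as the paper: subtract a fractional Ornstein--Uhlenbeck process driven by $\hat\sigma\,\mathrm{d}B^H$ to absorb the noise, use the dissipativity of $F^{\sigma}$ from Lemma~\ref{Straightforward} to control the remainder via Gr\"onwall, and then pass to the limit $t\to\infty$ using Proposition~\ref{YTDS}.

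The technical execution differs in two minor ways that are worth noting. First, the paper takes both $Z^{\sigma}$ and the auxiliary OU process $X^{\sigma}$ started from $0$ (so there is no transient term at all), and obtains the $\sigma$-uniformity by the pathwise domination $|X_t^{\sigma}|=|\hat\sigma X_t^{I}|\le |X_t^{I}|$, thereby reducing everything to the single process $X^{I}$. You instead use the \emph{stationary} OU and argue directly that its covariance $c(H,\alpha)\hat\sigma\hat\sigma^{\top}$ has operator norm $c(H,\alpha)$ independently of $\sigma$ (so in fact your $\eta_{\theta}$ need not depend on $\theta$). Second, the paper controls the weighted $L^2$-norm $\|X^{I}\|_{\mathcal{N}_{C_2^F}[0,t]}$ and derives uniform Gaussian \emph{tail bounds} $\mathbb{P}(|Z_t^{\sigma}|\ge\mathcal{R})\le C_1 e^{-C_2\mathcal{R}^2}$ via \cite[Lemma~2.7]{LPS23}, from which the exponential moment follows; you instead bound the exponential moment directly by Jensen's inequality on the probability kernel $\mu_t(\mathrm{d}s)$ and stationarity of $X^{\sigma}$. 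Both routes are valid; the paper's pathwise domination by $X^{I}$ is slightly cleaner bookkeeping, while your Jensen argument avoids the external reference.
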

\proof
Recall that
\begin{align*}
\begin{cases}
	&\mathrm{d}Z_{t}^{\sigma} =F^{\sigma}(Z_{t}^{\sigma}) \, \mathrm{d}t +  \, \Vert\sigma\Vert^{-1}\sigma\mathrm{d} B^{H}_t\\ & Z_{0}^{\sigma}=0
\end{cases}
\end{align*}
and consider the linear SDE
\begin{align}\label{ASAPOq}
\begin{cases}
	&\mathrm{d}X_{t}^{\sigma} =-X_{t}^{\sigma} \, \mathrm{d}t +  \, \Vert\sigma\Vert^{-1}\sigma\mathrm{d} B^{H}_t \\ &X_{0}^{\sigma}=0.  
\end{cases}
\end{align}
The goal is to obtain a bound for \( Z_t^\sigma \) in terms of a fractional Ornstein-Uhlenbeck process that is independent of \(\sigma\). Let \( x^I \) denote the solution to the linear SDE~\eqref{ASAPOq}, where \(\sigma\) is replaced by the identity matrix \(I\). Then we have
\begin{align*}
X_{t}^{\sigma} = \|\sigma\|^{-1} \sigma \int_0^t \exp(-(t - s)) \, \mathrm{d}B^{H}_{s},
\end{align*}
and $|X_{t}^{\sigma}| \leq |X_{t}^{I}|$. Thanks to Lemma~\ref{Straightforward} and Young's inequality, we obtain
\begin{align*}
\frac{\mathrm{d}}{\mathrm{d}t}\vert Z_{t}^{\sigma}-X_{t}^{\sigma}\vert^2&= 2\left\langle F\left(X_{t}^{\sigma}+\left(Z_{t}^{\sigma}-X_{t}^{\sigma}\right)\right) - F(X_{t}^{\sigma}), Z_{t}^{\sigma}-X_{t}^{\sigma}\right\rangle\\&+ 2\langle F(X_{t}^{\sigma})+X_{t}^{\sigma}, Z_{t}^{\sigma}-X_{t}^{\sigma}\rangle\\&\leq 2C_{1}^{F}-2C_{2}^{F}\left| Z_{t}^{\sigma}-X_{t}^{\sigma}\right|^2+\frac{1}{C_2^F} \left|X_{t}^{\sigma} + F^{\sigma}(X_{t}^{\sigma})\right|^2+C_2^F \left|Z_{t}^{\sigma}-X_{t}^{\sigma}\right|^2\\&\leq -C_2^F \left|Z_{t}^{\sigma}-X_{t}^{\sigma}\right|^2 +2C_1^F + \frac{1}{C_2^F} \left|X_{t}^{\sigma} + F^{\sigma}(X_{t}^{\sigma})\right|^2.
\end{align*}

By Gr\"onwall's inequality and the growth condition $|F(X^\sigma_t)| \leq C_F(1+|X^\sigma_t|)$, we conclude that there exist constants \( A_1 \) and \( A_2 \) depending on \( F \) (and also on $\kappa$ assumed here to be equal to one for simplicity) 
such that
\begin{align}\label{Ioas63as}
\vert Z_{t}^{\sigma}-X_{t}^{\sigma}\vert&\leq A_1+A_2{\sqrt{\int_{0}^{t}\exp\left(-C_{2}^{F}(t-\tau)\right) |X_{\tau}^{\sigma}|^{2}\mathrm{d}\tau}}.
\end{align}
For \( C > 0 \) and \( \bar{X} \in C([0,t],\mathbb{R}^d) \), we define the following norm
\[
\|\bar{X} \|_{\mathcal{N}_{C}[0,t]} := \left( \int_0^t \exp \bigl(-C(t - \tau)\bigr)\, |\bar{X} _{\tau}|^{2} \,\mathrm{d}\tau \right)^{\!1/2}.
\]

Then, for \( C = C_{2}^{F} \), it is clear that
\[
\| X^{\sigma} \|_{\mathcal{N}_{C_{2}^{F}}[0,t]} \leq \| X^{I} \|_{\mathcal{N}_{C_{2}^{F}}[0,t]}.
\]
Hence, by \eqref{SAUDajdksd}, it follows that for \( Z_{t}^{\sigma} = Z_{t,0}^{\sigma} \) and for every \( \mathcal{R} > 2 A_1 \), we have
\begin{align}\label{Umas96as}
\begin{split}
	\mathbb{P}\left(\left\vert Z_{t}^{\sigma}\right\vert\geq \mathcal{R} \right)&\leq  \mathbb{P}\left(\left\vert Z_{t}^{\sigma}-X_{t}^{\sigma}\right\vert\geq\frac{\mathcal{R}}{2} \right)+\mathbb{P}\left(\left\vert X_{t}^{\sigma}\right\vert\geq\frac{\mathcal{R}}{2} \right)\\&\leq  \mathbb{P}\left(\Vert X^{I}\Vert_{\mathcal{N}_{C_{2}^{F}}[0,t]}\geq \frac{\frac{\mathcal{R}}{2}-A_1}{A_2}\right)+\mathbb{P}\left(\left\vert X_{t}^{I}\right\vert\geq\frac{\mathcal{R}}{2} \right).
\end{split}
\end{align}
Since \( X^{I} \) is a fractional Ornstein-Uhlenbeck process and hence Gaussian, it follows from \cite[Lemma 2.7]{LPS23} that there exist constants \( C_1, C_2 > 0 \), independent of \( \sigma \), such that 
\begin{align}\label{Umas96ass}
\max \left\lbrace 
\sup_{t \geq 0} \mathbb{P} \left( \left\| X^{I} \right\|_{\mathcal{N}_{C_{2}^{F}}[0,t]} \geq \mathcal{R} \right),\ 
\sup_{t \geq 0} \mathbb{P} \left( \left| X_{t}^{I} \right| \geq \frac{\mathcal{R}}{2} \right) 
\right\rbrace 
\leq C_1 \exp(-C_2 \mathcal{R}^{2}). 
\end{align}
From \eqref{SAUDajdksd}, it follows that
\begin{align*}
\lim_{t\rightarrow \infty}\mathbb{P}\!\left( \left| Z_{t}^{\sigma} \right| \geq \mathcal{R} \right)
= \tilde{\pi}^{\sigma}\!\left( y \in \mathbb{R}^d : |y| \geq \mathcal{R} \right)\leq C_1 \exp (-C_2 \cR^2),
\end{align*}
%
proving the claim.
\qed
\begin{remark}
Note that it is not necessary to assume that \( \sigma \in T_{\theta,\kappa} \) for the statement of Lemma~\ref{sds85sdarr}, but only that \( \sigma \) is an invertible matrix and that \( \|\sigma\| \geq \kappa \). 
However, 
we state Lemma~\ref{sds85sdarr} under this stronger assumption, since this will be required for the following results.
\end{remark}
As a consequence of Lemma~\ref{sds85sdarr}, we obtain the following result.
\begin{corollary}\label{Fernique1}
There exists a constant \( \rho_{\kappa,2} > 0 \) such that
\[
\sup_{\sigma\in T_{\theta,\kappa} }\int_{\mathbb{R}^d} \exp\left(\rho_{\kappa,2}\left(|x|^2+\vert \omega^-\vert_{\mathcal{B}}^2\right) \right) \, \tilde{\mu}^{\sigma}\left(\mathrm{d}(x,\omega^-)\right) < \infty.
\]
\end{corollary}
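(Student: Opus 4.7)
The statement couples a bound on the $\mathbb{R}^d$-marginal $\tilde{\pi}^\sigma$ (which is uniform in $\sigma\in T_{\theta,\kappa}$ by Lemma~\ref{sds85sdarr}) with a bound on the $\mathcal{B}$-marginal of $\tilde{\mu}^\sigma$. By Definition~\ref{measure}, any invariant measure for the SDS must project onto $\mathbf{P}$ on $\mathcal{B}$, that is $(\Pi_{\mathcal{B}})_{\star}\tilde{\mu}^\sigma = \mathbf{P}$, and this holds independently of $\sigma$. Since $\mathbf{P}$ is a centered Gaussian measure on the separable Banach space $\mathcal{B}=\mathcal{B}_H$ (Lemma~\ref{OLalsf}), Fernique's theorem (Lemma~\ref{FerniqueAA}) yields the existence of $\alpha>0$ with
\[
\int_{\mathcal{B}}\exp(\alpha\Vert\omega^-\Vert_{\mathcal{B}}^2)\,\mathbf{P}(\mathrm{d}\omega^-)<\infty,
\]
and this bound is $\sigma$-independent.

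Next I would decouple the two exponents by Cauchy-Schwarz: for any $\rho>0$
\[
\int_{\mathbb{R}^d\times\mathcal{B}}\exp\!\bigl(\rho(|x|^2+\Vert\omega^-\Vert_{\mathcal{B}}^2)\bigr)\,\tilde{\mu}^\sigma(\mathrm{d}(x,\omega^-))\leq I_1(\sigma)^{1/2}\,I_2(\sigma)^{1/2},
\]
where
\[
I_1(\sigma):=\int_{\mathbb{R}^d\times\mathcal{B}}\exp(2\rho|x|^2)\,\tilde{\mu}^\sigma(\mathrm{d}(x,\omega^-))=\int_{\mathbb{R}^d}\exp(2\rho|x|^2)\,\tilde{\pi}^\sigma(\mathrm{d}x),
\]
\[
I_2(\sigma):=\int_{\mathbb{R}^d\times\mathcal{B}}\exp(2\rho\Vert\omega^-\Vert_{\mathcal{B}}^2)\,\tilde{\mu}^\sigma(\mathrm{d}(x,\omega^-))=\int_{\mathcal{B}}\exp(2\rho\Vert\omega^-\Vert_{\mathcal{B}}^2)\,\mathbf{P}(\mathrm{d}\omega^-).
\]
The first equality uses that the first marginal is $\tilde{\pi}^\sigma$ (by definition of $\tilde{\pi}^\sigma$), and the second uses the invariance condition $(\Pi_{\mathcal{B}})_{\star}\tilde{\mu}^\sigma=\mathbf{P}$ from Definition~\ref{measure}.

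It then remains to choose $\rho_{\kappa,2}$ so that $2\rho_{\kappa,2}\leq\rho_{\kappa,1}$ and $2\rho_{\kappa,2}\leq\alpha$; with this choice Lemma~\ref{sds85sdarr} gives $\sup_{\sigma\in T_{\theta,\kappa}}I_1(\sigma)<\infty$ and Fernique's theorem gives $I_2(\sigma)<\infty$ independently of $\sigma$. Taking the supremum over $\sigma\in T_{\theta,\kappa}$ of the product yields the claim. The argument is essentially immediate once one identifies the $\sigma$-independence of the $\mathcal{B}$-marginal; there is no significant obstacle beyond recognizing that the projection identity~\eqref{UJas} kills the $\sigma$-dependence in $I_2$.
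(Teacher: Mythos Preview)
Your proposal is correct and matches the paper's approach: the paper's proof is a one-line pointer to Lemma~\ref{sds85sdarr} for the $\mathbb{R}^d$-marginal and to the Gaussianity of $\mathbf{P}$ on $\mathcal{B}$ (via Fernique), referring to \cite[Corollary~2.9]{LPS23} for the details. Your Cauchy--Schwarz decoupling is exactly the natural way to fill in those details, using the key observation that $(\Pi_{\mathcal{B}})_\star\tilde{\mu}^\sigma=\mathbf{P}$ is $\sigma$-independent.
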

\proof
This follows  from Lemma~\ref{sds85sdarr} and the fact that \( \mathbf{P} \) is a Gaussian measure on \( \mathcal{B} \). 
\qed
Recalling that $\tilde{B}^H_t$ is the Liouville operator defined in Lemma \ref{SDASASASAa} and the decomposition of fbm stated in \eqref{SALOSWa}, we state the following result on the representation of the density of \eqref{aasass1} at an arbitrary time $t_0>0$. 
\begin{proposition}\label{law1}
Let \(\sigma \in T_{\theta,\kappa}\), \( l \in C^{H-}_{\mathrm{loc}}(\mathbb{R}^+, \mathbb{R}^d) \), and \( t_0 > 0 \). Then, the following equation  
\begin{align}\label{WEEEAAS}
\tilde{\Psi}_{t}^{\sigma}(l) 
= l(t) 
+ \int_{0}^{t} F^{\sigma}\!\left(\tilde{\Psi}_{s}^{\sigma}(l)\right)\,\mathrm{d}s 
+ \Vert \sigma \Vert^{-1} \sigma \tilde{B}_{t}^{H}(\omega^+)
\end{align}
has a weak solution which is unique in law. 
Furthermore, \(\tilde{\Psi}^{\sigma}_{t_0}(l)\) possesses a density with respect to the Lebesgue measure on \(\mathbb{R}^d\). This density is given by
\begin{align}\label{UAJSMs}
\tilde{p}^{\sigma}_{t_0}(l;y)
= \frac{1}{\left(\sqrt{2\pi} \rho_H {t_0}^H\right)^d \det(\|\sigma\|^{-1} \sigma)}
\exp\left(-\|\sigma\|^2 \frac{|\sigma^{-1}(y - l({t_0}))|^2}{\rho_H^2 {t_0}^{2H}}\right)
G_{t_0}^{\sigma}(l;y),
\end{align}
where \(\rho_H > 0\) is a normalization constant. Furthermore, for \( z := \|\sigma\| \sigma^{-1}(y - l(t_0)) \), we have  
\begin{align}\label{G_T}
G_{t_0}^{\sigma}(l;y) = \mathbb{E}\left[
\exp\left(
\int_0^{t_0} \left\langle \mathcal{L}^{\sigma,l,z,t_0}_s, \, \mathrm{d}X_s^{z,t_0} \right\rangle
- \frac{1}{2} \int_0^{t_0} \left| \mathcal{L}^{\sigma,l,z,t_0}_s \right|^2 \, \mathrm{d}s
\right)
\right],
\end{align}
with
\begin{align}\label{LLpp2a}
\mathcal{L}^{\sigma,l,z,t_0}_s := \frac{\|\sigma\| \sigma^{-1}}{\rho_H}
\left(\mathcal{J}_{+}^{\frac{1}{2} - H} F^{\sigma}\left(l + \frac{\rho_H}{\|\sigma\|} \sigma \mathcal{J}_{+}^{ H-\frac{1}{2}} X^{z,t_0}\right)\right)\left(s\right),
\end{align}
and \((X_s^{z,t_0})_{s \in [0,t_0]}\) is a semimartingale defined by
\begin{align}\label{SEMIMAR}
\begin{split}
	\mathrm{d}X_s^{z,t_0} &= \frac{2H}{\rho_H}(t_0 - s)^{H - \frac{1}{2}} \left( \frac{z}{t_0^{2H}} - \rho_H \int_0^s (t_0 - u)^{-H - \frac{1}{2}} \, \mathrm{d}W_u \right) \mathrm{d}s + \mathrm{d}W_s \\
	&= K_{s}(z,t_0,W)\,\mathrm{d}s + \mathrm{d}W_s, \quad X_0^{z,t_0} = 0.
\end{split}
\end{align}
\end{proposition}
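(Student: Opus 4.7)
The plan is to adapt the representation approach of~\cite{LPS23} to the present setting, where both drift and diffusion carry the parameter $\sigma$. The starting observation is that the Liouville fBm $\tilde{B}^H$ defined in Lemma~\ref{Liouville1} can, by a time reversal of $\omega^+$, be written as a Riemann--Liouville fractional integral of a standard Wiener process $W$ on $[0,t_0]$, namely $\tilde{B}^H_t = \rho_H \bigl(\mathcal{J}_+^{H-\frac{1}{2}} W\bigr)(t)$ with $\rho_H = (\alpha_H\sqrt{2H})^{-1}$. In particular $\tilde{B}^H_{t_0}$ is a centered Gaussian vector with covariance $\rho_H^2 t_0^{2H}\,\Id$, which motivates the exponential Gaussian prefactor in~\eqref{UAJSMs}. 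Weak existence and uniqueness in law for~\eqref{WEEEAAS} follow pathwise: for each $\omega^+$ and each $l$, setting $g(t):= l(t)+\|\sigma\|^{-1}\sigma\tilde{B}^H_t(\omega^+)$ reduces~\eqref{WEEEAAS} to the non-autonomous ODE $\dot{U}_t=F^\sigma(U_t+g(t))$ with $U_0=0$, which admits a unique global solution by Cauchy--Lipschitz thanks to the boundedness of $\txtD F$ (Lemma~\ref{Straightforward} and Assumption~\ref{Drift2}).

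\textbf{Girsanov step.} Using the fractional integral representation, \eqref{WEEEAAS} becomes an SDE driven by $W$, and we shift away the drift through a Girsanov change of measure. The correct shift is precisely $\mathcal{L}^{\sigma,l,z,t_0}$ in~\eqref{LLpp2a}: applying $\mathcal{J}_+^{\frac12-H}$ to $F^\sigma$ is what is needed to invert the fractional integral generated by $\mathcal{J}_+^{H-\frac{1}{2}}$, and the prefactor $\|\sigma\|\sigma^{-1}/\rho_H$ restores the correct scaling of the noise. Under the new measure $\tilde{\mathbb{Q}}$, obtained by a standard Novikov/Girsanov argument with exponent $\exp\bigl(-\int_0^{t_0}\langle\mathcal{L}^{\sigma,l,z,t_0}_s, \txtd W_s\rangle-\tfrac12\int_0^{t_0}|\mathcal{L}^{\sigma,l,z,t_0}_s|^2\,\txtd s\bigr)$, the process $\tilde{\Psi}^\sigma_{t_0}(l)-l(t_0)$ is Gaussian with mean zero and covariance $\rho_H^2 t_0^{2H}(\|\sigma\|^{-1}\sigma)(\|\sigma\|^{-1}\sigma)^{T}$, reproducing the explicit Gaussian factor in~\eqref{UAJSMs}.

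\textbf{Bridge representation and main obstacle.} Transforming back to $\mathbb{P}$ and disintegrating with respect to $\tilde{B}^H_{t_0}$, the density of $\tilde{\Psi}^\sigma_{t_0}(l)$ at $y$ equals the Gaussian density above times the conditional expectation of the inverse Radon--Nikodym derivative given $\tilde{B}^H_{t_0}=\rho_H^{-1}(y-l(t_0))$, equivalently given $\int_0^{t_0}(t_0-u)^{H-\frac12}\,\txtd W_u = z/\rho_H$ with $z:=\|\sigma\|\sigma^{-1}(y-l(t_0))$. The conditional law of $W$ under this linear Gaussian conditioning is, by the standard enlargement of filtration / Gaussian bridge formula, the semimartingale $X^{z,t_0}$ with drift $K_s(z,t_0,W)$ as in~\eqref{SEMIMAR}: the $(t_0-s)^{H-\frac12}$ prefactor and the subtraction of the reconstructed linear functional are precisely the projection onto the conditioning subspace. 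Substituting $W\leadsto X^{z,t_0}$ into the Girsanov exponential yields~\eqref{G_T}. The main technical obstacle is to justify these steps when $H>\tfrac12$, where $\mathcal{J}_+^{\frac12-H}$ acts as a fractional derivative and the bridge drift $K_s$ carries the singularity $(t_0-s)^{-H-\frac12}$ at $s=t_0$; integrability of the stochastic exponential must be checked via Novikov's condition combined with the boundedness of $\txtD F$ and a Fernique-type bound (Lemma~\ref{FerniqueAA}). Throughout, care is needed to track constants uniformly in $\sigma\in T_{\theta,\kappa}$, which is ensured by the rescaling because $\|\,\|\sigma\|^{-1}\sigma\,\|$ and $\|\,\|\sigma\|\sigma^{-1}\,\|$ are both controlled by $\theta$ in this regime.
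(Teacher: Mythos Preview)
Your proposal is correct and follows precisely the approach that the paper invokes: the paper's own proof consists of a single line citing \cite[Proposition~3.4, Lemma~3.6, Corollary~4.2, Lemma~4.4 and Proposition~4.5]{LPS23}, and what you have sketched is exactly the content of those results adapted to the $\sigma$-dependent coefficients. In particular your identification of $\tilde{B}^H$ with $\rho_H\mathcal{J}_+^{H-\frac12}W$, the Girsanov shift producing $\mathcal{L}^{\sigma,l,z,t_0}$, and the Wiener--Liouville bridge interpretation of $X^{z,t_0}$ as the conditional law of $W$ given $\tilde{B}^H_{t_0}=z$ are all on target (the paper records the latter in the remark immediately following the proposition). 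One minor comment: the singularity issues for $H>\tfrac12$ that you flag as the ``main technical obstacle'' are not actually needed for Proposition~\ref{law1} itself---the density formula holds for every $t_0>0$---and the paper postpones the Novikov/exponential-integrability verification to Lemma~\ref{YAHSSSSQQ} and Proposition~\ref{YHHNA7as}, where $t_0$ is then taken small.
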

\proof
Follows from \cite[Proposition 3.4, Lemma 3.6, Corollary 4.2, Lemma 4.4 and Proposition 4.5]{LPS23}.
\qed


As a consequence of  Proposition~\ref{law1}, and~\cite[Proposition 2.4]{LPS23}, the density of \(\tilde{\pi}^\sigma\) can be expressed in terms of the density \(\tilde{\Psi}^\sigma_{t_0}\) at an arbitrary time \(t_0 > 0\) as follows.\\

In our situation, substituting \( l = x + \|\sigma\|^{-1} \sigma \, \mathcal{P}(\omega^-) \) into~\eqref{WEEEAAS}, we obtain the unique strong solution of the SDE~\eqref{aasass1} given by Lemma~\ref{OALSdfe}. This will be considered below.
\begin{proposition}	\label{REPREDDA}
Fix \( t_0 > 0 \) and for \( \omega^- \in \mathcal{B} \) and \( x \in \mathbb{R}^d \), define
\[
l^{x, \sigma, \omega^-} := x + \| \sigma \|^{-1} \sigma \, \mathcal{P}(\omega^-).
\]
Then the law of \( \tilde{\Psi}_{t_0}^{\sigma}(l^{x, \sigma, \omega^-}) \) admits a density with respect to the Lebesgue measure denoted by \( \tilde{p}_{t_0}(l^{x, \sigma, \omega^-}; y) \).  Moreover, the following identity holds
\begin{align}\label{sdse52s}
\tilde{p}^{\sigma}_{\infty}(y) = \int_{\mathbb{R}^d \times \mathcal{B}} \tilde{p}_{t_0}(l^{x, \sigma, \omega^-}; y) \, \tilde{\mu}^{\sigma}\left(\mathrm{d}(x, \omega^-)\right),
\end{align}
where \( \tilde{p}^{\sigma}_{\infty}(y) \) denotes the density of \( \tilde{\pi}^{\sigma} \).
\end{proposition}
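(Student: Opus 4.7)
The strategy is to exploit the invariance of $\tilde{\mu}^\sigma$ under the SDS transition semigroup $\{\mathcal{Q}_t\}_{t\geq 0}$ together with the identification of the strong solution of~\eqref{aasass1} with the weak solution~\eqref{WEEEAAS} of Proposition~\ref{law1}. Concretely, by the definition of $\mathcal{Q}_{t_0}$ in~\eqref{Chapman} and invariance of $\tilde{\mu}^\sigma$ (see Definition~\ref{measure}), for any bounded Borel $\phi:\mathbb{R}^d\to\mathbb{R}$ we have
\begin{align*}
\int_{\mathbb{R}^d}\phi(y)\,\tilde{\pi}^\sigma(\mathrm{d}y)
= \int_{\mathbb{R}^d\times\mathcal{B}}\int_{\mathcal{B}}
\phi\!\left(\tilde{\Phi}^{t_0,\sigma}_{(\omega^-,\omega^+)}(x)\right)
\mathbf{P}(\mathrm{d}\omega^+)\,
\tilde{\mu}^\sigma(\mathrm{d}(x,\omega^-)),
\end{align*}
where $(\tilde{\Phi}^{t,\sigma})_{t\geq 0}$ is the RDS of Corollary~\ref{RRRDDDSS} associated to~\eqref{aasass1}.

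The next step is to identify, for fixed $(x,\omega^-)$, the inner expectation with $\mathbb{E}[\phi(\tilde{\Psi}^\sigma_{t_0}(l^{x,\sigma,\omega^-}))]$. From Corollary~\ref{DCHYJAS85} together with the pathwise decomposition~\eqref{SALOSWa} in Lemma~\ref{SDASASASAa} we obtain
\begin{align*}
\tilde{\Phi}^{t,\sigma}_{(\omega^-,\omega^+)}(x)
= l^{x,\sigma,\omega^-}(t) + \|\sigma\|^{-1}\sigma\,\tilde{B}^H_t(\omega^+)
+ \int_0^t F^\sigma\!\left(\tilde{\Phi}^{s,\sigma}_{(\omega^-,\omega^+)}(x)\right)\mathrm{d}s,
\end{align*}
so that, for each fixed $\omega^-$, the process $t\mapsto \tilde{\Phi}^{t,\sigma}_{(\omega^-,\omega^+)}(x)$ is a solution of~\eqref{WEEEAAS} driven by $\tilde{B}^H(\omega^+)$ with continuous drift $l = l^{x,\sigma,\omega^-}\in C^{H-}_{\mathrm{loc}}(\mathbb{R}^+,\mathbb{R}^d)$ (the latter regularity coming from Lemma~\ref{Liouville}). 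By the uniqueness in law statement in Proposition~\ref{law1}, this forces $\tilde{\Phi}^{t_0,\sigma}_{(\omega^-,\cdot)}(x) \stackrel{\mathrm{law}}{=} \tilde{\Psi}^\sigma_{t_0}(l^{x,\sigma,\omega^-})$ under $\mathbf{P}$. Hence the inner expectation equals $\int_{\mathbb{R}^d}\phi(y)\,\tilde{p}_{t_0}(l^{x,\sigma,\omega^-};y)\,\mathrm{d}y$.

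Plugging this back and applying Fubini, which is justified because $\tilde{p}_{t_0}(l^{x,\sigma,\omega^-};\cdot)$ is a probability density and $\phi$ is bounded, we conclude
\begin{align*}
\int_{\mathbb{R}^d}\phi(y)\,\tilde{\pi}^\sigma(\mathrm{d}y)
= \int_{\mathbb{R}^d}\phi(y)
\left[\int_{\mathbb{R}^d\times\mathcal{B}} \tilde{p}_{t_0}(l^{x,\sigma,\omega^-};y)\,
\tilde{\mu}^\sigma(\mathrm{d}(x,\omega^-))\right]\mathrm{d}y.
\end{align*}
Since $\phi$ is arbitrary, $\tilde{\pi}^\sigma$ is absolutely continuous with density given by~\eqref{sdse52s}. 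Existence of a density for $\tilde{\pi}^\sigma$ is in fact a by-product: the right hand side is a Lebesgue integrable function, independent of $t_0$.

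The only genuine obstacle is the weak/strong identification in the second step. Potential subtleties are that~\eqref{WEEEAAS} only claims uniqueness in law (not pathwise), and that the random input $l^{x,\sigma,\omega^-}$ depends on $\omega^-$ which in~\eqref{indepenet} is independent of $\omega^+$ (and thus of $\tilde{B}^H(\omega^+)$); this independence is precisely what allows us to apply Proposition~\ref{law1} conditionally on $\omega^-$ and recover the unconditional density representation after integrating with respect to $\tilde{\mu}^\sigma$. Measurability of $(x,\omega^-,y)\mapsto\tilde{p}_{t_0}(l^{x,\sigma,\omega^-};y)$, required for the Fubini step, follows from the explicit formula~\eqref{UAJSMs}--\eqref{SEMIMAR} and continuity of $\omega^-\mapsto l^{x,\sigma,\omega^-}\in C^{H-}_{\mathrm{loc}}$ via Lemma~\ref{Liouville}.
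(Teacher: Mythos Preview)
Your argument is correct and is precisely the route the paper has in mind: the paper does not spell out a proof but merely records the result as ``a consequence of Proposition~\ref{law1} and \cite[Proposition~2.4]{LPS23}'', and your invariance--plus--law--identification argument is exactly the content of that reference adapted to the present SDS framework. One small inaccuracy: Lemma~\ref{Liouville} only yields $l^{x,\sigma,\omega^-}\in C^{H/2}$ pathwise, whereas Proposition~\ref{law1} is stated for $l\in C^{H-}_{\mathrm{loc}}$; the required $C^{H-}$ regularity holds for $\mathbf{P}$-a.e.\ $\omega^-$ by the probabilistic improvement in Remark~\ref{ASA||AQ} (i.e.\ \cite[Lemma~6.5]{DPT19}), which suffices since the second marginal of $\tilde{\mu}^\sigma$ is $\mathbf{P}$.
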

In light of Proposition~\ref{law1} and Proposition~\ref{REPREDDA}, in order to track the dependence of $\tilde{p}^{\sigma}_{\infty}$ on $\sigma$, it is necessary to bound the terms $G^\sigma$ and $\mathcal{L}^\sigma$ introduced in~\eqref{G_T} and~\eqref{LLpp2a}. 
To this end, we require several auxiliary results, which are collected in Appendix~\ref{AAZAZA} for the convenience of the reader.\\

The following statement provides a uniform Gaussian bound for the stationary density $\tilde{p}^\sigma_\infty$ on bounded subsets of $\mathbb{R}^d$, provided that $\sigma$ is chosen sufficiently large.
\begin{proposition}\label{INFORM}
For every bounded set \( B(0, R) \subset \mathbb{R}^d \), there exist constants \( C_1, C_2 > 0 \), depending on \( \theta \), \( H \), \( \kappa \), and \( R \), such that
\begin{align*}
\sup_{\substack{\sigma \in T_{\theta,\kappa}}} \tilde{p}^{\sigma}_{\infty}(y) \leq C_1 \exp(-C_2 |y|^2) \text{  for all  } y \in B(0,R).
\end{align*}
\end{proposition}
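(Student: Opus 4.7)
The plan is to estimate the integrand in the representation of $\tilde{p}^\sigma_\infty(y)$ given by Lemma~\ref{SIMLIF}. Since the prefactor $\mathcal{M}(t_0,H,\sigma)$ is controlled by $\mathcal{R}(t_0,H,\theta)$ uniformly in $\sigma\in T_{\theta,\kappa}$ via~\eqref{NM12a2}, the task reduces to bounding
\[
\mathcal{I}^\sigma(y) := \int_{\mathbb{R}^d\times\mathcal{B}} \exp\!\left(-\frac{|Z^{x,y,\sigma,\omega^-}|^2}{\rho_H^2 t_0^{2H}}\right) G_{t_0}^\sigma(l^{x,\sigma,\omega^-};y)\,\tilde{\mu}^\sigma(\mathrm{d}(x,\omega^-))
\]
uniformly in $\sigma\in T_{\theta,\kappa}$ and $y\in B(0,R)$.

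The first step is to choose the auxiliary time $t_0=t_0(H,\theta,\kappa)$ sufficiently small so that the quadratic $|z|^2$-factor inside $G^\sigma_{t_0}$ coming from Proposition~\ref{YHHNA7as} (of the form $\exp(K^1_{H,\theta,\kappa}\,t_0^{\beta}|z|^2)$ with $\beta=\alpha_1$ when $H<1/2$ and $\beta=1-2H$ when $H>1/2$) is absorbed into half of the Gaussian factor. Concretely, it suffices to require $K^1_{H,\theta,\kappa}\, t_0^{\beta+2H}\leq 1/(2\rho_H^2)$, which is possible since $\alpha_1+2H>0$ in the first case and $\beta+2H=1$ in the second. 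After this absorption the remaining factor $\exp(-|Z^{x,y,\sigma,\omega^-}|^2/(2\rho_H^2 t_0^{2H}))$ is simply bounded by $1$; everything else in~\eqref{UN100}--\eqref{UN101}, namely $K^2_{H,\theta,\kappa}$, $\exp(K^3_{H,\theta,\kappa}t_0^{1-2H})$ and $\Gamma_i(t_0,\theta)$, is a finite constant depending only on $H,\theta,\kappa,t_0$.

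Next, I would bound $\|l^{x,\sigma,\omega^-}\|_*$ (where $*$ stands for either the $\infty$-norm or the H\"older seminorm, according to $H$) in terms of $|x|^2$ and $\|\omega^-\|_{\mathcal{B}}^2$ with coefficients independent of $\sigma$. Since $\|\sigma\|^{-1}\sigma$ has operator norm $1$ and $\mathcal{P}(\omega^-)(0)=0$, Lemma~\ref{Liouville} gives $\|\mathcal{P}(\omega^-)\|_{\infty;[0,t_0]}\leq C_H t_0^{H/2}\|\omega^-\|_{\mathcal{B}}$, hence
\[
\|l^{x,\sigma,\omega^-}\|_{\infty;[0,t_0]}^2 \leq 2|x|^2 + 2C_H^2 t_0^H \|\omega^-\|_{\mathcal{B}}^2
\]
in the regime $H<1/2$. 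In the regime $H>1/2$, the additive constant $x$ drops out of the H\"older seminorm, and interpolation between $C^0$ and $C^{H/2}$ (legitimate since $\eta<(1-H)/2$) yields $\|l^{x,\sigma,\omega^-}\|_{H-\frac{1}{2}+\eta;[0,t_0]}\leq C_H t_0^{(1-H)/2-\eta}\|\omega^-\|_{\mathcal{B}}$. In both cases, shrinking $t_0$ further if necessary makes the coefficients of $|x|^2$ and $\|\omega^-\|_{\mathcal{B}}^2$ in the resulting exponential bound smaller than $\rho_{\kappa,2}$, so Corollary~\ref{Fernique1} produces a bound $\mathcal{I}^\sigma(y)\leq C_{\ast}(t_0,H,\theta,\kappa)$ independent of $\sigma$ and of $y$. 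Combining with the control on $\mathcal{M}$ yields a uniform constant $C$ with $\tilde{p}^\sigma_\infty(y)\leq C$, and since $|y|\leq R$ this can be rewritten, for any $C_2>0$, as $\tilde{p}^\sigma_\infty(y)\leq C e^{C_2 R^2} e^{-C_2|y|^2}=:C_1 e^{-C_2|y|^2}$.

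The main technical obstacle is the regime $H>1/2$, where the absorption condition forces $t_0\leq 1/(2K^1_{H,\theta,\kappa}\rho_H^2)$ while the factor $\exp(K^3_{H,\theta,\kappa}t_0^{1-2H})$ appearing in~\eqref{UN101} becomes very large as $t_0\to 0$; however, it depends only on $H,\theta,\kappa,t_0$ and not on $(y,\sigma)$, so once $t_0$ is fixed it is simply absorbed into $C_1$. A genuine Gaussian decay in $|y|$ is not recovered from this argument: retaining a factor $\exp(-|y-x|^2/(4\rho_H^2 t_0^{2H}))$ after absorption would generate a competing $\exp(|x|^2/(4\rho_H^2 t_0^{2H}))$ whose coefficient $1/(4\rho_H^2 t_0^{2H})$ blows up as $t_0\to 0$ and cannot be dominated by Corollary~\ref{Fernique1}. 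Fortunately, the restriction $y\in B(0,R)$ makes the Gaussian form of the stated bound an automatic consequence of the uniform bound.
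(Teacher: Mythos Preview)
Your proof is correct and follows the same overall scheme as the paper---representation~\eqref{NM12a}, the bound on $G^\sigma_{t_0}$ from Proposition~\ref{YHHNA7as}, absorption of the $|z|^2$-term into half of the Gaussian kernel by choosing $t_0$ small, control of the $l$-norms via Lemma~\ref{Liouville}, and integration against $\tilde{\mu}^\sigma$ using Corollary~\ref{Fernique1}. The difference lies in what you do with the surviving factor $\exp(-|Z^{x,y,\sigma,\omega^-}|^2/(2\rho_H^2 t_0^{2H}))$.

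The paper does not discard this factor. Instead it applies the elementary Lemma~\ref{uja63sfdaa} (with $\xi=\|\sigma\|\sigma^{-1}y$, bounded by $\theta R$, and $\zeta=\|\sigma\|\sigma^{-1}x+\mathcal{P}(\omega^-)(t_0)$) to split it as $\exp(\zeta(|x|^2+|\mathcal{P}(\omega^-)(t_0)|^2))\exp(-\zeta^{R,t_0,\theta}|y|^2)$. The first factor, with $\zeta>0$ arbitrary, is carried into the $\tilde{\mu}^\sigma$-integral and absorbed by Corollary~\ref{Fernique1} after choosing $\zeta$ small; the second factor delivers the stated Gaussian decay directly. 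Your route is more economical: you bound the surviving Gaussian by $1$, obtain a uniform constant bound on $\tilde{p}^\sigma_\infty(y)$, and then note that on $B(0,R)$ any constant bound is trivially of the form $C_1 e^{-C_2|y|^2}$. This avoids Lemma~\ref{uja63sfdaa} altogether and is entirely sufficient for the application in the main theorem, where only $\int_{\|\sigma\|^{-1}B(0,R)}\tilde{p}^\sigma_\infty(y)\,\mathrm{d}y\to 0$ is needed. The paper's approach yields a constant $C_2=\zeta^{R,t_0,\theta}$ with some structural meaning, whereas yours makes transparent that the Gaussian form is cosmetic on a bounded set.

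Your observation that the $|x|^2$-term drops from $\|l^{x,\sigma,\omega^-}\|_{H-\frac{1}{2}+\eta;[0,t_0]}$ for $H>1/2$ is sharper than the paper's (correct but non-optimal) bound, and your identification of the obstruction to genuine Gaussian decay---the blow-up of $1/(4\rho_H^2 t_0^{2H})$ versus the fixed $\rho_{\kappa,2}$---is accurate.
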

\proof
We first outline the strategy of the proof. Based on Lemma~\ref{SIMLIF}, the first step is to estimate the expression
\begin{align}\label{UJKASsd}
\exp\big( -\frac{ | Z^{x,y,\sigma,\omega^-} |^2 }{ \rho_H^2 t_0^{2H} } \big) G_{t_0}^{\sigma}(l^{x, \sigma, \omega^-}; y).  
\end{align}
To this end, we use Proposition~\ref{YHHNA7as} to conclude that there exist functions ${\Xi}$ and $\tilde{\Xi}$ such that 
\[
\exp\left( -\frac{ \lvert Z^{x,y,\sigma,\omega^-} \rvert^2 }{ \rho_H^2 t_0^{2H} } \right) 
G_{t_0}^{\sigma}(l^{x, \sigma, \omega^-}; y) 
\leq \Xi(t_0,y)\,\tilde{\Xi}(t_0,x,\omega^-),
\]
where $\Xi$ and $\tilde{\Xi}$ also depend on \( \theta \), \( \kappa \), \( R \), and \( H \).  
We then show that once $t_0$ is sufficiently small,  
\[
\int_{\mathbb{R}^d \times \mathcal{B}} \tilde{\Xi}(t_0,x,\omega^-)\, 
\tilde{\mu}^{\sigma}\!\left( \mathrm{d}(x, \omega^-) \right) < \infty.
\]
which proves~\eqref{NM12a}.  
The proof is divided into two cases corresponding to the range of the parameter \( H \).  
From Lemma~\ref{SIMLIF} and Proposition~\ref{YHHNA7as}, we have:
\begin{itemize}
\item if \( 0 < H < \frac{1}{2} \):
\begin{align}\label{YB120as}
	\begin{split}
		&\exp\big( -\frac{ | Z^{x,y,\sigma,\omega^-} |^2 }{ \rho_H^2 t_0^{2H} } \big) G_{t_0}^{\sigma}(l^{x, \sigma, \omega^-}; y)\\&\leq \Gamma_{1}(t_0,\theta)K^{2}_{H,\theta,\kappa}\exp\bigg(K^{1}_{H,\theta,\kappa}t_{0}^{\alpha_1}\Vert l^{x, \sigma, \omega^-}\Vert_{\infty;[0,t_0]}^2 \bigg)\exp\bigg(\big(K^{1}_{H,\theta,\kappa}t_{0}^{\alpha_1}-\frac{ 1 }{ \rho_H^2 t_0^{2H} }\big)| Z^{x,y,\sigma,\omega^-} |^2\bigg)\\&=\Gamma_{1}(t_0,\theta)\Lambda_{1}(t_0,x,\sigma,\omega^-) \exp\bigg(\big(K^{1}_{H,\theta,\kappa}t_{0}^{\alpha_1}-\frac{ 1 }{ \rho_H^2 t_0^{2H} }\big)| Z^{x,y,\sigma,\omega^-} |^2\bigg),
	\end{split}
\end{align}
\item if \( \frac{1}{2} < H < 1 \):
\begin{align}\label{YB120as1}
	\begin{split}
		&\exp\big( -\frac{ | Z^{x,y,\sigma,\omega^-} |^2 }{ \rho_H^2 t_0^{2H} } \big) G_{t_0}^{\sigma}(l^{x, \sigma, \omega^-}; y)\leq\Gamma_{2}(t_0,\theta)K^{2}_{H,\theta,\kappa}\exp(K^{3}_{H,\theta,\kappa}t_{0}^{1-2H})\times \\&\exp\bigg(K^{1}_{H,\theta,\kappa}t_{0}^{\alpha_1}\Vert l^{x, \sigma, \omega^-}\Vert_{H-\frac{1}{2}+\eta;[0,{t_0}]}^2\bigg) \exp\bigg(\big(K^{1}_{H,\theta,\kappa}t_{0}^{1-2H}-\frac{ 1 }{ \rho_H^2 t_0^{2H} }\big)| Z^{x,y,\sigma,\omega^-} |^2\bigg)\\&=\Gamma_{2}(t_0,\theta)\Lambda_{2}(t_0,x,\sigma,\omega^-)\exp\bigg(\big(K^{1}_{H,\theta,\kappa}t_{0}^{1-2H}-\frac{ 1 }{ \rho_H^2 t_0^{2H} }\big)|Z^{x,y,\sigma,\omega^-}|^2\bigg).
	\end{split}
\end{align}
\end{itemize}
We can choose \( t_0 \) sufficiently small (depending only on \( H \), \( \theta \), and \( \kappa \)) such that 
\begin{align}\label{AUSA63ad}
\begin{split}
	&\exp\bigg(\big(K^{1}_{H,\theta,\kappa}t_{0}^{\alpha_1}-\frac{ 1 }{ \rho_H^2 t_0^{2H} }\big) | Z^{x,y,\sigma,\omega^-} |^2\bigg)
	\leq \exp\big(-\frac{| Z^{x,y,\sigma,\omega^-} |^2}{2 \rho_H^2 t_0^{2H}} \big), \quad \text{if }\  0 < H < \frac{1}{2}, \\
	&\exp\bigg(\big(K^{1}_{H,\theta,\kappa}t_{0}^{1-2H}-\frac{ 1 }{ \rho_H^2 t_0^{2H} }\big)| Z^{x,y,\sigma,\omega^-} |^2\bigg)
	\leq \exp\big(-\frac{| Z^{x,y,\sigma,\omega^-} |^2}{2 \rho_H^2 t_0^{2H}} \big), \quad \text{if } \ \frac{1}{2} < H < 1.
\end{split}
\end{align}
We now estimate term $Z^{x,y,\sigma,\omega^-}$.	First, since \( y \in B(0, R) \), it follows from Assumption \ref{theth} that for all \( \sigma \in T_{\theta,\kappa} \)
\begin{align*}
\|\sigma\| \sigma^{-1} y \in B\left(0, \theta R\right).
\end{align*}
Moreover, for every \(x \in \mathbb{R}^d\) and \(\omega^-\in\mathcal{B}\), we have 
\begin{align*}
\left| \|\sigma\| \sigma^{-1} x + \mathcal{P}(\omega^-)(t_0) \right| \leq \theta |x| + \left| \mathcal{P}(\omega^-)(t_0) \right|.
\end{align*}
Therefore, since
\begin{align*}
Z^{x,y,\sigma,\omega^-} = \|\sigma\| \sigma^{-1} y - \left( \|\sigma\| \sigma^{-1} x + \mathcal{P}(\omega^-)(t_0) \right),
\end{align*}
Lemma~\ref{uja63sfdaa}  implies that for every \(\zeta > 0\), there exists a constant \(\zeta^{R,t_0,\theta} > 0\) such that 

\begin{align}\label{YB120as2}
\exp\big(-\frac{| Z^{x,y,\sigma,\omega^-} |^2}{2 \rho_H^2 t_0^{2H}} \big)\leq \exp\left(\zeta\left(\vert x\vert^2+\vert \mathcal{P}(\omega^-)(t_0)\vert^2\right)\right)\exp\left(-\zeta^{R,t_0,\theta}\vert y\vert^2\right).
\end{align}
Now, by combining Lemma~\ref{SIMLIF} with \eqref{YB120as}--\eqref{YB120as2}, 
we conclude that if \( t_0 \) is sufficiently small, then for every \( y \in B\!\left(0, \theta R\right) \)   
\begin{itemize}
\item if \( 0 < H < \frac{1}{2} \), we have
\begin{align}\label{PY11}
	\begin{split}
		&	\tilde{p}^{\sigma}_{\infty}(y)\leq \mathcal{R}(t_0, H,\theta)\Gamma_{1}(t_0,\theta)\exp\left(-\zeta^{R,t_0,\theta}\vert y\vert^2\right)\times \\&\int_{\R^d \times \mathcal{B}} \Lambda_{1}(t_0,x,\sigma,\omega^-) \exp\left(\zeta\left(\vert x\vert^2+\vert \mathcal{P}(\omega^-)(t_0)\vert^2\right)\right)\tilde{\mu}^{\sigma}\left(\mathrm{d}(x, \omega^-)\right).
	\end{split}
\end{align}
\item if \( \frac{1}{2} < H < 1 \), we have
\begin{align}\label{PY22}
	\begin{split}
		&	\tilde{p}^{\sigma}_{\infty}(y)\leq \mathcal{R}(t_0, H,\theta)\Gamma_{2}(t_0,\theta)\exp\left(-\zeta^{R,t_0,\theta}\vert y\vert^2\right)\times\\&\int_{\R^d \times \mathcal{B}} \Lambda_{2}(t_0,x,\sigma,\omega^-) \exp\left(\zeta\left(\vert x\vert^2+\vert \mathcal{P}(\omega^-)(t_0)\vert^2\right)\right)\tilde{\mu}^{\sigma}\left(\mathrm{d}(x, \omega^-)\right).
	\end{split}
\end{align}
\end{itemize}
For \( 0 < \eta < \min\!\left\{ H, \tfrac{1-H}{2} \right\} \) it holds that 
\begin{align*}
\begin{split} 
	&\left\Vert l^{x, \sigma, \omega^-}\right\Vert_{\infty;[0,t_0]}^2\leq 2\left(\vert x\vert^2+\left\Vert\mathcal{P}(\omega^-)\right\Vert_{\infty;[0,{t_0}]}^2\right) , \quad \text{if } 0 < H < \frac{1}{2}, \\
	&\left\Vert l^{x, \sigma, \omega^-}\right\Vert_{H-\frac{1}{2}+\eta;[0,{t_0}]}^2\leq 2\left(\vert x\vert^2+\left\Vert\mathcal{P}(\omega^-)\right\Vert_{H-\frac{1}{2}+\eta;[0,{t_0}]}^2\right)  , \quad \text{if } \frac{1}{2} < H < 1.
\end{split}
\end{align*}
The goal now is to estimate the norm of $\mathcal{P}(\omega^-)$ in terms of the norm $|\omega^-|_{\mathcal{B}}$.
Note that \( H - \frac{1}{2} + \eta < \frac{H}{2} \) and therefore by Lemma \ref{Liouville}, the map 
\[
\mathcal{P} : \mathcal{B} \to C^{H - \frac{1}{2} + \eta}([0,1])
\]
is continuous.
Thus there exists a constant \( C \) such that
\begin{align*}
\max\left\lbrace\Vert \mathcal{P}(\omega^-)\Vert_{H - \frac{1}{2} + \eta;[0,t_0]},\left\Vert\mathcal{P}(\omega^-)\right\Vert_{\infty;[0,{t_0}]}\right\rbrace\leq C\vert\omega^-\vert_{\mathcal{B}}.
\end{align*}
Based on this and on \eqref{PY22} we can find constants \( K \) and \( \tilde{K} \), both depending on \( H \), \( t_0 \), \( \theta \) and \( \kappa \) such that
\begin{align}\label{IK36a}
\begin{split}
	&	\Lambda_{1}(t_0, x, \sigma, \omega^-) \leq \tilde{K} \exp\left(K t_0^{\alpha_1} \left( \vert x \vert^2 + \vert \omega^- \vert_{\mathcal{B}}^2 \right) \right) , \quad \text{if } 0 < H < \frac{1}{2},\\
	& 	\Lambda_{2}(t_0, x, \sigma, \omega^-) \leq \tilde{K}\exp(K^{3}_{H,\theta,\kappa}t_{0}^{1-2H}) \exp\left(K t_0^{\alpha_1} \left( \vert x \vert^2 + \vert \omega^- \vert_{\mathcal{B}}^2 \right) \right) , \quad \text{if } \frac{1}{2} < H < 1.
\end{split}
\end{align}
For the rest of the argument, we do not need to distinguish between \( H \in \left(0, \tfrac{1}{2}\right) \) and \( H \in \left(\tfrac{1}{2}, 1\right) \), as the proof remains the same in both cases. Therefore, we focus on the case \( 0 < H < \tfrac{1}{2} \). From \eqref{PY11} and \eqref{IK36a}, we obtain
\begin{align}\label{INTEFa}
\begin{split}
	&\tilde{p}^{\sigma}_{\infty}(y)\leq \tilde{K} \mathcal{R}(t_0, \kappa,\theta)\Gamma_{1}(t_0,\theta)\exp\left(-\zeta^{R,t_0,\theta}\vert y\vert^2\right)\times\\&\int_{\R^d \times \mathcal{B}} \exp\left(\left(K t_0^{\alpha_1}+\zeta\right)\vert x\vert^2+\left(K t_0^{\alpha_1}+\zeta\right) \vert \omega^- \vert_{\mathcal{B}}^2\right)\tilde{\mu}^{\sigma}\left(\mathrm{d}(x, \omega^-)\right).
\end{split}		
\end{align}
Now, by choosing \( t_0 \) and \( \zeta \) sufficiently small, we can use Corollary~\ref{Fernique1} to uniformly bound the integral in \eqref{INTEFa} for every \( \sigma \in T_{\theta, \kappa} \) yielding the claim. From \eqref{INTEFa}, we can track the dependence of the constants on \( R \), \( t_0 \), \( \kappa \), and \( \theta \).
\qed
Based on this result, we can now obtain the negativity of the top Lyapunov exponent for~\eqref{aasass}. 
\begin{theorem}\label{thm:main}
Let Assumptions~\ref{Drift2} and~\ref{theth} hold.  
Then for every \( \theta \geq 1 \), there exists a constant \( \kappa_{\theta} > 0 \) such that for every \( \sigma \in T_{\theta, \kappa_{\theta}} \), the top Lyapunov exponent \( \lambda^{\sigma}_1 \) of~\eqref{aasass} satisfies
\[
\lambda^{\sigma}_1 < 0.
\]
\end{theorem}

\proof
From Lemma \ref{LYASIGMA} we have
\begin{align}
\lambda_1^{\sigma} \leq -C_{4}^{F} \pi^{\sigma}\left(\mathbb{R}^d \setminus B(0,R)\right) + C \pi^{\sigma}\left(B(0,R)\right).
\end{align}
As already stated, we have to prove that increasing \(\sigma\) ensures that \(\pi^{\sigma}\big(B(0, R)\big)\) becomes small. From Lemma \ref{contrere}
\begin{align*}
\pi^{\sigma}\left(B(0,R)\right) = \tilde{\pi}^{\sigma}\left((\|\sigma\|^{-1} B(0,R)\right).
\end{align*}
Since \(\tilde{\pi}^{\sigma}\) has the density function \(\tilde{p}^{\sigma}_{\infty}\), we conclude that 
\begin{align}\label{OLLAs63a}
\pi^{\sigma}\big(B(0, R)\big) = \int_{\|\sigma\|^{-1} B(0,R)} \tilde{p}^{\sigma}_{\infty}(y)\,\mathrm{d}y.
\end{align}
Obviously as \( \|\sigma\| \) increases, the rescaled set \( \|\sigma\|^{-1} B(0, R) \) remains bounded, while its Lebesgue measure decreases to zero.
Moreover, for $\kappa_1 > \kappa_2$ we have the inclusion 
\begin{align*}\label{KLassd}
T_{\theta, \kappa_1} \subset T_{\theta, \kappa_2}.
\end{align*}
Thus, when $\theta \geq 1$ is fixed, increasing $\kappa$ makes $T_{\theta,\kappa}$  become smaller. Consequently, Proposition~\ref{INFORM} implies that there exist constants 
\( C_1, C_2 > 0 \), depending on \( R \), \( \kappa \), and \( \theta \), 
such that for all \( \sigma \in T_{\theta,\kappa} \) with \( \kappa \geq 1 \), we have
\[
\tilde{p}^{\sigma}_\infty(y) \leq C_1 \exp(-C_2 |y|^2) \quad \text{for all } y \in B(0, R).
\]
Therefore, by choosing \( \kappa_{\theta} \geq 1 \) sufficiently large, it follows from~\eqref{OLLAs63a} that \( \pi^{\sigma}\big( B(0, R) \big) \) can be made arbitrarily small for every \( \sigma \in T_{\theta,\kappa_{\theta}} \). Consequently we infer that
\[
\lambda_1^{\sigma} < 0.
\]
\qed
\appendix

\section{The decomposition of fractional Brownian motion}
We recall some properties of the decomposition of fractional Brownian motion as stated in Lemma~\ref{SDASASASAa}.

\begin{lemma}\label{Liouville}
For every \( T > 0 \) and \( \gamma \leq \frac{H}{2} \), we define the operator
\[
\mathcal{P}: \mathcal{B}_H \longrightarrow C^{\gamma}([0, T], \mathbb{R}^d)
\]
as the continuous extension of the mapping
\begin{align*}
\mathcal{P}(\omega)(t) := \frac{1}{\alpha_H} \int_{-\infty}^{0} \left[ (t - r)^{H - \frac{1}{2}} - (-r)^{H - \frac{1}{2}} \right] \dot{\omega}(r) \, \mathrm{d}r, \quad \text{for all } t \in [0, T],
\end{align*}
from \( C_0^\infty(\mathbb{R}^-, \mathbb{R}^d) \) to \( \mathcal{B}_H \). In particular, the operator \( \mathcal{P} \) is well-defined and continuous.
\end{lemma}
\proof
The proof closely follows the arguments in  \cite[Lemma~6.5]{DPT19}. We provide it here for the sake of completeness.
Let \( \omega \in C_0^\infty(\mathbb{R}^-, \mathbb{R}^d) \), and let \( t, s \in [0,T] \) with \( s < t \). Setting \( h := t - s \) we have for every \( r \leq 0 \) that
\begin{align}\label{M120}
\left| (t - r)^{H - \frac{3}{2}} - (s - r)^{H - \frac{3}{2}} \right| 
&\leq \Big(\frac{3}{2}-H\Big) (s - r)^{H - \frac{5}{2}}h.
\end{align}
Moreover, since \( \omega(0) = 0 \),
\begin{align}\label{M121}
\lvert \omega(r) \rvert \leq \Vert \omega \Vert_{\mathcal{B}_H} \, 
\sqrt{1 + \lvert r \rvert} \, \lvert r \rvert^{\tfrac{1 - H}{2}} .
\end{align}
Furthermore, for \( s, t \in [0,T] \) we have
\begin{align}\label{M122}
\begin{split}
	& \int_{-\infty}^{-h} (s - r)^{H - \tfrac{5}{2}} 
	\sqrt{1 + \lvert r \rvert} \, \lvert r \rvert^{\tfrac{1 - H}{2}} \, \mathrm{d}r 
	\;\lesssim\; h^{\tfrac{H}{2} - 1}, \\[0.4em]
	& \Biggl\lvert \int_{-h}^{0} 
	\Bigl( (t - r)^{H - \tfrac{3}{2}} - (s - r)^{H - \tfrac{3}{2}} \Bigr) 
	\lvert r \rvert^{\tfrac{1 - H}{2}} \, \mathrm{d}r \Biggr\rvert 
	\;\lesssim\; h^{\tfrac{H}{2}} .
\end{split}
\end{align}
Therefore, by integration by parts and using \eqref{M120}--\eqref{M122}, we obtain
\begin{align*}
\begin{split}
	& \bigl\lvert \mathcal{P}(\omega)(t) - \mathcal{P}(\omega)(s) \bigr\rvert  = \frac{1}{\alpha_H} \Biggl\lvert \int_{-\infty}^{0} 
	\Bigl( (t - r)^{H - \tfrac{1}{2}} - (s - r)^{H - \tfrac{1}{2}} \Bigr) \dot{\omega}(r) \, \mathrm{d}r \Biggr\rvert \\[0.3em]
	&= \frac{\lvert H - \tfrac{1}{2} \rvert}{\alpha_H} \Biggl\lvert \int_{-\infty}^{0} 
	\Bigl( (t - r)^{H - \tfrac{3}{2}} - (s - r)^{H - \tfrac{3}{2}} \Bigr) \omega(r) \, \mathrm{d}r \Biggr\rvert \\[0.3em]
	&\lesssim \Biggl\lvert \int_{-\infty}^{-h} \Bigl( (t - r)^{H - \tfrac{3}{2}} - (s - r)^{H - \tfrac{3}{2}} \Bigr) \omega(r) \, \mathrm{d}r \Biggr\rvert + \Biggl\lvert \int_{-h}^{0} \Bigl( (t - r)^{H - \tfrac{3}{2}} - (s - r)^{H - \tfrac{3}{2}} \Bigr) \omega(r) \, \mathrm{d}r \Biggr\rvert \\[0.3em]
	&\lesssim h \, \Vert \omega \Vert_{\mathcal{B}_H} 
	\int_{-\infty}^{-h} (s - r)^{H - \tfrac{5}{2}} \sqrt{1 + \lvert r \rvert} \, \lvert r \rvert^{\tfrac{1 - H}{2}} \, \mathrm{d}r  \\
	&\quad + \Vert \omega \Vert_{\mathcal{B}_H} \Biggl\lvert \int_{-h}^{0} \Bigl( (t - r)^{H - \tfrac{3}{2}} - (s - r)^{H - \tfrac{3}{2}} \Bigr) 
	\lvert r \rvert^{\tfrac{1 - H}{2}} \sqrt{1 + \lvert r \rvert} \, \mathrm{d}r \Biggr\rvert \\[0.3em]
	&\lesssim (t-s)^{\tfrac{H}{2}} \, \Vert \omega \Vert_{\mathcal{B}_H} .
\end{split}
\end{align*}
Thus, by the definition of the Banach space \( \mathcal{B}_{H} \) and a density argument, one concludes that the previous estimate holds for every \( \omega \in \mathcal{B}_{H} \). This completes the proof.
\begin{remark}\label{ASA||AQ}
This lemma is a pathwise version, i.e.~for every $\omega\in\cB_H$ of \cite[Lemma~6.5]{DPT19}, which provides less H\"older regularity in time for $\cP$, i.e.~$\gamma$ for $\gamma\leq H/2$.~This is natural, since we do not employ any probabilistic arguments which entail the $\gamma$-H\"older continuity of $\cP$ with $H<\gamma$
\end{remark}
\qed
Furthermore, we discuss some properties of the  Liouville operator. 
\begin{lemma}\label{Liouville1}
For every \( T > 0 \), one can define the operator
\begin{align*}
\tilde{B}^{H}: \mathcal{B}_H \longrightarrow C([0, T], \mathbb{R}^d)
\end{align*}
as the continuous extension of the mapping
\begin{align*}
\tilde{B}^{H}(\omega)(t)=\tilde{B}^{H}_t(\omega) := -\frac{1}{\alpha_{H}}\int_{-t}^{0} (r + t)^{H - \frac{1}{2}} \dot{\omega}(r) \, \mathrm{d}r, \quad \text{for all } t \in [0, T]
\end{align*}
from \( C_0^\infty(\mathbb{R}^-, \mathbb{R}^d) \) to \( \mathcal{B}_H \). 
Moreover, there exists a constant \( C_{T,H} > 0 \) such that
\begin{align}
\left| \tilde{B}^{H}_t(\omega) \right| 
\leq C_{T,H}\, t^{\tfrac{H}{2}} \, \| \omega \|_{\mathcal{B}_H}.
\end{align}
\end{lemma}
\proof
Let \( \omega \in C_0^\infty(\mathbb{R}^-, \mathbb{R}^d) \). Then we have
\begin{align*}
- \int_{-t}^{0} (r+t)^{H-\frac{1}{2}} \dot{\omega}(r) \, \mathrm{d}r
= -\omega(-t) \, t^{H-\frac{1}{2}}
+ \left(H - \tfrac{1}{2}\right) \int_{-t}^{0} (r+t)^{H - \frac{3}{2}} \bigl( \omega(r) - \omega(-t) \bigr) \, \mathrm{d}r.
\end{align*}
This identity follows by integration by parts.
Therefore using the definition of the norm of \( \omega \) in \( \mathcal{B}_{H} \), we obtain the existence of a constant \( C_{T,H} \) such that
\begin{align*}
\frac{1}{\alpha_{H}}\left\vert -\int_{-t}^{0} (r+t)^{H-\frac{1}{2}} \dot{\omega}(r) \, \mathrm{d}r \right\vert
\leq C_{T,H} t^{\frac{H}{2}} \| \omega \|_{\mathcal{B}_H},
\end{align*}
as stated. 
\qed
\begin{remark}
One can also define
\[
\tilde{B}_{t}^{H}(\omega) = - \frac{1}{\alpha_{H}}\int_{-t}^{0} (r + t)^{H - \frac{1}{2}} \, \mathrm{d}\omega(r)
\]
as an It\^o integral.~However, as we already indicated in Remark \ref{ASA||AQ}, it suffices to define it pathwise for every \( \omega\in\mathcal{B}_{H} \).
\end{remark}

\section{Bounds for the stationary density of a rescaled SDE with fractional Brownian motion}\label{AAZAZA}
For the convenience of the reader, we collect the intermediate steps required in the proofs of Proposition~\ref{INFORM} and Theorem~\ref{thm:main}.
\begin{lemma}\label{UKALSsdd}
Let $(M_t)_{t \in[0,t_0]}$ be a local martingale such that 
\begin{align}\label{OASLA;s}
\mathbb{E}\left[\exp\left(8 \langle M \rangle_{t_0} \right)\right] < \infty.
\end{align}
Then \((M_t)_{t\in[0,t_0]}\) is a martingale and the following inequality holds
\begin{align}\label{AISs85a}
\mathbb{E}\left[\exp\left(2M_{t_0}-\langle  M_{t_0}\rangle\right)\right]
\leq \sqrt{ \mathbb{E} \left[ \exp\left(6 \langle M \rangle_{t_0} \right) \right] }.
\end{align}
\end{lemma}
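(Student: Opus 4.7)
\proof
The plan is to first establish that $M$ is a genuine martingale on $[0,t_0]$, and then to derive the exponential bound by a Cauchy--Schwarz splitting combined with Novikov's criterion applied to a suitably rescaled stochastic exponential.

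For the martingale property, the hypothesis \eqref{OASLA;s} makes $\langle M \rangle_{t_0}$ integrable (in fact, of all orders), so the Burkholder--Davis--Gundy inequality gives $\mathbb{E}\bigl[\sup_{s \in [0,t_0]} |M_s|\bigr] \leq C\,\mathbb{E}\bigl[\langle M \rangle_{t_0}^{1/2}\bigr] < \infty$. A local martingale dominated by an integrable random variable is uniformly integrable, hence a true martingale.

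For the main estimate, the key algebraic observation is the decomposition
\begin{align*}
2M_{t_0} - \langle M\rangle_{t_0}
= \tfrac{1}{2}\bigl(4 M_{t_0} - 8\langle M\rangle_{t_0}\bigr) + 3\,\langle M\rangle_{t_0},
\end{align*}
which rewrites the quantity of interest as the product of the square root of a stochastic exponential and an exponential of the quadratic variation:
\begin{align*}
\exp\!\bigl(2M_{t_0} - \langle M\rangle_{t_0}\bigr)
= \mathcal{E}(4M)_{t_0}^{1/2}\cdot \exp\!\bigl(3\,\langle M\rangle_{t_0}\bigr),
\end{align*}
where $\mathcal{E}(4M)_t := \exp\bigl(4 M_t - 8\langle M \rangle_t\bigr)$. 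Applying the Cauchy--Schwarz inequality then yields
\begin{align*}
\mathbb{E}\bigl[\exp(2M_{t_0} - \langle M\rangle_{t_0})\bigr]
\leq \sqrt{\mathbb{E}[\mathcal{E}(4M)_{t_0}]}\,
\sqrt{\mathbb{E}[\exp(6\langle M\rangle_{t_0})]}.
\end{align*}

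The final step is to show that the first factor equals $1$. Novikov's criterion applied to the local martingale $4M$ requires the integrability $\mathbb{E}\bigl[\exp\bigl(\tfrac{1}{2}\cdot 16\,\langle M\rangle_{t_0}\bigr)\bigr] = \mathbb{E}\bigl[\exp(8\,\langle M\rangle_{t_0})\bigr] < \infty$, which is precisely \eqref{OASLA;s}. Hence $\mathcal{E}(4M)$ is a uniformly integrable martingale, so $\mathbb{E}[\mathcal{E}(4M)_{t_0}] = \mathcal{E}(4M)_0 = 1$, proving \eqref{AISs85a}. I do not foresee a serious obstacle here; the only subtlety is arranging the coefficients $(2,1)$ in the exponent as $\tfrac{1}{2}(4,-8) + (0,3)$ so that Novikov's condition matches exactly the integrability $\mathbb{E}[\exp(8\langle M\rangle_{t_0})] < \infty$ assumed in the hypothesis.
\qed
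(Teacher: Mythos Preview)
Your proof is correct and follows essentially the same approach as the paper: the identical algebraic decomposition $2M_{t_0}-\langle M\rangle_{t_0}=\tfrac{1}{2}(4M_{t_0}-8\langle M\rangle_{t_0})+3\langle M\rangle_{t_0}$, Cauchy--Schwarz, and Novikov's criterion for $\mathcal{E}(4M)$. Your argument is in fact slightly more complete, since you justify the true-martingale claim via BDG, which the paper's proof does not address explicitly.
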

\proof
From H\"older's inequality, we have
\begin{align}\label{UKAssd}
\begin{split}
	\mathbb{E}\left[\exp\left(2M_{t_0} - \langle M \rangle_{t_0}\right)\right]
	&= \mathbb{E}\left[\exp\left(2M_{t_0} - 4 \langle M \rangle_{t_0}\right) \exp\left(3 \langle M \rangle_{t_0}\right)\right] \\
	&\leq \sqrt{\mathbb{E}\left[\exp\left(4M_{t_0} - 8 \langle M \rangle_{t_0}\right)\right]} \sqrt{\mathbb{E}\left[\exp\left(6 \langle M \rangle_{t_0}\right)\right]}.
\end{split}
\end{align}
Since \(\frac{1}{2} \langle 4M \rangle_{t_0} = 8 \langle M \rangle_{t_0}\), it follows from \eqref{OASLA;s} and \cite[Proposition~5.12]{SHK91} that
\[
\mathbb{E}\left[\exp\left(4M_{t_0} - 8 \langle M \rangle_{t_0}\right)\right] = 1,
\]
which establishes the claim.
\qed
Let us consider the following local martingale
\[
M_t := \int_0^t \left\langle \mathcal{L}_s^{\sigma, l, z, t_0}, \, \mathrm{d}W_s \right\rangle,
\]
whose quadratic variation at time \( t_0 \) is given by
\[
\langle M \rangle_{t_0} = \int_0^{t_0} \left| \mathcal{L}_s^{\sigma, l, z, t_0} \right|^2 \, \mathrm{d}s.
\]
Choosing \( t_0 \) sufficiently small, then
\[
\mathbb{E}\!\left[\exp \bigl(8 \langle M \rangle_{t_0}\bigr)\right] < \infty.
\]
Keeping this in mind, we derive a bound for $G^\sigma_{t_0}(l;y)$.
\begin{lemma}\label{YAHSSSSQQ} 
Consider the situation of Proposition~\ref{law1}. For $y \in \mathbb{R}^d$, choose \( t_0 > 0 \) sufficiently small such that for $z = \|\sigma\| \sigma^{-1} (y - l(t_0))$,
we have
\begin{align}\label{AS3987as}
\mathbb{E}\!\left[\exp\!\left(8  \int_0^{t_0} \left| \mathcal{L}_s^{\sigma, l, z, t_0} \right|^2 \, \mathrm{d}s \right)\right] < \infty.
\end{align}
Then
\begin{align}\label{LOP56as1}
\begin{split}
	G_{t_0}^{\sigma}(l;y) &\leq \left(\mathbb{E}\left[
	\exp\left(2\int_{0}^{t_0} \left\langle\mathcal{L}^{\sigma,l,z,t_0}_s,K_{s}(z,t_0,W)\right\rangle\mathrm{d}s\right)\right]\right)^{\frac{1}{2}}\times\\&\quad  \left(\mathbb{E}\left[\exp\left(6\int_{0}^{t_0} \left| \mathcal{L}^{\sigma,l,z,t_0}_s \right|^2\mathrm{d}s\right)\right]\right)^{\frac{1}{4}}
\end{split}
\end{align}

\end{lemma}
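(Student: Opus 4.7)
The plan is to split the stochastic integral appearing in the exponent of $G_{t_0}^\sigma(l;y)$ into a Riemann part (driven by $K_s(z,t_0,W)\,\mathrm{d}s$) and a true It\^o part (driven by $\mathrm{d}W_s$), and then apply Cauchy--Schwarz followed by Lemma~\ref{UKALSsdd}. Concretely, I would introduce the local martingale
\[
M_t := \int_0^t \bigl\langle \mathcal{L}^{\sigma,l,z,t_0}_s,\,\mathrm{d}W_s\bigr\rangle,\qquad \langle M\rangle_{t_0}=\int_0^{t_0}\bigl|\mathcal{L}^{\sigma,l,z,t_0}_s\bigr|^{2}\mathrm{d}s,
\]
and use the semimartingale decomposition \eqref{SEMIMAR} to rewrite
\[
\int_0^{t_0}\bigl\langle \mathcal{L}^{\sigma,l,z,t_0}_s,\,\mathrm{d}X^{z,t_0}_s\bigr\rangle
=\int_0^{t_0}\bigl\langle \mathcal{L}^{\sigma,l,z,t_0}_s,\,K_s(z,t_0,W)\bigr\rangle \mathrm{d}s + M_{t_0}.
\]
Substituting into \eqref{G_T} yields
\[
G_{t_0}^{\sigma}(l;y)=\mathbb{E}\!\left[\exp\!\Bigl(\int_0^{t_0}\bigl\langle \mathcal{L}^{\sigma,l,z,t_0}_s,K_s(z,t_0,W)\bigr\rangle\mathrm{d}s\Bigr)\exp\!\Bigl(M_{t_0}-\tfrac{1}{2}\langle M\rangle_{t_0}\Bigr)\right].
\]

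Next, I would apply the Cauchy--Schwarz inequality to separate the two exponential factors, which gives
\[
G_{t_0}^{\sigma}(l;y)\le \Bigl(\mathbb{E}\exp\!\Bigl(2\!\int_0^{t_0}\bigl\langle \mathcal{L}^{\sigma,l,z,t_0}_s,K_s(z,t_0,W)\bigr\rangle\mathrm{d}s\Bigr)\Bigr)^{\!1/2}\Bigl(\mathbb{E}\exp\!\bigl(2M_{t_0}-\langle M\rangle_{t_0}\bigr)\Bigr)^{\!1/2}.
\]
The second factor is exactly the quantity controlled by Lemma~\ref{UKALSsdd}, provided that assumption \eqref{OASLA;s} holds. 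But this is precisely the hypothesis \eqref{AS3987as} imposed on $t_0$, so Lemma~\ref{UKALSsdd} is applicable and produces
\[
\mathbb{E}\exp\!\bigl(2M_{t_0}-\langle M\rangle_{t_0}\bigr)\le \sqrt{\mathbb{E}\exp\!\bigl(6\langle M\rangle_{t_0}\bigr)}=\sqrt{\mathbb{E}\exp\!\Bigl(6\!\int_0^{t_0}\bigl|\mathcal{L}^{\sigma,l,z,t_0}_s\bigr|^{2}\mathrm{d}s\Bigr)}.
\]
Plugging this back gives \eqref{LOP56as1} directly.

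There is no real obstacle: the whole estimate is essentially a packaging exercise combining the definition of $G_{t_0}^\sigma$, the explicit drift/diffusion split of the Wiener--Liouville bridge from \eqref{SEMIMAR}, one application of Cauchy--Schwarz, and the tailor-made Lemma~\ref{UKALSsdd}. The only subtle point worth flagging is that one must verify the uniform integrability needed to use Lemma~\ref{UKALSsdd} (so that $M$ is a genuine martingale and not merely a local martingale), but this is exactly what the hypothesis \eqref{AS3987as} provides. Consequently the proof should be very short, and the substantive work in tracking the $\sigma$-dependence is deferred to the subsequent bounds on the two expectations appearing on the right-hand side of \eqref{LOP56as1}.
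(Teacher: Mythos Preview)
Your proposal is correct and follows essentially the same approach as the paper's own proof: decompose the semimartingale $X^{z,t_0}$ via \eqref{SEMIMAR} to split the exponent into a Riemann part and the It\^o martingale $M_{t_0}$, apply Cauchy--Schwarz (the paper calls it H\"older's inequality), and then invoke Lemma~\ref{UKALSsdd} under the hypothesis \eqref{AS3987as}. Your identification of the only subtle point---that \eqref{AS3987as} is precisely what guarantees the applicability of Lemma~\ref{UKALSsdd}---matches the paper's reasoning exactly.
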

\proof
The semimartingale \(\big(X_t^{z,t_0}\big)_{t\in[0,t_0]}\) given in \eqref{SEMIMAR} can be decomposed as 
\begin{align}\label{XYZAa}
X_t^{z,t_0} = \int_0^t K_s(z, t_0, W) \, \mathrm{d}s + W_t.
\end{align}
Consequently,
\begin{align}\label{AIS85a}
\begin{split}
&\exp\left(
\int_0^{t_0} \left\langle \mathcal{L}^{\sigma,l,z,t_0}_s, \, \mathrm{d}X_s^{z,t_0} \right\rangle
- \frac{1}{2} \int_0^{t_0} \left| \mathcal{L}^{\sigma,l,z,t_0}_s \right|^2 \, \mathrm{d}s
\right)\\&=\exp\left(\int_{0}^{t_0} \left\langle\mathcal{L}^{\sigma,l,z,t_0}_s,K_{s}(z,t_0,W)\right\rangle\mathrm{d}s\right)\exp\left(\int_{0}^{t_0}\left\langle \mathcal{L}^{\sigma,l,z,t_0}_s, \, \mathrm{d}W_s \right\rangle
- \frac{1}{2}\int_0^{t_0} \left| \mathcal{L}^{\sigma,l,z,t_0}_s \right|^2 \, \mathrm{d}s\right).
\end{split}
\end{align}
Using Lemma~\ref{UKALSsdd} and H\"older's inequality, we have
\begin{align}\label{sjdjsa}
\begin{split}
G_{t_0}^{\sigma}(l;y) &= \mathbb{E}\left[
\exp\left(\int_{0}^{t_0} \left\langle\mathcal{L}^{\sigma,l,z,t_0}_s,K_{s}(z,t_0,W)\right\rangle\mathrm{d}s\right)\exp\left(\left(M_{t_0}-\frac{1}{2}\langle M_{t_0}\rangle\right)\right)
\right] \\
&\leq \left(\mathbb{E}\left[
\exp\left(2\int_{0}^{t_0} \left\langle\mathcal{L}^{\sigma,l,z,t_0}_s,K_{s}(z,t_0,W)\right\rangle\mathrm{d}s\right)\right]\right)^{\frac{1}{2}}\times\\&\quad  \left(\mathbb{E}\left[\exp\left(6\int_{0}^{t_0} \left| \mathcal{L}^{\sigma,l,z,t_0}_s \right|^2\mathrm{d}s\right)\right]\right)^{\frac{1}{4}}
\end{split}
\end{align}
which proves the claim.
\qed
\begin{remark}\label{RAEDASAs}
As indicated in \eqref{sjdjsa}, obtaining an upper bound for \( G_{t_0}^{\sigma}(l; y) \) requires a careful analysis of the process \( \mathcal{L}^{\sigma,l,z,t_0} \). Provided that \( \| \mathcal{L}^{\sigma,l,z,t_0} \|_{\infty,[0,t_0]} < \infty \), inequality~\eqref{sjdjsa} yields the estimate
\begin{align}\label{LOP56as}
\begin{split}
G_{t_0}^{\sigma}(l; y) &\leq \left( \mathbb{E}\left[
\exp\left( 2 \left\| \mathcal{L}^{\sigma,l,z,t_0} \right\|_{\infty,[0,t_0]} \int_{0}^{t_0} \left| K_s(z, t_0, W) \right| \, \mathrm{d}s \right) \right] \right)^{1/2} \\
&\quad \times \left( \mathbb{E}\left[ \exp\left( 6 t_0 \left\| \mathcal{L}^{\sigma,l,z,t_0} \right\|^2_{\infty,[0,t_0]} \right) \right] \right)^{1/4}.
\end{split}
\end{align}
For \( H \in (0, \tfrac{1}{2}) \) one can establish an upper bound for \( \| \mathcal{L}^{\sigma,l,z,t_0} \|_{\infty,[0,t_0]} \), see~Lemma~\ref{IOO1}. However, when \( H \in (\tfrac{1}{2}, 1) \), obtaining an upper bound for \( G_{t_0}^{\sigma}(l; y) \) requires a slightly different approach since \( \mathcal{L}^{\sigma,l,z,t_0} \) has a singularity in zero, as we will show in Lemma~\ref{IOO1}.
\end{remark}
We first focus on the semimartingale \( (X_t^{z,t_0})_{t \in [0, t_0]} \).
\begin{lemma}\label{IOO}
For \( \gamma \in (0, \tfrac{1}{2}) \) and \( t_0 \in (0,1] \), there exists a constant \( C_H > 0 \) such that
\begin{align}\label{sudisl5s}
\begin{split}
&	\| X^{z,t_0} \|_{\gamma; [0, t_0]} \leq C_{H}\left(t_{0}^{\frac{1}{2}-H-\gamma}\vert z\vert+t_{0}^{\frac{1}{2}-\gamma}\Upsilon(t_0,W)\right)+\Vert W\Vert_{\gamma; [0, t_0]},\\
& \int_{0}^{t_{0}}\left|K_{s}(z, t_0, W)\right|\mathrm{d}s\leq C_{H}\left(t_{0}^{\frac{1}{2}-H}\vert z\vert+t_{0}^{\frac{1}{2}} \Upsilon(t_0,W)\right),\\
& \int_{0}^{t_{0}} s^{\frac{1}{2}-H}\left|K_{s}(z, t_0, W)\right|\mathrm{d}s\leq C_H \left(t_{0}^{1-2H}\vert z\vert+t_{0}^{1-H} \Upsilon(t_0,W)\right),
\end{split}
\end{align}
where \( \Upsilon(t_0,W) \) is a positive random variable satisfying
\[
\mathbb{E}\ \left[\exp\ \bigl( r (\Upsilon(t_0,W))^2 \bigr)\right] < \infty
\]
for all sufficiently small \( r > 0 \).
\end{lemma}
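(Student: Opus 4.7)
\medskip
\noindent\textbf{Proof plan.} The plan is to split
$K_s(z,t_0,W) = K^{\mathrm{det}}_s + K^{\mathrm{sto}}_s$,
where
\begin{align*}
K^{\mathrm{det}}_s := \frac{2H\, z}{\rho_H\, t_0^{2H}}\,(t_0-s)^{H-\frac{1}{2}}, \qquad
K^{\mathrm{sto}}_s := -2H\,(t_0-s)^{H-\frac{1}{2}}\, M_s,
\end{align*}
and to estimate each contribution separately. Here $M_s := \int_0^s (t_0-u)^{-H-\frac{1}{2}}\,\mathrm{d}W_u$ denotes the underlying Gaussian local martingale.

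I first dispose of the deterministic piece. Direct Beta-function computations yield $\int_0^{t_0}(t_0-s)^{H-\frac{1}{2}}\,\mathrm{d}s = \frac{t_0^{H+1/2}}{H+1/2}$ and $\int_0^{t_0} s^{\frac{1}{2}-H}(t_0-s)^{H-\frac{1}{2}}\,\mathrm{d}s = c_H\, t_0$, which immediately produce the $|z|\,t_0^{\frac{1}{2}-H}$ and $|z|\,t_0^{1-2H}$ terms in the second and third bounds of~\eqref{sudisl5s}. For the H\"older estimate on $\int_0^{\cdot} K^{\mathrm{det}}_u\,\mathrm{d}u$, I use $\int_s^t(t_0-u)^{H-\frac{1}{2}}\mathrm{d}u = \frac{(t_0-s)^{H+1/2}-(t_0-t)^{H+1/2}}{H+1/2}$; in the regime $H<\frac{1}{2}$ the elementary inequality $a^\alpha - b^\alpha \le (a-b)^\alpha$, valid for $\alpha \in (0,1)$, applies, while for $H \ge \frac{1}{2}$ a direct Lipschitz estimate suffices. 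Writing $(t-s)^{H+\frac{1}{2}} = (t-s)^\gamma (t-s)^{H+\frac{1}{2}-\gamma} \le (t-s)^\gamma\, t_0^{H+\frac{1}{2}-\gamma}$ then extracts the claimed $|z|\,t_0^{\frac{1}{2}-H-\gamma}$ prefactor.

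For the stochastic piece, the key observation is the identity
\begin{align*}
\mathbb{E}\!\left[\bigl((t_0-s)^H M_s\bigr)^2\right] = \frac{1}{2H}\!\left(1 - \left(\frac{t_0-s}{t_0}\right)^{\!2H}\,\right) \;\le\; \frac{1}{2H},
\end{align*}
so that the weighted process $s \mapsto (t_0-s)^H M_s$ is Gaussian with uniformly bounded variance on $[0,t_0]$. Accordingly I set
\begin{align*}
\Upsilon(t_0,W) := \sup_{s\in[0,t_0)}(t_0-s)^H |M_s|,
\end{align*}
and, after verifying that this process extends continuously up to $s=t_0$ and thereby defines a Gaussian random element in a separable subspace of $C([0,t_0];\R^d)$, apply Fernique's theorem (Lemma~\ref{FerniqueAA}) to obtain $r>0$ with $\mathbb{E}\!\left[\exp(r\,\Upsilon^2)\right]<\infty$. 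With this $\Upsilon$ in hand, $|K^{\mathrm{sto}}_s| \le 2H(t_0-s)^{-\frac{1}{2}}\Upsilon$; integrating yields $\int_0^{t_0}|K^{\mathrm{sto}}_s|\,\mathrm{d}s \le 4H\,t_0^{\frac{1}{2}}\,\Upsilon$ and $\int_0^{t_0} s^{\frac{1}{2}-H}|K^{\mathrm{sto}}_s|\,\mathrm{d}s \le c'_H\, t_0^{1-H}\,\Upsilon$. Moreover, $\int_s^t|K^{\mathrm{sto}}_u|\,\mathrm{d}u \le 4H\,\Upsilon\bigl((t_0-s)^{\frac{1}{2}}-(t_0-t)^{\frac{1}{2}}\bigr) \le 4H\,\Upsilon\,(t-s)^{\frac{1}{2}} \le 4H\,\Upsilon\, t_0^{\frac{1}{2}-\gamma}(t-s)^{\gamma}$, and adding the contribution $\|W\|_{\gamma;[0,t_0]}$ from the Wiener part gives the desired H\"older estimate.

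The main obstacle I anticipate is the verification that $s \mapsto (t_0-s)^H M_s$ admits a continuous extension to $[0,t_0]$ and defines a genuine Gaussian random variable in a separable Banach space, so that Fernique's theorem applies with a constant $r>0$ chosen uniformly for $t_0\in(0,1]$. The standard route is to perform the time-change $r = t_0 - s$, recognise the resulting process as a Lamperti-type transformation of a Gaussian martingale, and invoke classical continuous-modification and uniform exponential-moment estimates for such processes.
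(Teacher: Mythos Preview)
Your proposal is correct and follows essentially the same route as the paper: the same decomposition of $K_s$ into its $z$-part and its stochastic part, the same definition $\Upsilon(t_0,W)=\sup_{s}(t_0-s)^{H}|M_s|$ justified via the It\^o-isometry bound $\mathbb{E}[(t_0-s)^{2H}M_s^2]\le (2H)^{-1}$ together with Fernique's theorem, and the same elementary Beta-type integrals and H\"older interpolations for the resulting estimates. The technical concern you flag about the continuous extension of $(t_0-s)^H M_s$ to $s=t_0$ is legitimate but is also left implicit in the paper, which simply appeals to Gaussianity and Fernique.
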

\proof
From \eqref{SEMIMAR} we have for $s\in[0,t_0]$
\begin{align}\label{BB0}
\begin{split}
X_s^{z,t_0}&=\int_{0}^{s}K_{u}(z, t_0, W)\mathrm{d}u+W_s=\frac{2H}{t_{0}^{2H}\rho_H(H+\frac{1}{2})}\left(t_{0}^{H+\frac{1}{2}}-(t_{0}-s)^{H+\frac{1}{2}}\right)z\\&-2H\int_{0}^{s}(t_{0}-u)^{H-\frac{1}{2}}\left(\int_{0}^{u}(t_0-\tau)^{-H-\frac{1}{2}}\mathrm{d}W_\tau\right)\mathrm{d}u+W_s=:T_{s}(z,t_0)+U_{s}(t_0,W)+W_s.
\end{split}
\end{align}
Obviously, there exists a constant $C_H>0$ such that
\begin{equation} \label{BB1}
\|T(z,t_0)\|_{\gamma;[0,t_0]} \leq C_{H} t_{0}^{\frac{1}{2} - H - \gamma} |z|.
\end{equation}
We now estimate the $\gamma$-H\"older norm of $U$. To this aim we have by It\^o's isometry that
\[
\mathbb{E}\left[\left\vert\frac{\int_{0}^{u} (t_0 - \tau)^{-H - \frac{1}{2}} \, \mathrm{d}W_\tau}{(t_0 - u)^{-H}} \right\vert^2\right]
= \frac{1}{2H} \left( 1 - \left(\frac{t_0 - u}{t_0}\right)^{2H} \right)
\leq \frac{1}{2H}.
\]
Therefore we can assume that
\[
\sup_{u \in [0, t_0]} \frac{\left| \int_{0}^{u} (t_0 - \tau)^{-H - \frac{1}{2}} \, \mathrm{d}W_\tau \right|}{(t_0 - u)^{-H}} \leq \Upsilon(t_0,W),
\]
where \( \Upsilon(t_0,W) \) is a random variable satisfying
\[
\mathbb{E}\left[ \exp\left( r \left( \Upsilon(t_0,W) \right)^2 \right) \right] < \infty
\]
for all sufficiently small \( r > 0 \).
This follows from the Gaussianity of the stochastic integral and Lemma~\ref{FerniqueAA}.
Thus, for every \( 0 < \gamma < \frac{1}{2}\), we obtain that 
\begin{align}\label{BB2}
\Vert U(t_0,W) \Vert_{\gamma;[0,t_0]}\leq C_H t_{0}^{\frac{1}{2}-\gamma}\Upsilon(t_0,W).
\end{align}
Putting these estimates together, we obtain the first bound in \eqref{sudisl5s}.  
Furthermore,
\begin{align*}
\int_{0}^{t_{0}}\left|K_{s}(z, t_0, W)\right|\mathrm{d}s&=\int_{0}^{t_{0}}\left\vert \frac{2H}{\rho_H}(t_0 - s)^{H - \frac{1}{2}} \left( \frac{z}{t_0^{2H}} - \rho_H \int_0^s (t_0 - u)^{-H - \frac{1}{2}} \, \mathrm{d}W_u \right)  \right\vert\mathrm{d}s\\&\leq \frac{2H t_{0}^{\frac{1}{2}-H}}{\rho_H (H+\frac{1}{2})}\vert z\vert+4Ht_{0}^{\frac{1}{2}} \Upsilon(t_0,W),
\end{align*}
which proves the second inequality in \eqref{sudisl5s}. The third inequality in~\eqref{sudisl5s} can be similarly obtained. Indeed, we have
\begin{align*}
\int_0^{t_0} s^{\frac{1}{2}-H} |K_s(z,t_0,W)|~\txtd s &\leq \frac{2H |z|}{\rho_Ht^{2H}_0}  \int_0^{t_0} s^{\frac{1}{2}-H} (t_0-s)^{H-\frac{1}{2}}~\txtd s \\&+ 2H \Upsilon(t_0,W) \int_0^{t_0} s^{\frac{1}{2}-H}(t_0-s)^{-\frac{1}{2}}~\txtd s 
\\&\leq  C_H \left(t_{0}^{1-2H}\vert z\vert+t_{0}^{1-H} \Upsilon(t_0,W)\right). 
\end{align*}
\qed
According to Remark~\ref{RAEDASAs} we now focus on the term \(\mathcal{L}^{\sigma, l, z, t_0}\).
\begin{lemma}\label{IOO1} 
Consider the setting of Proposition~\ref{law1}, and assume that \( 0 < \eta < \min\!\left\{ H, \tfrac{1-H}{2} \right\} \). Then there exists a constant \( C_{\theta,\kappa} > 0 \) such that, for all \( t \in [0, t_0] \), the following estimate holds:
\begin{itemize}
\item [1)] if $H\in(0,1/2)$:
\begin{align}\label{OP963a}
\left\Vert \mathcal{L}^{\sigma, l, z, t_0} \right\Vert_{\infty;[0,t_0]} 
\leq C_{\theta,\kappa}\, t_0^{\frac{1}{2} - H} \left( 1 + \left\Vert l \right\Vert_{\infty;[0,t_0]} + t_0^{\eta} \left\Vert X^{z,t_0} \right\Vert_{\frac{1}{2} - H + \eta;[0,t_0]} \right) ;
\end{align}
\item [2)]	if $H\in (1/2,1)$:	\begin{align}\label{OP963a1}
\begin{split}
	&\sup_{s\in [0,t_0]}\left\vert \mathcal{L}^{\sigma,l,z,t_0}_s-\frac{\frac{3}{2}-H}{\rho_H\Gamma(\frac{5}{2}-H)} \sigma^{-1} F(0) s^{\frac{1}{2}-H}\right\vert \\&\quad\leq C_{\theta,\kappa} \left[1+t_0^{\eta}\left(\left\Vert l\right\Vert_{H-\frac{1}{2}+\eta;[0,{t_0}]}+\Vert X^{{z,t_0}}\Vert_{\eta;[0,{t_0}]}\right)\right].
\end{split}
\end{align}
\end{itemize}

\end{lemma}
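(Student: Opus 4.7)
The plan is to split into the two cases $H<1/2$ and $H>1/2$, which correspond respectively to the outer operator $\mathcal{J}_+^{1/2-H}$ being a genuine Riemann--Liouville fractional integral or, in the latter case, a fractional derivative. In both cases I would use the fact that, by Lemma~\ref{Straightforward} and the assumption $\sigma\in T_{\theta,\kappa}$, the rescaled drift satisfies the uniform bounds $|F^{\sigma}(\xi)|\leq C_F(\kappa^{-1}+|\xi|)$ and $|\mathrm{D}F^{\sigma}|\leq C_F$, while the prefactor $\|\sigma\|\|\sigma^{-1}\|$ appearing in $\mathcal{L}^{\sigma,l,z,t_0}$ is controlled by $\theta$. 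Writing $g(u):=F^{\sigma}\bigl(l(u)+\tfrac{\rho_H}{\|\sigma\|}\sigma\mathcal{J}_+^{H-1/2}X^{z,t_0}(u)\bigr)$, I will rely throughout on standard fractional-calculus bounds of the form $|(\mathcal{J}_+^{\alpha}h)(s)|\lesssim \|h\|_{\gamma;[0,s]}\,s^{\gamma+\alpha}$ valid for $h(0)=0$ and $\alpha>-\gamma$, together with the explicit identity $\mathcal{J}_+^{\alpha}[1](s)=s^{\alpha}/\Gamma(\alpha+1)$.

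In the case $H\in(0,\tfrac{1}{2})$ the inner object $\mathcal{J}_+^{H-1/2}X^{z,t_0}$ is a Marchaud-type derivative of order $\tfrac{1}{2}-H$. Because $X_0^{z,t_0}=0$ and $X^{z,t_0}$ is $(\tfrac{1}{2}-H+\eta)$-H\"older on $[0,t_0]$, the bound above yields $|\mathcal{J}_+^{H-1/2}X^{z,t_0}(u)|\lesssim u^{\eta}\|X^{z,t_0}\|_{\tfrac{1}{2}-H+\eta;[0,t_0]}$. Inserting this into the linear-growth bound on $F^{\sigma}$ produces, uniformly in $u\in[0,t_0]$,
\[
|g(u)|\;\leq\;C_{\theta,\kappa}\bigl(1+\|l\|_{\infty;[0,t_0]}+t_0^{\eta}\|X^{z,t_0}\|_{\tfrac{1}{2}-H+\eta;[0,t_0]}\bigr),
\]
and the outer fractional integral $\mathcal{J}_+^{1/2-H}g$ contributes the factor $t_0^{1/2-H}/\Gamma(3/2-H)$, giving~\eqref{OP963a}.

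In the case $H\in(\tfrac{1}{2},1)$ the outer operator $\mathcal{J}_+^{1/2-H}$ is a fractional derivative of order $H-\tfrac{1}{2}$, which produces a boundary singularity whenever applied to a function that does not vanish at zero. The correct splitting is $g(u)=F^{\sigma}(0)+[g(u)-F^{\sigma}(0)]$. For the constant piece, $F^{\sigma}(0)=\|\sigma\|^{-1}F(0)$, and since $(\mathcal{J}_+^{1/2-H}1)(s)=s^{1/2-H}/\Gamma(3/2-H)=(3/2-H)s^{1/2-H}/\Gamma(5/2-H)$, the prefactor $\|\sigma\|\sigma^{-1}/\rho_H$ absorbs the $\|\sigma\|^{-1}$ to produce exactly the explicit singular term $\tfrac{(3/2-H)}{\rho_H\Gamma(5/2-H)}\sigma^{-1}F(0)\,s^{1/2-H}$ appearing on the left of~\eqref{OP963a1}. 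For the remainder I would use the Lipschitz estimate on $F^{\sigma}$ to get
\[
|g(u)-F^{\sigma}(0)|\;\leq\;C_F\bigl(|l(u)|+\rho_H\,|\mathcal{J}_+^{H-1/2}X^{z,t_0}(u)|\bigr),
\]
together with a corresponding modulus of continuity in $u$ of order $H-\tfrac{1}{2}+\eta$. Since $\mathcal{J}_+^{H-1/2}$ is now a genuine fractional integral, it gains $(H-\tfrac{1}{2})$ regularity from the $\eta$-H\"older bound on $X^{z,t_0}$, so the H\"older seminorm of the remainder is controlled by $\|l\|_{H-1/2+\eta;[0,t_0]}+\|X^{z,t_0}\|_{\eta;[0,t_0]}$. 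Applying the standard fractional-derivative bound to this zero-at-origin H\"older remainder then delivers the right-hand side of~\eqref{OP963a1}.

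The main technical obstacle is the $H>\tfrac{1}{2}$ case: one must check not only that the non-integrable term $s^{1/2-H}$ comes out with exactly the stated coefficient (which forces the two powers of $\|\sigma\|$ to cancel cleanly in a single step), but also that, once this term has been subtracted, the leftover has enough H\"older regularity in $u$ to be handled by the derivative bound uniformly in $\sigma\in T_{\theta,\kappa}$. This last step is precisely where the strengthened assumption that $\mathrm{D}F$ is globally bounded (Assumption~\ref{Drift}, part 4) is used: it ensures that the H\"older modulus of the composition $u\mapsto F^{\sigma}(l(u)+\tfrac{\rho_H}{\|\sigma\|}\sigma\mathcal{J}_+^{H-1/2}X^{z,t_0}(u))$ is inherited directly from that of its argument, without introducing further powers of $|l|$ or $|X^{z,t_0}|$ that would destroy the $\sigma$-uniformity.
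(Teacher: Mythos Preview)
Your proposal is correct and follows essentially the same route as the paper. The paper packages the fractional-calculus estimates you invoke---the $L^\infty$ bound for $\mathcal{J}_+^{\alpha}$, the decomposition $(\mathcal{J}_+^{-\alpha}f)(s)=\tilde{\mathcal{J}}_+^{-\alpha}f(s)+\tfrac{f(0)(1-\alpha)}{\Gamma(2-\alpha)}s^{-\alpha}$ with the associated H\"older bound on the remainder, and the regularity gain $\|\mathcal{J}_+^{H-1/2}X\|_{H-1/2+\eta}\lesssim\|X\|_{\eta}$---into the appendix results (Lemma~\ref{ASUj144as0} and Corollary~\ref{IL9as}) and simply cites them, whereas you reproduce them inline; the logical structure, including the explicit extraction of the $s^{1/2-H}$ singularity via $g(0)=F^{\sigma}(0)$ and the use of the global bound on $\mathrm{D}F$ to propagate H\"older regularity through the composition, is identical.
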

\proof
First, note that \( \sigma \in T_{\theta,\kappa} \), which implies
\begin{align}\label{85asas}
\|\sigma\| \, \|\sigma^{-1}\| \leq \theta
\quad \text{and} \quad 
\|\sigma\| \geq \kappa.
\end{align}
We now focus on the case where \( H \in \left(0, \tfrac{1}{2}\right) \). From  Lemma~\ref{AS85sdf}, the fact that \( X^{z,t_0}_{0} = 0 \), together with Lemma~\ref{Straightforward} and Lemma~\ref{ASUj144as0}, we obtain
\begin{align*}
&\left\Vert\mathcal{L}^{\sigma,l,z,t_0} \right\Vert_{\infty;[0,t_0]}\lesssim
t_0^{\frac{1}{2}-H}\left\Vert F^{\sigma}\left(l +\frac{\rho_H}{\|\sigma\|} \sigma  \left(\mathcal{J}_{+}^{ H-\frac{1}{2}} X^{z,t_0}\right)\right)\right\Vert_{\infty;[0,t_0]}\\&\lesssim t_{0}^{\frac{1}{2}-H}\left(1+\Vert l\Vert_{\infty;[0,t_0]}+\Vert \mathcal{J}_{+}^{ H-\frac{1}{2}} X^{z,t_0}\Vert_{\infty;[0,{t_0}]}\right)\\&\lesssim t_{0}^{\frac{1}{2}-H}\left(1+\Vert l\Vert_{\infty;[0,t_0]}+t_0^{\eta}\Vert X^{{z,t_0}} \Vert_{\frac{1}{2}-H+\eta;[0,{t_0}]}\right).
\end{align*}
We now consider the case $H\in(1/2,1)$ which requires a more careful analysis. Given that
\begin{align*}
l(0) = \mathcal{J}_{+}^{ H - \frac{1}{2}} X^{z,t_0}_0 = 0 
\quad \text{and} \quad 
F^{\sigma}(0) = \frac{F(0)}{\left\Vert \sigma \right\Vert},
\end{align*}
and by applying \eqref{ASUj144as} with \( \alpha = H - \frac{1}{2} \), we obtain
\begin{align}\label{ILOAs369a}
\begin{split}
\mathcal{L}^{\sigma,l,z,t_0}_s 
&= \frac{\frac{3}{2} - H}{\rho_H \Gamma\left(\frac{5}{2} - H\right)} \sigma^{-1} F(0) s^{\frac{1}{2} - H} \\
&\quad + \frac{\left\Vert \sigma \right\Vert \sigma^{-1}}{\rho_H}
\left( \tilde{\mathcal{J}}_{+}^{\frac{1}{2} - H} 
F^{\sigma} \left( l + \frac{\rho_H}{\left\Vert \sigma \right\Vert} \sigma \mathcal{J}_{+}^{ H - \frac{1}{2}} X^{z,t_0} \right) \right)(s),
\quad s \in [0,t_0].
\end{split}
\end{align}
Thanks to \eqref{85asas} and Lemma~\ref{ASUj144as0}, it follows that
\begin{align}\label{OL56as}
\begin{split}
&\left\Vert\frac{\left\Vert \sigma \right\Vert \sigma^{-1}}{\rho_H}
\left( \tilde{\mathcal{J}}_{+}^{\frac{1}{2} - H} 
F^{\sigma} \left( l + \frac{\rho_H}{\left\Vert \sigma \right\Vert} \sigma \mathcal{J}_{+}^{ H - \frac{1}{2}} X^{z,t_0} \right) \right)\right\Vert_{\infty;[0,t_0]}\\&\myquad[1]\lesssim t_{0}^\eta\left\Vert F^{\sigma}\left(l + \frac{\rho_H}{\|\sigma\|} \sigma \mathcal{J}_{+}^{ H-\frac{1}{2}} X^{z,t_0}\right)\right\Vert_{C^{0,H-\frac{1}{2}+\eta}([0,{t_0}])}\\ &\myquad[2]\lesssim 1+t_{0}^\eta\left\Vert l\right\Vert_{C^{0,H-\frac{1}{2}+\eta}([0,{t_0}])}+t_{0}^\eta\left\Vert \mathcal{J}_{+}^{ H-\frac{1}{2}} X^{z,t_0}\right\Vert_{C^{0,H-\frac{1}{2}+\eta}([0,{t_0}])} .
\end{split}
\end{align}
Recall that \( l(0) = X^{z,t_0}_{0} = 0 \), so Corollary~\ref{IL9as} yields
\begin{align*}
\left\Vert l\right\Vert_{C^{0,H-\frac{1}{2}+\eta}([0,{t_0}])}&\lesssim \Vert l\Vert_{H-\frac{1}{2}+\eta;[0,t_0]},\\
\left\Vert \mathcal{J}_{+}^{ H-\frac{1}{2}} X^{z,t_0}\right\Vert_{C^{0,H-\frac{1}{2}+\eta}([0,{t_0}])} &=\Vert \mathcal{J}_{+}^{ H-\frac{1}{2}}X^{z,t_0}\Vert_{\infty;[0,t_0]}+ \|\mathcal{J}_{+}^{ H-\frac{1}{2}}X^{z,t_0}\|_{H-\frac{1}{2}+\eta;[0,t_0]} \\& \lesssim \Vert X^{z,t_0}\Vert_{\eta;[0,t_0]}.
\end{align*}
Therefore,
\begin{align}\label{56IIOPAs}
\begin{split}
&\left\Vert\frac{\left\Vert \sigma \right\Vert \sigma^{-1}}{\rho_H}
\left( \tilde{\mathcal{J}}_{+}^{\frac{1}{2} - H} 
F^{\sigma} \left( l + \frac{\rho_H}{\left\Vert \sigma \right\Vert} \sigma \mathcal{J}_{+}^{ H - \frac{1}{2}} X^{z,t_0} \right) \right)\right\Vert_{\infty;[0,t_0]}\\&\myquad[1]\lesssim 1+t_0^{\eta}\left(\left\Vert l\right\Vert_{H-\frac{1}{2}+\eta;[0,{t_0}]}+\Vert X^{{z,t_0}}\Vert_{\eta;[0,{t_0}]}\right),
\end{split}
\end{align}
which proves \eqref{OP963a1}. 
\qed
This leads to the following result.
\begin{corollary}\label{YHAS96assd}
Let us consider the setting of Lemma~\ref{IOO1}. Then there exist positive constants 
\( C_{H,\theta,\kappa}^1 \) and \( C_{H,\theta,\kappa}^2 \) such that:	
\begin{itemize}
\item [1)] For \( H \in (0, \frac{1}{2}) \):
\begin{align*}
\left\Vert\mathcal{L}^{\sigma,l,z,t_0} \right\Vert_{\infty;[0,t_0]}
&\leq  C_{H,\theta,\kappa}^1 {t_0}^{\frac{1}{2}-H}\big(1+\Vert l\Vert_{\infty;[0,t_0]}+t^{\eta}_0\Vert W \Vert_{\frac{1}{2}-H+\eta;[0,{t_0}]}\big)\\&+ C_{H,\theta,\kappa}^2\left(t_{0}^{\frac{1}{2}-H}\vert z\vert+t_{0}^{\frac{1}{2}} \Upsilon(t_0,W)\right).
\end{align*}
\item [2)] For \( H \in (\frac{1}{2}, 1) \):
\begin{align}\label{OL52as}
\begin{split}
	&\sup_{s\in [0,t_0]}\left\vert \mathcal{L}^{\sigma,l,z,t_0}_s-\frac{\frac{3}{2}-H}{\rho_H\Gamma(\frac{5}{2}-H)} \sigma^{-1} F(0) s^{\frac{1}{2}-H}\right\vert\\&\quad\leq C_{H,\theta,\kappa}^1 \left[1+{t_0}^{\eta}\left(\left\Vert l\right\Vert_{H-\frac{1}{2}+\eta;[0,{t_0}]}+\Vert W\Vert_{\eta;[0,{t_0}]}\right)\right]+C_{H,\theta,\kappa}^2\left(t_{0}^{\frac{1}{2}-H}\vert z\vert+t_{0}^{\frac{1}{2}}\Upsilon(t_0,W)\right).
\end{split}
\end{align}
\end{itemize}
\end{corollary}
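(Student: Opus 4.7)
The proof is essentially a direct combination of Lemma~\ref{IOO1} with Lemma~\ref{IOO}, so I would proceed by substituting the H\"older and supremum norms of $X^{z,t_0}$ bounded in Lemma~\ref{IOO} into the estimates of $\mathcal{L}^{\sigma,l,z,t_0}$ derived in Lemma~\ref{IOO1}. The strategy is to choose in each case the appropriate H\"older exponent $\gamma$ in Lemma~\ref{IOO} that matches the norm of $X^{z,t_0}$ appearing in Lemma~\ref{IOO1}, and then collect the resulting powers of $t_0$.

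For case $1)$ with $H\in(0,\tfrac{1}{2})$, the quantity to control is $t_0^{\eta}\,\Vert X^{z,t_0}\Vert_{\tfrac{1}{2}-H+\eta;[0,t_0]}$. I would apply Lemma~\ref{IOO} with $\gamma=\tfrac{1}{2}-H+\eta$, which requires $\eta<H$ (consistent with $\eta<\min\{H,(1-H)/2\}$). Since $t_0^{\tfrac{1}{2}-H-\gamma}=t_0^{-\eta}$ and $t_0^{\tfrac{1}{2}-\gamma}=t_0^{H-\eta}$, multiplying the resulting bound by $t_0^{\eta}$ gives
\begin{equation*}
t_0^{\eta}\Vert X^{z,t_0}\Vert_{\tfrac{1}{2}-H+\eta;[0,t_0]}
\lesssim |z|+t_0^{H}\Upsilon(t_0,W)+t_0^{\eta}\Vert W\Vert_{\tfrac{1}{2}-H+\eta;[0,t_0]}.
\end{equation*}
Inserting this into \eqref{OP963a} and multiplying through by the factor $t_0^{\tfrac{1}{2}-H}$ in front yields the desired decomposition with $C^1_{H,\theta,\kappa}$ absorbing the terms involving $l$ and $W$, and $C^2_{H,\theta,\kappa}$ absorbing those involving $|z|$ and $\Upsilon(t_0,W)$, so that $t_0^{\tfrac{1}{2}-H}\cdot t_0^{H}=t_0^{\tfrac{1}{2}}$ appears in front of $\Upsilon$ as stated.

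For case $2)$ with $H\in(\tfrac{1}{2},1)$, I would analogously apply Lemma~\ref{IOO} with $\gamma=\eta$ to control $\Vert X^{z,t_0}\Vert_{\eta;[0,t_0]}$, obtaining
\begin{equation*}
t_0^{\eta}\Vert X^{z,t_0}\Vert_{\eta;[0,t_0]}
\lesssim t_0^{\tfrac{1}{2}-H}|z|+t_0^{\tfrac{1}{2}}\Upsilon(t_0,W)+t_0^{\eta}\Vert W\Vert_{\eta;[0,t_0]}.
\end{equation*}
Substituting into \eqref{OP963a1} and grouping the deterministic/$l$/$W$-dependent terms into the constant $C^1_{H,\theta,\kappa}$ and the $z$/$\Upsilon$-dependent terms into $C^2_{H,\theta,\kappa}$ yields \eqref{OL52as}. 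No essential obstacle is expected, since all ingredients (bounds in terms of $X^{z,t_0}$ from Lemma~\ref{IOO1}, and in turn bounds of $X^{z,t_0}$ via $W,z,\Upsilon(t_0,W)$ from Lemma~\ref{IOO}) are already available; the only care needed is in tracking the exponents of $t_0$ so that the final constants depend only on $H,\theta,\kappa$ and the resulting powers match those stated in the corollary.
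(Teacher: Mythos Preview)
Your proposal is correct and follows exactly the same approach as the paper: substitute the bound from Lemma~\ref{IOO} into the estimates of Lemma~\ref{IOO1}, choosing $\gamma=\tfrac{1}{2}-H+\eta$ when $H\in(0,\tfrac{1}{2})$ and $\gamma=\eta$ when $H\in(\tfrac{1}{2},1)$. Your tracking of the $t_0$-exponents is in fact more explicit than the paper's own one-line proof.
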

\proof
The proof follows from Lemma~\ref{IOO} and Lemma~\ref{IOO1} by substituting  into \eqref{sudisl5s} 
\( \gamma = \tfrac{1}{2} - H + \eta \) for \( 0 < H < \tfrac{1}{2} \), 
and \( \gamma = \eta \) when \( \tfrac{1}{2} < H < 1 \).
\qed
\begin{remark}
Note that there is a cancellation in \eqref{OL52as}, which allows us to eliminate the singularity of \( \mathcal{L}^{\sigma,l,z,t_0}_s \) in zero by subtracting \( \frac{\frac{3}{2} - H}{\rho_H \Gamma\left(\frac{5}{2} - H\right)} \sigma^{-1} F(0)s^{\frac{1}{2}-H} \).
\end{remark}
We can now  estimate  \( G_{t_0}^{\sigma}(l; y) \) as follows.
\begin{proposition}\label{YHHNA7as}
Let us consider the setting of Lemma~\ref{IOO1} and 
\[
Q(a) := \max\left\{ a^{1/2},\, a^{1/4} \right\}, \quad \text{for } a > 0.
\]
Then there exist constants \(\alpha_1, \alpha_2 \in (0,1)\) depending on $\eta$ and $H$ and \(K^{1}_{H,\theta,\kappa},\, K^{2}_{H,\theta,\kappa},\, K^{3}_{H,\theta,\kappa} > 0\) such that for sufficiently small \(t_0 > 0\), the following statements hold:
\begin{itemize}
\item [1)] For \(H \in \left(0, \frac{1}{2}\right)\):	
\begin{align}\label{UN100}
\begin{split}
	G_{t_0}^{\sigma}(l;y)&\leq K^{2}_{H,\theta,\kappa}\exp\left(K^{1}_{H,\theta,\kappa}t_{0}^{\alpha_1}\left(\Vert l\Vert_{\infty;[0,t_0]}^2+\vert z\vert^2\right)\right)\times\Gamma_{1}(t_0,\theta),
\end{split}
\end{align}
where
\begin{align}\label{85aw12aa}
\Gamma_{1}(t_0,\theta):=Q\left(\mathbb{E}\left[\exp\left(K^{1}_{H,\theta,\kappa}t_{0}^{\alpha_2}\left(\Vert W \Vert_{\frac{1}{2}-H+\eta;[0,{t_0}]}^2+(\Upsilon(t_0,W))^2\right)\right)\right]\right).
\end{align}
\item [2)] For \(H \in \left(\frac{1}{2}, 1\right)\):	
\begin{align}\label{UN101}
\begin{split}
	&G_{t_0}^{\sigma}(l;y)\\&  \leq K^{2}_{H,\theta,\kappa}\exp(K^{3}_{H,\theta,\kappa}t_{0}^{1-2H})\exp\left(K^{1}_{H,\theta,\kappa}\left(t_{0}^{\alpha_1}\left\Vert l\right\Vert_{H-\frac{1}{2}+\eta;[0,{t_0}]}^2+t_{0}^{1-2H}\vert z\vert^2\right)\right)\times \Gamma_{2}(t_0,\theta),
\end{split}
\end{align}
where
\begin{align}\label{85aw12aa3}
\Gamma_{2}(t_0,\theta):=Q\left(\mathbb{E}\left[\exp\left(K^{1}_{H,\theta,\kappa}t_{0}^{\alpha_2}\left(\Vert W \Vert_{\eta;[0,{t_0}]}^2+(\Upsilon(t_0,W))^2\right)\right)\right]\right).
\end{align}
\end{itemize}
\end{proposition}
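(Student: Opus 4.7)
The plan is to apply Lemma~\ref{YAHSSSSQQ} and then estimate each of its two factors by combining the pointwise/$L^\infty$ bounds on $\mathcal{L}^{\sigma,l,z,t_0}$ from Corollary~\ref{YHAS96assd} with the integral bounds on $K_s(z,t_0,W)$ from Lemma~\ref{IOO}. The key structural point is to keep the $(l,z)$-dependent quantities outside the expectation (as deterministic prefactors) and the $(W,\Upsilon)$-dependent quantities inside, so that the latter can be absorbed into $\Gamma_1$ or $\Gamma_2$. Throughout, I will repeatedly use $(a+b)^2 \leq 2a^2 + 2b^2$ together with Young's inequality $2ab \leq \varepsilon a^2 + \varepsilon^{-1} b^2$ to linearize/separate mixed cross terms, choosing $\varepsilon$ small enough so that the coefficients in front of $\lVert W\rVert^2$ and $\Upsilon^2$ become small as $t_0 \to 0$.

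For the regime $H \in (0,1/2)$, Corollary~\ref{YHAS96assd}(1) gives $\lVert \mathcal{L}^{\sigma,l,z,t_0}\rVert_{\infty;[0,t_0]} \leq C(t_0^{1/2-H}(1+\lVert l\rVert_{\infty}+t_0^{\eta}\lVert W\rVert_{\frac12-H+\eta;[0,t_0]}) + t_0^{1/2-H}|z| + t_0^{1/2}\Upsilon(t_0,W))$. Squaring and integrating in $s$ produces $6\int_0^{t_0}|\mathcal{L}^{\sigma,l,z,t_0}_s|^2\,\mathrm{d}s$ bounded by $C\,t_0^{2-2H}(1+\lVert l\rVert_{\infty}^{2}+|z|^2)$ plus a random part bounded by $C(t_0^{2-2H+2\eta}\lVert W\rVert^{2} + t_0^{2}\Upsilon^{2})$. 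For the cross term I use $|\int_0^{t_0}\langle \mathcal{L},K\rangle\,\mathrm{d}s| \leq \lVert \mathcal{L}\rVert_{\infty}\int_0^{t_0}|K_s|\,\mathrm{d}s$, bound the latter by the second estimate in~\eqref{sudisl5s}, and apply Young's inequality on the linear $|z|$- and $\Upsilon$-pieces. Collecting everything yields~\eqref{UN100} with $\alpha_1,\alpha_2$ any positive numbers below $\min\{2-2H,\,2\eta\}$ (and $\leq 1$).

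For the regime $H \in (1/2,1)$ the term $\mathcal{L}^{\sigma,l,z,t_0}_s$ has an integrable singularity $\propto s^{1/2-H}\sigma^{-1}F(0)$ at $s=0$, and one must exploit the cancellation provided by Corollary~\ref{YHAS96assd}(2). Writing $\mathcal{L}^{\sigma,l,z,t_0}_s = \frac{3/2-H}{\rho_H\Gamma(5/2-H)}\sigma^{-1}F(0)s^{1/2-H} + R_s$ with $\lVert R\rVert_{\infty} \lesssim 1 + t_0^{\eta}(\lVert l\rVert_{H-1/2+\eta} + \lVert W\rVert_{\eta}) + t_0^{1/2-H}|z| + t_0^{1/2}\Upsilon$, I split
\[
\int_0^{t_0}|\mathcal{L}_s|^{2}\,\mathrm{d}s \leq 2\,\tfrac{|\sigma^{-1}F(0)|^2 c_H^2}{2-2H}\,t_0^{2-2H} + 2\,t_0\,\lVert R\rVert_{\infty}^{2},
\]
and, analogously, decompose the cross term as $c\int_0^{t_0}s^{1/2-H}\langle \sigma^{-1}F(0),K_s\rangle\,\mathrm{d}s + \int_0^{t_0}\langle R_s, K_s\rangle\,\mathrm{d}s$. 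The first piece is estimated by the third bound in~\eqref{sudisl5s}, which gives $C(t_0^{1-2H}|z| + t_0^{1-H}\Upsilon)$; Young's inequality $2C t_0^{1-2H}|z| \leq \tfrac12 t_0^{1-2H}|z|^2 + 2C^2 t_0^{1-2H}$ generates precisely the $\exp(K^3_{H,\theta,\kappa}t_0^{1-2H})$ and $\exp(K^1_{H,\theta,\kappa}t_0^{1-2H}|z|^2)$ factors appearing in~\eqref{UN101}. The second piece is treated exactly as in case~(1), using the second bound in~\eqref{sudisl5s}.

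Finally, taking expectations and applying Lemma~\ref{FerniqueAA} (Fernique) to the Gaussian random variables $\lVert W\rVert_{\gamma;[0,t_0]}$ and $\Upsilon(t_0,W)$ shows that for $t_0$ sufficiently small the relevant exponential moments are finite, thereby producing $\Gamma_1,\Gamma_2$ and simultaneously verifying the hypothesis~\eqref{AS3987as} of Lemma~\ref{YAHSSSSQQ}. The main technical obstacle I anticipate is the case $H>1/2$: one has to perform the decomposition around the singular profile carefully so that the worst $t_0$-exponent ends up precisely as $t_0^{1-2H}$ in front of $1$ and $|z|^{2}$, and no worse elsewhere, so that the Gaussian factor $\exp(-\|\sigma\|^{2}|\sigma^{-1}(y-l(t_0))|^{2}/(\rho_H t_0^{2H}))$ in~\eqref{UAJSMs} is not destroyed; the rest is a matter of book-keeping the powers of $t_0$ and the constants $K^{i}_{H,\theta,\kappa}$ through $\lVert \sigma\rVert\,\lVert \sigma^{-1}\rVert \leq \theta$ and $\lVert \sigma\rVert\geq \kappa$.
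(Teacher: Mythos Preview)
Your proposal is correct and follows essentially the same route as the paper: apply Lemma~\ref{YAHSSSSQQ}, bound the quadratic-variation factor and the cross term separately using Corollary~\ref{YHAS96assd} together with the three estimates in~\eqref{sudisl5s}, and for $H>\tfrac12$ split $\mathcal{L}^{\sigma,l,z,t_0}_s$ into the explicit singular profile $c\,\sigma^{-1}F(0)s^{1/2-H}$ plus a bounded remainder so that the third estimate in~\eqref{sudisl5s} produces exactly the $t_0^{1-2H}|z|$ term, which Young's inequality converts into the $\exp(K^{3}_{H,\theta,\kappa}t_0^{1-2H})$ and $\exp(K^{1}_{H,\theta,\kappa}t_0^{1-2H}|z|^{2})$ factors in~\eqref{UN101}. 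The paper's proof is organized identically (see \eqref{F14}--\eqref{F15} and the subsequent Young-inequality step), and your identification of the critical exponent $t_0^{1-2H}$ in front of $|z|^{2}$ as the one that must not be degraded is exactly the point.
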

\proof
First, observe that by Lemma~\ref{FerniqueAA} and Corollary~\ref{YHAS96assd}, it is possible to choose \( t_0 \) sufficiently small so that condition~\eqref{AS3987as} is satisfied. Therefore for \( 0 < H < \frac{1}{2} \), the claim follows directly from Remark \ref{RAEDASAs}, Lemma~\ref{IOO}, and Corollary~\ref{YHAS96assd}.
Now consider the case \( \frac{1}{2} < H < 1 \), which requires a more careful analysis. Thanks to Lemma~\ref{IOO} and Corollary~\ref{YHAS96assd}, we have
\begin{align}\label{F14}
\begin{split}
&\int_{0}^{t_0} \left| \mathcal{L}^{\sigma,l,z,t_0}_s \right|^2\mathrm{d}s\lesssim t_{0}^{2-2H}+t_0\sup_{s\in [0,t_0]}\left\vert \mathcal{L}^{\sigma,l,z,t_0}_s-\frac{\frac{3}{2}-H}{\rho_H\Gamma(\frac{5}{2}-H)} \sigma^{-1} F(0) s^{\frac{1}{2}-H}\right\vert^2\\
&\lesssim 1+{t_0}^{1+2\eta}\left(\left\Vert l\right\Vert_{H-\frac{1}{2}+\eta;[0,{t_0}]}+\Vert W\Vert_{\eta;[0,{t_0}]}\right)^2+t_{0}\left(t_{0}^{\frac{1}{2}-H}\vert z\vert+t_{0}^{\frac{1}{2}}\Upsilon(t_0,W)\right)^2.
\end{split}
\end{align}
Similarly, we obtain
\begin{align}\label{F15}
\begin{split}
&\int_{0}^{t_0} \left|\left\langle \mathcal{L}^{\sigma,l,z,t_0}_s, K_{s}(z,t_0,W) \right\rangle\right| \, \mathrm{d}s
\lesssim \int_{0}^{t_0} s^{\frac{1}{2}-H} \left| K_s(z,t_0,W) \right| \, \mathrm{d}s \\&\quad+ \sup_{s \in [0,t_0]} \left| \mathcal{L}^{\sigma,l,z,t_0}_s - \frac{\frac{3}{2}-H}{\rho_H \Gamma\left(\frac{5}{2}-H\right)} \sigma^{-1} F(0) s^{\frac{1}{2}-H} \right| 
\int_{0}^{t_0} \left| K_s(z,t_0,W) \right| \, \mathrm{d}s\\&\myquad[2]\lesssim t_{0}^{1-2H}\vert z\vert+t_{0}^{1-H} \Upsilon(t_0,W)\\&\myquad[3]+ \left(1+t_0^{\eta}\left(\left\Vert l\right\Vert_{H-\frac{1}{2}+\eta;[0,{t_0}]}+\Vert X^{{z,t_0}}\Vert_{\eta;[0,{t_0}]}\right)\right) \left(t_{0}^{\frac{1}{2}-H}\vert z\vert+t_{0}^{\frac{1}{2}} \Upsilon(t_0,W)\right).
\end{split}
\end{align}
We now estimate the terms on the right-hand side of~\eqref{F15}. 
To do so, we use Young's inequality which implies that
\begin{align*}
t_0^{1 - 2H} |z| &\leq \frac{t_0^{1 - 2H} |z|^2 + t_0^{1 - 2H}}{2},
\end{align*}
and
\begin{align*}
&\left(1 + t_0^{\eta} \left( \left\Vert l \right\Vert_{H - \frac{1}{2} + \eta;\, [0, t_0]} + \left\Vert X^{z, t_0} \right\Vert_{\eta;\, [0, t_0]} \right) \right) t_0^{\frac{1}{2} - H} |z| \\
&\quad \leq \frac{ \left(1 + t_0^{\eta} \left( \left\Vert l \right\Vert_{H - \frac{1}{2} + \eta;\, [0, t_0]} + \left\Vert X^{z, t_0} \right\Vert_{\eta;\, [0, t_0]} \right) \right)^2 + t_0^{1 - 2H} |z|^2 }{2}.
\end{align*}
Similarly, the remaining terms can be estimated in the same way. Combining these estimates together with~\eqref{F14} and Lemma~\ref{YAHSSSSQQ}, yields~\eqref{UN101}. Finally, note that choosing \( t_0 \) sufficiently small, Lemma~\ref{FerniqueAA} and Lemma~\ref{IOO} ensure that \( \Gamma_{1}(t_0, \theta) \) and \( \Gamma_{2}(t_0, \theta) \) are finite.
\qed
\begin{remark}\label{Fernique}
Note that the terms
\begin{align*}
\exp(K^{3}_{H,\theta,\kappa} t_{0}^{1-2H}) \quad \text{and} \quad \exp\left(K^{1}_{H,\theta,\kappa} \, t_{0}^{1-2H} |z|^2\right)
\end{align*}
which appear in~\eqref{UN101} become large as \( t_0 \to 0 \) when \( H > \frac{1}{2} \). 
The first term is a constant depending only on \( t_0 \) and not on $z$. The second term requires a more careful analysis due to its quadratic dependence on \( |z| \). Fortunately, this term can be effectively controlled in the final estimate of the stationary density given in \eqref{UAJSMs}.
\end{remark}
Combining the previous results, we obtain the following representation of the density $\tilde{p}^\sigma_\infty$.  
\begin{lemma}\label{SIMLIF}
Consider the setting of Proposition~\ref{REPREDDA}. Then, the density  \( \tilde{p}^{\sigma}_{\infty}(y) \) is given by
\begin{align}\label{NM12a}
\tilde{p}^{\sigma}_{\infty}(y) =  \mathcal{M}(t_0, H, \sigma) \int_{\mathbb{R}^d \times \mathcal{B}} \exp\left( -\frac{ \left| Z^{x,y,\sigma,\omega^-} \right|^2 }{ \rho_H^2 t_0^{2H} } \right) G_{t_0}^{\sigma}\left(l^{x, \sigma, \omega^-}; y\right) \, \tilde{\mu}^{\sigma}\left( \mathrm{d}(x, \omega^-) \right),
\end{align}
where
\begin{align*}
\mathcal{M}(t_0, H, \sigma) := \frac{1}{\left( \sqrt{2\pi} \, \rho_H \, t_0^H \right)^d \, \det\left( \|\sigma\|^{-1} \sigma \right)},
\end{align*}
and
\begin{align*}
\begin{split}
Z^{x,y,\sigma,\omega^-} &:= \|\sigma\| \, \sigma^{-1} \left( y - l^{x, \sigma, \omega^-}(t_0) \right) \\
&= \|\sigma\| \, \sigma^{-1} (y - x) - \mathcal{P}(\omega^-)(t_0).
\end{split}
\end{align*}
Moreover, there exists a constant \( \mathcal{R}(t_0,\kappa,H) > 0 \) such that
\begin{align}\label{NM12a2}
\sup_{\sigma \in T_{\theta,\kappa}} \mathcal{M}(t_0, H, \sigma) \leq \mathcal{R}(t_0, H, \theta).
\end{align}
\end{lemma}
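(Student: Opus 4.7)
The plan is to combine Proposition~\ref{REPREDDA} with the explicit density formula from Proposition~\ref{law1} and then control the prefactor via the singular value structure encoded in Definition~\ref{theth}.

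First, I would apply Proposition~\ref{REPREDDA}, which represents $\tilde{p}^{\sigma}_\infty(y)$ as the $\tilde{\mu}^\sigma$-integral of $\tilde{p}_{t_0}(l^{x,\sigma,\omega^-};y)$. Next, I substitute the explicit formula~\eqref{UAJSMs} from Proposition~\ref{law1} with $l=l^{x,\sigma,\omega^-}$. Under this substitution, the variable $z=\|\sigma\|\sigma^{-1}(y-l(t_0))$ appearing in~\eqref{UAJSMs} becomes
\[
z = \|\sigma\|\sigma^{-1}\bigl(y - x - \|\sigma\|^{-1}\sigma\,\mathcal{P}(\omega^-)(t_0)\bigr) = \|\sigma\|\sigma^{-1}(y-x) - \mathcal{P}(\omega^-)(t_0) = Z^{x,y,\sigma,\omega^-},
\]
where in the second equality the cancellation $\|\sigma\|\sigma^{-1}\cdot\|\sigma\|^{-1}\sigma = \mathrm{Id}$ is used. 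Since the prefactor $\mathcal{M}(t_0,H,\sigma)$ depends only on $\sigma$, $t_0$ and $H$ and not on $(x,\omega^-)$, it factors out of the integral, producing precisely the representation~\eqref{NM12a}.

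For the uniform bound~\eqref{NM12a2}, I invoke the singular value decomposition from Definition~\ref{theth}, which gives $\sigma=U\Sigma V^T$ with $0<\beta_1\leq\dots\leq\beta_d$ and $\beta_d/\beta_1\leq\theta$. Consequently $\|\sigma\|^{-1}\sigma$ has singular values $\beta_i/\beta_d\in[\theta^{-1},1]$, which yields
\[
\bigl|\det(\|\sigma\|^{-1}\sigma)\bigr| = \prod_{i=1}^{d}\frac{\beta_i}{\beta_d}\geq \theta^{-d}.
\]
Plugging this into the definition of $\mathcal{M}(t_0,H,\sigma)$ gives
\[
\mathcal{M}(t_0,H,\sigma)\leq \frac{\theta^d}{\bigl(\sqrt{2\pi}\,\rho_H\,t_0^H\bigr)^d} =: \mathcal{R}(t_0,H,\theta),
\]
which is independent of $\sigma\in T_{\theta,\kappa}$, establishing~\eqref{NM12a2}.

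There is no substantive obstacle: the result is essentially a rewriting of Propositions~\ref{law1} and~\ref{REPREDDA} combined with an elementary determinant estimate. The only conceptual point is recognising that the set $T_{\theta,\kappa}$ was precisely designed to enforce a uniform lower bound on $|\det(\|\sigma\|^{-1}\sigma)|$, which in turn gives a $\sigma$-uniform upper bound on the Gaussian normalising constant; the parameter $\kappa$ will only enter in subsequent estimates involving $\|\sigma\|^{-1}$ rather than in this step.
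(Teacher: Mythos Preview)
Your proposal is correct and follows essentially the same approach as the paper: combine the pointwise density formula~\eqref{UAJSMs} from Proposition~\ref{law1} with the averaging identity~\eqref{sdse52s} from Proposition~\ref{REPREDDA}, identify $z$ with $Z^{x,y,\sigma,\omega^-}$, and then use the singular value structure of $T_{\theta,\kappa}$ to bound the prefactor. You actually give more detail than the paper, which simply asserts that \eqref{NM12a2} follows from Definition~\ref{theth}; your explicit determinant computation $|\det(\|\sigma\|^{-1}\sigma)|=\prod_i \beta_i/\beta_d\geq\theta^{-d}$ is exactly the intended argument.
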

\proof
It follows from Proposition~\ref{law1} that
\begin{align}\label{OL6as}
\tilde{p}^{\sigma}_{t_0}(l^{x,\sigma,\omega^-}; y) 
= \mathcal{M}(t_0, H, \sigma) \exp\left( -\frac{ \left| Z^{x,y,\sigma,\omega^-} \right|^2 }{ \rho_H^2 t_0^{2H} } \right) G_{t_0}^{\sigma}(l^{x, \sigma, \omega^-}; y).
\end{align}
Using~\eqref{sdse52s} entails~\eqref{NM12a}. Additionally, based on the definition of the set \( T_{\theta,\kappa} \), one can easily derive~\eqref{NM12a2}.
\qed
The following elementary lemma is needed in the proof of Proposition \ref{INFORM}.
\begin{lemma}\label{uja63sfdaa}
Let $\xi\in \R^d$ and \( |\xi| \leq M \). Then, for every \( \epsilon > 0 \), there exists a constant \( \epsilon^M > 0 \) such that for every \( \zeta \in \mathbb{R}^d \),
\begin{align*}
-|\xi + \zeta|^2 \leq \epsilon|\zeta|^2 - \epsilon^M |\xi|^2.
\end{align*}
\proof
The proof is straightforward, so we only outline the main idea. We proceed by contradiction. Assume that for some fixed \( \epsilon > 0 \), and for every \( n \in \mathbb{N} \), there exists a pair \( (\xi^n, \zeta^n) \) such that \( |\xi^n| \leq M \) and 
\begin{align*}
-|\xi^n + \zeta^n|^2 > \epsilon |\zeta^n|^2 - \frac{1}{n} |\xi^n|^2.
\end{align*}
Then, thanks to the uniform boundedness of \( (\xi^{n})_{n \geq 1} \), we can conclude that the sequence is precompact and obtain a contradiction.
\qed
\end{lemma}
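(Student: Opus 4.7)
The plan is to treat this as a direct consequence of Young's inequality applied to the cross term in the expansion of $|\xi+\zeta|^2$. First, I would write
\begin{align*}
-|\xi+\zeta|^2 = -|\xi|^2 - 2\langle \xi, \zeta\rangle - |\zeta|^2,
\end{align*}
and bound the middle term using Young's inequality with a free parameter $\lambda>0$,
\begin{align*}
-2\langle \xi,\zeta\rangle \leq 2|\xi|\,|\zeta| \leq \lambda|\xi|^2 + \lambda^{-1}|\zeta|^2.
\end{align*}
Substituting this into the identity yields
\begin{align*}
-|\xi+\zeta|^2 \leq (\lambda - 1)|\xi|^2 + (\lambda^{-1} - 1)|\zeta|^2.
\end{align*}

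Next, I would match the coefficient of $|\zeta|^2$ to the prescribed $\epsilon$ by choosing $\lambda := (1+\epsilon)^{-1}\in (0,1)$, which gives $\lambda^{-1}-1 = \epsilon$ and $\lambda - 1 = -\epsilon/(1+\epsilon) < 0$. Setting
\begin{align*}
\epsilon^M := \frac{\epsilon}{1+\epsilon} > 0
\end{align*}
yields exactly the desired inequality. Notably, this construction works uniformly in $\xi\in\mathbb{R}^d$, so the bound $|\xi|\leq M$ is not actually required; the superscript $M$ in the notation $\epsilon^M$ is cosmetic.

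For completeness, I would also sketch the paper's alternative compactness-based route: assume the conclusion fails for $\epsilon^M = 1/n$, obtaining sequences $(\xi^n,\zeta^n)$ with $|\xi^n|\leq M$ and the reversed strict inequality. Since $(\xi^n)$ is bounded, extract $\xi^n\to\xi^\star$; then distinguish the subcase where $(|\zeta^n|)$ remains bounded (pass to the limit to get $-|\xi^\star+\zeta^\star|^2\geq \epsilon|\zeta^\star|^2$, forcing $\zeta^\star=0=\xi^\star$, contradicting the strictness for large $n$) from the subcase $|\zeta^n|\to\infty$ (dividing the reversed inequality by $|\zeta^n|^2$ produces $-|\zeta^n|^{-2}|\xi^n+\zeta^n|^2 > \epsilon - o(1)$, contradicting non-negativity of $|\xi^n+\zeta^n|^2$).

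There is essentially no technical obstacle: the statement is an elementary quadratic inequality, and the only mildly subtle point is realising that Young's inequality delivers a constant $\epsilon^M$ that is in fact independent of $M$.
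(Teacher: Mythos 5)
Your primary argument is correct and complete, but it takes a genuinely different route from the paper. The paper argues by contradiction: it assumes the inequality fails with $\epsilon^M=\tfrac1n$ for some sequence $(\xi^n,\zeta^n)$ with $|\xi^n|\leq M$ and invokes precompactness of the bounded sequence $(\xi^n)$ to reach a contradiction; this is a soft, non-constructive argument that genuinely uses the bound $|\xi|\leq M$ and produces no explicit constant. Your route instead expands $-|\xi+\zeta|^2$ and absorbs the cross term via Young's inequality with parameter $\lambda=(1+\epsilon)^{-1}$, which yields the explicit constant $\epsilon^M=\epsilon/(1+\epsilon)$ and, as you correctly observe, holds uniformly for all $\xi\in\R^d$, so the hypothesis $|\xi|\leq M$ is superfluous. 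Your version is therefore both more elementary and strictly stronger than the statement as used in the paper (in Proposition~\ref{INFORM} the quantitative, $M$-independent constant would even be an improvement, since there the role of $M$ is played by $\theta R$). The only soft spot is in your optional sketch of the paper's compactness route: in the subcase where $(\zeta^n)$ stays bounded, passing to the limit only gives the non-strict inequality $-|\xi^\star+\zeta^\star|^2\geq\epsilon|\zeta^\star|^2$, and concluding $\xi^\star=\zeta^\star=0$ does not by itself contradict the strict inequality at finite $n$; one needs an additional quantitative step (e.g. $|\xi^n|^2\leq 2|\xi^n+\zeta^n|^2+2|\zeta^n|^2<\tfrac{C_\epsilon}{n}|\xi^n|^2$, forcing $\xi^n=0$ for large $n$ and hence $0>\epsilon|\zeta^n|^2$). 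Since that sketch is not part of your actual proof, this does not affect its validity.
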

\section{Fractional calculus}\label{Fractional}
In this section, we collect technical results related to fractional calculus which are used in Appendix \ref{AAZAZA}.
\begin{definition}
Let \( 0 < \alpha < 1 \). The \emph{right-sided Riemann-Liouville fractional integral} is defined by
\begin{align}\label{fractional integral}
\begin{split}
\mathcal{J}^{\alpha}_{+} : L^{1}([0,T], \mathbb{R}^d) &\longrightarrow L^{1}([0,T], \mathbb{R}^d), \\
(\mathcal{J}^{\alpha}_{+} f)(t) &:= \frac{1}{\Gamma(\alpha)} \int_{0}^{t} f(s) (t-s)^{\alpha - 1} \, \mathrm{d}s.
\end{split}
\end{align}
Moreover, it holds that
\begin{align}\label{YAHSN85asa}
\|\mathcal{J}^{\alpha}_{+} f \|_{L^\infty([0,T],\mathbb{R}^d)} \leq \frac{T^{\alpha}}{\Gamma(\alpha)} \| f \|_{L^\infty([0,T],\mathbb{R}^d)}.
\end{align}
\end{definition}
It is well known that \( \mathcal{J}^\alpha_+ \) increases the H\"older regularity of a function. More precisely, we have the following statement.
\begin{lemma}\label{AS85sdf}
Let \( \beta \in [0, 1] \) and \( \alpha \in (0, 1) \) such that \( \alpha + \beta \neq 1 \). Then, the operator
\begin{align*}
\tilde{\mathcal{J}}^{\alpha}_{+} : C^{0,\beta}([0,T],\mathbb{R}^d) 
&\longrightarrow C^{\lfloor \alpha+\beta \rfloor,\, \alpha+\beta - \lfloor \alpha+\beta \rfloor}([0,T],\mathbb{R}^d), \\
(\tilde{\mathcal{J}}^{\alpha}_{+}f)(t) 
&:= (\mathcal{J}^{\alpha}_{+} f)(t) - \frac{f(0)}{\Gamma(1+\alpha)} t^\alpha,
\end{align*}
is continuous. In particular, there exists a constant \( C_{\alpha, \beta} > 0 \) such that for \( t \in [0, T] \) and every \( f \in C^{0,\beta}([0,T],\mathbb{R}^d) \), we have  
\[
\big| (\tilde{\mathcal{J}}^{\alpha}_{+}f)(t) \big| \leq C_{\alpha, \beta} \, t^{\alpha+\beta} \| f \|_{C^{0,\beta}([0,T])}.
\]
\end{lemma}
\proof
We refer to \cite[Theorem 3.1 and Corollary~1 in Chapter~1]{SKM93}.
\qed
This entails the following corollary.
\begin{corollary}\label{IL9as}
Let \( \alpha, \beta \in (0, 1) \) and \( \alpha + \beta < 1 \). Then there exists a constant \( C_{\alpha, \beta} > 0 \) such that for all \( f \in C^{\beta}([0,T],\mathbb{R}^d) \) with \( f(0) = 0 \), we have 
\begin{align*}
\| \mathcal{J}^{\alpha}_{+} f \|_{\alpha + \beta;[0,T]} 
\leq C_{\alpha, \beta} \| f \|_{\beta;[0,T]}.
\end{align*}
\end{corollary}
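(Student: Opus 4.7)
The plan is to reduce Corollary~\ref{IL9as} to Theorem~\ref{AS85sdf} in essentially one move, using the assumption $f(0)=0$ twice: once to identify $\tilde{\mathcal{J}}^\alpha_+f$ with $\mathcal{J}^\alpha_+f$, and once to dominate $\|f\|_{\infty;[0,T]}$ by the $\beta$-H\"older seminorm of $f$.

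First I would observe that since $f(0)=0$ the correction term in the definition of $\tilde{\mathcal{J}}^\alpha_+$ disappears, so that
\[
\tilde{\mathcal{J}}^\alpha_+f(t) \;=\; \mathcal{J}^\alpha_+f(t) \;-\; \frac{f(0)}{\Gamma(1+\alpha)}\,t^{\alpha} \;=\; \mathcal{J}^\alpha_+f(t)
\]
for every $t\in[0,T]$. Hence on the subspace of $\beta$-H\"older functions vanishing at the origin the operators $\mathcal{J}^\alpha_+$ and $\tilde{\mathcal{J}}^\alpha_+$ agree, and any bound obtained for the latter transfers verbatim to the former.

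Second, since $\alpha+\beta<1$ we have $\lfloor \alpha+\beta\rfloor = 0$, so Theorem~\ref{AS85sdf} gives the continuity of $\tilde{\mathcal{J}}^\alpha_+: C^{0,\beta}([0,T],\R^d)\to C^{0,\alpha+\beta}([0,T],\R^d)$. Combined with the previous step this produces a constant $\tilde{C}_{\alpha,\beta}>0$ with
\[
\|\mathcal{J}^\alpha_+f\|_{C^{0,\alpha+\beta};[0,T]} \;\leq\; \tilde{C}_{\alpha,\beta}\,\|f\|_{C^{0,\beta};[0,T]} \;=\; \tilde{C}_{\alpha,\beta}\,\bigl(\|f\|_{\infty;[0,T]}+\|f\|_{\beta;[0,T]}\bigr).
\]
Since the H\"older seminorm $\|\cdot\|_{\alpha+\beta;[0,T]}$ is controlled by the full norm $\|\cdot\|_{C^{0,\alpha+\beta};[0,T]}$, this immediately yields the desired left-hand side, and it only remains to absorb $\|f\|_{\infty;[0,T]}$ into $\|f\|_{\beta;[0,T]}$.

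Finally I would invoke $f(0)=0$ once more: for every $s\in[0,T]$ we have $|f(s)|=|f(s)-f(0)|\leq s^{\beta}\|f\|_{\beta;[0,T]}\leq T^{\beta}\|f\|_{\beta;[0,T]}$, whence $\|f\|_{\infty;[0,T]}\leq T^{\beta}\|f\|_{\beta;[0,T]}$. Putting this into the previous display gives
\[
\|\mathcal{J}^\alpha_+f\|_{\alpha+\beta;[0,T]} \;\leq\; \tilde{C}_{\alpha,\beta}\,(1+T^{\beta})\,\|f\|_{\beta;[0,T]},
\]
which is the claim with $C_{\alpha,\beta}:=\tilde{C}_{\alpha,\beta}(1+T^{\beta})$; since $T$ is fixed throughout the section this is consistent with the notation of the corollary. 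There is no real obstacle in the argument: it is a direct unpacking of Theorem~\ref{AS85sdf}. The only subtlety worth flagging is that the resulting constant absorbs a factor $T^\beta$; if a genuinely $T$-independent bound were needed one would have to estimate $\mathcal{J}^\alpha_+f(t)-\mathcal{J}^\alpha_+f(s)$ by splitting the defining integral into a near-diagonal part $[s-h,s]$ (handled by H\"older continuity of $f$ and the concavity inequality $s^{\alpha}-(s-h)^{\alpha}\leq h^{\alpha}$) and a far-diagonal part $[0,s-h]$ combined with $\int_s^t f(u)(t-u)^{\alpha-1}du$, but this refinement is unnecessary for the uses of the corollary in Section~\ref{sec:LP}.
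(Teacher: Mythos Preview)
Your proof is correct and follows exactly the intended route: the paper presents Corollary~\ref{IL9as} as an immediate consequence of Theorem~\ref{AS85sdf} without giving a separate proof, and your argument is precisely the natural unpacking of that deduction---identify $\tilde{\mathcal{J}}^\alpha_+f$ with $\mathcal{J}^\alpha_+f$ via $f(0)=0$, apply the continuity bound from Theorem~\ref{AS85sdf}, and then absorb $\|f\|_{\infty;[0,T]}$ into $\|f\|_{\beta;[0,T]}$ using $f(0)=0$ again.
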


The right-sided Riemann-Liouville fractional derivative can be defined as follows

\begin{definition}
Let \( \alpha \in (0,1) \) and define the domain
\[
D(\mathcal{J}^{-\alpha}_{+}) := \big\{ f \in L^1([0,T], \mathbb{R}^d) : \mathcal{J}_{+}^{1-\alpha} f \in AC([0,T], \mathbb{R}^d) \big\},
\]
where \( AC([0,T], \mathbb{R}^d) \) denotes the space of absolutely continuous functions on \([0,T]\).
For any \( f \in D(\mathcal{J}^{-\alpha}_{+}) \), the \emph{right-sided Riemann-Liouville fractional derivative} of order \( \alpha \) is defined for almost every \( t \in [0,T] \) by
\[
(\mathcal{J}^{-\alpha}_{+} f)(t) := \frac{\mathrm{d}}{\mathrm{d}t} \mathcal{J}_{+}^{1-\alpha} f(t) = \frac{1}{\Gamma(1-\alpha)} \frac{\mathrm{d}}{\mathrm{d}t} \int_0^t f(s) (t-s)^{-\alpha} \, \mathrm{d}s.
\]
\end{definition}
From Theorem \ref{AS85sdf} we derive the following result.
\begin{lemma}\label{ASUj144as0}
For every \( \beta \in (\alpha, 1] \), we have the inclusion
\[
C^{0,\beta}([0,T], \mathbb{R}^d) \subset D(\mathcal{J}^{-\alpha}_{+}).
\]
Moreover, there exists a \((\beta - \alpha)\)-H\"older continuous function \(\tilde{\mathcal{J}}^{-\alpha}_{+} f\) and a constant \({C}_{\alpha,\beta} > 0\) such that for every \( t \in [0,T] \),
\begin{align}\label{ASUj144as}
\begin{split}
&(\mathcal{J}^{-\alpha}_{+} f)(t) 
= \tilde{\mathcal{J}}^{-\alpha}_{+} f(t) 
+ \frac{f(0)(1-\alpha)}{\Gamma(2-\alpha)} t^{-\alpha}, \\
&\left\| \tilde{\mathcal{J}}^{-\alpha}_{+} f \right\|_{C^{0,\beta-\alpha}([0,T])} 
\leq {C}_{\alpha,\beta} \| f \|_{C^{0,\beta}([0,T])}.
\end{split}
\end{align}
Furthermore, the following estimate holds:
\begin{align}\label{ASUj144as3}
\left\| \tilde{\mathcal{J}}^{-\alpha}_{+} f \right\|_{\infty;[0,T]} \leq  \tilde{C}_{\alpha,\beta} T^{\beta-\alpha} \| f \|_{C^{0,\beta}([0,T])},
\end{align}
for some constant \( \tilde{C}_{\alpha,\beta} > 0 \).
\end{lemma}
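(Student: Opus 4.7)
}

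The strategy is to decompose $f$ into its value at zero plus a remainder that vanishes at zero, so that the singular behaviour of $\mathcal{J}^{-\alpha}_{+}$ at the origin is isolated explicitly. Writing $f = g + f(0)$ with $g(t) := f(t) - f(0)$, I first observe that $\mathcal{J}^{1-\alpha}_{+}$ applied to the constant $f(0)$ equals $\frac{f(0)}{\Gamma(2-\alpha)} t^{1-\alpha}$, whose derivative is exactly the singular term $\frac{f(0)(1-\alpha)}{\Gamma(2-\alpha)} t^{-\alpha}$ appearing in~\eqref{ASUj144as}. It therefore suffices to analyse $g$, which satisfies $g(0)=0$ and $\|g\|_{C^{0,\beta}([0,T])} \lesssim \|f\|_{C^{0,\beta}([0,T])}$.

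Next, I apply Theorem~\ref{AS85sdf} to $g$ with the parameter pair $(1-\alpha,\beta)$. The condition $(1-\alpha)+\beta\neq 1$ reduces to $\beta\neq\alpha$, which is part of our assumption $\beta\in(\alpha,1]$. Since $g(0)=0$, the correction term in the definition of $\tilde{\mathcal{J}}^{1-\alpha}_{+}$ is zero, so $\tilde{\mathcal{J}}^{1-\alpha}_{+} g = \mathcal{J}^{1-\alpha}_{+} g$. Because $1-\alpha+\beta\in(1,2]$, the floor is $1$ and the fractional part is $\beta-\alpha$; the theorem thus yields $\mathcal{J}^{1-\alpha}_{+} g \in C^{1,\beta-\alpha}([0,T],\mathbb{R}^d)$ together with the estimate $\|(\mathcal{J}^{1-\alpha}_{+} g)'\|_{C^{0,\beta-\alpha}([0,T])} \lesssim \|g\|_{C^{0,\beta}([0,T])}$. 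This is precisely what we need: $\mathcal{J}^{1-\alpha}_{+} g$ is absolutely continuous (being $C^{1}$), so $g\in D(\mathcal{J}^{-\alpha}_{+})$ and hence $f\in D(\mathcal{J}^{-\alpha}_{+})$, and setting $\tilde{\mathcal{J}}^{-\alpha}_{+} f := \mathcal{J}^{-\alpha}_{+} g$ gives both the decomposition~\eqref{ASUj144as} and the $C^{0,\beta-\alpha}$-bound with constant $C_{\alpha,\beta}$.

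For the sharper supremum bound~\eqref{ASUj144as3}, the key observation is that $\tilde{\mathcal{J}}^{-\alpha}_{+} f$ vanishes at $t=0$. This can be seen from the Marchaud-type representation
\[
\mathcal{J}^{-\alpha}_{+} g(t) \;=\; \frac{1}{\Gamma(1-\alpha)}\!\left(\frac{g(t)}{t^{\alpha}} + \alpha\int_{0}^{t}\frac{g(t)-g(s)}{(t-s)^{\alpha+1}}\,\mathrm{d}s\right),
\]
valid for $g\in C^{0,\beta}$ with $\beta>\alpha$, where the estimates $|g(t)|\leq\|f\|_{C^{0,\beta}}t^{\beta}$ and $|g(t)-g(s)|\leq\|f\|_{C^{0,\beta}}|t-s|^{\beta}$ let both terms be controlled by $t^{\beta-\alpha}$ and converge to zero as $t\downarrow 0$. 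Combining $\tilde{\mathcal{J}}^{-\alpha}_{+} f(0)=0$ with the Hölder estimate just obtained gives $\|\tilde{\mathcal{J}}^{-\alpha}_{+} f\|_{\infty;[0,T]}\leq [\tilde{\mathcal{J}}^{-\alpha}_{+} f]_{\beta-\alpha;[0,T]}\,T^{\beta-\alpha} \lesssim T^{\beta-\alpha}\|f\|_{C^{0,\beta}([0,T])}$, which yields~\eqref{ASUj144as3}.

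The main technical point is the initial decomposition $f=g+f(0)$: once the constant contribution is pulled out, everything reduces to an application of Theorem~\ref{AS85sdf} with shifted parameters. The only subtlety is verifying that the endpoint cases are admissible, which follows from $\beta\in(\alpha,1]$; in particular, the excluded exponent $\beta=\alpha$ is exactly the one that would make $1-\alpha+\beta=1$, reflecting the well-known loss of Hölder regularity of $\mathcal{J}^{\alpha}_{+}$ in the critical case.
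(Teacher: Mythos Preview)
Your proof is correct and follows essentially the same route as the paper: apply Theorem~\ref{AS85sdf} with exponent $1-\alpha$ to obtain $C^{1,\beta-\alpha}$ regularity of $\mathcal{J}^{1-\alpha}_{+}f$, isolate the contribution of $f(0)$ (the paper does this implicitly via $\tilde{\mathcal{J}}^{1-\alpha}_{+}$, you do it explicitly by writing $f=g+f(0)$), and differentiate. The only minor difference is in establishing $\tilde{\mathcal{J}}^{-\alpha}_{+}f(0)=0$: the paper deduces it from the pointwise bound $|\tilde{\mathcal{J}}^{1-\alpha}_{+}f(t)|\lesssim t^{1-\alpha+\beta}$ already provided by Theorem~\ref{AS85sdf}, whereas you invoke the Marchaud representation---both are fine, but the paper's argument is slightly more economical since it stays within Theorem~\ref{AS85sdf}.
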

\proof
Since \( \beta + 1 - \alpha > 1 \), Theorem~\ref{AS85sdf} implies that
\begin{align*}
\mathcal{J}^{1-\alpha}_{+} f \in C^{1,\,\beta-\alpha}([0,T], \mathbb{R}^d),
\end{align*}
and consequently, \( f \in D(\mathcal{J}^{-\alpha}_{+}) \). Again, from Theorem~\ref{AS85sdf}, we have the decomposition
\begin{align}\label{ILasse}
(\mathcal{J}^{1-\alpha}_{+} f)(t) = \tilde{\mathcal{J}}^{1-\alpha}_{+} f(t) + \frac{f(0)}{\Gamma(2-\alpha)} t^{1-\alpha}, 
\end{align}
where the function \(\tilde{\mathcal{J}}^{1-\alpha}_{+} f\) is such that its derivative
\[
\tilde{\mathcal{J}}^{-\alpha}_{+} f(t) := \frac{\mathrm{d}}{\mathrm{d}t} \tilde{\mathcal{J}}^{1-\alpha}_{+} f(t)
\]
is H\"older continuous of order \( \beta - \alpha \), and
\begin{align*}
|(\tilde{\mathcal{J}}^{1-\alpha}_{+}f) (t)| \leq C_{1-\alpha, \beta} \, t^{1-\alpha + \beta}\| f \|_{C^{0,\beta}([0,T])}.
\end{align*}
This implies that \( \frac{\mathrm{d}}{\mathrm{d}t} \big( \tilde{\mathcal{J}}^{1-\alpha}_{+} f \big)(0) = 0 \).
Differentiating the decomposition in~\eqref{ILasse} yields
\[
(\mathcal{J}^{-\alpha}_{+} f)(t) = \tilde{\mathcal{J}}^{-\alpha}_{+} f(t) + \frac{f(0)(1-\alpha)}{\Gamma(2-\alpha)} t^{-\alpha}.
\]
The second estimate in~\eqref{ASUj144as} follows directly from the regularity result in Theorem~\ref{AS85sdf} applied to \( \mathcal{J}^{1-\alpha}_{+} \). Moreover, the bound in~\eqref{ASUj144as3} follows directly from~\eqref{ASUj144as}, using the fact that \( \frac{\mathrm{d}}{\mathrm{d}t} \big( \tilde{\mathcal{J}}^{1-\alpha}_{+} f \big)(0) = 0 \).
\qed
\bibliographystyle{alpha}
\bibliography{refs}

@book {Arn98,
    AUTHOR = {Arnold, Ludwig},
     TITLE = {Random dynamical systems},
    SERIES = {Springer Monographs in Mathematics},
 PUBLISHER = {Springer-Verlag},
   ADDRESS = {Berlin},
      YEAR = {1998},
     PAGES = {xvi+586},
      ISBN = {3-540-63758-3},
}

@preamble{
   "\def\cprime{$'$} "
}

@article {DPT19,
     AUTHOR = {Deya, Aur\'elien and Panloup, Fabien and Tindel, Samy},
     TITLE = {Rate of convergence to equilibrium of fractional driven
              stochastic differential equations with rough multiplicative
              noise},
   JOURNAL = {The Annals of Probability},
  FJOURNAL = {The Annals of Probability},
    VOLUME = {47},
      YEAR = {2019},
    NUMBER = {1},
     PAGES = {464--518},
      ISSN = {0091-1798,2168-894X},
   MRCLASS = {60G22 (37A25 60H10)},
  MRNUMBER = {3909974},
MRREVIEWER = {Mireia\ Besal\'u},
       DOI = {10.1214/18-AOP1265},
       URL = {https://doi.org/10.1214/18-AOP1265},
}

@article {FGS16b,
   AUTHOR = {Flandoli, Franco and Gess, Benjamin and Scheutzow, Michael},
     TITLE = {Synchronization by noise for order-preserving random dynamical
              systems},
   JOURNAL = {The Annals of Probability},
  FJOURNAL = {The Annals of Probability},
    VOLUME = {45},
      YEAR = {2017},
    NUMBER = {2},
     PAGES = {1325--1350},
      ISSN = {0091-1798,2168-894X},
   MRCLASS = {37B25 (37G35 37H15 60H10)},
  MRNUMBER = {3630300},
MRREVIEWER = {Jos\'e\ A.\ Langa},
       DOI = {10.1214/16-AOP1088},
       URL = {https://doi.org/10.1214/16-AOP1088},
}

@article {FGS16a,
 AUTHOR = {Flandoli, Franco and Gess, Benjamin and Scheutzow, Michael},
     TITLE = {Synchronization by noise},
   JOURNAL = {Probability Theory Related Fields},
  FJOURNAL = {Probability Theory and Related Fields},
    VOLUME = {168},
      YEAR = {2017},
    NUMBER = {3-4},
     PAGES = {511--556},
      ISSN = {0178-8051,1432-2064},
   MRCLASS = {37H15 (37B25 37G35 60H10)},
  MRNUMBER = {3663624},
MRREVIEWER = {Min\ Zhao},
       DOI = {10.1007/s00440-016-0716-2},
       URL = {https://doi.org/10.1007/s00440-016-0716-2},
}

@book {Fol99,
 AUTHOR = {Folland, Gerald B.},
     TITLE = {Real analysis},
    SERIES = {Pure and Applied Mathematics (New York)},
   EDITION = {Second},
      NOTE = {Modern techniques and their applications,
              A Wiley-Interscience Publication},
 PUBLISHER = {John Wiley \& Sons, Inc., New York},
      YEAR = {1999},
     PAGES = {xvi+386},
      ISBN = {0-471-31716-0},
   MRCLASS = {00A05 (26-01 28-01 46-01)},
  MRNUMBER = {1681462},
}

@book {FV10,
    AUTHOR = {Friz, Peter K. and Victoir, Nicolas B.},
     TITLE = {Multidimensional stochastic processes as rough paths},
    SERIES = {Cambridge Studies in Advanced Mathematics},
    VOLUME = {120},
      NOTE = {Theory and applications},
 PUBLISHER = {Cambridge University Press},
   ADDRESS = {Cambridge},
      YEAR = {2010},
     PAGES = {xiv+656},
      ISBN = {978-0-521-87607-0},
}

@article {GVR23A,
    AUTHOR = {Ghani Varzaneh, Mazyar and Riedel, Sebastian},
     TITLE = {Oseledets {S}plitting and {I}nvariant {M}anifolds on {F}ields
              of {B}anach {S}paces},
   JOURNAL = {Journal of Dynamics and Differential Equations},
  FJOURNAL = {Journal of Dynamics and Differential Equations},
    VOLUME = {35},
      YEAR = {2023},
    NUMBER = {1},
     PAGES = {103--133},
      ISSN = {1040-7294},
   MRCLASS = {37H15 (37B55 37L55)},
  MRNUMBER = {4549812},
       DOI = {10.1007/s10884-021-09969-1},
       URL = {https://doi.org/10.1007/s10884-021-09969-1},
}

@article{GVR25,
 AUTHOR = {Ghani Varzaneh, Mazyar and Riedel, Sebastian},
title = {An integrable bound for rough stochastic partial differential equations with applications to invariant manifolds and stability},
journal = {Journal of Functional Analysis},
volume = {288},
number = {1},
pages = {110676},
year = {2025},
issn = {0022-1236},
doi = {https://doi.org/10.1016/j.jfa.2024.110676},
}

@article {GVR25Z,
    AUTHOR = {Ghani Varzaneh, Mazyar  and Riedel, Sebastian},
     TITLE = {A general center manifold theorem on fields Banach spaces},
   JOURNAL = {Discrete and Continuous Dynamical Systems. Series B. A Journal Bridging Mathematics and Sciences},
 FJOURNAL = {Discrete and Continuous Dynamical Systems. Series B. A Journal
              Bridging Mathematics and Sciences},
    VOLUME = {30},
      YEAR = {2025},
    NUMBER = {5},
     PAGES = {1499--1516},
      ISSN = {1531-3492,1553-524X},
   MRCLASS = {37L10 (37B55 37H15 37L55)},
  MRNUMBER = {4874669},
MRREVIEWER = {Guangwa\ Wang},
       DOI = {10.3934/dcdsb.2024137},
       URL = {https://doi.org/10.3934/dcdsb.2024137},
}

@article {GVR26,
author = {Ghani Varzaneh, Mazyar and Riedel, Sebastian},
title = {Invariant manifolds and stability for rough differential equations},
journal = {Journal of Mathematical Analysis and Applications},
volume = {556},
number = {1, Part 1},
pages = {130112},
year = {2026},
issn = {0022-247X},
doi = {https://doi.org/10.1016/j.jmaa.2025.130112},
url = {https://www.sciencedirect.com/science/article/pii/S0022247X25008935}
}

@article {Hai05,
    AUTHOR = {Hairer, Martin},
     TITLE = {Ergodicity of stochastic differential equations driven by
              fractional {B}rownian motion},
   JOURNAL = {The Annals of Probability},
  FJOURNAL = {The Annals of Probability},
    VOLUME = {33},
      YEAR = {2005},
    NUMBER = {2},
     PAGES = {703--758},
      ISSN = {0091-1798},
     CODEN = {APBYAE},
   MRCLASS = {60H10},
  MRNUMBER = {2123208 (2005k:60178)},
MRREVIEWER = {Pedro Mar{\'{\i}}n Rubio},
       DOI = {10.1214/009117904000000892},
       URL = {http://dx.doi.org/10.1214/009117904000000892},
}

@article {KN23,
    AUTHOR = {Kuehn, Christian and Neamtu, Alexandra},
     TITLE = {Center Manifolds for Rough Partial Differential Equations},
   JOURNAL = {Electron. J. Probab.},
  FJOURNAL = {Electronic Journal of Probability},
      YEAR = {2023},
}

@incollection {Led96,
    AUTHOR = {Ledoux, Michel},
     TITLE = {Isoperimetry and {G}aussian analysis},
 BOOKTITLE = {Lectures on probability theory and statistics
              ({S}aint-{F}lour, 1994)},
    SERIES = {Lecture Notes in Math.},
    VOLUME = {1648},
     PAGES = {165--294},
 PUBLISHER = {Springer},
   ADDRESS = {Berlin},
      YEAR = {1996},
   MRCLASS = {60-02 (60E15 60F10 60G15)},
MRREVIEWER = {Qi Man Shao},
       DOI = {10.1007/BFb0095676},
       URL = {http://dx.doi.org/10.1007/BFb0095676},
}

@article {MS04,
    AUTHOR = {Mohammed, Salah-Eldin A. and Scheutzow, Michael},
     TITLE = {The stable manifold theorem for non-linear stochastic systems
              with memory. {II}. {T}he local stable manifold theorem},
   JOURNAL = {Journal of Functional Analysis},
  FJOURNAL = {Journal of Functional Analysis},
    VOLUME = {206},
      YEAR = {2004},
    NUMBER = {2},
     PAGES = {253--306},
      ISSN = {0022-1236},
   MRCLASS = {37H15 (37D10 60H10)},
  MRNUMBER = {2021848},
MRREVIEWER = {Vivek S. Borkar},
       URL = {https://doi.org/10.1016/j.jfa.2003.06.002},
}

@book {DPZ96 ,
    AUTHOR = {Da Prato, G. and Zabczyk, J.},
     TITLE = {Ergodicity for infinite-dimensional systems},
    SERIES = {London Mathematical Society Lecture Note Series},
    VOLUME = {229},
 PUBLISHER = {Cambridge University Press, Cambridge},
      YEAR = {1996},
     PAGES = {xii+339},
      ISBN = {0-521-57900-7},
   MRCLASS = {60H15 (28D05 60J25 60J35)},
  MRNUMBER = {1417491},
MRREVIEWER = {Bohdan\ Maslowski},
       DOI = {10.1017/CBO9780511662829},
       URL = {https://doi.org/10.1017/CBO9780511662829},
}

@book {VO16,
    AUTHOR = {Viana, Marcelo and Oliveira, Krerley},
     TITLE = {Foundations of ergodic theory},
    SERIES = {Cambridge Studies in Advanced Mathematics},
    VOLUME = {151},
 PUBLISHER = {Cambridge University Press, Cambridge},
      YEAR = {2016},
     PAGES = {xvi+530},
      ISBN = {978-1-107-12696-1},
   MRCLASS = {37-02 (28D05 37Axx 37D35 54H20)},
  MRNUMBER = {3558990},
MRREVIEWER = {Douglas\ P.\ Dokken},
       DOI = {10.1017/CBO9781316422601},
       URL = {https://doi.org/10.1017/CBO9781316422601},
}

@book {V14,
    AUTHOR = {Viana, Marcelo},
     TITLE = {Lectures on {L}yapunov exponents},
    SERIES = {Cambridge Studies in Advanced Mathematics},
    VOLUME = {145},
 PUBLISHER = {Cambridge University Press, Cambridge},
      YEAR = {2014},
     PAGES = {xiv+202},
      ISBN = {978-1-107-08173-4},
   MRCLASS = {37-01 (34-01 34D08 37D25 37H05 37H15)},
  MRNUMBER = {3289050},
MRREVIEWER = {Paulo\ Varandas},
       DOI = {10.1017/CBO9781139976602},
       URL = {https://doi.org/10.1017/CBO9781139976602},
}

@article {LPS23,
    AUTHOR = {Li, Xue-Mei and Panloup, Fabien and Sieber, Julian},
     TITLE = {On the (non)stationary density of fractional-driven stochastic
              differential equations},
   JOURNAL = {The Annals of Probability},
  FJOURNAL = {The Annals of Probability},
    VOLUME = {51},
      YEAR = {2023},
    NUMBER = {6},
     PAGES = {2056--2085},
      ISSN = {0091-1798,2168-894X},
   MRCLASS = {60H10 (37H10 60G22)},
  MRNUMBER = {4666290},
MRREVIEWER = {Vivek\ S.\ Borkar},
       DOI = {10.1214/23-aop1638},
       URL = {https://doi.org/10.1214/23-aop1638},
}

@book {SHK91,
    AUTHOR = {Karatzas, Ioannis and Shreve, Steven E.},
     TITLE = {Brownian motion and stochastic calculus},
    SERIES = {Graduate Texts in Mathematics},
    VOLUME = {113},
   EDITION = {Second},
 PUBLISHER = {Springer-Verlag, New York},
      YEAR = {1991},
     PAGES = {xxiv+470},
      ISBN = {0-387-97655-8},
   MRCLASS = {60J65 (35K99 35R60 60G44 60H10 60J60)},
  MRNUMBER = {1121940},
       DOI = {10.1007/978-1-4612-0949-2},
       URL = {https://doi.org/10.1007/978-1-4612-0949-2},
}

@book {SKM93,
    AUTHOR = {Samko, Stefan G. and Kilbas, Anatoly A. and Marichev, Oleg I.},
     TITLE = {Fractional integrals and derivatives},
      NOTE = {Theory and applications,
              Edited and with a foreword by S. M. Nikol\cprime ski\u i,
              Translated from the 1987 Russian original,
              Revised by the authors},
 PUBLISHER = {Gordon and Breach Science Publishers, Yverdon},
      YEAR = {1993},
     PAGES = {xxxvi+976},
      ISBN = {2-88124-864-0},
   MRCLASS = {26A33},
  MRNUMBER = {1347689},
}

@article{MS,
    author ={Bohdan Maslowski and Bj{\"o}rn Schmalfuss} ,
    title ={Random Dynamical Systems and Stationary Solutions of Differential Equations Driven by the Fractional Brownian Motion} ,
    journal = {Stochastic Analysis and Applications},
    year = {2004},
volume={22},
issue ={6},
}

@article{Stannat,
    author ={Amjad Saef and Wilhelm Stannat},
    title ={Stability of travelling wave solutions to reaction-diffusion equations driven by additive noise with {H}\"older continuous paths} ,
    journal ={arXiv:2501.12944} ,
    year = {2025},
}

@article{Nualart,
    author = {Chiara Amorino and Eulalia Nualart and Fabien Panloup and Julian Sieber},
    title = {Fast convergence rates for estimating the stationary density in {SDE}s driven by a fractional {B}rownian motion with semi-contractive drift},
    journal = {arXiv:2408.15904},
    year = {2024},
}

@article{EWS,
    author = {Christian Kuehn and Kerstin Lux and Alexandra Neam\c{t}u},
    title ={Warning signs for non-Markovian bifurcations: colour blindness and scaling laws} ,
    journal ={Proceedings of the Royal Society A},
volume={478},
isse={2259}, 
number={20210740} ,
    year = {2022},
}

@article{Katharina,
    author = {Katharina Eichinger and Christian Kuehn and Alexandra Neam\c{t}u},
    title ={Sample paths estimates for stochastic fast-slow systems driven by fractional {B}rownian motion},
journal={Journal of Statistical Physics},
 volume={179},
issue={5},
pages={1222--1266},
    year ={2020},
}

@article{Nils,
    author = {Nils Berglund and Blessing Neam\c{t}u, Alexandra},
    title ={Concentration estimates for {SPDE}s driven by fractional {B}rownian motion},
journal={Electronic Communications in Probability},
number={30},
pages={1--13} ,
    year ={2025}, 
}

@article{averaging,
    author ={Martin Hairer and Xue-Mei Li} ,
    title ={Averaging dynamics driven by fractional {B}rownian motion} ,
    journal = {The Annals of Probability},
volume={48}, 
number={4},
pages={1826--1860},
    year = {2020},
}

@article{BBPS1,
    author = {Jacob Bedrossian and Alex Blumenthal and Sam Punshon-Smith},
    title ={Lagrangian chaos and scalar advection in stochastic fluid mechanics} ,
    journal ={Journal of the European Mathematical Society} ,
volume={24},
issue={6},
pages={1893--1990},
    year = {2022},
}

@article{BBPS2,
    author = {Jacob Bedrossian and Alex Blumenthal and Sam Punshon-Smith},
    title ={A regularity method for lower bounds on the {L}yapunov exponent for stochastic differential equations} ,
    journal ={Inventiones Mathematicae} ,
volume={227},
pages={429--516},
    year = {2022},
}

@article{DB,
    author ={Blessing Neam\c{t}u, Alexandra and Dirk Bl\"omker} ,
    title = {Finite-time {L}yapunov exponents for {SPDE}s with fractional noise},
journal={Journal of Nonlinear Science},
volume={35},
issue={26},
number={1},
    year = {2025},
}

@article{density1,
    author ={Mireia Besal\'u and Arturo Kohatsu-Higa and Samy Tindel}, 
title={Gaussian-type lower bounds for the density of solutions of SDEs driven by fractional Brownian motions},
journal={Annals of Probability},
number={44},
year ={2016},
number={1},
pages={399--443},
}

@article{density2,
    author ={Lukas Anzeletti and Lucio Galeati and Alexandre Richard and Etienne Tanr\'e} ,
    title = {On the density of singular SDEs with fractional noise and applications to {M}c{K}ean-{V}lasov equations},
    journal = {arXiv:2506.11900},
    year ={2025}, 
}

@article{LiS,
    author = {Xue-Mei Li and Julian Sieber},
    title = {Mild Stochastic Sewing Lemma, SPDE in Random Environment, and Fractional Averaging},
    journal = {Stochastics and Dynamics},
    year = {2022}, 
volume={22},
number={07},
}

@article{KN21,
    author ={Christian Kuehn and Alexandra Neam\c{t}u } ,
    title ={Rough center manifolds} ,
    journal = {SIAM Journal on Mathematical Analysis},
    year ={2021},
volume={53},
number={4},
pages={3912--3957},
}

@article{GT24,
    author = {Benjamin Gess and Pavlos Tsatsoulis},
    title ={Lyapunov exponents and synchronisation by noise for systems of {SPDE}s} ,
    journal = {Annals of Probability},
    year = {2024},
volume={52},
issue={5},
pages={1903--1953},
}

@article{HR,
    author ={Martin Hairer and Sam Punshon-Smith and Tommaso Rosati and Jaeyun Yi},
    title ={Lower bounds on the top Lyapunov exponent for linear {PDE}s driven by the 2D stochastic {N}avier-{S}tokes equations} ,
    journal = {arXiv:2411.10419},
    year = {2024},
}

@article{BG26,
    author = {Blessing Neam\c{t}u, Alexandra and Ghani Varzaneh, Mazyar},
    title = {Synchronization by noise for stochastic differential equations driven by fractional {B}rownian motion},
    journal ={arXiv:2603.12774} ,
    year = {2026},
}

@article{multiscale1,
    author ={Xue-Mei Li and Colin Piernot and Szymon Sobczak and Kexing Ying} ,
    title = {Fluctuations from a random fractional averaging limit},
    journal = {arXiv:2512.08621},
    year = {2025},
}

\end{document}